\newcommand{\subscript}[2]{$#1 _ #2$}
\newcommand{\cP}{\mathcal{P}}
\newcommand{\cU}{\mathcal{U}}
\newcommand{\cV}{\mathcal{V}}
\newcommand{\cW}{\mathcal{W}}
\newcommand{\PS}{\mathcal{PS}}
\newcommand{\K}{\kappa}
\newcommand{\C}{k}
\newcommand{\E}{\mathrm{E}}
\newcommand{\cov}{\mathrm{Cov}}
\newcommand{\NC}{\mathit{NC}}
\newcommand{\Tr}{\mathrm{Tr}}
\newcommand{\iCycleGamma}[1]{[\![m_{#1} ]\!]}
\newcommand{\ab}{\allowbreak}
\newcommand{\thebottomline}{\renewcommand{\thefootnote}{}
  \renewcommand{\footnoterule}{}
  \phantom{M}\footnotetext{\tiny{}\hfill
    \textit{\noindent\romannumeral\day.%
\romannumeral\month.\romannumeral\year}}}
\newtheorem{theorem}{Theorem}[section]
\newtheorem{proposition}[theorem]{Proposition}
\newtheorem{corollary}[theorem]{Corollary}
\newtheorem{lemma}[theorem]{Lemma}
\theoremstyle{definition}
\newtheorem{definition}[theorem]{Definition}
\newtheorem{notation}[theorem]{Notation}
\newtheorem{example}[theorem]{Example}
\newtheorem{remark}[theorem]{Remark}
\newtheorem{notation and remark}[theorem]{Notation and remark}
\title[{Higher order free cumulants}] {Asymptotic limit of Cumulants and Higher order free cumulants of Complex Wigner Matrices}
\author[mingo]{James A. Mingo$^{(*)}$} \address{Department of Mathematics and Statistics, Queen's University, Jeffery Hall, Kingston, Ontario, K7L 3N6, Canada}
\email{mingo@mast.queensu.ca}
\author[munoz]{Daniel Munoz George$^{(\dagger)}$}
\address{Department of Mathematics and Statistics, Hong Kong University of Science and Technology, Clear Water Bay, Kowloon, Hong Kong}
\email{dmunoz@ust.hk}
\thanks{$^{(*)}$ Research supported by a Discovery Grant from
  the Natural Sciences and Engineering Research Council of
  Canada}
\thanks{$^{(\dagger)}$ Research Supported by Hong Kong RGC GRF 16305421 and NSFC 12222121.}
\thanks{AMS Subject Classification: 60B20, 46L54, 15B52}
\begin{document}\thispagestyle{empty}

\begin{abstract}
We compute the fluctuation moments $\alpha_{m_1,\dots,m_r}$ of a Complex Wigner Matrix $X_N$ given by the limit $\lim_{N\rightarrow\infty}N^{r-2}\ab\C_r(\Tr(X_N^{m_1}),\dots, \Tr(X_N^{m_r}))$. We prove the limit exists and characterize the leading order via planar graphs that result to be trees. We prove these graphs can be counted by the set of non-crossing partitioned permutations which permit us to express the moments $\alpha_{m_1,\dots,m_r}$ in terms of simpler quantities $\K_{m_1,\dots,m_r}$ known as the higher order cumulants. As for lower order dimensions ($r\leq 3$) we observe that while the moments have a more elaborated expression the cumulants are simpler.
\end{abstract}

\maketitle

\section{Introduction and main results}\label{Section: Introduction}

A key point of Wigner's celebrated semi-circle theorem was its universality; the limit of the empirical eigenvalue distribution of a Wigner matrix depended only on the variance of the entries. The rest of the information was assumed lost to universality. In this paper we show how to recover the distribution of the entries from the higher order fluctuation moments. We show that the fluctuation moments are described by some planar graphs with certain topological properties.  This connection with planar graphs goes back to early work of 't Hooft \cite{gth}. However as the order increases, the graphs get quite complicated. In this paper we show that by utilizing higher order free cumulants, the fluctuation moments can be easily understood. The higher order free cumulants are just obtained by M\"obius inversion from the fluctuation moments, so this is quite a simple transformation. 

While it is surprising that one can recover local information from the fluctuation moments, which are manifestly global, there were two precedents. The first of course was Wigner's original work \cite{W1}. The second was the work of Khorunzhy,  Khoruzhenko, and Pastur \cite{KKP}, where they showed that the linear statistics of the empirical eigenvalue distribution can described using the first four moments of the distribution of the entries. Since \cite{KKP} was before higher order free cumulants were known, there were no further results until our recent paper \cite{MM} where we showed how to recover the first six moments from the fluctuation moments. In this paper we complete the project to get all moments.

A further impetus for this work was the recent breakthrough of Borot, Charbonnier, Garcia-Failde, Leid, and Shadrin \cite{bcgf}. Based on work of Maxim Kazarian \cite{bdks} and collaborators,  in the paper \cite{bcgf} a connection was established between the topological recursion of some matrix integrals and higher order free cumulants.  However, until \cite{MM} and this paper, these cumulants were only computed for the Gaussian case, in which case they were all $0$ except $\kappa_2$. 


In Wigner's original series of papers \cite{W1, W2}, he showed that the moments of the limiting eigenvalue distribution were given by the Catalan numbers, known for counting many planar combinatorial objects. Much later, the global fluctuation moments of a Wigner matrix, $\cov(\Tr(X^n), \Tr(X^m))$, were found by Khorunzhy, Khoruzhenko, and Pastur in \cite{KKP}.  These moments were reinterpreted using the non-crossing annular permutations introduced in \cite{MN}, and their interactions with
deterministic matrices were described in \cite{MMPS}. 
%
%

In \cite{MM} the authors adapted the techniques of \cite{MMPS} to compute the large $N$ limit of the  third order moments of a Wigner matrix, $N\C_3(\Tr(X^n), \ab \Tr(X^m), \Tr(X^p))$, and showed they can be simply described by its higher order free cumulants introduced in \cite{CMSS}, here $\C_3$ denotes the third classical cumulant as defined in Section \ref{Section: Non-crossing objects} Equation \ref{Equation: First order free cumulants}.

Besides the Wigner model, the quantities $\C_r(\Tr(X^{m_1}),\dots, \Tr(X^{m_r}))$ have been studied for other models. In \cite{MN}, Mingo and Nica study the second order case $\C_2(\Tr(X^n), \Tr(X^m))$ for the Wishart model. In \cite{PSp}, Pluma and Speicher look at $\C_r(\Tr(X^{m_1}),\dots, \Tr(X^{m_r}))$ for the SYK-model while in \cite{CMSS} they deal with unitarily invariant ensembles. In this paper we show that the normalized cumulants $N^{r-2}\C_r(\Tr(X^{m_1}),\dots,\ab \Tr(X^{m_r}))$ converge in the large $N$ limit. Moreover we characterize the leading order and find and very simple expression for the limit in terms of its higher order free cumulants (see Section \ref{Section: Non-crossing objects} Equation \ref{Equation: Higher order free cumulants}). In this context, the present work represents the first instance in which free cumulants of arbitrary order are computed for a random matrix model, beyond the GUE case, where the calculations are elementary.

For our model we consider $R$-diagonal variables. A complex random variable $x$ is called $R$-diagonal if its classical cumulants are all $0$ except,
$$\C_r(x,\overline{x},\dots,x,\overline{x}).$$
We use the notation,
$$\beta_{2n}\vcentcolon = \C_{2n}(x,\overline{x},\dots,x,\overline{x}).$$
The sequence $(\beta_n)_n$ will characterize our limit and we call this the cumulant sequence of $x$. $R$-diagonal variables has been considered in other models, for example in \cite{AM} they consider the second order cumulants as defined in Equation \ref{Equation: Higher order free cumulants} for the product of $R$-diagonal variables. The model $R$-diagonal variable  is the Gaussian complex random variable for which $\beta_n=0$ for all $n\neq 2$ and $\beta_2=1$. In this sense, our model generalizes the $GUE$ model.

\begin{definition}\label{Definition: Complex Wigner Matrix}
By a complex Wigner matrix $X_N$ we mean a self-adjoint
$N\times N$ random matrix of the form
$X_N=\frac{1}{\sqrt{N}}(x_{i,j})$ such that,

\begin{itemize}

\item[$\diamond$]
the entries are complex random variables;

\item[$\diamond$]
the matrix is self-adjoint: $x_{i,j}=\overline{x_{j,i}}$;
 
\item[$\diamond$] 
all entries on and above the diagonal are independent: 
$\{x_{i,j}\}_{i<j}\ab \cup \{x_{i,i}\}_i$ are independent;

\item[$\diamond$]
the entries above the diagonal, $\{x_{i,j}\}_{i<j}$, are
identically distributed and $R$-diagonal with cumulants sequence $(\beta_n)_n$;

\item[$\diamond$] 
the diagonal entries, $\{x_{i,i}\}_{i}$, are identically
distributed $N(0,\beta_2)$;

\item[$\diamond$] 
$\E(x_{i,j})=0$ for all $i,j$,



\end{itemize}
\end{definition}

We are interested in the quantities,
\begin{equation}\label{Equation: definition of higher order moments N}
\alpha_{m_1,\dots,m_r}^{(N)}\vcentcolon = N^{r-2}\C_r(\Tr(X^{m_1}),\dots, \Tr(X^{m_r})).
\end{equation}
and their limit,
\begin{equation}\label{Equation: definition of higher order moments}
\alpha_{m_1,\dots,m_r}\vcentcolon = \lim_{N\rightarrow\infty} N^{r-2}\C_r(\Tr(X^{m_1}),\dots, \Tr(X^{m_r})).
\end{equation}
We regard the quantities $(\alpha_{m_1,\dots,m_r})_{m_1,\dots,m_r}$ as moment sequences; we are interested in their corresponding free cumulants 
$(\K_{m_1,\dots,m_r})_{m_1,\dots,m_r}$ 
defined as in Section \ref{Section: Non-crossing objects} Definition \ref{Definition: Higher order free cumulants}. Historically, for $r\leq 3$, the cumulants describe the limit of Equation \ref{Equation: definition of higher order moments} in a very simple way. We will show that this behavior still holds for any $r$. The normalization $N^{r-2}$ is motivated by the phenomena observed in lower order cases ($r\leq 3$) and that also holds for other models such as the Wishart model. Our main result shows that the limit of Equation \ref{Equation: definition of higher order moments} exists and it provides explicit expressions for both $(\alpha_{m_1,\dots,m_r})_{m_1,\dots,m_r}$ and $(\K_{m_1,\dots,m_r})_{m_1,\dots,m_r}$. Before stating our main results let us go through a brief recap of lower order cases. Recall that the semi-circular law can be described by requiring that all free cumulants $\kappa_n = 0$ for $n>2$. In the case of the
fluctuation moments, it was shown in \cite{MMPS} that the
fluctuation moments can be described by requiring that
$\kappa_2 = \beta_2$ and $\kappa_n = 0$ for $n > 2$ and
$\kappa_{p,q} = 0$ for $(p, q) \not = (2, 2)$ and
$\kappa_{2,2} = 2 \beta_4$, where
$\{\kappa_{p,q}\}_{p,q}$ are the second order free cumulants. In \cite{MM} we show that for the third order case we require $\K_2=\beta_2, \K_{2,2}=2\beta_4, \K_{2,2,2}=4\beta_6,
\K_{2,1,1} = -2\beta_4$ and $0$ otherwise. Let us now state our main results.

\begin{notation}
For $r\geq 1$ let $\gamma_{2r}^{par}\in S_{2r}$ to be the permutation $$(1,2)\cdots (2r-1,2r).$$
For $\sigma\in \cP_2(2r)$ and $\tau\in \cP(2r)$ with $\sigma\leq \tau$ we let $\Gamma(\tau,\sigma,\gamma_{2r})$ to be the bipartite graph with white vertices the blocks of $\gamma_{2r}^{par}\vee\sigma$, black vertices the blocks of $\tau$ and edges indexed by the blocks of $\sigma$ where the block $B$ of $\sigma$ is an edge going from the black vertex that contains $B$ to the white vertex that contains $B$. For a partition $\tau\in \cP(2r)$ we say that $\tau$ \textit{doesn't admit a non-crossing partitioned permutation} if any block of $\tau$ contains the same number of even and odd numbers, $\tau\vee\gamma_{2r}^{par}=1_{2r}$, and there is no $\sigma\in \cP_2(2r)$ where each block of $\sigma$ contains one even and one odd number and such that $\Gamma(\tau,\sigma,\gamma_{2r}^{par})$ is a tree.
\end{notation}

\begin{definition} 
Let $m_1,\dots,m_r\in\mathbb{N}$ and $m=\sum_i m_i$. Let $\sigma\in S_m$ be a permutation whose cycle decomposition is a product of transpositions. Let $\gamma$ be the permutation with cycles $(1,\dots,m_1)\cdots (m_1+\cdots+m_{r-1}+1,\dots,m)$. For a cycle $B=(u,v)$ of $\sigma$ we say that $B$ is a \textit{loop block} if $u$ and $v$ are in the same cycle of $\gamma\sigma$. For a pair $(\tau,\sigma)\in \cP(m)\times \cP_2(m)$ we say that  the pair is \textit{loop free} if $\sigma\leq \tau$ and if $B$ is a loop block of $\sigma$ then $B$ is also a block of $\tau$. We denote by $\PS_{NC_2}^{loop-free}(m_1,\dots,m_r)$ to all the partitioned permutations $(\tau,\sigma)\in \PS_{NC_2}(m_1,\dots,m_r)$ where $(\tau,\sigma)$ is loop free. Here $\PS_{NC_2}(m_1,\dots,m_r)$ denotes the subset of the set of non-crossing partitioned permutations where $\sigma$ has only cycles of size $2$.
\end{definition}

\begin{notation}
Let $r\in\mathbb{N}$ and let $\PS_{\NC_2}(2^{r}):=\PS_{\NC_2}(2,\dots,2)$ be the set of non-crossing partitioned permutations on $r$ cycles each consisting on $2$ points. Let,
$$C_{2r}=\sum_{\substack{\tau\in \cP(2r) \\ \tau\text{ doesn't admit }\\ \text{a non-crossing} \\ \text{partitioned permutation}}}\prod_{D\in\tau}\beta_{|D|}.$$
\end{notation}

\begin{theorem}\label{Theorem: Main theorem 1}
Let $X_N$ be a complex Wigner matrix with moment sequence give by the limit of Equation $(\ref{Equation: definition of higher order moments})$. Let $u,v\geq 0$ be non-negative integers such that $u$ is even and let $r=u/2+v$. Let $(\delta_r)_r$ be the sequence given by the recursive equation $\delta_1=\beta_2$ and
$$\delta_r=2^{r-1}\left(\beta_{2r}+C_{2r}\right)-\sum_{\substack{(\cU,\sigma)\in \PS_{\NC_2}(2^r) \\ \cU\neq 1_{2r}}}\prod_{V\in U}\delta_{\#(\sigma|_V)}-\prod_{V\in U}2^{\#(\sigma)-1}\beta_{|V|},$$
for $r\geq 2$. Then the higher order free cumulants of $X_N$ are give by
\begin{equation}
\K_{1,\dots,1,2,\dots,2}=(-1)^{u/2}\frac{u!}{2^{u/2}(u/2)!}\delta_r,
\end{equation}
where there are exactly $v$ indices that are $2$ and $u$ indices that are $1$ and $(u,v)\neq(2,0)$ and $0$ otherwise.
\end{theorem}

\begin{theorem}\label{Theorem: Main theorem 2}
Let $X_N$ be a complex Wigner matrix. Then for any $m_1\dots,m_r$,
\begin{equation}
\alpha_{m_1,\dots,m_r}=\sum_{(\tau,\sigma)\in \PS_{\NC_2}^{loop-free}(m_1,\dots,m_r)}\K_{(\tau,\sigma)},
\end{equation}
where $\K_{(\tau,\sigma)}$ is the multiplicative extension of $\K_{m_1,\dots,m_r}$ given as in Theorem \ref{Theorem: Main theorem 1}.
\end{theorem}

\begin{corollary}\label{Theorem: Main theorem 3}
Let $X_N$ be a GUE random matrix, i.e a Wigner matrix with off diagonal entries Complex Gaussian. Then for any $m_1,\dots,m_r$,
$$\alpha_{m_1,\dots,m_r}=|\NC_2(m_1,\dots,m_r)|.$$
\end{corollary}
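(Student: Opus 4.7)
The plan is to substitute the GUE cumulants $\beta_2=1$, $\beta_n=0$ for $n\neq 2$, into the pseudo-cumulant formula of Theorem \ref{Theorem: Main theorem 1} and to identify the surviving terms with non-crossing pair partitions.

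First I would evaluate $\overline{\K}_{(B,\sigma|_B)}$ when $B$ consists of $r_B$ cycles of $\sigma$. Under the GUE specialization the leading term $2^{r_B-1}\beta_{2r_B}$ equals $1$ when $r_B=1$ and $0$ otherwise; in the sum term, $\prod_{D\in\tau'}\beta_{|D|}\neq 0$ forces $\tau'\in\cP_2(2r_B)$. The key lemma is that no pair partition $\tau'\in\cP_2(2r_B)$ satisfies simultaneously the three conditions in the definition of ``$\tau'$ doesn't admit a non-crossing partitioned permutation''. Indeed, if $\tau'$ satisfies the parity condition and $\tau'\vee\gamma_{2r_B}^{par}=1_{2r_B}$, choose $\sigma':=\tau'$. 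Then $\sigma'\in\cP_2(2r_B)$, each block of $\sigma'$ contains one odd and one even element, $\sigma'\leq\tau'$ trivially, and the bipartite graph $\Gamma(\tau',\sigma',\gamma_{2r_B}^{par})$ has one white vertex (because $\gamma_{2r_B}^{par}\vee\sigma'=1_{2r_B}$), $r_B$ black vertices (the blocks of $\tau'$) and $r_B$ edges, each joining a distinct black vertex to the unique white vertex. This graph is therefore a tree, contradicting the third condition. Hence the sum term vanishes and $\overline{\K}_{(B,\sigma|_B)}=\mathbbm{1}_{r_B=1}$.

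It then follows that $\overline{\K}_{(\tau,\sigma)}=\prod_{B\in\tau}\overline{\K}_{(B,\sigma|_B)}$ is either $0$ or $1$, and equals $1$ precisely when every block of $\tau$ is a single cycle of $\sigma$, i.e.\ when $\tau=\sigma$ as partitions. In this situation the loop-free requirement is automatic, since any loop block of $\sigma$ is itself a cycle of $\sigma$ and hence a block of $\tau=\sigma$. Theorem \ref{Theorem: Main theorem 1} then specializes to
$$\alpha_{m_1,\dots,m_r}=\#\{\sigma:(\sigma,\sigma)\in\PS_{\NC_2}(m_1,\dots,m_r)\}=|\NC_2(m_1,\dots,m_r)|,$$
which is the claimed identity. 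The only non-routine step is the tree argument eliminating the sum term in the pseudo-cumulant formula; all subsequent reductions are direct substitutions into Theorem \ref{Theorem: Main theorem 1}.
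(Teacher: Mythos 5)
Your proposal is correct and follows the same route as the paper: specialize Theorem \ref{Theorem: Main theorem 1} to $\beta_2=1$, $\beta_n=0$ for $n\neq 2$, and observe that $\overline{\K}_{(\tau,\sigma)}$ survives exactly when $\tau=\sigma$, in which case $(\sigma,\sigma)\in\PS_{\NC_2}^{loop-free}(m_1,\dots,m_r)$ corresponds to $\sigma\in\NC_2(m_1,\dots,m_r)$. The paper's own treatment is a one-line remark and does not explain why the correction term
\[
\sum_{\substack{\tau'\in \cP(2r_B) \\ \tau'\text{ doesn't admit a n.c.\ p.p.}}}\prod_{D\in\tau'}\beta_{|D|}
\]
vanishes under the GUE specialization, which is genuinely needed to kill $\overline{\K}_{(B,\sigma|_B)}$ for $r_B\geq 2$. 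You supply this missing justification: under GUE only $\tau'\in\cP_2(2r_B)$ contributes, and for such a $\tau'$ respecting parity with $\tau'\vee\gamma_{2r_B}^{par}=1_{2r_B}$, taking $\sigma'=\tau'$ produces a star graph $\Gamma(\tau',\sigma',\gamma_{2r_B}^{par})$ with one white vertex, $r_B$ black vertices, and $r_B$ edges, hence a tree — so no pair partition can ``fail to admit'' a non-crossing partitioned permutation and the sum is empty. Your observation that the loop-free condition is automatic once $\tau=\sigma$ is also correct and worth noting. In short: same strategy, but your write-up closes a gap the paper glosses over.
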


Recently it has been shown that higher order free cumulants have an interpretation in terms of a phenomenon, known as topological recurrence, which is a universal recurrence on the genus of a map on a surface and the number of its boundaries. In \cite{bcgf}, Borot, Charbonnier, Garcia-Failde, Leid, and Shadrin showed that this relation between higher order moments and higher order cumulants can be expressed in the form of power series connecting fluctuation moments to higher order free cumulants. This was already done in \cite{CMSS} in the second order case. In this paper we compute, for the first time the higher order cumulants of any order.

After this introduction, the organization of the paper is as  follows.
In Section \ref{Section: Non-crossing objects} we review some basic material on non-crossing partitions, permutations and partitioned permutations. We also introduce the cumulants. In Section \ref{Section: Graph theory} we introduce some graph theory and techniques that we will need to establish our main result. In Section \ref{Section: Asymptotics} we show that the large $N$ limit defined in Equation \ref{Equation: definition of higher order moments} exists and in Section \ref{Section: Leading order} we identify the leading order that correspond to the moments, this provides an expression for both the moments and the cumulants and it proves our main results.

\section{Non-crossing sets}\label{Section: Non-crossing objects}

\subsection{Non-crossing partitions and free cumulants} We begin by recalling from \cite{NS} some basic facts about
non-crossing partitions and free cumulants. We let $[n] =
\{1, 2, \dots, n\}$, a partition of $[n]$ is a disjoint collection of sets $V_1,\dots,V_k$ whose union is $[n]$. We call $V_i$ the \textit{blocks of }$\pi$ and use the notation $V_i\in \pi$. The set of partitions of $[n]$ will be denoted $\cP(n)$. $0_n$ denotes the partition of $[n]$ with all blocks singletons and $1_n$ denotes the partition of $[n]$ with only one block. We say $\pi$ is \textit{non-crossing} if we cannot find $i < j < k <l \in [n]$ such that $i$ and $k$ are in one block of $\pi$ and $j$ and $l$ are in a different block of $\pi$. We denote by $\NC(n)$ the subset of $\cP(n)$ consisting of non-crossing partitions. When all blocks of $\pi$ have size $2$ we call $\pi$ a pairing and denote to the set of pairings by $\cP_2(n)$.
       
If $\pi \in \cP(n)$ and $A \subseteq [n]$ we can form $\pi|_A$, the \textit{restriction} of $\pi$ to $A$ as follows. If the blocks of $\pi$ are $V_1, \dots, V_k$, then the blocks of $\pi|_A$ are the non-empty elements of $V_1 \cap A, \dots, V_k \cap A$. For two given partitions $\pi,\sigma$ we say that $\pi\leq \sigma$ if any block of $\pi$ is contained in a block of $\sigma$. Equipped with this partial order the sets $\cP(n)$ and $\NC(n)$ becomes a partially ordered set. The partition $\pi\vee\sigma$ denotes the smallest partition greater or equal than $\pi$ and $\sigma$.

The free cumulants of a random variable $X$ are the sequence $(\K_n)_n$ given by the recursive equations
\begin{equation}\label{Equation: First order free cumulants}
\E(X^m)=\sum_{\pi\in \NC(m)}\K_{\pi},
\end{equation}
where
$$\K_{\pi}=\prod_{B\in\pi}\K_{|B|}.$$
More generally, the mixed cumulants of a sequence of random variable $(X_i)_i$ is the sequence determined by the recursive equations
\begin{equation}\label{Equation: First order free cumulants multivariate}
\E(X_1\cdots X_m)=\sum_{\pi\in \NC(m)}\K_{\pi}(X_1,\dots,X_m),
\end{equation}
where,
$$\K_{\pi}(X_1,\dots,X_m)=\prod_{\substack{B\in\pi\\ B=\{i_1,\dots,i_k\}}}\K_k(X_{i_1}\cdots X_{i_k}).$$
For more details on non-crossing partitions and cumulants we refer to \cite[Lecture 9,Lecture 11]{NS}.

\subsection{Non-crossing permutations} As one can see from the definition, the order in $[n]$ plays an important role for elements of $\NC(n)$. In this paper it will be very important to switch our focus to non-crossing permutations. We let $S_n$ be the group of permutations of $[n]$. For a permutation $\pi\in S_n$ we let $0_{\pi}$ be the partition obtained from its cycle decomposition. We say that $\pi \in S_n$ is \textit{non-crossing}, if $0_{\pi}$ is non-crossing. When there is no risk of confusion we just write $\pi$ to denote both the permutation and the partition $0_{\pi}$. Using a theorem of Biane we can describe easily the permutations that give non-crossing partitions.

For a partition or a permutation, $\pi$, we let $\#(\pi)$
denote the number of blocks of $\pi$ if it is a partition
and the number of cycles of $\pi$ if it is a permutation. We
let $\gamma_n = (1, 2, \dots, n) \in S_n$ be the permutation
with one cycle and the elements in increasing order. We
compose our permutations from right to left: $\pi\gamma_n$
means perform $\gamma_n$ first then $\pi$.  Biane's rule
is that for $\pi \in S_n$ we have $\#(\pi) +
\#(\pi^{-1}\gamma_n) \leq n + 1$ with equality only if $\pi$
is non-crossing, for more details and references one can see \cite{B} and \cite[Ch.~5]{MSBook}. 

Another way to write Biane's rule is to use a \textit{length} function on $S_n$. For a permutation $\pi \in S_n$ we let $|\pi|$ be the minimal number of transpositions required to write $\pi$ as a product of transpositions. One can then check that $\#(\pi) + |\pi| = n$. One immediately has the triangle
inequality $|\pi\sigma| \leq |\pi| + |\sigma|$ for any $\pi,
\sigma \in S_n$. Then Biane's inequality becomes $|\gamma_n|
\leq |\pi| + |\pi^{-1} \gamma_n|$ with equality only if
$\pi$ is non-crossing. Because of this we shall say that
$\pi$ is non-crossing \textit{relative} to $\gamma_n$. In \cite[Equation 2.9]{MN} they prove that for any two permutations $\pi,\gamma$ the following inequality holds.
\begin{equation}\label{Inequality: Mingo and Nica inequality}
\#(\pi)+\#(\gamma)+\#(\pi^{-1}\gamma) \leq n+2\#(\pi\vee\gamma).
\end{equation}
Inspired in all these facts we define the notion of a non-crossing permutation relative to any other permutation $\gamma$

\begin{definition}\label{Definition: Non-crossing permutation relative to gamma arbitrary}
Let $\gamma\in S_n$ and $\pi\in S_n$. We say that $\pi$ is non-crossing relative to $\gamma$ if,
\begin{enumerate}
    \item $\pi\vee\gamma=1_n$, and,
    \item $\#(\pi)+\#(\gamma)+\#(\pi^{-1}\gamma) = n+2.$
\end{enumerate}
We denote to the set of non-crossing permutations relative to $\gamma$ by $S_{NC}(\gamma)$. When the first condition is not satisfied but we still have Equality in \ref{Inequality: Mingo and Nica inequality}, i.e,
$$\#(\pi)+\#(\gamma)+\#(\pi^{-1}\gamma) = n+2\#(\pi\vee\gamma).$$
Then we say that $\pi$ is non-connecting non-crossing and denote to the set of these permutations by $S_{NC}^{nc}(\gamma)$.
\end{definition}

In this paper we will be concerned with permutations on a
\textit{multi-annulus}. In terms of permutations this means
we consider a permutation $\gamma$ with one or more cycles. If $m_1, m_2, \dots, m_r$ are positive integers, $n = m_1 + \cdots + m_r$, we let $\gamma_{m_1,m_2, \dots, m_r}$ be the
permutation in $S_n$ with $r$ cycles, the $i^{th}$ cycle
being $[\![m_i ]\!] := (m_1\ab + \cdots + m_{i-1} +1, \dots,
m_1 + \dots + m_i)$. When $r = 1$ we are in the case of
$\NC(n)$. When $r > 1$ we are in the case of a
\textit{r-annulus}. In \cite[Def.~3.5]{MN}, conditions were
given, similar to the ones for non-crossing partitions
above, that ensured that when $r = 2$ we can draw two
circles representing the cycles of $\gamma_{m_1, m_2}$ and
then arrange the cycles of $\pi$ so that the cycles do not
cross, assuming that there is at least one cycle of $\pi$
that meets both cycles of $\gamma_{m_1, m_2}$, i.e $\pi \vee
\gamma_{m_1, m_2} = 1_{m_1 + m_2}$.  In \cite{MN}, it was shown
that the non-crossing condition was equivalent to the metric
condition: $|\pi| + |\pi^{-1}\gamma_{m_1, m_2}| =
|\gamma_{m_1, m_2}| + 2$.

\begin{notation}
We let for $r \geq 1$ and $n = m_1 + \cdots + m_r$ and
$\gamma_{m_1, \dots, m_r}$,
$$S_{\NC}(m_1,\dots,m_r)\vcentcolon = S_{NC}(\gamma_{m_1,\dots,m_r}),$$
and,
$$S_{\NC}^{nc}(m_1,\dots,m_r)\vcentcolon = S_{NC}^{nc}(\gamma_{m_1,\dots,m_r}).$$
\end{notation}

Observe that it might be possible than more that one permutation $\pi$ and $\pi^\prime$ correspond to the same partition $0_{\pi}$. In this sense turning a permutation into a partition is well defined however the reverse operation depends on the cycle choice. There is however one case where the cycle choice is unique, that is when all the cycles are of size at most $2$.

\begin{notation}
We let for $r\geq 1$ and $n=m_1+\cdots +m_r$ and $\gamma=\gamma_{m_1,\dots,m_r}$,
\begin{multline*}
\NC_2(m_1, \dots, m_r) \\ = \{ \pi \in S_\NC(m_1, \dots,
m_r) \mid \textrm{\ all cycles of\ } \pi \textrm{\ have 2
  elements\ }\},
\end{multline*}
and,
\begin{multline*}
\NC_2^{nc}(m_1, \dots, m_r) \\ = \{ \pi \in S_\NC^{nc}(m_1, \dots,
m_r) \mid \textrm{\ all cycles of\ } \pi \textrm{\ have 2
  elements\ }\}.
\end{multline*}
A cycle of $\pi\in \NC_2(m_1,\dots,m_r)$ consisting of two elements from distinct cycles of $\gamma$ is called a \textit{through string}.
\end{notation}

\begin{definition}\label{Definition: Restriction of permutation}
For a permutation $\pi\in S_n$ and a subset $M \subset [n]$. Let $\pi|_M$ be the permutation on the set of points $M$ defined by the first return map; that is,
$$\pi|_{M}(m)=\pi^l(m),$$
where $l = \min\{k \geq 1: \pi^k(m)\in M\}.$ It is well known that $\pi^{n!}=id$ and therefore such a minimum exist. Moreover if $m_1,m_2\in M$ are such that $\pi|_{M}(m_1)=\pi|_{M}(m_2)$ then $\pi^{m_1^{min}}(m_1)=\pi^{m_2^{min}}(m_2)$. If $m_1^{min}=m_2^{min}$ then $m_1=m_2$, if $m_1^{min}<m_2^{min}$ then we apply the inverse function, $\pi^{-1}$, $m_1^{min}$ times to get $m_1=\pi^{m_2^{min}-m_1^{min}}(m_2)$, however this means that $\pi^{m_2^{min}-m_1^{min}}(m_2) \in M$ with $0<m_2^{min}-m_1^{min}<m_2^{min}$ which is not possible, so it must be $m_1^{min}=m_2^{min}$ and $m_1=m_2$. This proves that $\pi|_{M}$ is well defined.
\end{definition}

\subsection{Non-crossing partitioned permutations and higher order free cumulants} 

\begin{notation}
Let $\cP(n)$ be the partitions of $[n] = \{1 ,2 ,
\dots,\ab n\}$ and $S_n$ be the permutations of $[n]$.  For
$\cU \in \cP(n)$ we let $\#(\cU)$ denote the number of
blocks of $\cU$ and $|\cU| = n - \#(\cU)$, the
\textit{length} of $\cU$.

For $\pi \in S_n$ recall that $|\pi| = n - \#(\pi)$ and $|\pi|$ is
the minimal number of transpositions needed to write $\pi$
as a product of transpositions. Since multiplying a
permutation by the transposition, $(i, j)$, reduces the
number of cycles by 1, if $i$ and $j$ were in different
cycles of the permutation, and increases it by $1$ if $i$
and $j$ are in the same cycle of the permutation, we see
that the length of a permutation and the length of the
partition obtained from it cycles are the same.

If $\pi \in S_n$ and $\cU \in \cP(n)$ we write $\pi \leq
\cU$ to mean that every cycle of $\pi$ is contained in some
block of $\cU$.  We call the pair $(\cU, \pi)$ a
\textit{partitioned permutation}. We denote by $\PS(n)$ to the set of partitioned permutations. 

We let $|(\cU, \pi)| = 2|\cU| - |\pi|$ and call this the \textit{length} of $(\cU, \pi)$. If we let $p = |\cU| - |\pi| = \#(\pi) - \#(\cU)$, then $p$ is the number of cycles of $\pi$ joined by $\cU$. For example, when $p = 1$, this means that one block of $\cU$ contains two cycles of $\pi$ and all other blocks of $\cU$ contain only one cycle of $\pi$.  If $p = 2$ this means that either one block of $\cU$ contains three cycles of $\pi$ or two blocks of $\cU$ each contain 2 cycles of $\pi$.

\end{notation}

Two crucial properties of the length function are
\cite[Lemma 2.2]{MSS} and \cite[Prop. 5.6]{CMSS}.

\begin{proposition}[Triangle Inequality]
Given partitioned permutations $(\cV, \pi)$ and $(\cU,
\sigma)$ we have
\[
|(\cV \vee \cU, \pi\sigma)| \leq |(\cV, \pi)| + |(\cU, \sigma)|.
\]
\end{proposition}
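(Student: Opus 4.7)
The plan is to reduce the inequality to the Hurwitz/Biane inequality for permutations (already recorded as Inequality~\ref{Inequality: Mingo and Nica inequality}) together with a short monotonicity lemma for joins of partitions. Writing $n$ for the underlying size, the identity $|(\cU, \pi)| = 2|\cU| - |\pi| = n - 2\#(\cU) + \#(\pi)$ turns the claim
\[
|(\cV \vee \cU, \pi\sigma)| \leq |(\cV, \pi)| + |(\cU, \sigma)|
\]
into the algebraically equivalent form
\[
\#(\pi\sigma) + 2\bigl(\#(\cV) + \#(\cU) - \#(\cV \vee \cU)\bigr) \leq n + \#(\pi) + \#(\sigma).
\]

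For the permutation input I would apply Inequality~\ref{Inequality: Mingo and Nica inequality} to the pair $\pi, \sigma$, namely
\[
\#(\pi) + \#(\sigma) + \#(\pi\sigma) \leq n + 2 \#(0_\pi \vee 0_\sigma),
\]
which yields $\#(\pi\sigma) \leq n + 2 \#(0_\pi \vee 0_\sigma) - \#(\pi) - \#(\sigma)$. The partition input is the elementary lemma that whenever $\cV' \leq \cV$ and $\cU' \leq \cU$ in $\cP(n)$ one has
\[
\#(\cV' \vee \cU') - \#(\cV \vee \cU) \leq \bigl(\#(\cV') - \#(\cV)\bigr) + \bigl(\#(\cU') - \#(\cU)\bigr),
\]
which I would establish by induction on the number of elementary block-splits needed to refine $\cV$ to $\cV'$ and $\cU$ to $\cU'$: a single split of a block of $\cV$ into two pieces increases $\#(\cV)$ by one and increases $\#(\cV \vee \cU)$ by at most one, since the two pieces are either joined by a chain of blocks of $\cU$ (no change) or split off into exactly two components (change of $+1$).

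Applying this lemma to $\cV' = 0_\pi$ and $\cU' = 0_\sigma$ (legitimate because $\pi \leq \cV$ and $\sigma \leq \cU$) gives
\[
\#(0_\pi \vee 0_\sigma) \leq \#(\cV \vee \cU) + \bigl(\#(\pi) - \#(\cV)\bigr) + \bigl(\#(\sigma) - \#(\cU)\bigr),
\]
and feeding this into the Hurwitz bound produces exactly the equivalent reformulation above. The main obstacle is the monotonicity lemma: although intuitive, the ``at most one new component per split'' step requires care, because the second partition $\cU$ can connect the two new pieces through long chains, and one has to verify that no elementary move can increase $\#(\cV \vee \cU)$ by more than $\#(\cV')-\#(\cV)$ overall.
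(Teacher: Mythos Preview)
The paper does not actually prove this proposition; it simply records it as \cite[Lemma~2.2]{MSS}. Your argument is correct and is essentially the standard one. Two small points worth noting: first, to get $\#(\pi)+\#(\sigma)+\#(\pi\sigma)\le n+2\#(0_\pi\vee 0_\sigma)$ from Inequality~\ref{Inequality: Mingo and Nica inequality} as stated, substitute $\pi\mapsto\pi^{-1}$ and $\gamma\mapsto\sigma$ (using $\#(\pi^{-1})=\#(\pi)$ and $0_{\pi^{-1}}=0_\pi$); second, your ``at most one new component per elementary split'' step is indeed sound, since after splitting a block $B$ into $B_1\cup B_2$ every element of the old $\cV\vee\cU$-block containing $B$ remains connected (through the unchanged $\cU$-edges and the remaining $\cV'$-edges) to either $B_1$ or $B_2$, and each of $B_1,B_2$ is itself connected. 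This is exactly the semimodularity of the partition lattice, so one could also invoke that directly instead of the induction.
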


Given two partitioned permutations $(\pi,\mathcal{V}),(\mathcal{W},\sigma)\in \PS(n)$, their product is defined as,
\begin{align*} \lefteqn{
(\pi,\mathcal{V})\cdot(\mathcal{W},\sigma) }\\
&= \left\{ \begin{array}{lcc}
             (\mathcal{V} \vee \mathcal{W},\pi\sigma) &   \text{if }|(\mathcal{V} \vee \mathcal{W},\pi\sigma)| = |(\mathcal{V},\pi)|+|(\mathcal{W},\sigma)|, \\
             0 &  \mathrm{otherwise} \\
             \end{array}
   \right.
\end{align*}

\begin{proposition}[Equality in the triangle inequality] 
  \label{Proposition: Equality in triangle}
Let $(\cV, \pi)$ and $(\cW, \ab\pi^{-1} \gamma)$ be
partitioned permutations of $[n]$. The equation,
$$(\cV,\pi)\cdot (\cW,\pi^{-1}\gamma) = (\cV\vee\cW,\gamma)$$
is equivalent to the conjunction of the following four conditions:
\begin{enumerate}[label=(\roman*)]

\item
$|\pi| + |\pi^{-1}\gamma| + |\gamma| = 2 |\pi \vee \gamma|$

\item
$|\cV \vee \gamma| - |\pi \vee \gamma | = |\cV| - |\pi|$

\item
$|\cW \vee \pi| - |\pi \vee \gamma| = |\cW| - |\pi^{-1}\gamma|$

\item
$|\cV| - |\pi| + |\cW| - |\pi^{-1}\gamma| =
|\cV \vee \cW| - |\pi \vee \gamma|$

\end{enumerate}
In particular if we have equality in the triangle inequality:
\[
|(\cV, \pi)| + |(\cW, \pi^{-1}\gamma)| = |(\cV \vee \cW,
\gamma)|.
\]
Then $(i),(ii),(iii)$ and $(iv)$ hold.

\end{proposition}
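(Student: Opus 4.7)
The plan is to observe that, by the very definition of the product in $\PS(n)$, the equation $(\cV,\pi)\cdot(\cW,\pi^{-1}\gamma)=(\cV\vee\cW,\gamma)$ is equivalent to equality in the triangle inequality
\[
|(\cV\vee\cW,\gamma)|=|(\cV,\pi)|+|(\cW,\pi^{-1}\gamma)|,
\]
which, after expanding $|(\cU,\sigma)|=2|\cU|-|\sigma|$, becomes the scalar identity $D=0$ with
\[
D := 2(|\cV|+|\cW|-|\cV\vee\cW|)-(|\pi|+|\pi^{-1}\gamma|-|\gamma|).
\]
The strategy is to write $D$ as a non-negative combination of four elementary defects, each vanishing precisely when one of (i)--(iv) is in force.

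To that end I would introduce
\begin{align*}
b_1 &:= |\pi|+|\pi^{-1}\gamma|+|\gamma|-2|\pi\vee\gamma|, \\
b_2 &:= (|\cV|-|\pi|)-(|\cV\vee\gamma|-|\pi\vee\gamma|), \\
b_3 &:= (|\cW|-|\pi^{-1}\gamma|)-(|\cW\vee\pi|-|\pi\vee\gamma|), \\
b_4 &:= (|\cV\vee\gamma|+|\cW\vee\pi|)-(|\cV\vee\cW|+|\pi\vee\gamma|),
\end{align*}
and verify that each is non-negative. The bound $b_1\geq 0$ is the Mingo--Nica inequality \ref{Inequality: Mingo and Nica inequality}. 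For $b_2\geq 0$ (and, symmetrically, $b_3\geq 0$) I would realize $\cV$ from $\pi$ by a chain of $|\cV|-|\pi|$ two-block mergings and observe that each merging can lower $\#(\pi\vee\gamma)$ by at most one, so $|\cV\vee\gamma|-|\pi\vee\gamma|\leq|\cV|-|\pi|$. For $b_4\geq 0$ I would invoke submodularity of $|\cdot|$ on $\cP(n)$, $|\mathcal{A}|+|\mathcal{B}|\geq|\mathcal{A}\vee\mathcal{B}|+|\mathcal{A}\wedge\mathcal{B}|$, applied with $\mathcal{A}=\cV\vee\gamma$ and $\mathcal{B}=\cW\vee\pi$; the two identifications $\mathcal{A}\vee\mathcal{B}=\cV\vee\cW$ (using $\gamma\leq\pi\vee\pi^{-1}\gamma=\pi\vee\gamma\leq\cV\vee\cW$) and $\mathcal{A}\wedge\mathcal{B}\geq\pi\vee\gamma$ then yield the inequality.

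The heart of the argument is the algebraic identity
\[
D = b_1+2(b_2+b_3+b_4),
\]
verified by direct expansion. Since every $b_i\geq 0$, this identity reproves the triangle inequality and shows that $D=0$ if and only if $b_1=b_2=b_3=b_4=0$. Translating back: $b_1=0$ is (i), $b_2=0$ is (ii), $b_3=0$ is (iii); and a one-line rearrangement gives $b_2+b_3+b_4=(|\cV|-|\pi|)+(|\cW|-|\pi^{-1}\gamma|)-(|\cV\vee\cW|-|\pi\vee\gamma|)$, so condition (iv) is precisely $b_2+b_3+b_4=0$. Thus (i)--(iv) hold jointly if and only if $D=0$, equivalently if and only if the product equation holds; the closing ``in particular'' sentence is then the forward implication of this equivalence.

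The main obstacle is the submodular bound $b_4\geq 0$: this is the one step that is not purely cycle-counting, and it rests on the lattice-theoretic submodularity of $|\cdot|$ on $\cP(n)$ together with the two meet/join identifications for $\cV\vee\gamma$ and $\cW\vee\pi$. Everything else is either a restatement of the Mingo--Nica inequality, an elementary merging argument, or a direct expansion of the identity defining $D$.
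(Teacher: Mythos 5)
Your proof is correct, and it is worth noting that the paper does not actually supply a proof of this proposition: it is imported verbatim from \cite[Lemma 2.2]{MSS} and \cite[Prop.~5.6]{CMSS}, so there is no in-paper argument to match against. Your write-up is a self-contained re-derivation, and the key moves all check out. The algebraic identity $D=b_1+2(b_2+b_3+b_4)$ expands correctly (I verified the cancellation: the $|\pi\vee\gamma|$, $|\cV\vee\gamma|$ and $|\cW\vee\pi|$ terms all drop out). The four non-negativity claims are each sound: $b_1\ge 0$ is exactly the Mingo--Nica bound rewritten in length form; $b_2\ge 0$ and $b_3\ge 0$ follow from the chain-of-mergings argument (with the small but necessary remark that $\pi\vee\pi^{-1}\gamma=\pi\vee\gamma$, which you implicitly use to put $b_3$ in the same form as $b_2$); and $b_4\ge 0$ follows from submodularity of the rank function on $\cP(n)$ applied to $\cV\vee\gamma$ and $\cW\vee\pi$, using $\gamma\le\pi\vee\gamma\le\cV\vee\cW$ for the join identification and $\pi\vee\gamma\le(\cV\vee\gamma)\wedge(\cW\vee\pi)$ for the meet bound. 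The translation back to (i)--(iv) is also correct, and in particular your computed simplification of $b_2+b_3+b_4$ matches condition (iv) exactly, so the equivalence $D=0\Leftrightarrow(b_1=b_2=b_3=b_4=0)\Leftrightarrow$(i)--(iv) goes through. One small remark: $b_2\ge 0$ and $b_3\ge 0$ are themselves instances of the same submodularity inequality (apply it to $\cV$ and $\pi\vee\gamma$, respectively to $\cW$ and $\pi\vee\gamma$), so if you wished you could unify the merging argument with the $b_4$ step under a single lemma; as written, though, both routes are valid.
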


\begin{remark}
Let us recall from \cite[Notation 5.13]{CMSS} what each of
these four conditions means.  First, $(i)$ means $\pi\in S_{\NC}(\gamma)\cup S_{\NC}^{nc}(\gamma)$. Condition $(ii)$ describes the way that $\cV$ can be a union of cycles of $\pi$.

Since $\pi \leq \cV$, we must have that each block of $\cV$
is a union of cycles of $\pi$; however $(ii)$ says that if
$c_1$ and $c_2$ are cycles of $\pi$ and are in the same block of
$\cV$ then these cycles may not meet the same cycle of
$\gamma$, i.e. they lie in different blocks of $\pi \vee
\gamma$. Condition $(iii)$ says that the same holds for the
pair $(\cW, \pi^{-1}\gamma)$. Condition $(iv)$ says that the
sum of the number of joins of the cycles of $\pi$ made by
$\cV$ and the number of joins of the cycles of
$\pi^{-1}\gamma$ made by $\cW$ equals the number of joins of
the blocks of $\pi \vee \gamma$ made by $\cV \vee \cW$.
\end{remark}

For this paper we deal with the set of non-crossing partitioned permutations introduced in \cite{CMSS}. Let us remind the reader  of the definition of non-crossing for partitioned permutations.

\begin{definition}\label{Definition: Non-crossing partitioned permutations}\phantom{I}
\begin{enumerate}
\item For $(\mathcal{U},\gamma)\in \PS(n)$ fixed. We say that $(\mathcal{V},\pi)\in \PS(n)$ is $(\mathcal{U},\gamma)$-non crossing if,
$$(\mathcal{V},\pi)\cdot (0_{\pi^{-1}\gamma},\pi^{-1}\gamma) = (\mathcal{U},\gamma).$$
The set of $(\mathcal{U},\gamma)$-non crossing partitioned permutations will be denote by $\PS_{\NC}(\mathcal{U},\gamma)$.
\item We say that $(\mathcal{V},\pi)\in \PS(n)$ is a \textit{non-crossing partitioned permutation} in the $m_1,\dots,m_r$-annulus if $(\mathcal{V},\pi)$ is $(1_n,\gamma_{m_1,\dots,m_r})$-non crossing with $\gamma_{m_1,\dots,m_r}$ defined as before and $n=m_1+\cdots+m_r$. The set of non-crossing partitioned permutations in the $m_1,\dots,m_r$-annulus will be denote by $\PS_{\NC}(m_1,\dots,m_r)$.
\item We denote by $\PS_{\NC_2}(m_1,\dots,m_r)$ to all the non-crossing partitioned permutations in $\PS_{\NC}(m_1,\dots,m_r)$ such that $\sigma$ is a pairing and by $\PS_{\NC_{2,1}}(m_1,\dots,m_r)$ to all the ones where any cycle of $\sigma$ has size at most $2$.
\end{enumerate}
\end{definition}

\begin{remark}\label{Remark: Equality of four conditions}
Now let us consider what happens in Proposition
\ref{Proposition: Equality in triangle} when $\cW =
0_{\pi^{-1}\gamma}$ and $\cV \vee \gamma = 1_n$. First $(i)$
does not involve the partitions $\cV$ and $\cW$, so nothing changes
here. Next, $(ii)$ becomes $|1_n| - |\pi \vee \gamma| =
|\cV| - |\pi|$. Finally, $(iii)$ and $(iv)$ are tautologies
when $\cW = 0_{\pi^{-1}\gamma}$.
\end{remark}

\begin{remark}\label{Remark and definition: The graph Gamma(V,pi)}
Suppose $(\cV, \pi)$ is a partitioned permutation in $\PS_{\NC}(\ab m_1,\dots,m_r)$ and let $\gamma=\gamma_{m_1,\dots,m_r}$.  Let us create an unoriented bipartite graph, $\Gamma(\cV, \pi, \gamma)$. We let the edge set $E = \{ e_c \mid c$ is a cycle of $\pi\}$ be the cycles of $\pi$ and the vertex set $V$ be the union of the blocks of $\cV$ (the \textit{black} vertices) and the blocks of $\pi \vee \gamma$ (the \textit{white} vertices). Each edge $e_c$ connects the block of $\cV$ containing $c$ to the block of $\pi \vee \gamma$ containing $c$.
\end{remark}

Thanks to Remark \ref{Remark: Equality of four conditions} we get the following result which characterizes the non-crossing partitioned permutations via its graph $\Gamma(\cV,\pi,\gamma)$.

\begin{lemma}\label{Lemma: Tree iff Non-crossing partitioned permutation}
Let $(\cV,\pi)\in \PS(m)$ be a partitioned permutation such that $\pi\in S_{\NC}(m_1,\dots,m_r)\cup S_{\NC}^{nc}(m_1,\dots,m_r)$. Then,
$$(\cV,\pi)\in \PS_{\NC}(m_1,\dots,m_r)$$ 
if and only if the graph $\Gamma(\cV, \pi,\gamma)$ is a tree.
\end{lemma}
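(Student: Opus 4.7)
The plan is to unwind the definition of a non-crossing partitioned permutation through Proposition \ref{Proposition: Equality in triangle} and Remark \ref{Remark: Equality of four conditions}, and then recognize the resulting conditions as exactly the statement that the bipartite graph $\Gamma(\cV,\pi,\gamma)$ is a tree.

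First I would translate the definition. By Definition \ref{Definition: Non-crossing partitioned permutations}, $(\cV,\pi)\in\PS_{\NC}(m_1,\dots,m_r)$ means $(\cV,\pi)\cdot(0_{\pi^{-1}\gamma},\pi^{-1}\gamma) = (1_n,\gamma)$, which requires simultaneously equality in the triangle inequality and $\cV\vee 0_{\pi^{-1}\gamma} = 1_n$. Applying Proposition \ref{Proposition: Equality in triangle} with $\cW = 0_{\pi^{-1}\gamma}$: condition $(i)$ is exactly the hypothesis $\pi\in S_{\NC}(\gamma)\cup S_{\NC}^{nc}(\gamma)$; by Remark \ref{Remark: Equality of four conditions}, conditions $(iii)$ and $(iv)$ become tautologies; and condition $(ii)$ simplifies (once $\cV\vee\gamma = 1_n$ is imposed) to $|1_n| - |\pi\vee\gamma| = |\cV|-|\pi|$. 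I would also need the small observation that $\cV\vee 0_{\pi^{-1}\gamma} = 1_n$ is equivalent to $\cV\vee\gamma = 1_n$, which follows from $\pi\leq\cV$ together with $\pi\cdot\pi^{-1}\gamma = \gamma$: a single $\gamma$-step from $i$ can be written as a $\pi^{-1}\gamma$-step followed by a $\pi$-step, and the latter keeps us inside the same block of $\cV$.

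Next I would translate into graph theory. The graph $\Gamma(\cV,\pi,\gamma)$ has $\#(\cV)+\#(\pi\vee\gamma)$ vertices and $\#(\pi)$ edges, so the identity $|1_n| - |\pi\vee\gamma| = |\cV|-|\pi|$ is equivalent, via $|\rho| = n-\#(\rho)$, to $\#(\cV)+\#(\pi\vee\gamma) = \#(\pi)+1$, which is the Euler identity $|V|-|E| = 1$. The connected components of $\Gamma$ are in bijection with the blocks of the join $\cV\vee(\pi\vee\gamma) = \cV\vee\gamma$: two elements of $[n]$ lie in the same component of $\Gamma$ iff they can be linked by an alternating sequence passing through $\cV$-blocks and $(\pi\vee\gamma)$-blocks, which is exactly the description of the join. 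Since $\pi\leq\cV$, every black vertex of $\Gamma$ has degree $\geq 1$, and similarly every white vertex carries at least one edge, so there are no isolated vertices to corrupt the count. Hence $\Gamma$ is connected iff $\cV\vee\gamma=1_n$, and combining with the Euler condition one obtains that $\Gamma$ is a tree iff both $\cV\vee\gamma=1_n$ and condition $(ii)$ hold, which is exactly $(\cV,\pi)\in\PS_{\NC}(m_1,\dots,m_r)$.

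The main technical point is the identification of connected components of the bipartite graph $\Gamma$ with the blocks of the join $\cV\vee\gamma$; the remainder is routine bookkeeping converting between length functions and block counts and invoking the cited Proposition and Remark.
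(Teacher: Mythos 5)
Your proof is correct and follows essentially the same route as the paper's: both reduce via Proposition \ref{Proposition: Equality in triangle} and Remark \ref{Remark: Equality of four conditions} to the pair of conditions ``$\Gamma$ is connected'' $\Leftrightarrow$ $\cV\vee\gamma=1_n$ and ``$|V|-|E|=1$'' $\Leftrightarrow$ condition $(ii)$, using $\#(\rho)=n-|\rho|$ for the bookkeeping. Your version is in fact slightly more careful, spelling out the identification of connected components of $\Gamma$ with blocks of $\cV\vee\gamma$ and the equivalence $\cV\vee 0_{\pi^{-1}\gamma}=1_n\Leftrightarrow\cV\vee\gamma=1_n$, which the paper takes for granted.
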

\begin{proof}
Observe that $\cV\vee\gamma=1_n$ if and only if $\Gamma(\cV,\pi,\gamma)$ is connected. On the other hand the condition $(ii)$ of Proposition \ref{Proposition: Equality in triangle} is already satisfied as $\pi\in S_{\NC}(m_1,\dots,m_r)\cup S_{\NC}^{nc}(m_1,\dots,m_r)$. So, thanks to Proposition \ref{Proposition: Equality in triangle} we just have to check that $|V|-|E|=1$ if and only if $(ii)$ of Proposition \ref{Proposition: Equality in triangle} holds. This follows immediately as,
$$|V|-|E|=\#(\pi\vee\gamma)+\#(\cV)-\#(\pi)=n-|\pi\vee\gamma|-|\cV|+|\pi|.$$
\end{proof}

Let us finish this section with the introduction of the higher order free cumulants defined in \cite{CMSS}.

\begin{definition}\label{Definition: Higher order free cumulants}
For each $r\in\mathbb{N}$ let $(\alpha_{m_1,\dots,m_r})_{m_1,\dots,m_r=1}^{\infty}$ be a sequence indexed by $r$ subscripts. We call this a moment sequence of order $r$. Given the moment sequences of orders at most $r$:
$$(\alpha_m)_{m=1}^{\infty},(\alpha_{m_1,m_2})_{m_1,m_2=1}^{\infty},\dots, (\alpha_{m_1,\dots,m_r})_{m_1,\dots,m_r=1}^{\infty}$$ 
we define the free cumulants sequences 
$$(\K_m)_{m=1}^{\infty},(\K_{m_1,m_2})_{m_1,m_2=1}^{\infty},\dots, (\K_{m_1,\dots,m_r})_{m_1,\dots,m_r=1}^{\infty}$$ 
associated to these moment sequences with the recursive equations:
\begin{equation}\label{Equation: Higher order free cumulants}
\alpha_{m_1,\dots,m_t}=\sum_{(\mathcal{U},\pi)\in \PS_{NC}(m_1,\dots,m_t)}\K_{(\mathcal{U},\pi)}
\end{equation}
for $t=1,\dots,r$. Where $\K_{(\mathcal{U},\pi)}$ is called the \textit{multiplicative extension } of the cumulants sequence and it is defined as follows:
$$\K_{(\mathcal{U},\pi)}=\prod_{\substack{B\text{ block of }\mathcal{U} \\ V_1,\dots,V_i \text{ cycles of } \pi \text{ with }V_i\subset B}}\K_{|V_1|,\dots,|V_i|}$$
The numbers $\K_{m_1,\dots,m_r}$ are called the \textit{free cumulants of order} $r$ and the sequence $(\K_{m_1,\dots,m_r})_{m_1,\dots,m_r=1}^{\infty}$ is called the \textit{free cumulant sequence of order} $r$.
\end{definition}

When $r=1$ we recover the free cumulants as defined in Equation \ref{Equation: First order free cumulants}. There is also a multivariate version of the higher order free cumulants analogous to its first order case defined in Equation \ref{Equation: First order free cumulants multivariate}, however, for the purpose of this work we will not go in further. We rather refer to \cite[\S 7]{CMSS}.

\section{Graph theory}\label{Section: Graph theory}

The main tools to prove our results rely on graph theory. So for this section we will introduce all the concepts required throughout the paper. By a unoriented graph or simply graph we mean a pair $(V,E)$ where $V$ is a non-empty set (the vertices) and $E$ is a subset of $V\times V$ where if $(a,b)\in E$ then we say that the edge $(a,b)$  connects the vertices $a$ and $b$. In a graph we do not put an orientation on the edges, so we will write $\{a,b\}$ instead of $(a,b)$. When the pair $\{a,b\}$ has an order then we call $(V,E)$ an \textit{oriented graph}, for which we now adopt the notation $(a,b)$. In this case $(a,b)\in E$ does not imply $(b,a)\in E$. In our oriented graphs we will allow multiple edges and loops, so, the same pair of vertices might have more than one edge going from one vertex to the other and vice versa. Thus given two vertices $a,b$ we may have more than one edge of the form $(a,b)$. To distinguish in between two edges joining the same pair of vertices we consider a label $\delta: E\rightarrow \mathbb{N}$ which is an injective function that assigns a number to each edges. The edge $e_u$ denotes the edge whose label is $u$. The image of this label $\delta(E)$ will be denoted by $E_{\delta}$. When there are no multiple edges we call the graph \textit{simple}.

\begin{definition}\label{Definition: Quotient graph}
Let $G=(V,E,\delta)$ be a simple oriented graph and let $\pi\in \cP(V)$ be a partition of $V$. The \textit{quotient graph} $G^{\pi}=(V^{\pi},E^{\pi},\delta^\pi)$ is the labelled oriented graph obtained after identifying vertices in the same block. For $u\in V$, $[u]_{\pi}$ denotes the block of $\pi$ that contains $u$, the vertices of $G^{\pi}$ consist of the blocks of $\pi$. If $e=(u,v)$ is an edge of $G$ then $([u]_{\pi},[v]_{\pi})$ will be the corresponding edge of $G^{\pi}$ which we will denote by $e^{\pi}$. The label of the quotient graph is defined by $\delta^{\pi}(e^{\pi})\vcentcolon=\delta(e)$. In $G^{\pi}$ we may have multiple edges; if $e_1=([u_1]_\pi,[v_1]_\pi)$ and $e_2=([u_2]_\pi,[v_2]_\pi)$ are such that $[u_1]_\pi=[u_2]_\pi$ and $[v_1]_\pi=[v_2]_\pi$ or $[u_1]_\pi=[v_2]_\pi$ and $[u_2]_\pi=[v_1]_\pi$ then we say that $e_1$ and $e_2$ connect the same pair of vertices. The partition $\overline{\pi} \in \cP(E_\delta)$ is the partition where $i$ and $j$ are in the same block if and only if $e_i^{\pi}$ and $e_j^{\pi}$ connect the same pair of vertices. The labels $\delta^\pi$ and $\delta$ are essentially the same so to keep it simple we just write $G^{\pi}=(V^{\pi},E^{\pi},\delta)$. See Figure \ref{Figure: Big example} part A) for an example of a quotient graph.
\end{definition}

\begin{notation}\label{Notation: Elementarization}
Let $G=(V,E)$ be an oriented graph. The \textit{elementarization} of $G$ denoted by $\overline{G}$ is the simple unoriented graph obtained after identifying edges of $G$ connecting the same pair of vertices without considering the orientation of the edges. The vertices of $\overline{G}$ remains the same while the edges of $G$ consist of equivalence classes of $E$ where if $e\in E$ connects $u$ and $v$ (regardless of $e=(u,v)$ or $e=(v,u)$) then $\overline{e}$ is the edge of $\overline{G}$ that consists of all edges connecting $u$ and $v$. We will write $\overline{G}=(V,\overline{E})$. See Figure \ref{Figure: Big example} part B) for an example of the elementarization of a graph.
\end{notation}

\begin{remark}
If $G=(V,E,\delta)$ is a simple oriented graph and $\pi\in \cP(V)$ then $\overline{G^{\pi}}=(V^{\pi},\overline{E^{\pi}},\delta)$ is a graph whose vertices can be identified by the blocks of $\pi$ and whose edges can be identified by the blocks of $\overline{\pi}$. More precisely, $B$ is a block of $\overline{\pi}$ if and only if $\{e_b:b\in B\}$ is an edge of $\overline{G^{\pi}}$.
\end{remark}

\subsection{Quotient graphs by identification of the edges} Observe that a quotient graph is obtained after identifying vertices, we would like to do something similar via identification of the edges. Since we are working with oriented graphs then the identification of the edges must indicate in which orientation we will identify two edges. This motivates the following definition.

\begin{definition}\label{Definition: Identification of edges}
Let $G=(V,E,\delta)$ be an oriented labelled graph and let $\overline{G}=(V,\overline{E})$ be its elementarization. Let $\overline{e_1},\dots,\overline{e_n}$ be a collection of distinct edges of $\overline{E}$. We write $\overline{e_i}=\{u_i,v_i\}$ for $i=1,\dots,n$. 
\begin{enumerate}
\item An \textit{identification of the edges} $\overline{e_1},\dots,\overline{e_n}$ is an equivalence relation $\sim_{\overline{e_1},\dots,\overline{e_n}}$ on $A=\{u_1,\dots,u_n,v_1,\dots,v_n\}$ of the following form. If some of the edges $\overline{e_i}$ is a loop then $a \sim_{\overline{e_1},\dots,\overline{e_n}}b$ for any $a,b\in A$. Otherwise we consider $A_1$ and $A_2$ to be ordered sets of the form,
$$A_1=\{x_1,\dots,x_n\},$$
$$A_2=\{y_1,\dots,y_n\},$$
where for each $i$ either $x_i=u_i$ and $y_i=v_i$ or $x_i=v_i$ and $y_i=u_i$. We then define the equivalence relation $\sim_{\overline{e_1},\dots,\overline{e_n}}$ be given by $a\sim_{\overline{e_1},\dots,\overline{e_n}} b$ for any $a,b\in A_1$ and $a\sim_{\overline{e_1},\dots,\overline{e_n}} b$ for any $a,b\in A_2$ whenever $A_1\cap A_2=\emptyset$, or, $a\sim_{\overline{e_1},\dots,\overline{e_n}} b$ for any $a,b\in A$ whenever $A_1\cap A_2\neq \emptyset$.
\item For $\sim_{\overline{e_1},\dots,\overline{e_n}}$ fixed and $\overline{e_{i_1}},\cdots,\overline{e_{i_k}}$ a subset of $\overline{e_1},\dots,\overline{e_n}$ we define the restriction of $\sim_{\overline{e_1},\dots,\overline{e_n}}$ to $\overline{e_{i_1}},\cdots,\overline{e_{i_k}}$ as the equivalence relation on $D=\{u_{i_1},\dots,u_{i_k},v_{i_1},\dots,v_{i_k}\}$ given by the restriction of $\sim_{\overline{e_1},\dots,\overline{e_n}}$ to $D$.
\item An oriented partition of the edges is a pair $(\tau,\sim)$ where $\tau\in \cP(\overline{E})$ is a partition of the edges and $\sim=\{\sim_B : B\in\tau\}$ is a collection of equivalence relations for each block of $\tau$. Here if $B=\{\overline{e_1},\dots,\overline{e_n}\}$ is a block of $\tau$ then $\sim_B \vcentcolon = \sim_{\overline{e_1},\dots,\overline{e_n}}$ is an equivalence relation as before.
\item For two oriented partition of the edges $(\tau,\sim)$ and $(\tau^\prime,\sim^\prime)$ we say that $(\tau^\prime,\sim^\prime)\leq (\tau,\sim)$ if $\tau^\prime \leq \tau$ and for any block $B^\prime$ of $\tau^\prime$ that is contained in the block $B$ of $\tau$ we have that $\sim^\prime_{B^\prime}$ is the restriction of $\sim_B$ to $B^{\prime}$.
\end{enumerate} 
When a single edge $\overline{e_1}=(u,v)$ is a block of $\tau$ then $\sim_{\overline{e_1}}$ is a trivial relation in the sense that $u\sim_{\overline{e_1}} v$ if and only if $u=v$. An example of an oriented partition of the edges can be seen in Example \ref{Example: Oriented partition of the edges}.
\end{definition}

\begin{remark}
The objective of Definition \ref{Definition: Identification of edges} is to identify the edges $e_1,\dots,e_n$ of the graph $G$. For this reason we do not consider edges that are already connecting the same of vertices. The equivalence relation $\sim_{\overline{e_1},\dots,\overline{e_n}}$ will tell us whether $e_i=(u_i,v_i)$ and $e_j=(u_j,v_j)$ will be identified by merging $u_i$ with $u_j$ and $v_i$ with $v_j$ or the other way around.
\end{remark}

Given an oriented partition of the edges $(\tau,\sim)$ we can define a quotient graph where edges in the same block of $\tau$ are identified according to the orientation given by $\sim$. The formal definition is given right away.

\begin{definition}\label{Definition: First definition of Quotient along edges}
Let $G=(V,E,\delta)$ be a labelled oriented graph and let $\overline{G}=(V,\overline{E})$ be its elementarization. Let $(\tau,\sim)$ be an oriented partition of the edges. We let $\underset{\sim}{\tau}\in \cP(V)$ be the partition given by $u$ and $v$ are in the same block of $\underset{\sim}{\tau}$ if $u \sim_{B} v$ for some $B\in \tau$. The quotient graph $G^{\underset{\sim}{\tau}}$ will be called a \textit{a quotient graph along edges}. See Example \ref{Example: Oriented partition of the edges} for a quotient graph along the edges.
\end{definition}

\begin{example}\label{Example: Oriented partition of the edges}
Let us consider the labelled oriented graph $G=(V,E,\ab \delta)$ with vertices,
$$V=\{v_1,v_2,v_3,v_4,v_5,v_6\},$$
and edges,
$$E=\{e_1,e_2,e_3,e_4,e_5,e_6,e_7,e_8\},$$
given by $e_1=(v_3,v_1),e_2=(v_3,v_2),e_3=(v_3,v_2),e_4=(v_2,v_3),e_5=(v_3,v_4),e_6=(v_4,v_4),e_7=(v_5,v_6)$ and $e_8=(v_6,v_5)$, see Figure \ref{Figure: Oriented graph} part a). The elementarization of this graph has edges,
$\overline{e_1}=\{e_1\}, \overline{e_2}=\{e_2,e_4,e_3\}, \overline{e_5}=\{e_6\},\overline{e_6}=\{e_6\}$ and $\overline{e_7}=\{e_7,e_8\}$. Let us consider the partition $\tau$ of the edges $\overline{E}$ with blocks,
$$\{\overline{e_1}\},\{\overline{e_2},\overline{e_5},\overline{e_7}\},\{\overline{e_6}\}.$$
Let us consider two equivalence relations $\sim_1$ and $\sim_2$ as follows. Let us write, $\overline{e_2}=\{v_2,v_3\},\overline{e_5}=\{v_3,v_4\}$ and $\overline{e_7}=\{v_5,v_6\}$. Then for the block $\{\overline{e_2},\overline{e_5},\overline{e_7}\}$ of $\tau$ we let $\sim_1$ to be given by the sets,
$$A=\{v_2,v_3,v_5\}\text{ and }B=\{v_3,v_4,v_6\}.$$
In this case according to the definition we have that $a \sim_1 b$ for any $a,b\in \{v_2,v_3,v_4,v_5,v_6\}$ and therefore $\underset{\sim_1}{\tau}=\{v_1\},\{v_2,v_3,v_4,v_5,v_6\}$. The quotient graph along the edges of this oriented partition of the edges can be seen in Figure \ref{Figure: Oriented graph} part b). On the other hand, for $\sim_2$ we let,
$$A=\{v_2,v_4,v_5\}\text{ and }B=\{v_3,v_3,v_6\}.$$
Thus $a\sim_1 b$ for any $a,b\in A$ and $a\sim_1 b$ for any $a,b\in B$ and therefore $\underset{\sim_2}{\tau}=\{v_1\},\{v_2,v_4,v_5\},\{v_3,v_6\}$. The quotient graph along the edges of this oriented partition of the edges can be seen in Figure \ref{Figure: Oriented graph} part b).
\end{example}

\begin{figure}
    \centering
    \includegraphics[width=0.9\textwidth]{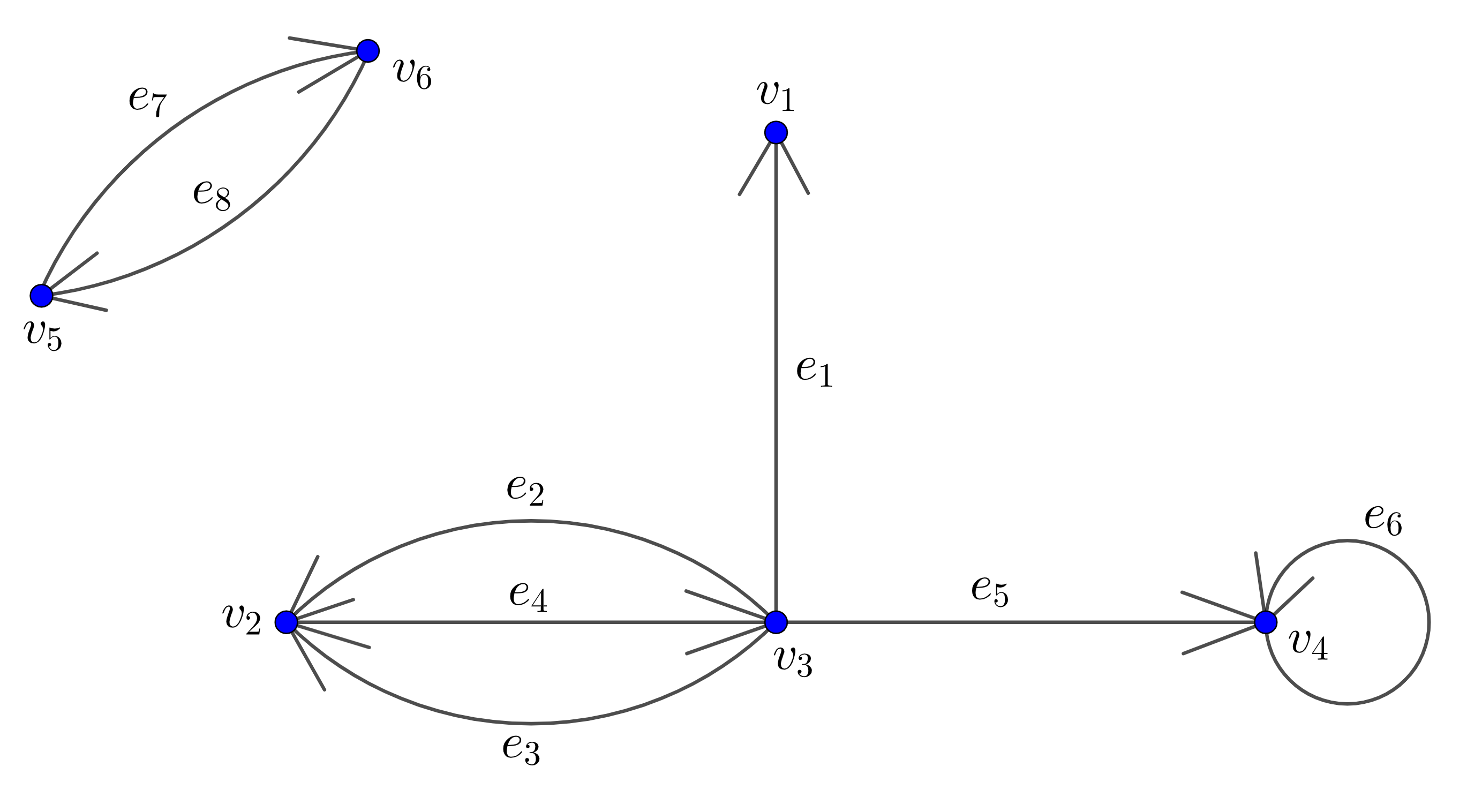}
    \text{a) The oriented graph of Example \ref{Example: Oriented partition of the edges}.}\\
    \includegraphics[width=0.45\textwidth]{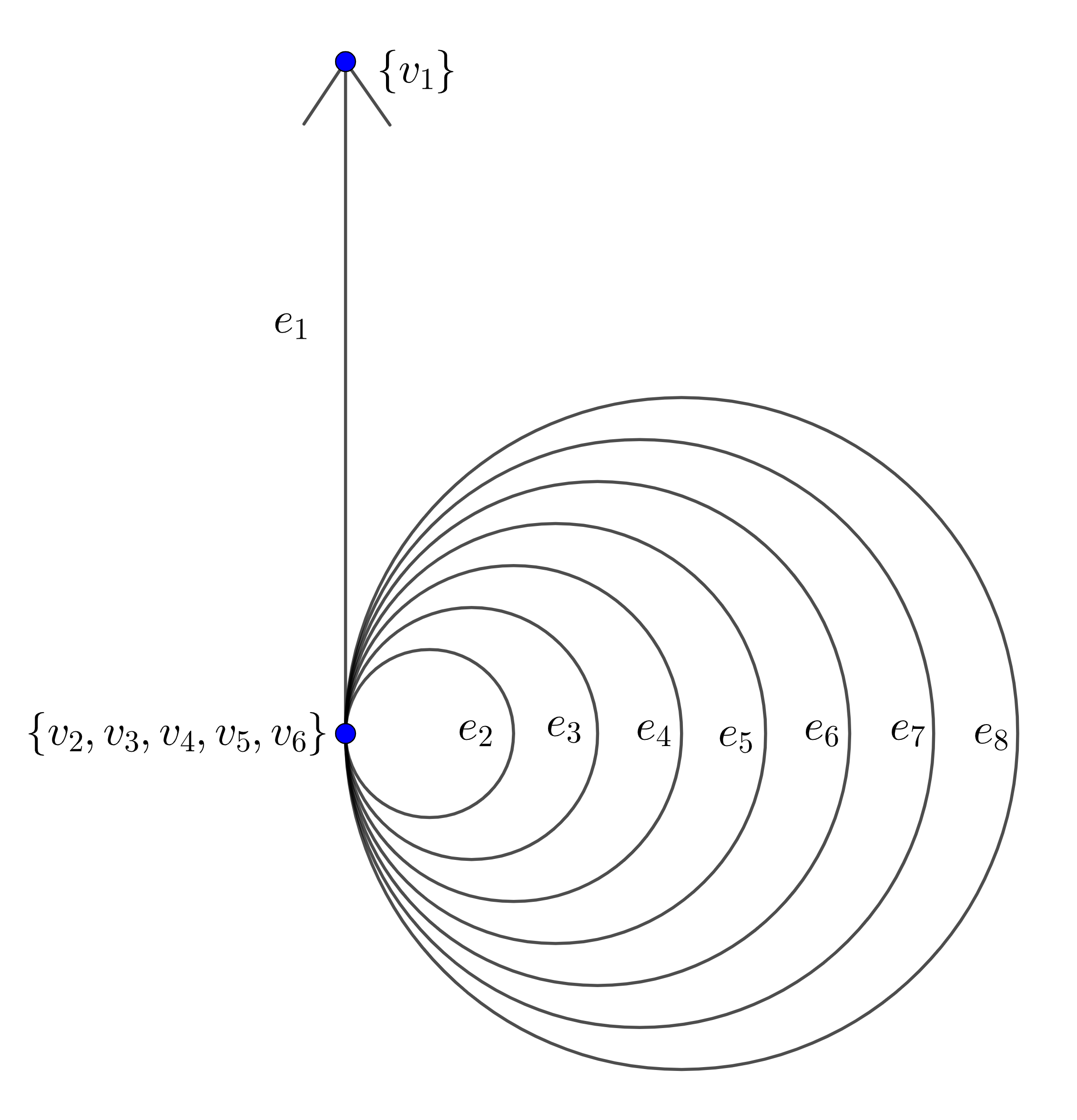}
    \includegraphics[width=0.45\textwidth]{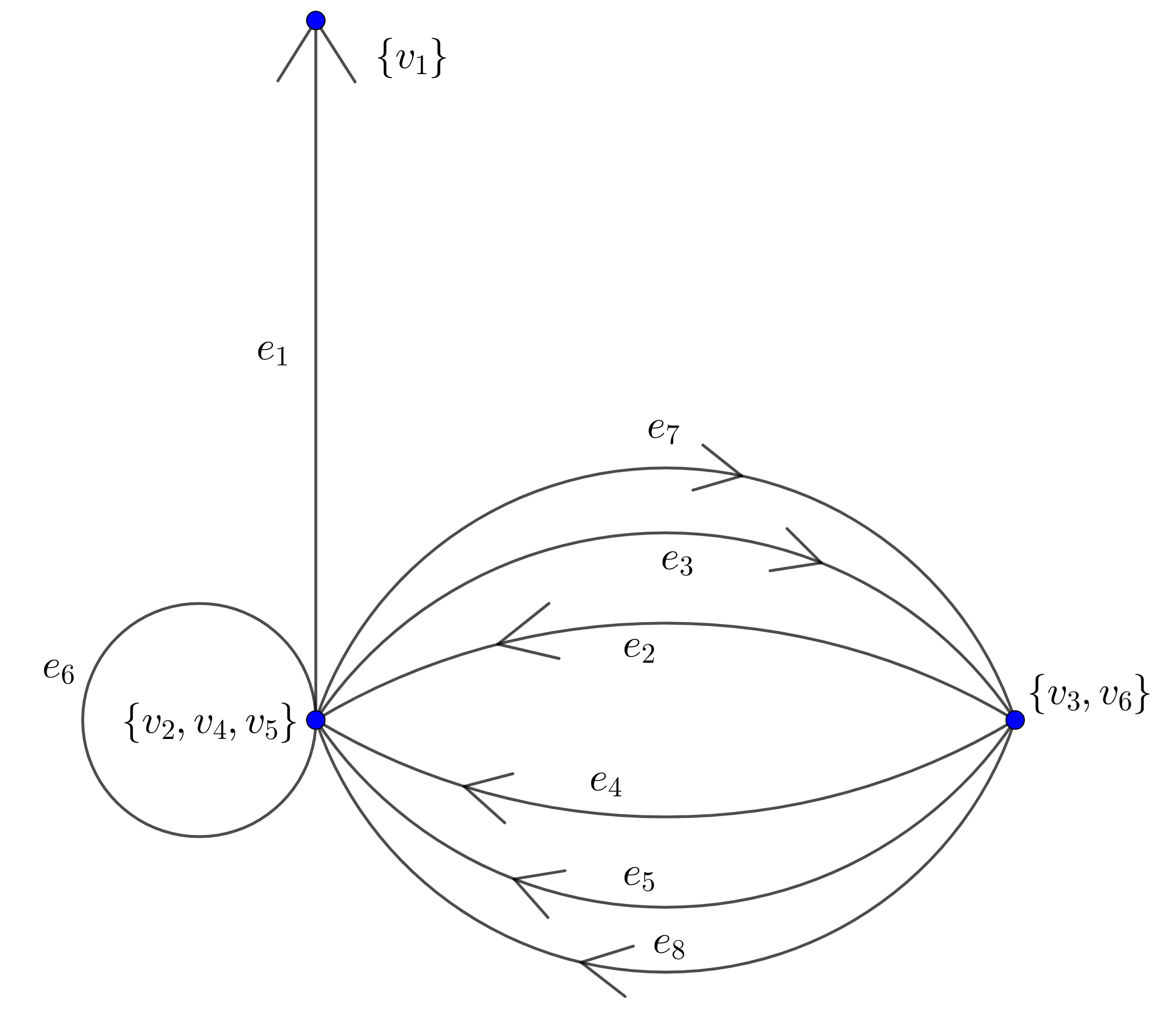}
    \text{b) Left the quotient graph along the edges corresponding to $(\tau,\sim_1)$.} 
    \text{Right The quotient graph along the edges corresponding to $(\tau,\sim_2)$.}\\
    \caption{The oriented graphs of Example \ref{Example: Oriented partition of the edges}.}
    \label{Figure: Oriented graph}
\end{figure}

\begin{definition}\label{Definition: Graphs with partitions on the edges without multiplicity}
Let $G=(V,E,\delta)$ be a labelled oriented graph and let $\overline{G}=(V,\overline{E})$ be its elementarization. Let $\tau\in \cP(\overline{E})$ be a partition of the edges. We define the graph $G(\tau)$ to be the bipartite graph with,
\begin{enumerate}
    \item set of white vertices given by the connected components of $G$,
    \item set of black vertices given by blocks of $\tau$, and,
    \item set of edges indexed by $\overline{E}$, where the edge $\overline{e}$ connects a white vertex $G_1$ with a black vertex $B$ if and only if $\overline{e}\in B$ and $\overline{e}$ is an edge of $\overline{G_1}$. An example can be seen in Figure \ref{Figure: The graph G(tau)}
\end{enumerate}
\end{definition}

\begin{figure}
    \centering
    \includegraphics[width=\textwidth]{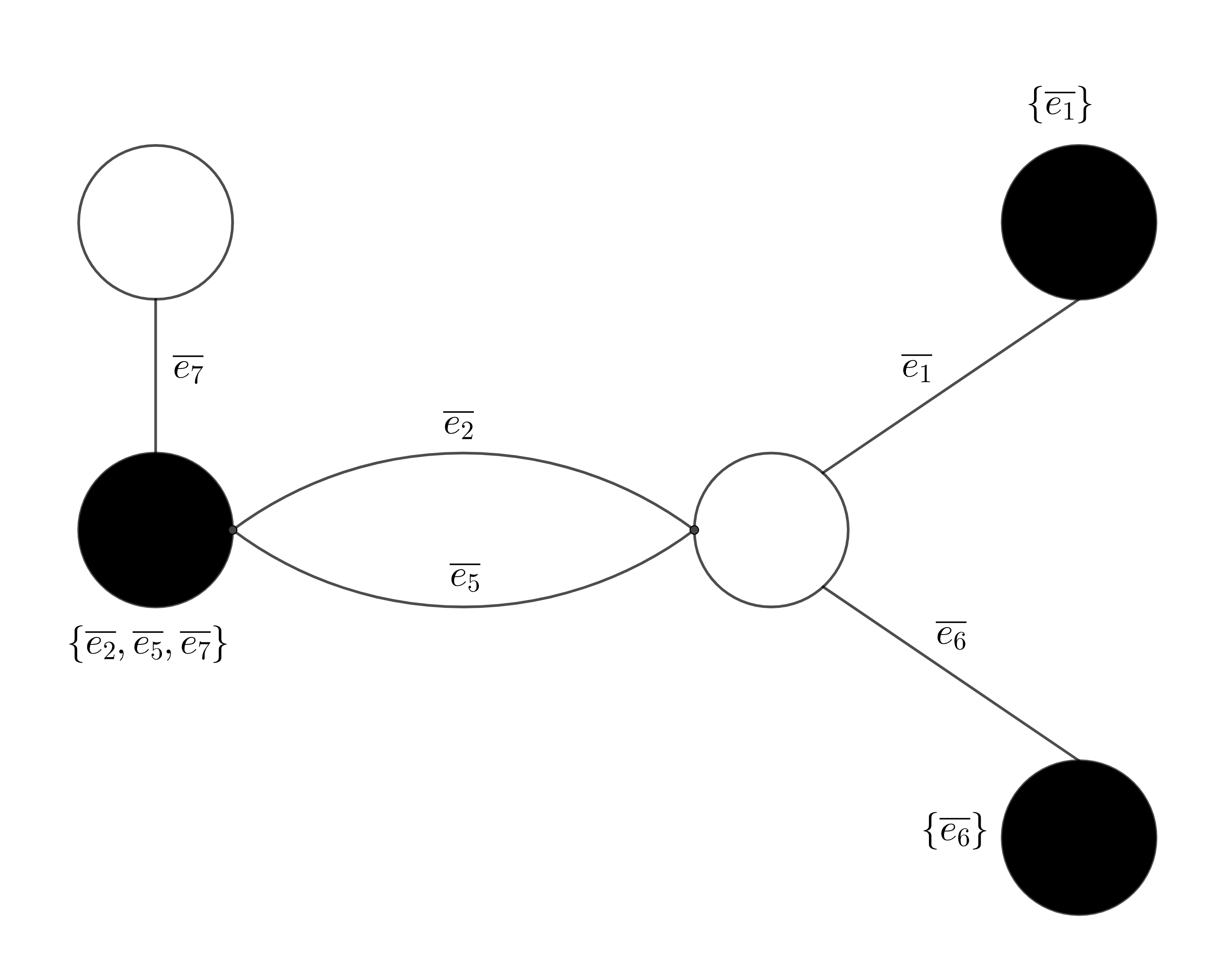}
    \caption{The graph $G(\tau)$ corresponding to $G=(V,E)$ and $\tau$ given as in Example \ref{Example: Oriented partition of the edges}.}
    \label{Figure: The graph G(tau)}
\end{figure}

\begin{lemma}\label{Lemma: Underline partition respects order}
Let $G=(V,E,\delta)$ be a labelled oriented graph and let $\overline{G}=(V,\overline{E})$ be its elementarization. Let $(\tau_1,\sim^1)$ and $(\tau_2,\sim^2)$ be two oriented partition of the edges and such that $(\tau_1,\sim^1)\leq (\tau_2,\sim^2)$. Then
$$\underset{\sim^1}{\tau_1}\leq \underset{\sim^2}{\tau_2}.$$
\end{lemma}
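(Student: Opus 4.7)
The plan is to unwind the definitions and then chase the equivalence relations through the hypothesis. Recall that $\underset{\sim}{\tau}$ is the partition on $V$ generated by the collection of equivalences $\{\sim_B : B \in \tau\}$, i.e.\ its blocks are the equivalence classes of the transitive closure of the relation ``$u \sim_B v$ for some $B \in \tau$.'' So to prove $\underset{\sim^1}{\tau_1} \leq \underset{\sim^2}{\tau_2}$, it suffices to show that every \emph{generating} equivalence of $\underset{\sim^1}{\tau_1}$ is an equivalence in $\underset{\sim^2}{\tau_2}$; transitivity will then take care of the rest.

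First I would take arbitrary $u, v \in V$ with $u \sim^1_{B_1} v$ for some block $B_1 \in \tau_1$. By the hypothesis $\tau_1 \leq \tau_2$, there is a (unique) block $B_2 \in \tau_2$ with $B_1 \subseteq B_2$. By the definition of $(\tau_1,\sim^1) \leq (\tau_2,\sim^2)$, the relation $\sim^1_{B_1}$ is literally the restriction of $\sim^2_{B_2}$ to the endpoints of the edges in $B_1$. Since $u$ and $v$ are among those endpoints (because $u \sim^1_{B_1} v$ forces $u,v$ to lie in the domain of $\sim^1_{B_1}$), the restriction is equality there, so $u \sim^2_{B_2} v$. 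By definition of $\underset{\sim^2}{\tau_2}$, this places $u$ and $v$ in the same block of $\underset{\sim^2}{\tau_2}$.

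Second, I would pass from the generating relation to the partition itself. If $u$ and $v$ are in a common block of $\underset{\sim^1}{\tau_1}$, then there exist $u = w_0, w_1, \dots, w_k = v$ and blocks $B^{(i)} \in \tau_1$ with $w_{i-1} \sim^1_{B^{(i)}} w_i$ for each $i$. Applying the previous step to each consecutive pair produces blocks $C^{(i)} \in \tau_2$ with $w_{i-1} \sim^2_{C^{(i)}} w_i$, so each consecutive pair lies in a common block of $\underset{\sim^2}{\tau_2}$. Since being in the same block is transitive, $u$ and $v$ lie in the same block of $\underset{\sim^2}{\tau_2}$, yielding $\underset{\sim^1}{\tau_1} \leq \underset{\sim^2}{\tau_2}$.

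There is no real obstacle here: the only step that requires care is verifying that the endpoints $u,v$ of the first paragraph actually lie in the restriction domain, which follows automatically because $\sim^1_{B_1}$ is only defined on those endpoints. The rest is bookkeeping on equivalence relations.
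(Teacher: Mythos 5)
Your proof is correct and follows essentially the same route as the paper's: take a chain of generating relations $u_{i-1}\sim^1_{B^{(i)}}u_i$, pass each to the containing block of $\tau_2$ via the restriction property, and conclude by transitivity. The only cosmetic difference is that you first isolate the single-step case and then invoke transitivity explicitly, whereas the paper handles the chain in one pass; the content is identical.
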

\begin{proof}
Let $u,v\in V$ be such that are in the same block of $\underset{\sim^1}{\tau_1}$. Then there exist $u_2,\dots,u_n \in V$ and blocks $B_1,\dots,B_n$ of $\tau_1$ such that,
$$u =u_1 \sim^1_{B_1} u_2 \sim^1_{B_2} u_3 \cdots u_n \sim^1_{B_n} u_{n+1}=v.$$
As $u_1 \sim^1_{B_1} u_{2}$ there exist edges $\overline{e_1}=\{u_1,v_1\}$ and $\overline{e_2}=\{u_2,v_2\}$ of $\overline{E}$
such that $\overline{e_1}$ and $\overline{e_2}$ are in the same block $B_1\in \tau_1$. Since $\tau_1\leq \tau_2$ then $B_1$ is contained in a block of $\tau_2$, say, $D_1$. Therefore $\overline{e_1}$ and $\overline{e_2}$ are also in the same block $D_1$ of $\tau_2$. Hence, as $(\tau_1,\sim^1)\leq (\tau_2,\sim^2)$ then $u_1 \sim^2_{D_1} u_2$. Similarly we prove that for any $i$, $u_i \sim^2_{D_i} u_{i+1}$ for some block $D_i$ of $\tau_2$. Thus $u=u_1$ and $v=u_{n+1}$ are in the same block of $\underset{\sim^2}{\tau_2}$.
\end{proof}

\begin{lemma}\label{Lemma: Equality when tree}
Let $G=(V,E,\delta)$ be a labelled oriented graph and let $\overline{G}=(V,\overline{E})$ be its elementarization. Let $(\tau,\sim)$ be an oriented partition of the edges such that if $\overline{e}$ is a loop of $\overline{G}$ then $\{\overline{e}\}$ is a singleton of $\tau$. If $G(\tau)$ is a tree then the following satisfy.
\begin{enumerate}[label=(\roman*)]
    \item $\#(\underset{\sim}{\tau})=|V|-2(\#(\text{connected components of }G)-1)$.
    \item For any $e_i,e_j\in E$ no loops of $G$; $\overline{e_i}$ and $\overline{e_j}$ are in the same block of $\tau$ if and only if $e_i^{\underset{\sim}{\tau}}$ and $e_j^{\underset{\sim}{\tau}}$ connect the same pair of vertices of $G^{\underset{\sim}{\tau}}$.
    \item No block of $\underset{\sim}{\tau}$ contains two vertices of $G$ in the same connected component of $G$.
\end{enumerate}
\end{lemma}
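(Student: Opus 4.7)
The overarching idea is to exploit the combinatorial rigidity of $G(\tau)$ being a tree. My first step is an observation that does much of the work: under the tree hypothesis, every non-singleton block $B$ of $\tau$ sits in the \emph{2-class case} of Definition \ref{Definition: Identification of edges}, and is incident to $|B|$ \emph{distinct} connected components of $G$. Indeed, if two edges $\overline{e_a},\overline{e_b}\in B$ shared a vertex, both would lie in the same component $C$ of $G$, and $G(\tau)$ would contain two parallel edges between the white vertex $C$ and the black vertex $B$, contradicting the tree property. Since the hypothesis forbids loops in non-singleton blocks, all $2|B|$ endpoint positions are distinct vertices of $V$ and the classes $A_1^B,A_2^B$ are disjoint, each consisting of one vertex from each component $B$ touches.

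Next, I prove (iii) by induction on the length $m$ of a chain $u=w_0\sim_{D_0}w_1\sim\cdots\sim_{D_{m-1}}w_m=v$ with $u,v$ in the same component. For $m=1$: if $D_0$ is a singleton, $\sim_{D_0}$ is trivial and $u=v$; otherwise, $u,v\in A_{j_0}^{D_0}$ forces each to be the same-side endpoint of the (unique, by treeness) edge of $D_0$ in their common component, so $u=v$. For $m\geq 2$, the chain induces a closed walk $C-D_0-C(w_1)-D_1-\cdots-D_{m-1}-C$ of positive length in the tree $G(\tau)$, which must backtrack. One type of backtracking is ``internal'' ($C(w_k)=C(w_{k+1})$), to which the $m=1$ case applies to conclude $w_k=w_{k+1}$; the other is ``inter-step'' ($D_k=D_{k+1}$, together with $j_k=j_{k+1}$ by 2-class disjointness), which lets us collapse two steps into one. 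In either case the chain shortens, and the inductive hypothesis gives $u=v$.

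For (ii), the forward direction is immediate from the 2-class description: two edges in the same block $B$ have one endpoint each in $A_1^B$ and the other in $A_2^B$, so their images in $G^{\underset{\sim}{\tau}}$ share both endpoints. For the reverse direction, assume $[u_i]=[u_j]$ and $[v_i]=[v_j]$. By (iii), either $u_i=u_j$ (which, combined with the analogous alternative for $v_i,v_j$, forces $\overline{e_i}=\overline{e_j}$), or $u_i\neq u_j$ lie in distinct components $C_1,C_2$, and likewise $v_i\neq v_j$ with $v_i\in C_1,v_j\in C_2$. The $u$-chain and $v$-chain then reduce to the unique tree-path $P$ from $C_1$ to $C_2$ in $G(\tau)$. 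Analyzing the first step of each chain at $C_1$: both $u_i$ and $v_i$ must be endpoints of the very first $P$-edge out of $C_1$, so that edge is $\overline{e_i}$ and the first block of $P$ is the one containing $\overline{e_i}$; symmetrically at the $C_2$-end. If $P$ had length $\geq 4$, applying the same reasoning at the second step would force the $D_1$-edge in the intermediate component to share both endpoints with the $B_{e_i}$-edge there, making them equal in $\overline{E}$ and hence $D_1=B_{e_i}$, contradicting that $P$ is a simple path. So $P$ has length $2$ and $\overline{e_i},\overline{e_j}$ lie in the same block.

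Finally, (i) is proved by induction on $|\overline{E}|$, peeling off a leaf of $G(\tau)$. When the leaf is a white vertex $C$ incident to an edge $\overline{e}$ in a non-singleton block $B$, removing $C$ and $\overline{e}$ preserves the tree structure (with $|V'|=|V|-2$, $c'=c-1$), and the reinstatement of $\overline{e}$ merely appends the two new endpoints $u,v$ to the pre-existing blocks of $\underset{\sim}{\tau'}$ containing $A_1^{B\setminus\{\overline{e}\}}$ and $A_2^{B\setminus\{\overline{e}\}}$, so $\#(\underset{\sim}{\tau})=\#(\underset{\sim}{\tau'})$ and the formula propagates. When the leaf is a singleton black block, a mild extension of the induction to \emph{forests} (with formula $|V|-2(c-k)$ for $k$ components) handles the case where removing the corresponding edge is a bridge of its component and splits the tree into two. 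The main obstacle is the bookkeeping in this last step—keeping track of how removing a single edge of $G$ can create a new component and thereby turn the auxiliary tree into a two-component forest—but the extended formula $|V|-2(c-k)$ is preserved exactly because the singleton's trivial $\sim$-relation neither merges nor separates classes of $\underset{\sim}{\tau}$.
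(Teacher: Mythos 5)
Your proof is correct but follows a genuinely different route from the paper's. The paper establishes all three parts simultaneously by a single induction on the number of connected components of $G$: it deletes the singleton-block leaves of $G(\tau)$, peels off a white leaf $G_{s+1}$ of the pruned tree, applies the inductive hypothesis to the remaining components $G_1\cup\cdots\cup G_s$, and tracks explicitly how each of (i)--(iii) changes when the peeled component is reinstated. Your proof is modular: the structural observation that each non-singleton block $B$ touches $|B|$ distinct components (so the classes $A_1^B,A_2^B$ are genuinely two and disjoint) is extracted once; (iii) is proved directly by eliminating backtrackings of the closed walk in $G(\tau)$ traced out by a chain of $\sim$-relations; (ii) then follows from (iii) together with the uniqueness of tree paths; and only (i) is by induction, on $|\overline{E}|$, peeling off both white and black leaves and generalizing the formula to forests. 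Your organization makes the role of acyclicity explicit and keeps (ii)--(iii) free of any induction on the graph, at the mild cost of having to prove (i) for forests rather than just trees.

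Two places deserve one more sentence. In (iii), you enumerate backtrackings at a black vertex $D_k$ (giving $C(w_k)=C(w_{k+1})$) and at an interior white vertex (giving $D_k=D_{k+1}$), but a priori the closed walk could backtrack only at the identified endpoint $C(w_0)=C(w_m)$, yielding $D_{m-1}=D_0$, which does not let you collapse two adjacent steps. The step is saved because a closed walk of positive length in a tree always backtracks at a vertex of maximal distance from the starting vertex, hence at an interior vertex of the walk --- but this needs to be stated. And the forest extension in (i) should be justified rather than merely invoked: it follows by applying the tree case to each component of $G(\tau)$ and summing, so that the induction on $|\overline{E}|$ (with a forest inductive hypothesis) is genuinely self-contained.
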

\begin{proof}
Before getting the proof observe that $(ii)$ is trivially satisfied if $e_i$ and $e_j$ are such that $\overline{e_i}=\overline{e_j}$ so we may assume always that $\overline{e_i}\neq\overline{e_j}$.

We will prove all claims by induction on the number of connected components of $G$. If $G$ is connected then any block of $\tau$ must be of size $1$ otherwise there will be a black vertex with more than one edge connected to the same white vertex of $G(\tau)$ which is impossible as $G(\tau)$ is a tree. If any block of $\tau$ has size $1$ then $\sim_B$ is a trivial relation for any $B\in \tau$ so $\underset{\sim}{\tau}$ connects no vertices of $V$, i.e $\underset{\sim}{\tau}=1_V$. Thus,
$$\#(\underset{\sim}{\tau})=|V|=|V|-2(1-1),$$
which proves $i$. $(ii)$ is satisfied automatically as there are no blocks of $\tau$ of size larger than $1$. Similarly $(iii)$ is satisfied since $\underset{\sim}{\tau}=1_V$. Now we suppose all are true if $G$ has $s$ connected components and we prove it for $G$ having $s+1$ connected components. Let $G_1,\dots,G_{s+1}$ be the connected components of $G$. Let $pruned(G)$ be the graph $G(\tau)$ after we remove all black vertices that are leaves and their adjacent edges. Since $G(\tau)$ is a tree so is $pruned(G)$ and therefore it must have a leaf that correspond to a white vertex, say $G_{s+1}$. The vertex $G_{s+1}$ is connected to the the rest of the graph $pruned(G)$ through one edge of $\overline{G}$, let us denoted that edge by $\overline{e_{s+1}}$ which must also be an edge of $\overline{G_{s+1}}$. Let $B$ be the block of $\tau$ that contains $\overline{e_{s+1}}$. Let $G^\prime = G_1\cup\cdots \cup G_s=(V^\prime,E^\prime)$ and $\tau^{\prime}$ to be $\tau$ restricted to the set of edges of $\overline{G^{\prime}}$. Observe that when we removed the leaves corresponding to black vertices then we removed all blocks of $\tau$ of size $1$, so since $G_{s+1}$ was a leaf after removing these blocks that means that $B$ is the only block of $\tau$ that contains one edge from $\overline{G_{s+1}}$ and that has size larger than $1$. This means that any block of $\tau^{\prime}$ is a block of $\tau$ except by the block $B\setminus \{\overline{e_{s+1}}\}$. We let $(\tau^\prime,\sim^{^\prime})$ be the oriented partition of the edges of the graph $\overline{G^\prime}$ given by $\sim^{\prime}_B=\sim_B$ if $B$ is a block of $\tau^\prime$ that is also a block of $\tau$ and for the block $B^\prime=B\setminus \{\overline{e_{s+1}}\}$ we let $\sim^{\prime}_{B^\prime}$ to be the equivalence relation $\sim_B$ restricted to $B^\prime$

Note that the graph $G^\prime(\tau^\prime)$ consist of the graph $G(\tau)$ after we take away the white vertex $G_{s+1}$ and all its adjacent edges and vertices except by the vertex $B$ which becomes the vertex $B\setminus \{\overline{e_{s+1}}\}$ in $G^\prime(\tau^\prime)$. The latter means that $G^{\prime}(\tau^\prime)$ is a tree where $G^\prime$ has $s$ connected components, so by induction hypothesis,
$$\#(\underset{\sim^\prime}{\tau^\prime})=|V^\prime|-2(s-1).$$
Also, for any $e_i,e_j\in \overline{E^\prime}$; $\overline{e_i}$ and $\overline{e_j}$ are in the same block of $\tau^\prime$ if and only if $e_i^{\underset{\sim^\prime}{\tau^\prime}}$ and $e_i^{\underset{\sim^\prime}{\tau^\prime}}$ connect the same pair of vertices of ${G^\prime}^{\underset{\sim^\prime}{\tau^\prime}}$. And no block of $\underset{\sim^\prime}{\tau^\prime}$ contains two vertices of $G^\prime$ in the same connected component of $G^\prime$.

By the mentioned before the blocks of $\tau$ are conformed by the block $B$, the singletons $\{\overline{e}\}$ where $\overline{e}$ is an edge of $\overline{G_{s+1}}$ distinct from $\overline{e_{s+1}}$ and the blocks of $\tau^{\prime}$ distinct of the block $B^\prime$. By definition the equivalence relations $\sim$ and $\sim^{\prime}$ are all the same except by $\sim_B$ and $\sim^{\prime}_{B^\prime}$. Let $\{\overline{e_1},\dots,\overline{e_n}\}$ be the block $B^\prime$ so that $\{\overline{e_1},\dots,\overline{e_n},\overline{e_{s+1}}\}$ is the block $B$. We write $\overline{e_i}=\{u_i,v_i\}$ for $i=1,\dots,n,s+1$. By hypothesis $u_i\neq v_i$, assume without loss of generality that $\sim_B$ is determined by the ordered sets,
$$A_1=\{u_1,\dots,u_n,u_{s+1}\},$$
$$A_2=\{v_1,\dots,v_n,v_{s+1}\},$$
where $A_1$ and $A_2$ are the sets defined as in Definition \ref{Definition: Identification of edges}. Therefore $\sim^{\prime}_{B^\prime}$ is simply determined by the ordered sets,
$$A_1^\prime=\{u_1,\dots,u_n\},$$
$$A_2^\prime=\{v_1,\dots,v_n\}.$$
This means that $\underset{\sim}{\tau}$ and $\underset{\sim^\prime}{\tau^\prime}$ identify exactly the same vertices except that in $\underset{\sim}{\tau}$ the vertices $u_{s+1}$ and $u_1$ are in the same block while in $\underset{\sim^\prime}{\tau^\prime}$ are not. The same applies to $v_{s+1}$ and $v_1$. Any other vertex of $G_{s+1}$ is a singleton of $\underset{\sim}{\tau}$ as any other edge of $\overline{G_{s+1}}$ distinct of $\overline{e_{s+1}}$ is a singleton of $\tau$. Thus,
\begin{eqnarray*}
\#(\underset{\sim}{\tau}) &=& \#(\underset{\sim^\prime}{\tau^\prime})+\#(\text{Vertices of }G_{s+1})-2 \\
&=& |V^\prime|-2s+2+\#(\text{Vertices of }G_{s+1})-2 \\
&=& |V|-2s.
\end{eqnarray*}
that completes the induction proof of $(i)$. 

Observe that the partition $\underset{\sim}{\tau}$ is just conformed by the singletons $\{v\}$ where $v$ is a vertex of $G_{s+1}$ distinct from $u_{s+1},v_{s+1}$, the blocks $A_1$ and $A_2$ and the blocks of $\underset{\sim^\prime}{\tau^\prime}$ distinct from $A_1^\prime$ and $A_2^\prime$. The latest means that the unique vertices of $G_{s+1}$ that are joined to other vertices of $G^{\prime}$ in the quotient graph $G^{\underset{\sim}{\tau}}$ are $u_{s+1}$ and $v_{s+1}$ which are joined to $A_1^\prime$ and $A_2^\prime$ respectively. This means that only the edge $\overline{e_{s+1}}$ of $\overline{G_{s+1}}$ is identified with the other edges $\overline{e_1},\dots,\overline{e_n}$ in the quotient graph $G^{\underset{\sim}{\tau}}$ which proves $(ii)$. For $(iii)$ we know by induction hypothesis that the sets $A_1^\prime$ and $A_2^\prime$ are disjoint otherwise $\underset{\sim^\prime}{\tau^\prime}$ contains $A_1^\prime \cup A_2^\prime$ in a block which means it identifies vertices of $G^{\prime}$ in the same connected component. Finally, since only the vertices $u_{s+1}$ and $v_{s+1}$ of $G_{s+1}$ are joined to other vertices in $\underset{\sim}{\tau}$ and $u_{s+1}$ and $v_{s+1}$ are joined to $A_1^\prime$ and $A_2^\prime$ respectively then $u_{s+1}$ and $v_{s+1}$ are not joined which proves the desired.
\end{proof}

\begin{lemma}\label{Lemma: Equality only if partitions are the same}
Let $G=(V,E,\delta)$ be a labelled oriented graph and let $\overline{G}=(V,\overline{E})$ be its elementarization. Let $(\tau_1,\sim^1)$ and $(\tau_2,\sim^2)$ be two oriented partitions of the edges such that $(\tau_1,\sim^1)\leq (\tau_2,\sim^2)$. Assume $(\tau_1,\sim^1)$ is such that if $\overline{e}$ is a loop of $\overline{G}$ then $\{\overline{e}\}$ is a singleton of $\tau_1$. Also assume that $G(\tau_1)$ is a tree. Then
$$\#(\underset{\sim^2}{\tau_2})\leq \#(\underset{\sim^1}{\tau_1}),$$
with equality if and only if $\tau_1=\tau_2$.
\end{lemma}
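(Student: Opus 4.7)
The plan is to handle the inequality portion first, then split the equality claim into its two implications, and finally focus the bulk of the argument on a reduction to a one-step merging.

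The inequality $\#(\underset{\sim^2}{\tau_2})\le\#(\underset{\sim^1}{\tau_1})$ is immediate from Lemma \ref{Lemma: Underline partition respects order}: that lemma gives $\underset{\sim^1}{\tau_1}\le\underset{\sim^2}{\tau_2}$, and a coarser partition has no more blocks. The trivial direction of the equality claim is also essentially free: if $\tau_1=\tau_2$ then the ordering condition of Definition \ref{Definition: Identification of edges}(4) forces $\sim^1_B=\sim^2_B$ for every $B$, hence $\underset{\sim^1}{\tau_1}=\underset{\sim^2}{\tau_2}$.

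For the converse direction I would argue by contrapositive, and reduce to a one-step merge by induction on $\#\tau_1-\#\tau_2\ge 0$. The base case $\#\tau_1=\#\tau_2$ forces $\tau_1=\tau_2$ because $\tau_1\le\tau_2$, handled above. For the inductive step, assume $\tau_1\ne\tau_2$ and pick a block $B_2$ of $\tau_2$ strictly containing two distinct blocks $B_1,B_1'$ of $\tau_1$. Let $\tau'$ be obtained from $\tau_1$ by merging only $B_1$ and $B_1'$ into $B_1\cup B_1'$, and define $\sim'$ on $\tau'$ by inheriting $\sim^1$ on the unchanged blocks and taking $\sim'_{B_1\cup B_1'}$ to be the restriction of $\sim^2_{B_2}$ to $B_1\cup B_1'$ (one checks directly from Definition \ref{Definition: Identification of edges} that this is well-defined and that $(\tau_1,\sim^1)\le(\tau',\sim')\le(\tau_2,\sim^2)$). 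By induction on the remaining gap $\#\tau'-\#\tau_2$ applied to $(\tau',\sim')\le(\tau_2,\sim^2)$, together with Lemma \ref{Lemma: Underline partition respects order} again, it suffices to prove the single-step strict inequality $\#(\underset{\sim'}{\tau'})<\#(\underset{\sim^1}{\tau_1})$.

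For this key step I would exploit the three conclusions of Lemma \ref{Lemma: Equality when tree}. Since $G(\tau_1)$ is a tree, conclusion (i) gives the exact count $\#(\underset{\sim^1}{\tau_1})=|V|-2(c-1)$, conclusion (ii) says that the partition of non-loop edges induced by ``connecting the same pair of vertices in $G^{\underset{\sim^1}{\tau_1}}$'' is precisely $\tau_1$ on non-loops, and conclusion (iii) rules out that any block of $\underset{\sim^1}{\tau_1}$ contains two vertices of a common connected component of $G$. The relation $\sim'_{B_1\cup B_1'}$, being of the form described in Definition \ref{Definition: Identification of edges}(1), produces at most two equivalence classes on its endpoint set, while $\sim^1_{B_1}$ and $\sim^1_{B_1'}$ already produce up to two classes each on their respective endpoint subsets. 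Hence $\sim'_{B_1\cup B_1'}$ must fuse at least one class coming from $B_1$ with a class coming from $B_1'$. Using the unique tree-path in $G(\tau_1)$ joining the black vertex $B_1$ to the black vertex $B_1'$, together with conclusion (iii) at each white vertex along that path, I would show that no fusion between $B_1$-endpoints and $B_1'$-endpoints was already effected by $\underset{\sim^1}{\tau_1}$ on the ``side'' that $\sim'$ forces. Consequently $\underset{\sim'}{\tau'}$ is a strict coarsening of $\underset{\sim^1}{\tau_1}$, yielding the strict inequality.

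The main obstacle I anticipate is the case bookkeeping when $B_1$ or $B_1'$ is a singleton; then $\sim^1_{B_1}$ is trivial, the orientation of $\sim'_{B_1\cup B_1'}$ has additional freedom, and the propagation of identifications along the tree-path can interact with loop edges (which by hypothesis sit as singletons in $\tau_1$ but may be absorbed into larger blocks of $\tau'$). Tracking these cases carefully—essentially following the proof idea of Lemma \ref{Lemma: Equality when tree}, where a leaf of $G(\tau_1)$ is pruned one connected component at a time—should reduce every configuration to the generic situation above, completing the strict-drop step and therefore the induction.
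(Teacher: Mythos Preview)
Your overall architecture matches the paper exactly: the inequality via Lemma \ref{Lemma: Underline partition respects order}, the trivial direction of equality, the contrapositive, and the reduction to a single merge $(\tau',\sim')$ sitting between $(\tau_1,\sim^1)$ and $(\tau_2,\sim^2)$. The paper does not bother with a full induction; one strict drop $\#(\underset{\sim'}{\tau'})<\#(\underset{\sim^1}{\tau_1})$ is enough, since then $\#(\underset{\sim^2}{\tau_2})\le\#(\underset{\sim'}{\tau'})<\#(\underset{\sim^1}{\tau_1})$.

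Where you diverge is in the proof of that strict drop, and you overcomplicate it. You propose to follow the tree-path in $G(\tau_1)$ between the black vertices $B_1$ and $B_1'$ and invoke conclusion (iii) of Lemma \ref{Lemma: Equality when tree} at each white vertex, and you flag singleton/loop blocks as an obstacle requiring case analysis. None of this is needed. The paper simply picks edges $\overline{e_1}\in B_1$ and $\overline{e_2}\in B_1'$ and applies conclusion (ii) of Lemma \ref{Lemma: Equality when tree} directly: since $\overline{e_1}$ and $\overline{e_2}$ lie in different blocks of $\tau_1$, the edges $e_1,e_2$ connect \emph{different} pairs of vertices in $G^{\underset{\sim^1}{\tau_1}}$. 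But the merge $\sim'_{B_1\cup B_1'}$ forces their endpoints to coincide (in one orientation or the other), so in $G^{\underset{\sim'}{\tau'}}$ they connect the \emph{same} pair. Hence $\underset{\sim^1}{\tau_1}<\underset{\sim'}{\tau'}$, done in one line. You even stated conclusion (ii) correctly but then did not use it; the tree-path argument and the anticipated ``case bookkeeping'' are detours around a fact you already had in hand.
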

\begin{proof}
By Lemma \ref{Lemma: Underline partition respects order} it suffices to show that if $\tau_1 \neq \tau_2$ then $\#(\underset{\sim^2}{\tau_2})< \#(\underset{\sim^1}{\tau_1})$. Since $\tau_1 \neq \tau_2$ then there exist distinct blocks of $\tau_1$; $V_1$ and $V_2$ such that $V_1\cup V_2$ is contained in a block of $\tau_2$. Let $\overline{e_1}$ and $\overline{e_2}$ so that $\overline{e_1}\in V_1$ and $\overline{e_2}\in V_2$. Let $(\tau^\prime,\sim^\prime)$ be the ordered partition of the edges given by:
\begin{enumerate}
    \item Any block of $\tau_1$ is a block of $\tau^\prime$ except $V_1$ and $V_2$ for which we made $V_1\cup V_2$ into a single block of $\tau^\prime$. In this way $\tau^\prime \leq \tau_2$.
    \item Each block $V^\prime$ of $\tau^\prime$ is contained in a block $V$ of $\tau_2$ so we let $\sim^\prime_{V^\prime}$ to be the restriction of $\sim^2_V$ to $V^\prime$. 
\end{enumerate}
Thus we have
$$(\tau_1,\sim^1)\leq(\tau^\prime,\sim^\prime)\leq (\tau_2,\sim^2).$$
By Lemmas \ref{Lemma: Underline partition respects order} we have $$\underset{\sim^1}{\tau_1}\leq \underset{\sim^\prime}{\tau^\prime} \leq \underset{\sim^1}{\tau_2},$$ 
and,
$$\#(\underset{\sim^2}{\tau_2}) \leq \#(\underset{\sim^\prime}{\tau^\prime}) \leq \#(\underset{\sim^1}{\tau_1}).$$
Write $\overline{e_1}=\{u_1,v_1\}$ and $\overline{e_2}=\{u_2,v_2\}$. By Lemma \ref{Lemma: Equality when tree} part $(ii)$ we know that $e_1$ and $e_2$ connect distinct pair of vertices of $G^{\underset{\sim^1}{\tau_1}}$ however by the same definition of $\tau^\prime$ we have have that $\underset{\sim^\prime}{\tau^\prime}$ connects $u_1$ to $u_2$ and $v_1$ to $v_2$ or the other way around. In either case we have that $e_1$ and $e_2$ connect the same pair of vertices of $G^{\underset{\sim^\prime}{\tau^\prime}}$ which forces $\underset{\sim^1}{\tau_1}<\underset{\sim^\prime}{\tau^\prime}$ and $\#(\underset{\tau^\prime}{\tau^\prime})<\#(\underset{\sim^1}{\tau_1})$.
\end{proof}

\begin{lemma}\label{Lemma: Inequality with equality iff tree}
Let $G=(V,E,\delta)$ be a labelled oriented graph and let $\overline{G}=(V,\overline{E})$ be its elementarization. Let $(\tau,\sim)$ be an oriented partition of the edges such that if $\overline{e}$ is a loop of $\overline{G}$ then $\{\overline{e}\}$ is a singleton of $\tau$. Suppose $G(\tau)$ is connected. Then
$$\#(\underset{\sim}{\tau})\leq |V|-2(\#(\text{connected components of }G)-1),$$
with equality if and only if $G(\tau)$ is a tree.
\end{lemma}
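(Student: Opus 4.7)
The plan is to deduce the inequality, together with its equality case, from Lemmas \ref{Lemma: Equality when tree} and \ref{Lemma: Equality only if partitions are the same} by constructing a refinement $(\tau_1,\sim^{1}) \leq (\tau,\sim)$ whose associated graph $G(\tau_1)$ is a tree and for which loops of $\overline{G}$ remain singletons of $\tau_1$. Once such a refinement is produced, Lemma \ref{Lemma: Equality when tree} delivers
$$\#(\underset{\sim^{1}}{\tau_1}) = |V| - 2(c-1),$$
where $c = \#(\text{connected components of } G)$, and Lemma \ref{Lemma: Equality only if partitions are the same} applied to $(\tau_1,\sim^{1}) \leq (\tau,\sim)$ gives
$$\#(\underset{\sim}{\tau}) \leq \#(\underset{\sim^{1}}{\tau_1}) = |V| - 2(c-1),$$
with equality if and only if $\tau_1 = \tau$.

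The heart of the argument is the construction of $\tau_1$. Since $G(\tau)$ is connected and finite, I fix a spanning tree $T \subseteq \overline{E}$ of $G(\tau)$. Every loop $\overline{e}$ of $\overline{G}$ sits at a singleton black vertex $\{\overline{e}\}$ of $G(\tau)$ by hypothesis, hence is a pendant edge of $G(\tau)$ and must lie in any spanning tree, so $\overline{e} \in T$. I then define $\tau_1$ to be the partition of $\overline{E}$ whose blocks are the non-empty intersections $T \cap B$ for each $B \in \tau$, together with singletons $\{\overline{e}\}$ for each non-tree edge $\overline{e} \in \overline{E} \setminus T$, and equip it with the equivalence relations $\sim^{1}$ obtained by restricting $\sim$ block-by-block. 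By construction $\tau_1 \leq \tau$ and loops of $\overline{G}$ remain singletons of $\tau_1$.

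To verify that $G(\tau_1)$ is a tree, observe first that every block $B$ of $\tau$ meets $T$ (since $T$ spans $G(\tau)$, every black vertex is incident to at least one tree edge), so the subgraph of $G(\tau_1)$ carried by the edges in $T$ and the black vertices $T \cap B$ is isomorphic to $T$ itself via the identification $B \mapsto T \cap B$, and is therefore a tree. Each non-tree edge $\overline{e} \in \overline{E} \setminus T$ is then attached to $G(\tau_1)$ as a pendant through its singleton block $\{\overline{e}\}$, which has degree one in $G(\tau_1)$; attaching pendants to a tree preserves the tree property.

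For the equality case, if $G(\tau)$ is already a tree I may take $T = \overline{E}$, giving $\tau_1 = \tau$ and equality. Conversely, if $G(\tau)$ contains a cycle, then each black vertex on that cycle has degree at least $2$ in $G(\tau)$, so at least one non-tree edge $\overline{e}$ lies in a non-singleton block of $\tau$; in $\tau_1$ this edge has been isolated as a singleton, so $\tau_1 \neq \tau$ and the second part of Lemma \ref{Lemma: Equality only if partitions are the same} forces strict inequality. The only real obstacle is the spanning-tree construction and the observation that every block of $\tau$ meets $T$, which is where the connectedness hypothesis on $G(\tau)$ is used.
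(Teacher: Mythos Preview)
Your proof is correct and follows essentially the same approach as the paper: both arguments take a spanning tree $T$ of $G(\tau)$, build the refinement $\tau_1$ by intersecting each block of $\tau$ with $T$ and isolating the non-tree edges as singletons, verify that $G(\tau_1)$ is a tree, and then invoke Lemmas \ref{Lemma: Equality when tree} and \ref{Lemma: Equality only if partitions are the same}. Your handling of the loop case (pendant edges must lie in every spanning tree) and of the strict-inequality direction (a non-tree edge necessarily sits in a non-singleton block) is slightly more explicit than the paper's, but the underlying construction and logic are identical.
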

\begin{proof}
Since $G(\tau)$ is connected it has a spanning tree $T$. For any block, $B$, of $\tau$ which is a black vertex of $T$, we define $B^\prime$ by removing the edges $\overline{e}$ of $B$ that correspond to edges of $G(\tau)$ that were removed from $G(\tau)$ in $T$. Let $G^\prime$ be the graph that has white vertices the connected components of $G$ and black vertices the blocks $B^{\prime}$, and edges connecting the connected component $G_1$ to $B^\prime$ if and only if $T$ has an edge connecting $G_1$ to $B$. The graph $G^\prime$ is isomorphic to $T$ in the sense that it is the same graph except that the black vertices $B$ of $T$ becomes $B^\prime$. Thus $G^\prime$ is a tree. For any edge ,$\overline{e}$, of $G(\tau)$ that was removed in $T$ we add to the graph $G^\prime$ the extra leaf consisting on the vertex $\{\overline{e}\}$ and the edge connecting the connected component of $G$ that contains $\overline{e}$ to $\{\overline{e}\}$. The resulting graph has then set of black vertices the blocks $B^\prime$ and singletons $\{\overline{e}\}$ where $\overline{e}$ is such that was removed from $B$ to get $B^\prime$. Let us denote to the set of black vertices of $G^\prime$ as $\tau^\prime$ which is a partition of $\overline{E}$ and $\tau^\prime \leq \tau$. We choice $\sim^\prime$ so that $(\tau^\prime,\sim^\prime)\leq (\tau,\sim)$. The resulting graph $G^\prime$ is still a tree and is the same as $G(\tau^\prime)$, moreover if $\overline{e}$ is a loop of $\overline{G}$ then it is a singleton of $\tau$ and then so is of $\tau^\prime$. By Lemmas \ref{Lemma: Equality only if partitions are the same} and \ref{Lemma: Equality when tree},
$$\#(\underset{\sim}{\tau}) \leq \#(\underset{\sim^\prime}{\tau^\prime})=|V|-2(\#(\text{connected components of }G)-1),$$
with equality if and only if $\tau^\prime=\tau$. Thus if we have equality we have $G(\tau)=G(\tau^\prime)$ and therefore $G(\tau)$ is a tree. Conversely if $G(\tau)$ is a tree then Lemma \ref{Lemma: Equality when tree} provides equality.
\end{proof}

Observe that for a labelled oriented graph $G=(V,E,\delta)$ with $\overline{G}=(V,\overline{E})$ being its elementarization we have only considered partitions $\tau \in \cP(\overline{E})$. Let us now introduce another graph similar to $G(\tau)$ but now instead we will consider $\tau\in \cP(E)$.

\begin{definition}\label{Definition: Graph with partitions on the edges with multiplicity}
Let $G=(V,E,\delta)$ be a labelled oriented graph and let $\overline{G}=(V,\overline{E})$ be its elementarization. Let $\rho\in \cP(E)$ and $\sigma\in \cP(E)$ be partitions of the edges such that $\sigma\leq \rho$ and if $u,v$ are in the same block of $\sigma$ then $\overline{e_u}=\overline{e_v}$. We define the graph $G(\sigma,\rho)$ to be the bipartite graph with,
\begin{enumerate}
    \item set of white vertices given by the connected components of $G$,
    \item set of black vertices given by blocks of $\rho$, and,
    \item set of edges given by blocks of $\sigma$ where the edge $B\in \sigma$ connects a white vertex $G_1$ with a black vertex $D$ if and only if $B\subset D$ and $\overline{e}$ is an edge of $\overline{G_1}$ for some $e\in B$.
\end{enumerate}
The latest is well defined as by hypothesis for any $e_u,e_v\in B$ we know $\overline{e_u}=\overline{e_v}$ and therefore the edges $e_u,e_v$ connect the same pair of vertices of $G$, in particular, they are in the same connected component of $G$.
\end{definition}

\section{Asymptotic expansion of the cumulants}\label{Section: Asymptotics}

For the rest of the paper we set $r\in\mathbb{N}$, $m_1,\dots,m_r\in\mathbb{N}$, $m=\sum_{i=1}^r m_i$ and $\gamma = \gamma_{m_1,\dots,m_r} \in S_m$ be defined as the permutation whose cycles are
$$(1,\dots,m_1)(m_1+1,\dots,m_1+m_2)\cdots (m_1+\cdots+m_{r-1}+1,\dots,m).$$
We let
$$\iCycleGamma{i} =\vcentcolon (m_1+\cdots+m_{i-1}+1,\dots,m_1+\cdots+m_i),$$
be the $i^{th}$ cycle of  $\gamma$. For each $1\leq i\leq r$ we let $T_i=(V_i,E_i,\delta_i)$ be the labelled oriented graph with set of vertices
$$V_i=\{m_1+\cdots+m_{i-1}+1,\dots,m_1+\cdots+m_i\},$$
set of edges
$$e_j \vcentcolon = (\gamma(j),j),$$
and label given by $\delta_i(e_j)=j$ for any $m_1+\cdots+m_{i-1}+1\leq j\leq m_1+\cdots+m_i$. We will let $T = T_{1,\dots, r}=(V,E,\delta)$ be the labelled oriented graph consisting on the union of the graphs $T_1,\dots,T_r$. See Figure \ref{Figure: T_4,2,2,2,2,2,2} for an example of such a graph.

\begin{figure}
    \centering
    \includegraphics[width=0.8\textwidth]{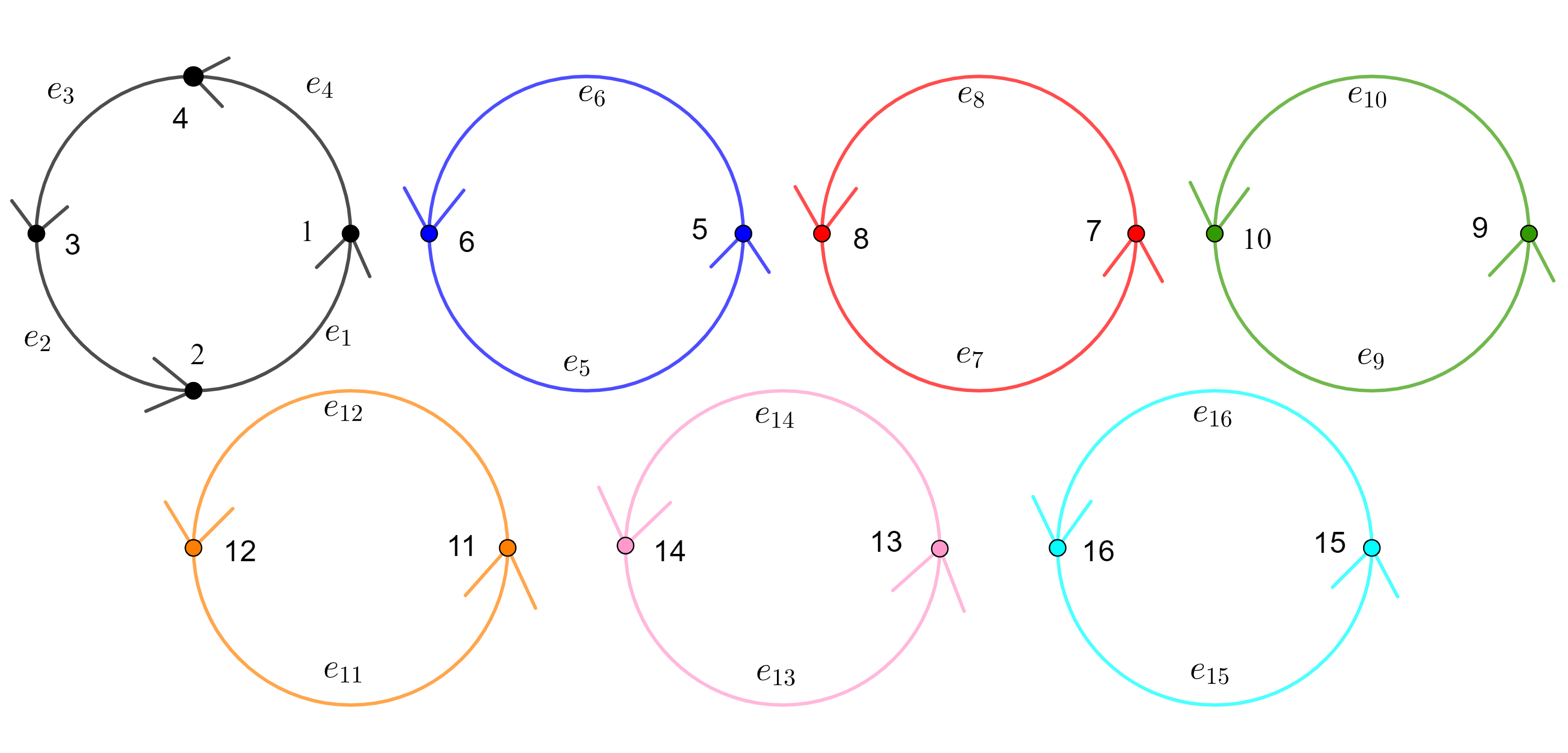}
    \caption{The graph $T$ corresponding to $m_1=4$ and $m_2=m_3=m_4=m_5=m_6=m_7=2$.}
    \label{Figure: T_4,2,2,2,2,2,2}
\end{figure}

We are interested in finding the asymptotic large limit of the quantities
\begin{eqnarray}\label{Equation: Definition of alpha}
\alpha_{m_1,\dots,m_r} \vcentcolon = \lim_{N\rightarrow \infty} N^{r-2}\C_r(\Tr(X^{m_1}),\dots, \Tr(X^{m_r})),
\end{eqnarray}
where $X=X_N$ is a Wigner Ensemble defined as in Section \ref{Section: Introduction}. For the rest of the paper we will answer to this question. In this Section we will prove that the limit of Equation \ref{Equation: Definition of alpha} exists while in Section \ref{Section: Leading order} we will find and explicit expression for the sequence $(\alpha_{m_1,\dots,m_r})_{m_1,\dots,m_r}$.
 
For each $1\leq j\leq r$, let $i_j$ be a function from $V_j$ to $[N]$. We denote by,
$$r(i_j)=\prod_{e\in E_j}x_{i_j(trg(e)),i_j(src(e))},$$
where $trg(\cdot)$ and $src(\cdot)$ denotes the target and source of an edge. It is clear by definition of $T_j$ that
$$\Tr(X^{m_j})=N^{-m_j/2}\sum_{i_j:V_j\rightarrow [N]}r(i_j).$$
This permit us to rewrite $\alpha_{m_1,\dots,m_r}^{(N)} \vcentcolon = N^{r-2}\C_r(\Tr(X^{m_1}),\dots, \Tr(X^{m_r}))$ as
\begin{eqnarray*}
\alpha_{m_1,\dots,m_r}^{(N)} &=& N^{-m/2+r-2}\sum_{i:V\rightarrow [N]} \C_r(r(i_1),\dots,r(i_r)),
\end{eqnarray*}
with $i_j$ being the restriction of $i$ to $V_j$. The latest can be rewritten as
$$\alpha_{m_1,\dots,m_r}^{(N)}=N^{-m/2+r-2}\sum_{\pi\in \cP(V)}\sum_{\substack{i:V\rightarrow [N] \\ \ker(i)=\pi}}\C_j(r(i_1),\dots,r(i_j)),$$
where $\ker(i)$ is the partition of $V$ given by $u$ and $v$ are in the same block of $\ker(i)$ if and only if $i(u)=i(v)$. Now we would like to use the permutation invariance property of our matrices. Observe that the term $K_r(r(i_1),\dots,r(i_r))$ depends only on $\ker(i)$, for a more detailed explanation see \cite[Lemma 6.2.3]{D}, thus
\begin{equation}\label{Equation: First equality of the cumulant}
\alpha_{m_1,\dots,m_r}^{(N)}=N^{-m/2+r-2}\sum_{\pi\in \cP(V)}q(N,\pi)\C_r(\pi),
\end{equation}
where $q(N,\pi)=N(N-1)\cdots (N-\#(\pi)+1)$ and 
$$K_r(\pi) \vcentcolon = \C_r(r(i_1),\dots,r(i_r)),$$
for some $i$ such that $\ker(i)=\pi$. Since the quantity $\C_r(r(i_1),\dots,r(i_r))$ depends only on $\ker(i)$ we consider a canonical function $i:V\rightarrow [N]$ with $\ker(i)=\pi$ as follows.
\begin{notation}\label{Notation: Definition of canonical i}
For a partition $\pi \in \cP(V)$ let $$i^\pi: V \rightarrow [N],$$ 
be the function that is constant on the blocks of $\pi$ defined as follows: 
\begin{enumerate}
\item We enumerate the blocks of $B$ according to its minimal element so that $B_1$ is the block that contains $1$, and for $i>1$, $B_i$ is the block that contains the smallest integer which is not contained in $\cup_{k=1}^{i-1} B_k$.
\item $i^{\pi}(b)=i$ for all $b\in B_i$.
\end{enumerate}
\end{notation}

With Notation \ref{Notation: Definition of canonical i} and Equation \ref{Equation: First equality of the cumulant} we get
\begin{equation}\label{Equation: Second equality of the cumulant}
\alpha_{m_1,\dots,m_r}^{(N)}=N^{-m/2+r-2}\sum_{\pi\in \cP(V)}q(N,\pi)\C_r(\pi),
\end{equation}
where $q(N,\pi)=N(N-1)\cdots (N-\#(\pi)+1)$ and 
$$\C_r(\pi) \vcentcolon = \C_r(r(i^{\pi}_1),\dots,r(i^{\pi}_r)).$$

From \cite[Theorem 2.2]{KS},
$$\C_r(\pi)=\sum_{\substack{\tau\in \cP(m) \\ \tau\vee\gamma=1_m}}\C_{\tau}(x_{i^{\pi}(1),i^{\pi}(\gamma(1))},\dots,x_{i^{\pi}(m),i^{\pi}(\gamma(m))}).$$
Substituting the last Equation into Equation \ref{Equation: Second equality of the cumulant} we get,
\begin{equation}\label{Equation: Third equality of the cumulant}
\alpha_{m_1,\dots,m_r}^{(N)}=\sum_{\pi\in \cP(m)}\sum_{\substack{\tau\in \cP(m)\\ \tau\vee\sigma=1_m}}N^{-m/2+r-2}q(N,\pi)\C_{\tau}(\pi),
\end{equation}
where $q(N,\pi)=N(N-1)\cdots (N-\#(\pi)+1)$ and 
$$\C_{\tau}(\pi) \vcentcolon = \C_{\tau}(x_{i^{\pi}(1),i^{\pi}(\gamma(1))},\dots,x_{i^{\pi}(m),i^{\pi}(\gamma(m))}).$$
Equation \ref{Equation: Third equality of the cumulant} provides an expansion in terms of $N$ of the quantities $\alpha_{m_1,\dots,m_r}^{(N)}$. Our next goal will be making use of this Equation to prove the existence of the large $N$ limit and finding a characterization of the leading order.

The next Lemmas prove the existence of the large $N$ limit of $\alpha_{m_1,\dots,m_r}^{(N)}$ and hence the moments and cumulants $\alpha_{m_1,\dots,m_r}$ and $\K_{m_1,\dots,m_r}$. 

\begin{notation}\label{Notation: Partition of edges and integers}
For a partition $\rho\in \cP(m)$ and $\pi\in \cP(m)$ we let $e_{\rho}^{\pi}\in \cP(E^{\pi})$ be the partition given by $\{u_1,\dots,u_n\}$ is a block of $\rho$ if and only if $\{e_{u_1}^{\pi},\dots,e_{u_n}^{\pi}\}$ is a block of $e_{\rho}^{\pi}$.
\end{notation}

\begin{lemma}\label{Lemma: Blocks of sigma determined edges of T^gamma sigma}
Let $T=(V,E,\delta)$ be the labelled oriented graph defined at the beginning of this section and let $\sigma\in \cP(m)$ be such that all its blocks have size $2$. Then
$$\sigma \leq \overline{\gamma\sigma}.$$
\end{lemma}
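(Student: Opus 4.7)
The statement is, despite the heavy notation, a direct unpacking of definitions, so I will spell out the plan rather than attempt a lengthy argument.

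The plan is to fix an arbitrary block $\{u,v\}$ of $\sigma$ and verify that $u$ and $v$ lie in the same block of $\overline{\gamma\sigma}$, i.e.\ (by the definition of $\overline{\pi}$ in Definition~\ref{Definition: Quotient graph}) that the edges $e_u^{\gamma\sigma}$ and $e_v^{\gamma\sigma}$ of the quotient graph $T^{\gamma\sigma}$ connect the same (unordered) pair of vertices. Since in $T$ the edge $e_j$ is defined as $(\gamma(j),j)$, this amounts to showing that
\[
\{[\gamma(u)]_{\gamma\sigma},\,[u]_{\gamma\sigma}\}=\{[\gamma(v)]_{\gamma\sigma},\,[v]_{\gamma\sigma}\},
\]
as unordered pairs of blocks of the partition $\gamma\sigma$.

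The key computation is to identify the pairing $\sigma$ with the involution in $S_m$ that it determines and then exploit the cycle structure of the composition $\gamma\sigma$. Since $\sigma(u)=v$ and $\sigma(v)=u$, we obtain
\[
(\gamma\sigma)(u)=\gamma(v) \quad\text{and}\quad (\gamma\sigma)(v)=\gamma(u).
\]
Consequently $u$ and $\gamma(v)$ lie in a common cycle of $\gamma\sigma$, and likewise $v$ and $\gamma(u)$, which gives the two identifications
\[
[u]_{\gamma\sigma}=[\gamma(v)]_{\gamma\sigma}\quad\text{and}\quad [v]_{\gamma\sigma}=[\gamma(u)]_{\gamma\sigma}.
\]
Substituting these into the endpoints of $e_v^{\gamma\sigma}=([\gamma(v)]_{\gamma\sigma},[v]_{\gamma\sigma})$ shows that as an unordered pair of vertices it equals $\{[u]_{\gamma\sigma},[\gamma(u)]_{\gamma\sigma}\}$, which is exactly the pair of endpoints of $e_u^{\gamma\sigma}=([\gamma(u)]_{\gamma\sigma},[u]_{\gamma\sigma})$.

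Therefore $u$ and $v$ are in the same block of $\overline{\gamma\sigma}$, and since $\{u,v\}$ was an arbitrary block of $\sigma$, this proves $\sigma\leq\overline{\gamma\sigma}$. There is no real obstacle here: the only subtlety is to correctly convert the pairing $\sigma$ into an involution so that the composition $\gamma\sigma$ makes sense as a permutation, and to recall that the elementarization $\overline{\,\cdot\,}$ ignores orientation, so the two edges $e_u^{\gamma\sigma}$ and $e_v^{\gamma\sigma}$ are identified even though they traverse the common pair of vertices in opposite directions.
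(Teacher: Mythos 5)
Your proof is correct and takes essentially the same approach as the paper's: you compute $(\gamma\sigma)(u)=\gamma(v)$ and $(\gamma\sigma)(v)=\gamma(u)$ to get $[u]_{\gamma\sigma}=[\gamma(v)]_{\gamma\sigma}$ and $[v]_{\gamma\sigma}=[\gamma(u)]_{\gamma\sigma}$, then observe that this makes $e_u^{\gamma\sigma}$ and $e_v^{\gamma\sigma}$ connect the same (unordered) pair of vertices of $T^{\gamma\sigma}$.
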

\begin{proof}
If $\{u,v\}$ is a block of $\sigma$ then $\gamma\sigma(u)=\gamma(v)$ and therefore,
$$[u]_{\gamma\sigma}=[\gamma(v)]_{\gamma\sigma}.$$
Similarly,
$$[v]_{\gamma\sigma}=[\gamma(u)]_{\gamma\sigma}.$$
Since $e_u=(\gamma(u),u)$ then $e_u^{\gamma\sigma}$ goes from $[\gamma(u)]_{\gamma\sigma}$ to $[u]_{\gamma\sigma}$ and in the same way $e_v^{\gamma\sigma}$ goes from $[\gamma(v)]_{\gamma\sigma}$ to $[v]_{\gamma\sigma}$. By the proved at the beginning, $\overline{e_u^{\gamma\sigma}}=\overline{e_v^{\gamma\sigma}}$ i.e. $u$ and $v$ are in the same block of $\overline{\gamma\sigma}$.
\end{proof}

\begin{lemma}\label{Lemma: Gamma sigma overline less or equal than pi}
Let $T=(V,E,\delta)$ be the labelled oriented graph defined at the beginning of this section and let $\pi\in \cP(m)$ and $\sigma\in \cP(m)$ be such that all the blocks of $\sigma$ have size $2$. If for any block $\{u,v\}\in \sigma$ the edges $e_u^{\pi}$ and $e_v^{\pi}$ connect the same pair of vertices of $T^{\pi}$ and with the opposite orientation, then
$$\gamma\sigma \leq \pi.$$
\end{lemma}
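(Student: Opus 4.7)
The plan is to translate the orientation condition in $T^\pi$ into a membership statement about blocks of $\pi$, and then check $\gamma\sigma\leq\pi$ pointwise. First I would recall that by construction $e_u=(\gamma(u),u)$, so in the quotient graph $T^\pi$ the edge $e_u^\pi$ is directed from $[\gamma(u)]_\pi$ to $[u]_\pi$. Fix a block $\{u,v\}$ of $\sigma$. By hypothesis $e_u^\pi$ and $e_v^\pi$ join the same pair of vertices of $T^\pi$ with opposite orientations; comparing source-with-target and target-with-source yields the identifications
\[
[\gamma(v)]_\pi=[u]_\pi \quad\text{and}\quad [\gamma(u)]_\pi=[v]_\pi.
\]
In particular $\gamma(v)$ and $u$ lie in a common block of $\pi$.

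Next I would reduce $\gamma\sigma\leq\pi$ to a pointwise statement. Since every block of $\sigma$ has size two, $\sigma$ is an involution and $\gamma\sigma(u)=\gamma(\sigma(u))$ for every $u\in[m]$. The containment $\gamma\sigma\leq\pi$ is equivalent to each cycle of $\gamma\sigma$ being contained in a block of $\pi$, which in turn is equivalent to the condition that $u$ and $\gamma\sigma(u)$ belong to the same block of $\pi$ for every $u$. Indeed, a permutation preserves a partition blockwise iff it sends each point into its own block, and then each orbit lies entirely in one block.

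Finally I would combine the two steps: given any $u\in[m]$, set $v=\sigma(u)$. Then $\{u,v\}$ is a block of $\sigma$ and $\gamma\sigma(u)=\gamma(v)$, and step one gives $[\gamma(v)]_\pi=[u]_\pi$. Hence $\gamma\sigma(u)\sim_\pi u$ for every $u$, proving $\gamma\sigma\leq\pi$.

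The only real pitfall is the orientation bookkeeping in the first step: with \emph{matching} (rather than opposite) orientation one would instead obtain $[\gamma(u)]_\pi=[\gamma(v)]_\pi$ and $[u]_\pi=[v]_\pi$, which is precisely the conclusion of the previous Lemma \ref{Lemma: Blocks of sigma determined edges of T^gamma sigma} and would \emph{not} suffice for $\gamma\sigma\leq\pi$. Thus the opposite-orientation hypothesis is exactly what is needed to cross-identify target of $e_u^\pi$ with source of $e_v^\pi$, producing the relation $u\sim_\pi\gamma(v)=\gamma\sigma(u)$ that drives the whole argument.
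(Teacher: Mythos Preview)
Your proof is correct and follows essentially the same route as the paper's: both identify $e_u^\pi$ as the directed edge from $[\gamma(u)]_\pi$ to $[u]_\pi$, use the opposite-orientation hypothesis to read off $[u]_\pi=[\gamma(v)]_\pi$, and conclude $[\gamma\sigma(u)]_\pi=[u]_\pi$. Your version is simply more explicit about why the pointwise condition $u\sim_\pi\gamma\sigma(u)$ implies $\gamma\sigma\leq\pi$; one small aside---your closing remark mischaracterizes Lemma~\ref{Lemma: Blocks of sigma determined edges of T^gamma sigma}, which works in $T^{\gamma\sigma}$ rather than $T^\pi$ and in fact also yields an opposite-orientation identification---but this does not affect the argument.
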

\begin{proof}
If $\{u,v\}$ is a block of $\sigma$ then $e_u^{\pi}$ goes from $[\gamma(u)]_{\pi}$ to $[u]_{\pi}$ in $T^{\pi}$. Similarly $e_v^{\pi}$ goes from $[\gamma(v)]_{\pi}$ to $[v]_{\pi}$. By hypothesis,
$$[u]_{\pi}=[\gamma(v)]_{\pi}.$$
Thus,
$$[\gamma\sigma(u)]_{\pi}=[\gamma(v)]_{\pi}=[u]_{\pi},$$
which proves that $u$ and $\gamma\sigma(u)$ are in the same block of $\pi$.
\end{proof}

\begin{notation and remark}\label{Notation: Edge associated to block}
Let $T=(V,E,\delta)$ be the graph defined at the beginning of the Section. Let $\pi \in \cP(m)$ and let $T^{\pi}=(V^{\pi},E^{\pi},\delta)$ be the quotient graph of $T$ under $\pi$. Let $\tau\in \cP(m)$ be such that $\tau\leq \overline{\pi}$. Then for any block, $B=\{i_1,\dots,i_s\}$, of $\tau$ the edges $e_{i_1}^{\pi},\dots,e_{i_s}^{\pi}$ connect all the same pair of vertices of $T^{\pi}$, i.e $\overline{e_{i_1}^{\pi}}=\cdots =\overline{e_{i_s}^{\pi}}$. Under this scenario, we use the notation $\overline{e_B^{\pi}}$ to mean $\overline{e_{i_1}^{\pi}}$. When $\overline{e_B^{\pi}}$ is a loop then all $e_i^{\pi}$ are loops of $T^{\pi}$ and therefore the orientation of each $e_i^{\pi}$ is unique. Thus, in this case by $e_B^{\pi}$ we mean $e_{i_1}^{\pi}$.
\end{notation and remark}

From now on for $\sigma\in \cP_2(m)$ and $\sigma\leq \tau$ by $T^{\gamma\sigma}(\sigma,\tau)$ we mean $T^{\gamma\sigma}(e_{\sigma}^{\gamma\sigma},e_{\tau}^{\gamma\sigma})$.

\begin{lemma}\label{Lemma: First characterization of the limit}
Let $T=(V,E,\delta)$ be the labelled oriented graph defined at the beginning of the section. Let $\pi\in \cP(m)$ and $\tau\in \cP(m)$ be such that $\tau\vee\gamma=1_m$ and
$$\C_{\tau}(x_{i^{\pi}(1),i^{\pi}(\gamma(1))},\dots,x_{i^{\pi}(m),i^{\pi}(\gamma(m))})\neq 0.$$
Then,
\begin{enumerate}
    \item $\tau\leq \overline{\pi}$.
    \item Any block of $\tau$ has even size and it contains the same number of edges in each orientation of the graph $T^{\pi}$.
    \item If $D\in \tau$ is a block of $\tau$ such that $e_D^{\pi}$ is a loop of $T^{\pi}$, then $D$ has size $2$.
    \item There exist $\sigma\in \cP_2(m)$ with $\sigma\leq \tau$ and such that if $\{u,v\}$ is a block of $\sigma$ then $e_u^{\pi}$ and $e_v^{\pi}$ connect the same pair of vertices of $T^{\pi}$ with opposite orientation.
    \item $$\#(\pi)-m/2+r-2\leq 0.$$
\end{enumerate}
Moreover we have equality in $(5)$ if and only if there exist $\sigma\in \cP_2(m)$ as in $(4)$ such that all of the following satisfy,
\begin{enumerate}[label=$($\subscript{L}{{\arabic*}}$)$]
    \item $\sigma\in \NC_2(m_1,\dots,m_r)\cup \NC_2^{nc}(m_1,\dots,m_r)$.
    \item For each block, $W$, of $\sigma\vee\gamma$, the partition $\pi|_W = \gamma|_W\sigma|_W$.

   \item The graph $T^{\pi}$ is the quotient graph along edges, all of them non-loops, of the graph $T^{\gamma\sigma}$, i.e. the graph $T^{\pi}$ is obtained by identifying non-loops edges of $T^{\gamma\sigma}$, where by identifying edges we mean merging the two pair of vertices of the edges.
        
        
    \item The graph $\overline{T^{\gamma\sigma}(\sigma,\tau)^{\overline{\pi_\tau}}}$ is a tree. Here $\overline{\pi}_\tau$ is the partition given by merging the blocks $D_1$ and $D_2$ of $\tau$ whenever they are in the same block of $\overline{\pi}$ except when $e_{D_i}^{\pi}$ is a loop of $T^{\pi}$.
\end{enumerate}
Examples of pairs $(\pi,\tau)$ for which $\#(\pi)-m/2+r-2=0$ and $\#(\pi)-m/2+r-2<0$ can be seen in Example \ref{Example: Example of limit and non-limit partition}.
\end{lemma}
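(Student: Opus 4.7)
My plan is to prove the five claims in order, handling (1)--(4) via the probabilistic/algebraic structure of the Wigner entries and reserving (5) together with the characterization of equality for a dedicated combinatorial argument.

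For (1), since the entries $\{x_{i,j}\}_{i<j}\cup \{x_{i,i}\}_i$ are independent and satisfy $x_{i,j}=\overline{x_{j,i}}$, a nonvanishing classical cumulant $\C_\tau$ forces every two indices $u,v$ in the same block of $\tau$ to give dependent entries, i.e.\ $\{i^\pi(u),i^\pi(\gamma(u))\}=\{i^\pi(v),i^\pi(\gamma(v))\}$ as unordered pairs. In the graph $T^\pi$ this is exactly $\overline{e_u^\pi}=\overline{e_v^\pi}$, so $\tau\le \overline{\pi}$. For (2) I would use that the off-diagonal entries are $R$-diagonal (only alternating cumulants $\beta_{2n}$ are nonzero, with equal counts of $x$ and $\overline{x}$) and the diagonal entries are Gaussian (only $\beta_2$ is nonzero); combined with $x_{i,j}=\overline{x_{j,i}}$, this forces each block of $\tau$ to have even size and equal numbers of edges in each of the two orientations of its associated multi-edge in $T^\pi$. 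Part (3) then follows because when $e_D^\pi$ is a loop the relevant variables are all diagonal Gaussians, for which only the second cumulant is nonzero. Part (4) is an immediate combinatorial pairing built from (2): each block of $\tau$ admits a refinement into oppositely-oriented pairs whose union is the required $\sigma$.

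For (5), I would view $\pi$ on the vertex set $V^{\gamma\sigma}$ of the quotient graph $T^{\gamma\sigma}$ via Lemma~\ref{Lemma: Gamma sigma overline less or equal than pi}, which gives $\gamma\sigma\le\pi$ and therefore $\#(\pi)=\#(\pi/\gamma\sigma)$. The condition $\tau\le\overline{\pi}$ combined with the orientations coming from $\sigma$ guarantees that the vertex-partition on $V^{\gamma\sigma}$ induced by $\pi$ is coarser than the partition $\underset{\sim}{\tau}$ associated to the oriented partition of edges $(\tau,\sim)$, so $\#(\pi)\le \#(\underset{\sim}{\tau})$. Next, since the connected components of $T^{\gamma\sigma}$ are in bijection with blocks of $\sigma\vee\gamma$, and the hypothesis $\tau\vee\gamma=1_m$ translates to $G(\tau)$ being connected, Lemma~\ref{Lemma: Inequality with equality iff tree} yields
\[
\#(\underset{\sim}{\tau})\le \#(\gamma\sigma)-2(\#(\sigma\vee\gamma)-1).
\]
On the other hand, applying the Mingo--Nica inequality \eqref{Inequality: Mingo and Nica inequality} to the pair $(\sigma,\gamma)$ (using $\sigma^{-1}=\sigma$, $\#(\sigma)=m/2$, and the conjugacy $\#(\sigma\gamma)=\#(\gamma\sigma)$) gives
\[
\#(\gamma\sigma)\le m/2-r+2\#(\sigma\vee\gamma).
\]
Adding these two bounds makes the $\#(\sigma\vee\gamma)$ terms cancel and yields $\#(\pi)\le m/2-r+2$, which is precisely (5).

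For the equality assertion, I would identify each step that must be tight and read off conditions $(L_1)$--$(L_4)$. Equality in the Mingo--Nica inequality for $(\sigma,\gamma)$ is by Definition~\ref{Definition: Non-crossing permutation relative to gamma arbitrary} the statement $\sigma\in\NC_2(m_1,\dots,m_r)\cup \NC_2^{nc}(m_1,\dots,m_r)$, giving $(L_1)$. Equality in Lemma~\ref{Lemma: Inequality with equality iff tree} forces the bipartite graph built from $T^{\gamma\sigma}$ together with the partition of edges induced by $\tau$ (after absorbing loops through the $\overline{\pi}_\tau$ construction, since by (3) loop-blocks must be singletons under the refined accounting) to be a tree, which is exactly $(L_4)$. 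The equality $\#(\pi)=\#(\underset{\sim}{\tau})$ means that $\pi$ identifies on $V^{\gamma\sigma}$ exactly the vertices forced by $\tau$ and no more, and unravelling this inside each block $W\in\sigma\vee\gamma$ gives $\pi|_W=\gamma|_W\sigma|_W$, which is $(L_2)$. Condition $(L_3)$ should then emerge as the remaining compatibility requirement: loop-blocks lying in distinct components of $T^{\gamma\sigma}$ may only be identified in $\overline{\pi}$ through a non-loop edge adjacent to both, which is precisely what is needed so that merging these loops via $\overline{\pi}_\tau$ does not destroy the tree structure from $(L_4)$. The main technical obstacle I anticipate lies in verifying $(L_3)$ rigorously and ensuring the bookkeeping between the edge-partition induced by $\tau$ on $T^{\gamma\sigma}$ and the coarser partition induced by $\pi$ is consistent once loops are singled out, since loops behave differently from non-loops both in the Mingo--Nica inequality and in Lemma~\ref{Lemma: Inequality with equality iff tree}.
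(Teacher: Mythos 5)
Your treatment of items (1)--(4) matches the paper's approach and is correct: (1) via independence/dependence of matrix entries, (2)--(3) via the $R$-diagonal structure of off-diagonal entries and Gaussianity of the diagonal, and (4) as a direct combinatorial consequence of (2). The overall strategy for (5) -- combine the graph-theoretic inequality of Lemma~\ref{Lemma: Inequality with equality iff tree} with the Mingo--Nica inequality, and read off $(L_1)$--$(L_4)$ as the conditions for equality in the respective links of the chain -- is also the paper's strategy in spirit.

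However, there is a genuine gap in the middle of your chain. You write ``the oriented partition of edges $(\tau,\sim)$'' on $\overline{T^{\gamma\sigma}}$ and then invoke Lemma~\ref{Lemma: Inequality with equality iff tree}, but $\tau$ is a partition of $[m]$, not of $\overline{E^{\gamma\sigma}}$, and it does not descend to one: since only $\tau\le\overline{\pi}$ is known, two labels in the same $\tau$-block correspond to edges which connect the \emph{same} pair of vertices of $T^{\pi}$ but possibly \emph{different} pairs in $T^{\gamma\sigma}$ (e.g.\ $\sigma$-blocks from different connected components of $T^{\gamma\sigma}$ glued by $\tau$). So the partition $(\tau,\sim)$ to which you want to apply Lemma~\ref{Lemma: Inequality with equality iff tree} is not yet defined, and the singleton-loop hypothesis of that lemma is also not automatic. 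The paper avoids this by working not with $T^{\gamma\sigma}$ but with $T^{\hat{\pi}}$ where $\hat{\pi}=\bigcup_i\pi|_{W_i}$ over the blocks $W_i$ of $\sigma\vee\gamma$: one first establishes $\sigma\le\overline{\hat{\pi}}$, constructs the bipartite graph $G(\sigma,\tau)$ of Definition~\ref{Definition: Graph with partitions on the edges with multiplicity} on $T^{\hat{\pi}}$, coarsens $\tau$ to $\hat{\tau}$ (merging blocks joined via a common white vertex while isolating loop blocks), and only then obtains a bona fide oriented partition $\bar{\tau}$ of $\overline{E^{\hat{\pi}}}$. This introduces an extra link $|V^{\hat{\pi}}|\le\#(\gamma\sigma)$ in the chain, whose equality is precisely $\hat{\pi}=\gamma\sigma$, i.e.\ $(L_2)$ -- a condition your three-link chain would have to extract less directly. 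Finally, $(L_3)$ is exactly the statement that equality $\#(\pi)=\#(\underset{\sim}{\bar{\tau}})$ (the first link) forces loops of $T^{\hat{\pi}}$ to be merged only via identifications of non-loop edges; as you yourself flag, your sketch does not supply this argument, and it is not a trivial afterthought: without the $\hat{\tau}$-construction there is no clean separation between the loop and non-loop contributions to the vertex count. In short, the probabilistic part and the endpoints of the graph-theoretic chain are right, but the construction of the oriented edge-partition feeding Lemma~\ref{Lemma: Inequality with equality iff tree}, and with it the derivation of $(L_2)$ and especially $(L_3)$, is missing.
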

\begin{proof}
If $u$ and $v$ are in the same block of $\tau$ then $e_u^{\pi}$ and $e_v^{\pi}$ must connect the same vertices of $T^{\pi}$ otherwise $x_{i^{\pi}(u),i^{\pi}(\gamma(u))}$ and $x_{i^{\pi}(v),i^{\pi}(\gamma(v))}$ are independent and then $\C_{\tau}(\pi)=0$. This proves $(1)$. Observe that any block of $e_{\overline{\pi}}^{\pi}$ must consist of the same number of edges in each orientation, otherwise for any $\tau \leq \overline{\pi}$ there is a block of $e_{\tau}^{\pi}$ that doesn't consist on the same number of edges in each orientation meaning $\C_{\tau}(\pi)=0$. The same reasoning applies to any block of $\tau$ and hence $(2)$. To prove $(3)$ it is enough to observe that if a block, $D$, of $\tau$ that contains more than two elements and is such that $e_u^{\pi}$ is a loop of $T^{\pi}$ with $u\in D$ then $\C_{\tau}(\pi)$ has a factor of the form $\C_D(x_{1,1},\dots,x_{1,1})$ with $|D|\geq 4$ which is $0$ and so is $\C_{\tau}(\pi)$. Now let us prove $(4)$. Let $\sigma\in \cP_2(m)$ be such that $\sigma\leq \tau$ and if $\{u,v\}$ is a block of $\tau$ then $e_u^{\pi}$ and $e_v^{\pi}$ connect the same pair of vertices of $T^{\pi}$ with opposite orientation (such a $\sigma$ exist thanks to $(2)$). 

Finally let us prove $(5)$. Let $W_1,\dots,W_s$ be the blocks of $\sigma\vee\gamma$. For each $1\leq i\leq s$ we let $\pi_i=\pi|_{W_i}$ and let $T_{W_i}^{\pi_i}$ be the quotient graph of $T_{W_i}$ under $\pi_i$. Here $W_i$ is a union of cycles $\iCycleGamma{i_1},\dots,\iCycleGamma{i_t}$ of $\gamma$, and therefore by $T_{W_i}$ we mean the union of the graphs $T_{i_1},\dots,T_{i_t}$. Let $\hat{\pi} = \pi_1\cup\cdots \cup \pi_s$, i.e. the partition whose any block is a block of $\pi_i$ for some $1\leq i\leq s$. Let $G=T^{\hat{\pi}}=(V^{\hat{\pi}},E^{\hat{\pi}},\delta)$ which is the same as the union of the graphs $T_{W_1}^{\pi_1},\dots,T_{W_s}^{\pi_s}$ and let $\overline{G}=(V^{\hat{\pi}},\overline{E^{\hat{\pi}}})$ be its elementarization. Observe that if $B=\{u,v\}$ is a block of $\sigma$ then by $(2)$ the edges $e_u^{\pi}$ and $e_v^{\pi}$ connect the same pair of vertices of $T^{\pi}$, which we call $C$ and $D$. Since $W_i$ is a union of blocks of $\sigma$ the block $B$ must be completely contained in some $W_i$, say $W_1$. The partition $\pi_1$ contains the blocks $C\cap W_1$ and $D\cap W_1$, hence the edges $e_u^{\hat{\pi}}$ and $e_v^{\hat{\pi}}$ will connect the same pair of vertices of $T^{\hat{\pi}}$ corresponding to $C\cap W_1$ and $D\cap W_2$. In other words we still have $\sigma\leq \overline{\hat{\pi}}$. Therefore the graph $G(\sigma,\tau)\vcentcolon = G(e_\sigma^{\hat{\pi}},e_\tau^{\hat{\pi}})$ is well defined. 


Let $\overline{\pi_\tau}$ be the partition obtained after joining two blocks of $\tau$ whenever the blocks are contained in the same block of $\overline{\pi}$, except when the blocks of $\tau$ correspond to loops of $T^{\pi}$.

Any block of $\tau$ determines a block of $e_\tau^{\hat{\pi}}$ and conversely, so we may identify the blocks of $e_\tau^{\hat{\pi}}$ with the blocks of $\tau$ and the blocks of $e_\sigma^{\hat{\pi}}$ with the blocks of $\sigma$. In this way the black vertices of $G(\sigma,\tau)$ are the blocks of $\tau$ and its edges are the blocks of $\sigma$. Note that $\hat{\pi}\leq\pi$ and therefore $\overline{\hat{\pi}}\leq \overline{\pi}$. 


We let $\hat{\tau}$ to be the partition given by joining blocks of $\tau$ whenever they are contained in the same block of $\overline{\pi_\tau}$. If $B_1,\dots,B_n$ are blocks of $\tau$ that correspond to loops of $T^{\pi}$ and such that $B_i$ are all connected to the same white vertex of $G(\sigma,\tau)$ and $B_i$ are all contained in the same block of $\overline{\pi}$ then we also let $B_1\cup \cdots \cup B_n$ to be a block of $\hat{\tau}$.

We let $\bar{\tau}\in \cP(\overline{E^{\hat{\pi}}})$ be the partition given by $\overline{e_u^{\hat{\pi}}}$ and $\overline{e_v^{\hat{\pi}}}$ are in the same block if and only if $u$ and $v$ are in the same block of $\hat{\tau}$. For each block of $\bar{\tau}$ with edges $\overline{e_1^{\hat{\pi}}},\dots,\overline{e_n^{\hat{\pi}}}$ we consider an identification of the edges $\sim_{\overline{e_1^{\hat{\pi}}},\dots,\overline{e_n^{\hat{\pi}}}}$ given by $\pi$ in the following manner: for each $u,v=1,\dots,n$ since $u$ and $v$ are in the same block of $\hat{\tau}$ then so are in the same block of $\overline{\pi}$ and therefore if $e_u=(u_1,v_1)$ and $e_v=(u_2,v_2)$ then either $u_1,u_2$ are in the same block of $\pi$ and $v_1,v_2$ are in the same block of $\pi$ or the other way around. We then define $[u_1]_{\hat{\pi}} \sim_{\overline{e_1^{\hat{\pi}}},\dots,\overline{e_n^{\hat{\pi}}}}[u_2]_{\hat{\pi}}$ and $[v_1]_{\hat{\pi}}\sim_{\overline{e_1^{\hat{\pi}}},\dots,\overline{e_n^{\hat{\pi}}}}[v_2]_{\hat{\pi}}$ in the former case or $[u_1]_{\hat{\pi}}\sim_{\overline{e_1^{\hat{\pi}}},\dots,\overline{e_n^{\hat{\pi}}}}[v_2]_{\hat{\pi}}$ and $[v_1]_{\hat{\pi}}\sim_{\overline{e_1^{\hat{\pi}}},\dots,\overline{e_n^{\hat{\pi}}}}[u_2]_{\hat{\pi}}$ in the latter case. We have then constructed and oriented partition of the edges $(\bar{\tau},\sim)$. Now we consider another partition $\hat{\tau_2}$ given by merging two blocks of $\tau$ whenever the blocks are contained in the same block of $\overline{\pi}$, i.e. $\hat{\tau_2}=\overline{\pi}$. Let us consider as before $\bar{\tau_2}\in \cP(\overline{E^{\hat{\pi}}})$ to be the partition give by $\overline{e_u^{\hat{\pi}}}$ and $\overline{e_v^{\hat{\pi}}}$ are in the same block if and only if $u$ and $v$ are in the same block of $\hat{\tau_2}$. We then construct and oriented partition of the edges $(\bar{\tau_2},\sim_2)$ as done previously. It is clear that,
$$(\bar{\tau},\sim) \leq (\bar{\tau_2},\sim_2).$$
Moreover $\underset{\sim_2}{\bar{\tau_2}}=\pi$ because $\hat{\tau_2}=\overline{\pi}$ and the identification of the edges respects the order given by $\pi$.

Observe that the graph $\overline{G(\sigma,\tau)^{\hat{\tau}}}$ is a connected graph with black vertices the blocks of $\hat{\tau}$ and exactly one edge for each block of $\overline{E^{\hat{\pi}}}$ because $G(\sigma,\tau)$ has at least one edge for each block of $\overline{E^{\hat{\pi}}}$ (since $\sigma\leq \overline{\hat{\pi}}$) then after doing the elementarization we get exactly one for each one. Further, each block of $\hat{\tau}$ determines a block of $\bar{\tau}$ and conversely. Therefore we conclude that the graph $G(\bar{\tau})$ is the same as the graph $\overline{G(\sigma,\tau)^{\hat{\tau}}}$ because in $G(\sigma,\tau)^{\hat{\tau}}$ we merge blocks of $\tau$ into the same block of $\hat{\tau}$, $\hat{\tau}$ and $\bar{\tau}$ have the same number of blocks and when we do the elementarization, $\overline{G(\sigma,\tau)^{\hat{\tau}}}$, we take away multiplicity of the edges. Moreover, let $\overline{e_1^{\hat{\pi}}}$ be a loop of $\overline{G}$ and let us write $\overline{e_1^{\hat{\pi}}}=\{e_1^{\hat{\pi}},\dots,e_n^{\hat{\pi}}\}$. Any block of $e_\tau^{\hat{\pi}}$ restricted to $\{e_1^{\hat{\pi}},\dots,e_n^{\hat{\pi}}\}$ must be of size $2$ otherwise $\C_{\tau}(\pi)=0$, so we may assume that $n=2t$ and the blocks of $\tau$ are $\{e_1^{\hat{\pi}},e_2^{\hat{\pi}}\},\dots,\{e_{2t-1}^{\hat{\pi}},e_{2t}^{\hat{\pi}}\}$. This means that the edge joining the white vertex consisting on the connected component of $G$ containing $\{e_1^{\hat{\pi}},\dots,e_n^{\hat{\pi}}\}$ and the black vertex the block $\{e_i^{\hat{\pi}},e_{i+1}^{\hat{\pi}}\}$ of $e_\tau^{\hat{\pi}}$ is a leaf of $G(\sigma,\tau)$ for $i=1,3,\dots,2k-1$. By definition of $\hat{\tau}$ all the black vertices $\{e_i^{\hat{\pi}},e_{i+1}^{\hat{\pi}}\}$ are merged in $G(\sigma,\tau)^{\hat{\tau}}$ and only them as we do not merge leaves that correspond to loops of $G$ connected to another connected component of $G$. This results in $\{e_1^{\hat{\pi}},\dots,e_n^{\hat{\pi}}\}$ being a block of $e_{\hat{\tau}}^{\hat{\pi}}$ and therefore $\{\overline{e_1^{\hat{\pi}}}\}$ being a block of $\bar{\tau}$. We conclude by Lemmas \ref{Lemma: Underline partition respects order} and \ref{Lemma: Inequality with equality iff tree},
\begin{equation}\label{Aux: Inequality 1}
\#(\pi)=\#(\underset{\sim_2}{\bar{\tau_2}})\leq \#(\underset{\sim}{\bar{\tau}}) \leq |V^{\hat{\pi}}|-2(s-1),
\end{equation}

By definition of $\hat{\tau}$, the blocks of $\bar{\tau}$ of size larger than $2$ contain edges $\overline{e_u^{\hat{\pi}}}$ where $e_u^{\pi}$ is a non-loop of $T^{\pi}$. However $e_u^{\pi}$ is a non-loop of $T^{\pi}$ if and only if $e_u^{\hat{\pi}}$ is a non-loop of $T^{\hat{\pi}}$. So we conclude that if $\#(\underset{\sim_2}{\bar{\tau_2}})=\#(\underset{\sim}{\bar{\tau}})$ then the graph $T^{\pi}$ is obtained by identifying the non-loops edges of $T^{\hat{\pi}}$ that are joining the same pair of vertices of $T^{\pi}$. Conversely if $T^{\pi}$ is obtained by identifying non-loops edges of $T^{\hat{\pi}}$ then clearly identifying all the non-loops edges of $T^{\hat{\pi}}$ that are joining the same pair of vertices of $T^{\pi}$ recovers $T^{\pi}$, i.e. $\#(\underset{\sim_2}{\bar{\tau_2}})=\#(\underset{\sim}{\bar{\tau}})$. So the first inequality becomes equality if and only if $(L_3)$ is satisfied with the minor difference that we consider $\hat{\pi}$ instead of $\gamma\sigma$.

Similarly, thanks to \ref{Lemma: Inequality with equality iff tree} the second inequality becomes equality if and only if $G(\bar{\tau})$ is a tree. This occurs if and only if $\overline{T^{\hat{\pi}}(\sigma,\tau)^{\overline{\pi_\tau}}}$ is a tree. This is condition $(L_4)$ taking $\hat{\pi}$ instead of $\gamma\sigma$.
 

On the other hand, observe that $\gamma\sigma\leq \pi$ by Lemma \ref{Lemma: Gamma sigma overline less or equal than pi}, hence for each $1\leq i\leq s$ we have $\gamma_i\sigma_i \leq \pi_i$ with $\gamma_i=\gamma|_{W_i}$ and $\sigma_i=\sigma|_{W_i}$. Thus,
$$\sum_{i=1}^s \#(\pi_i) \leq \sum_{i=1}^s \gamma_i\sigma_i =\#(\gamma\sigma)\leq m+2s-\#(\gamma)-\#(\sigma).$$
Where last inequality comes from \cite[Equation 2.9]{MN}.
We then have,
\begin{eqnarray*}\label{Aux: Inequality 2}
|V^{\hat{\pi}}|-2(s-1) &=& \sum_{i=1}^s \#(\pi_i)-2(s-1) \nonumber \\
&\leq & \sum_{i=1}^s \#(\gamma_i\sigma_i)-2(s-1) \\
&=& \#(\gamma\sigma)-2(s-1)\leq m/2-r+2.  \nonumber  
\end{eqnarray*}
With Equality if and only if,
\begin{enumerate}
    \item $\pi_i=\gamma_i\sigma_i$, equivalently $\hat{\pi}=\gamma\sigma$.
    \item $\sigma\in S_{NC}(\gamma)\cup S_{NC}^{nc}(\gamma)$
\end{enumerate}

The last two conditions are precisely $(L_2)$ and $(L_1)$ respectively. From here we conclude as whenever the four conditions are satisfied we can change $\hat{\pi}$ by $\gamma\sigma$ and viceversa.
\end{proof}

\begin{example}\label{Example: Example of limit and non-limit partition}
Let $r=10$ and $m_1=m_6=5,m_2=m_7=4,m_3=m_8=3$ and $m_4=m_5=m_9=m_{10}=1$. So that,
\begin{eqnarray*}
\gamma &=& (1,2,3,4,5)(6,7,8,9)(10,11,12)(13)(14) \\
&& (15,16,17,18,19)(20,21,22,23)(24,25,26)(27)(28).
\end{eqnarray*}
Let,
\begin{eqnarray*}
\pi &=&\{1,4,8\},\{1,3,5,9,14,15,17,19,23,28\}\\
&&\{2,6,8,12\},\{16,20,22,26\},\{7,10,11,13\},\{21,24,25,27\}.
\end{eqnarray*}
The graph $T^{\pi}$ can be seen in Figure \ref{Figure: Example of limit and non-limit partition} part A, so that,
\begin{eqnarray*}
\overline{\pi}&=&\{3,4,17,18\},\{5,14,19,28\},\{1,2,8,9\},\{15,16,22,23\}\\
&&\{6,7,11,12\},\{20,21,25,26\},\{10,13\},\{24,27\}.
\end{eqnarray*}
Let,
\begin{eqnarray*}
\tau &=& \{3,4,17,18\},\{5,14\},\{19,28\},\{1,2,8,9\},\{15,16,22,23\} \\
&& \{6,11\},\{7,12\},\{20,25\},\{21,26\},\{10,13\},\{24,27\}.
\end{eqnarray*}
So that $(\pi,\tau)$ satisfies $\#(\pi)-m/2+r-2=0$ and $\K_{\tau}(\pi)\neq 0$. The permutation $\sigma\in \cP_2(m)$ is determined at the blocks of $\tau$ of size $2$, for the blocks of size $4$ we have two options for each block. Any choice of $\sigma$ actually works, this can be seen from the proof where there is no hypothesis for the choice except that if $\{u,v\}$ is a block of $\sigma$ then $e_u^{\pi}$ and $e_v^{\pi}$ connect the same pair of vertices with the opposite orientation. Let us choose,
\begin{eqnarray*}
\sigma&=&(1,8)(2,9)(3,4)(5,14)(6,11)(7,12)(10,13)(15,22)(16,23)(17,18) \\&&(19,28)(20,25)(21,26)(24,27)
\end{eqnarray*}
Note that $\gamma\vee\sigma=\{1,\dots,14\}\{15,\dots,28\}$ and,
\begin{eqnarray*}
\gamma\sigma &=&(1,9,3,5,14)(2,6,12,8)(4)(7,10,13,11)\\
&&(15,23,17,19,28)(16,20,26,22)(18)(21,24,27,25)
\end{eqnarray*}
Thus 
$$\#(\sigma)+\#(\gamma\sigma)+\#(\gamma)=14+8+10=32=m+2\#(\gamma\vee\sigma),$$
so that $\sigma\in \NC_2^{nc}(m_1,\dots,m_10)$ and then condition $L_1$ is satisfied. If $A=\{1,\dots,14\}$ and $B=\{15,\dots,28\}$ then,
\begin{eqnarray*}
\gamma|_A\sigma|_A = (1,9,3,5,14)(2,6,12,8)(4)(7,10,13,11) = \pi|_A
\end{eqnarray*}
Similarly we can verify $\gamma|_B\sigma|_B=\pi|_B$ so that condition $L_2$ is satisfied. To verify that condition $L_3$ is satisfied observe that the blocks $B_1=\{5,14\},B_2=\{19,28\},B_3=\{10,13\}$ and $B_4=\{24,27\}$ of $\sigma$ correspond to loops $e_{B_i}^{\gamma\sigma}$ of $T^{\gamma\sigma}$. Note that only the blocks $B_1$ and $B_2$ are in the same block of $\overline{\pi}$. The loop $e_{B_1}^{\gamma\sigma}$ has vertex $[B_1]_{\gamma\sigma}=\{1,9,3,5,14\}$ while the loop $e_{B_2}^{\gamma\sigma}$ has vertex $[B_2]_{\gamma\sigma}=\{15,23,17,19,28\}$. Note that the edges $e_3^{\gamma\sigma}$ and $e_{17}^{\gamma\sigma}$ are non-loop edges of $T^{\gamma\sigma}$ such that $[B_1]_{\gamma\sigma}$ and $[B_2]_{\gamma\sigma}$ are vertices of $e_3^{\gamma\sigma}$ and $e_{17}^{\gamma\sigma}$ respectively. Finally the edges $e_3^{\pi}$ and $e_{17}^{\pi}$ connect the same pair of vertices and one of this vertices is $\{1,3,5,9,14,15,17,19,23,28\}$ that contains $[B_1]_{\gamma\sigma}$ and $[B_2]_{\gamma\sigma}$ as condition $L_3$ says. Intuitively condition $L_3$ says that the graph $T^{\pi}$ is obtained from the graph $T^{\gamma\sigma}$ by connecting components of $T^{\gamma\sigma}$ along edges which are non-loops. In this example this edges correspond to $e_3^{\gamma\sigma}$ and $e_{17}^{\gamma\sigma}$. Finally, as mentioned before, the loops of $T^{\pi}$ are $e_5^{\pi},e_{14}^{\pi},e_{19}^{\pi},e_{28}^{\pi},e_{10}^{\pi},e_{13}^{\pi},e_{24}^{\pi}$ and $e_{27}^{\pi}$. Thus,
\begin{eqnarray*}
\overline{\pi_\tau}&=& \{3,4,17,18\},\{5,14\},\{19,28\},\{1,2,8,9\},\{15,16,22,23\}\\
&&\{6,7,11,12\},\{20,21,25,26\},\{10,13\},\{24,27\}.
\end{eqnarray*}
From this it can be verified that the graph $\overline{T^{\gamma\sigma}(\sigma,\tau)^{\overline{\pi_\tau}}}$ is a tree as seen in Figure \ref{Figure: Example of limit and non-limit partition} part C. Intuitively, this condition says that the connection of the connected components of $T^{\gamma\sigma}$ along non-loops edges is done in a minimal way, that is, we identified the smallest number of edges, this condition is equivalent to the tree condition. One might be tempted to think that only conditions $L_1,L_2$ and $L_4$ are required. However the loops plays a crucial role, as a counterexample to why $L_3$ condition is required we propose the partition,
\begin{eqnarray*}
\pi^\prime &=&\{1,4,8\},\{1,3,5,9,14,15,17,19,23,28\},\{2,6,8,12\}\\
&&\{16,20,22,26\},\{7,10,11,13,21,24,25,27\}.
\end{eqnarray*}
This partition is obtained from merging the blocks $\{7,10,11,13\}$ and $\{21,24,25,27\}$ of $\pi$. In terms of the graph $T^{\pi^\prime}$ it means the loops $e_{B_3}^{\pi^\prime}$ and $e_{B_4}^{\pi^\prime}$ are connecting the same vertex. By definition of $\overline{{\pi^\prime}_\tau}$ the condition $L_4$ is still satisfied. However $\#(\pi^\prime)-m/2+r-2=-1<0$ so this partition is not in the limit. This illustrates why the connection along edges of the connected components of $T^{\gamma\sigma}$ must be done trough non-loop edges.
\end{example}

\begin{figure}
    \centering
    \includegraphics[width=0.7\textwidth]{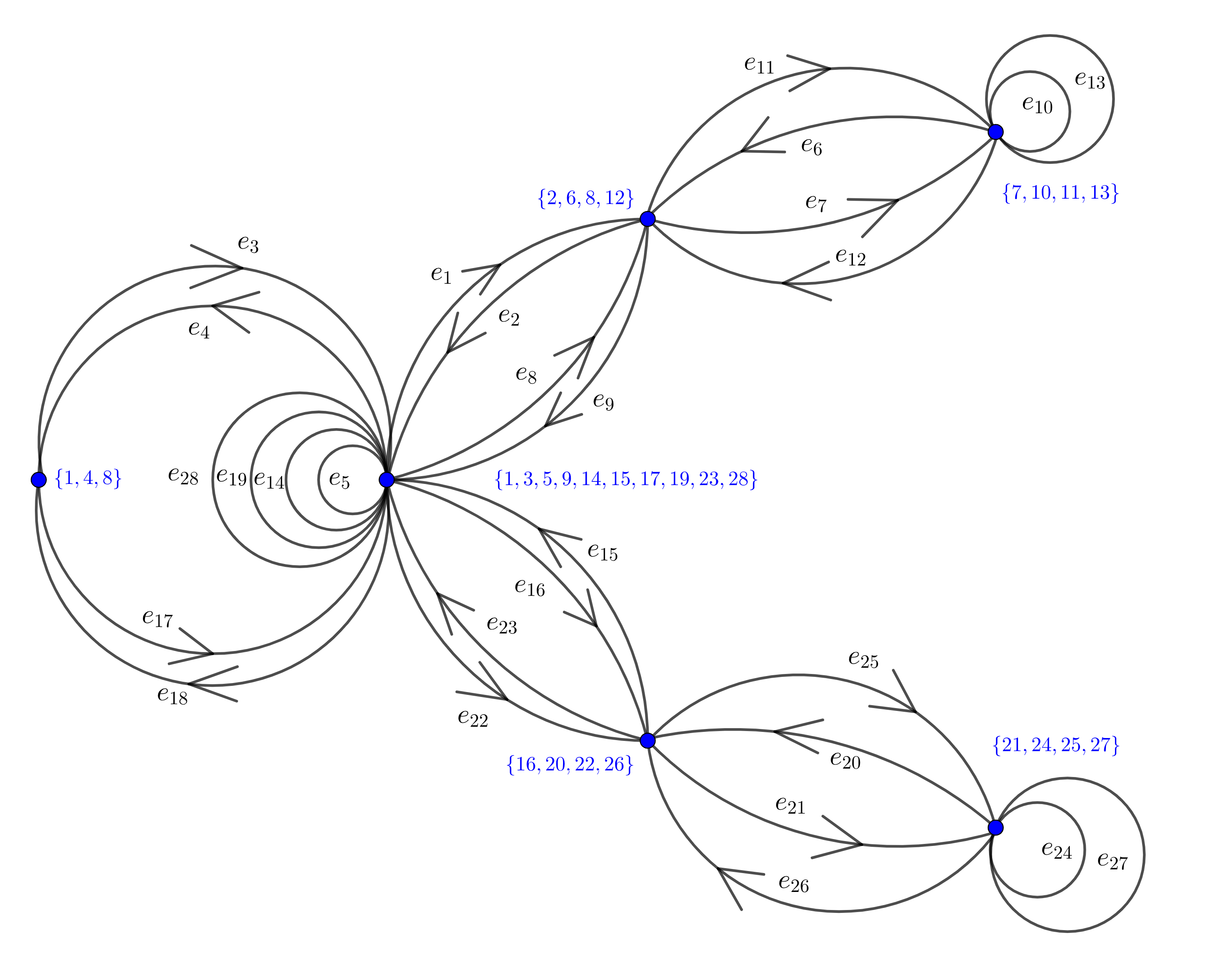}
    
{\small A) The quotient graph $T^{\pi}$.}

    \includegraphics[width=0.6\textwidth]{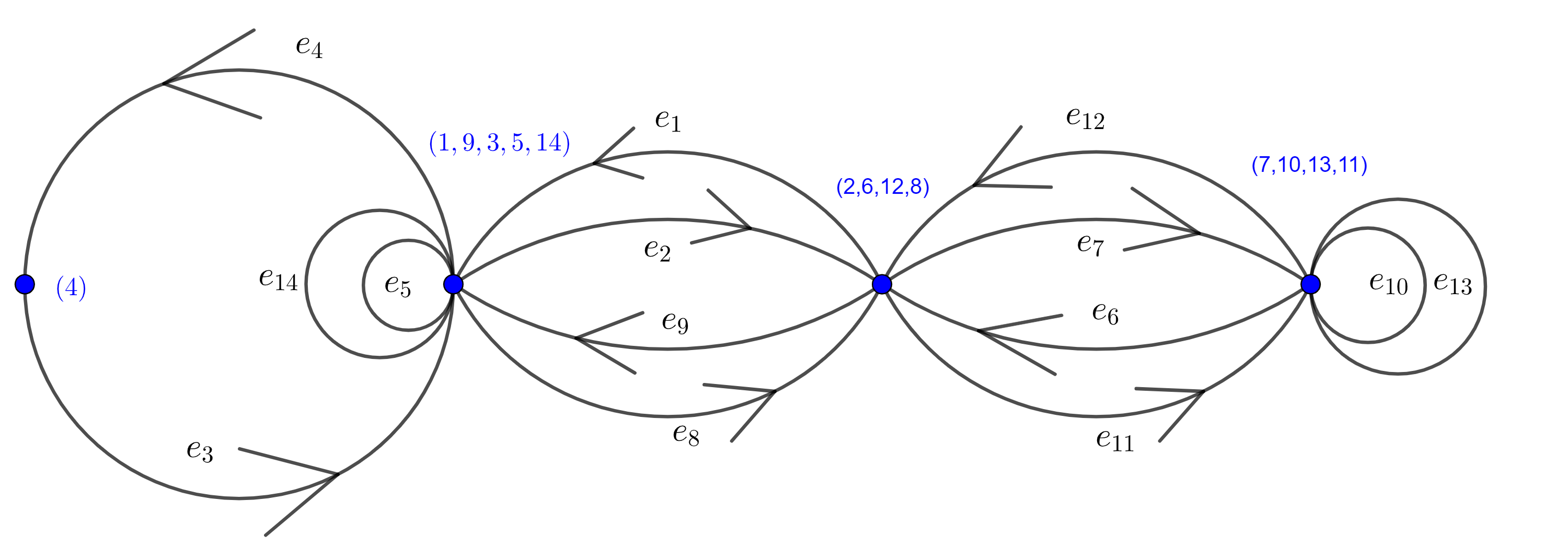}
    \includegraphics[width=0.6\textwidth]{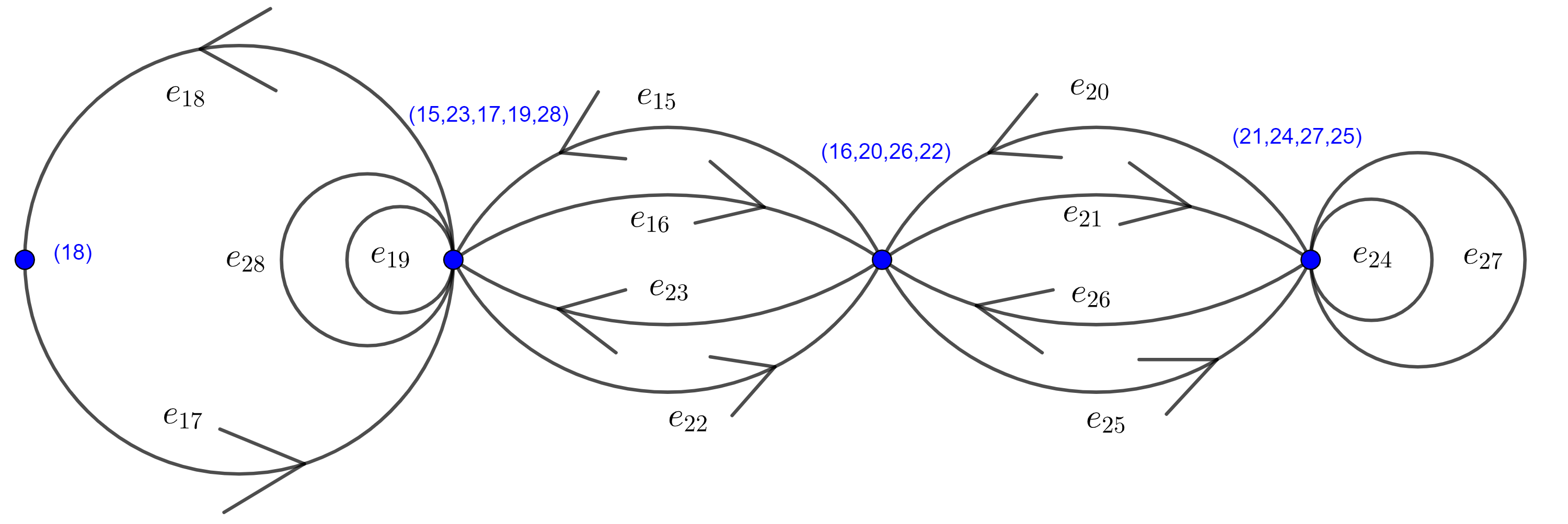}
    
{\small B) The graph $T^{\gamma\sigma}$. Note that $T^{\gamma\sigma}$ has two connected components}\\

    \includegraphics[width=0.7\textwidth]{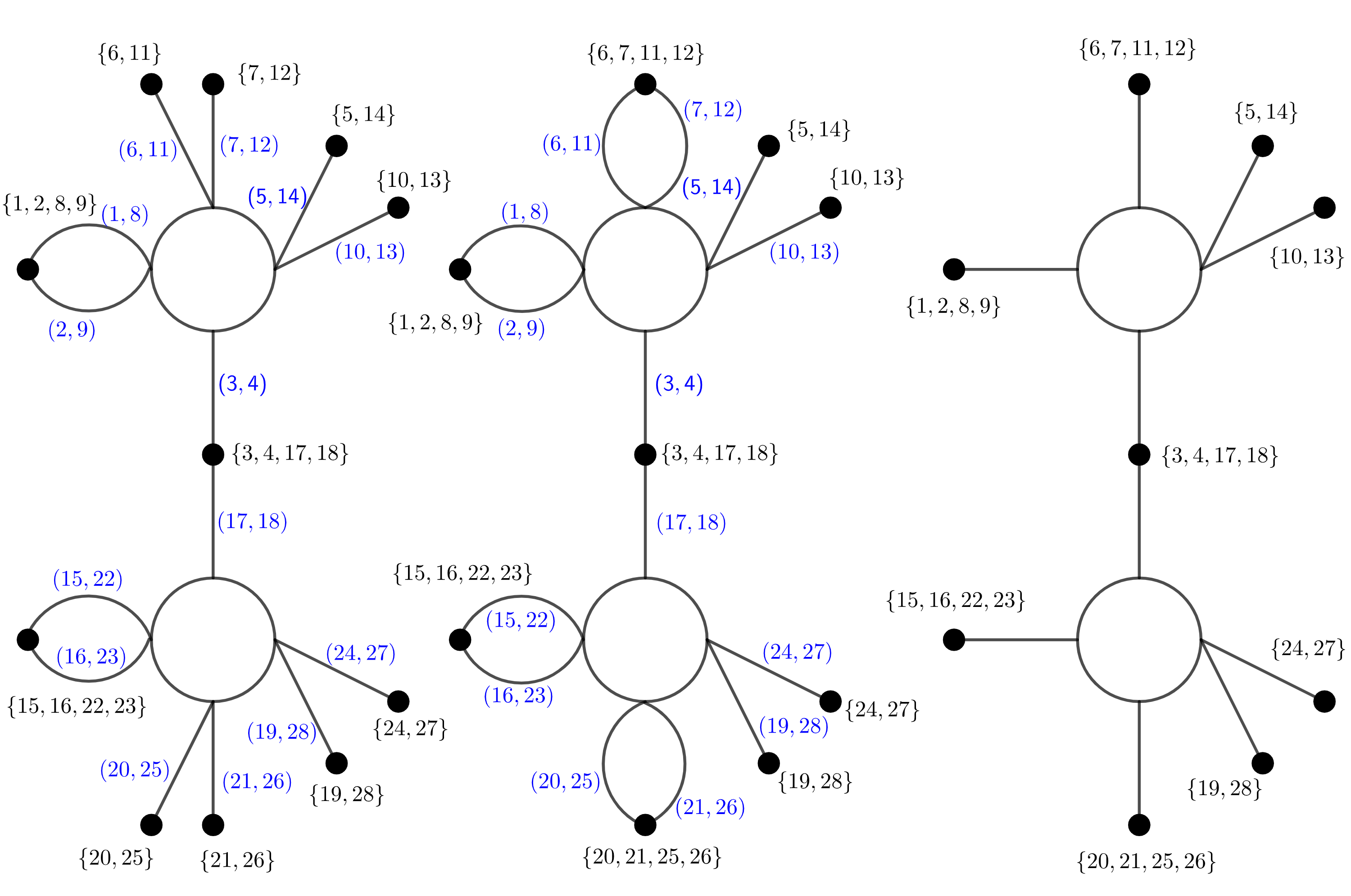}

{\small C) From left to right the graphs $T^{\gamma\sigma}(\sigma,\tau),T^{\gamma\sigma}(\sigma,\tau)^{\overline{\pi_\tau}}$ and $\overline{T^{\gamma\sigma}(\sigma,\tau)^{\overline{\pi_\tau}}}.$}\\
    
    \caption{The graphs of Example \ref{Example: Example of limit and non-limit partition}}
    \label{Figure: Example of limit and non-limit partition}
\end{figure}

\begin{remark}\label{Remark: edge type is preserved}
In the setting of Lemma \ref{Lemma: First characterization of the limit} note that $e_u^{\gamma\sigma}$ is a non-loop of $T^{\gamma\sigma}$ if and only if $e_u^{\pi}$ is a non-loop of $T^{\pi}$. This can be obtained from Lemma \ref{Lemma: Equality when tree} part $(iii)$ which guarantees that $\pi$ never merges two blocks of $\gamma\sigma$ in the same connected component and therefore any non-loop of $T^{\gamma\sigma}$ is also a non-loop of $T^{\pi}$. Conversely if $e_u^{\pi}$ is a non-loop of $T^{\pi}$ then it must be a non-loop of $T^{\gamma\sigma}$ since $\gamma\sigma\leq \pi$.
\end{remark}

\begin{corollary}\label{Corollary: Existence of moments}
For any $N \geq m/2-r+2$,
$$\alpha_{m_1,\dots,m_r}^{(N)}= \kern-1.5em
\sum_{\substack{\pi\in \cP(V) \\ \#(\pi)-m/2+r-2=0}}\sum_{\substack{\tau\in \cP(V) \\ \tau\vee\gamma =1_m}}\C_{\tau}(x_{i^{\pi}(1),i^{\pi}(\gamma(1))},\dots,x_{i^{\pi}(m),i^{\pi}(\gamma(m))})+o(1).$$
Consequently
$$\alpha_{m_1,\dots,m_r}= \kern-1em
\sum_{\substack{\pi\in \cP(V) \\ \#(\pi)-m/2+r-2=0}}\sum_{\substack{\tau\in \cP(V) \\ \tau\vee\gamma =1_m}}\C_{\tau}(x_{i^{\pi}(1),i^{\pi}(\gamma(1))},\dots,x_{i^{\pi}(m),i^{\pi}(\gamma(m))}).$$
\end{corollary}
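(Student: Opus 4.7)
The plan is to extract the leading-order contributions from Equation~\ref{Equation: Third equality of the cumulant} directly, using the inequality already established in Lemma~\ref{Lemma: First characterization of the limit}. Recall that
$$q(N,\pi) = N(N-1)\cdots(N-\#(\pi)+1) = N^{\#(\pi)}\bigl(1+O(N^{-1})\bigr),$$
so each term of \eqref{Equation: Third equality of the cumulant} takes the form
$$N^{-m/2+r-2}q(N,\pi)\,\C_{\tau}(\pi) = N^{\#(\pi)-m/2+r-2}\bigl(1+O(N^{-1})\bigr)\C_{\tau}(\pi).$$
The cumulants $\C_{\tau}(\pi)$ are products of classical cumulants of entries of $X_N$, which are bounded uniformly in $N$ by Definition~\ref{Definition: Complex Wigner Matrix} (the hypothesis $\E(|x_{i,j}|^k)<\infty$ for all $k$), and the number of pairs $(\pi,\tau)\in\cP(m)\times\cP(m)$ is a finite constant depending only on $m$ and not on $N$.

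The key input is Lemma~\ref{Lemma: First characterization of the limit}$(5)$: whenever $\tau\vee\gamma=1_m$ and $\C_{\tau}(\pi)\neq 0$, we have $\#(\pi) - m/2 + r - 2 \leq 0$. Thus I would split the double sum in \eqref{Equation: Third equality of the cumulant} into two pieces according as $\#(\pi)-m/2+r-2 = 0$ or is strictly negative, with no surviving positive-order terms. On the strictly negative piece, each summand is $O(N^{-1})$ and, because there are only finitely many $(\pi,\tau)$ contributing and each $\C_{\tau}(\pi)$ is bounded independently of $N$, the total contribution of this piece is $o(1)$ as $N\to\infty$.

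On the zero-order piece, where $\#(\pi) = m/2 - r + 2$, the factor $N^{-m/2+r-2}q(N,\pi)$ equals $1 + O(N^{-1})$ provided $N \geq m/2 - r + 2$ (so that $q(N,\pi)$ does not vanish). Hence
$$N^{-m/2+r-2}q(N,\pi)\,\C_{\tau}(\pi) = \C_{\tau}(\pi) + O(N^{-1}),$$
and summing over the finitely many admissible pairs gives the first displayed identity of the Corollary, with the error aggregated into the $o(1)$ term. Taking $N\to\infty$ eliminates the error and yields the second identity, which also proves that the limit defining $\alpha_{m_1,\dots,m_r}$ exists.

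I do not expect any serious obstacle: all the combinatorial and cumulant-theoretic content is already packaged into Lemma~\ref{Lemma: First characterization of the limit}, and the remaining work is an elementary order-of-magnitude bookkeeping. The only points to check carefully are the uniform boundedness of $\C_{\tau}(\pi)$ in $N$ (immediate from the moment hypothesis on the entries) and that the counting set $\cP(m)\times\cP(m)$ is independent of $N$, both of which are straightforward.
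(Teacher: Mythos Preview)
Your proposal is correct and follows essentially the same approach as the paper's own proof, which simply cites Equation~\ref{Equation: Third equality of the cumulant} and Lemma~\ref{Lemma: First characterization of the limit} and notes the role of the hypothesis $N\geq m/2-r+2$ in keeping $q(N,\pi)$ from vanishing on the leading-order terms. You have merely made explicit the order-of-magnitude bookkeeping that the paper leaves implicit.
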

\begin{proof}
The proof follows immediately from expression \ref{Equation: Third equality of the cumulant} and Lemma \ref{Lemma: First characterization of the limit}. We just take $N\geq m/2-r+2$ to guarantee that $q(N,\pi)$ is a polynomial on $1/N$ with leading coefficient $1$, otherwise $q(N,\pi)=0$.
\end{proof}

\section{Characterization of the leading order}\label{Section: Leading order}

In Lemma \ref{Lemma: First characterization of the limit} we characterized the leading term of $\alpha_{m_1,\dots,m_r}^{(N)}$ via the conditions $L_1,L_2,L_3$. In order to find the free cumulants $\K_{m_1,\dots,m_r}$ we would like to translate these conditions to something involving the set of non-crossing partitioned permutations. That will be the goal of this section. Let us remind that in Section \ref{Section: Asymptotics} we proved that for pair $(\pi,\tau)\in \cP(m)\times \cP(m)$ with $\C_{\tau}(\pi)\neq 0$ we have $\#(\pi)-m/2+r-2 \leq 0$ with equality if and only if there exist $\sigma\leq \tau$ that satisfies,
\begin{enumerate}[label=$($\subscript{L}{{\arabic*}}$)$]
    \item $\sigma\in \NC_2(m_1,\dots,m_r)\cup \NC_2^{nc}(m_1,\dots,m_r)$.
    \item For each block, $W$, of $\sigma\vee\gamma$, the partition $\pi|_W = \gamma|_W\sigma|_W$.
	\item The graph $T^{\pi}$ is the quotient graph along edges, all of them non-loops, of the graph $T^{\gamma\sigma}$.

    \item The graph $\overline{T^{\gamma\sigma}(\sigma,\tau)^{\overline{\pi_\tau}}}$ is a tree.
\end{enumerate}
For now on any triple $(\sigma,\tau,\pi)$ satisfying $L_1,L_2,L_3,L_4$ will be called a \textit{limit triple}. The limit triples characterize the moments $\alpha_{m_1,\dots,m_r}$. Let us now introduce the extra condition,

\begin{equation*}\tag{$L_4^\prime$}\label{Condition: L4 prime}
\text{The graph }T^{\gamma\sigma}(\sigma,\tau)\text{ is a tree}.
\end{equation*}
For a limit triple the condition \ref{Condition: L4 prime} might or might not be satisfied. So we incorporate the following notation.

\begin{notation}\label{Notation: Definition of types of limit triples}
\begin{enumerate}
    \item We say that the limit triple is of \textit{non-crossing type} if it satisfies $L_1,L_2,L_3,L_4,L_4^{\prime}$ and is of \textit{crossing type} if it only satisfies $L_1,L_2,L_3,L_4$.
    \item For $(\pi,\tau)\in \cP(m)\times \cP(m)$ with $\C_{\tau}(\pi)\neq 0$, we defined the disjoint sets,
    $$\mathcal{A}=\{(\pi,\tau): \exists \sigma 
        \text{ s.t. }(\sigma,\tau,\pi)\text{ is a non-crossing limit triple}\},$$
    and,
    \begin{multline*}
        \mathcal{B}=\{(\pi,\tau): \exists \sigma
        \text{ s.t. }(\sigma,\tau,\pi)\text{ is limit triple but there is no }\sigma \\ \text{ with }(\sigma,\tau,\pi)\text{ being a non-crossing limit triple}\}.
    \end{multline*}
\end{enumerate}
\end{notation}

An example of a limit triple $(\sigma,\tau,\pi)$ can be seen below in Example \ref{Example: Big example}.

\begin{example}\label{Example: Big example}
Let us consider $m_1=4$ and $m_2=m_3=m_4=m_5=m_6=m_7=2$. Let
$$\pi=\{1,12,14,16\},\{2,4,6,8,10,11,13,15\},\{3,5,7,9\},$$ 
so that 
$$\overline{\pi}=\{1,4,11,12,13,14,15,16\},\{2,3,5,6,7,8,9,10\}.$$
Let
$$\tau=\{1,11,13,16\},\{4,12,14,15\},\{2,5,7,10\},\{3,6,8,9\}.$$
See Figure \ref{Figure: Big example} par A) to see the quotient graph $T^{\pi}$. It can be easily verified that $\#(\pi)-m/2+r-2=0$ and $\C_{\tau}(\pi)\neq 0$. We take,
$$\sigma=(1,11)(13,16)(4,12)(14,15)(2,5)(7,10)(3,6)(8,9).$$
In Figure \ref{Figure: Big example} part C) we can see the graph $T^{\gamma\sigma}(\sigma,\tau)$. Inside each white vertex we draw the corresponding connected component of $T^{\gamma\sigma}$ so it can be seen that condition $L_2$ is satisfied. In Figure \ref{Figure: Big example} part D) we can see the graph $\Gamma(\tau,\sigma,\gamma)$ which is the same as $T^{\gamma\sigma}(\sigma,\tau)$ but the white vertices are indexed by the connected components of $\sigma\vee\gamma$ instead of the connected components of $T^{\gamma\sigma}$. Inside each white vertex we draw the corresponding $\sigma|_W$ for each block $W$ of $\sigma\vee\gamma$ so it can be seen that $\sigma\in \NC_2^{nc}(4,2,2,2,2,2,2)$. The condition $L_3$ is automatically satisfied as there are no loops in this example and therefore in this example $\overline{\pi}=\overline{\pi_\tau}$. Note that $L_4^\prime$ is not satisfied and it can be verified there is no $\sigma$ that satisfies $L_4^\prime$ so this correspond to a case where $(\pi,\tau)\in\mathcal{B}$. In Figure \ref{Figure: Big example} part E) we draw the graph $T^{\gamma\sigma}(\sigma,\tau)^{\overline{\pi_\tau}}$ which as it can be seen it is a tree after we forget multiplicity of the edges so that it satisfies condition $L_4$. In this last Figure we draw inside each white vertex the corresponding $\sigma|_W$ for each $W$ block of $\sigma\vee\gamma$ instead of the connected components of  $T^{\gamma\sigma}$.
\end{example}

\begin{figure}
    \centering
    \includegraphics[width=0.9\textwidth]{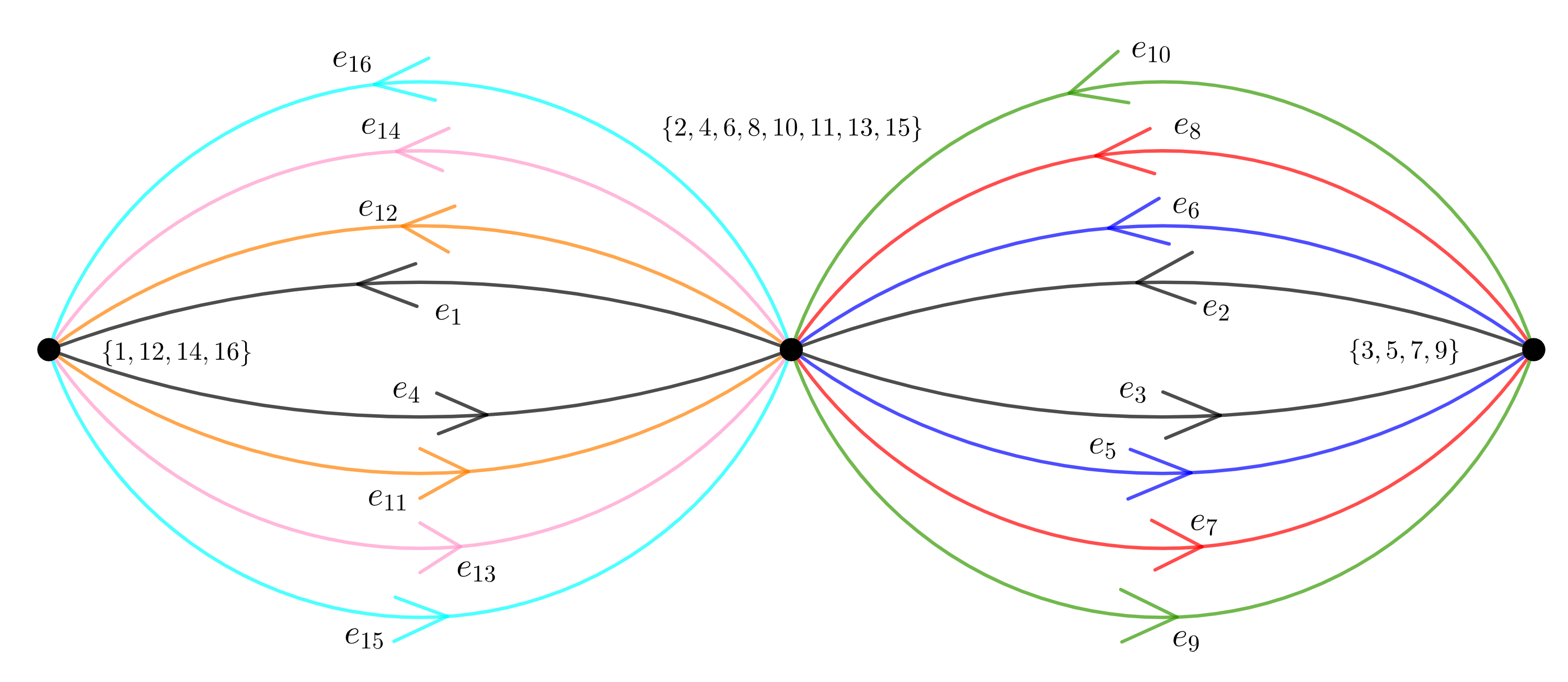}
    
A) The quotient graph $T^{\pi}$ corresponding to example \ref{Example: Big example}.

    \includegraphics[width=0.9\textwidth]{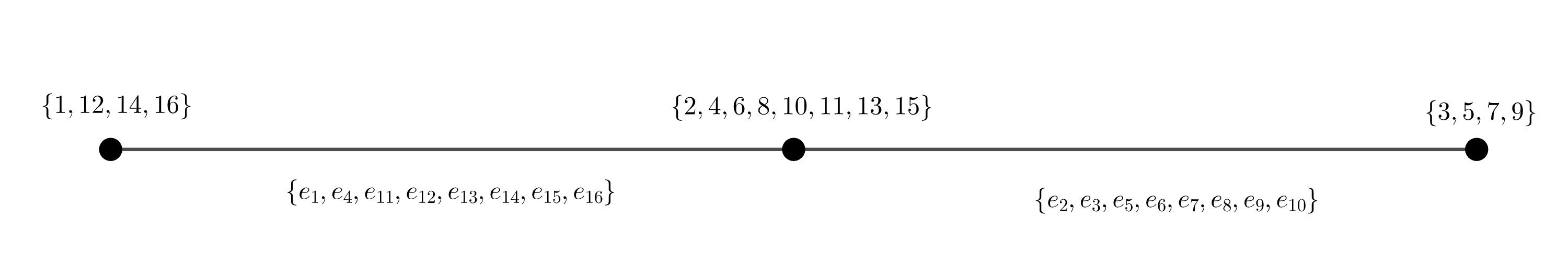}
    
B) The elementarization graph $\overline{T^{\pi}}$. The vertices are the blocks of $\pi$ while the edges are the equivalence classes $\overline{e_1}=\{e_1,e_4,e_{11},e_{12},e_{13}\ab ,e_{14},$ $e_{15},e_{16}\}$ and $\overline{e_2}=\{e_2,e_3,e_5,e_6,e_7,e_8,e_9,e_{10}\}$. Hence $\overline{\pi}=\{1,4,11,12,$
    \text{$13,14,15,16\},\{2,3,5,6,7,8,9,10\}$.}\\

    \includegraphics[width=0.9\textwidth]{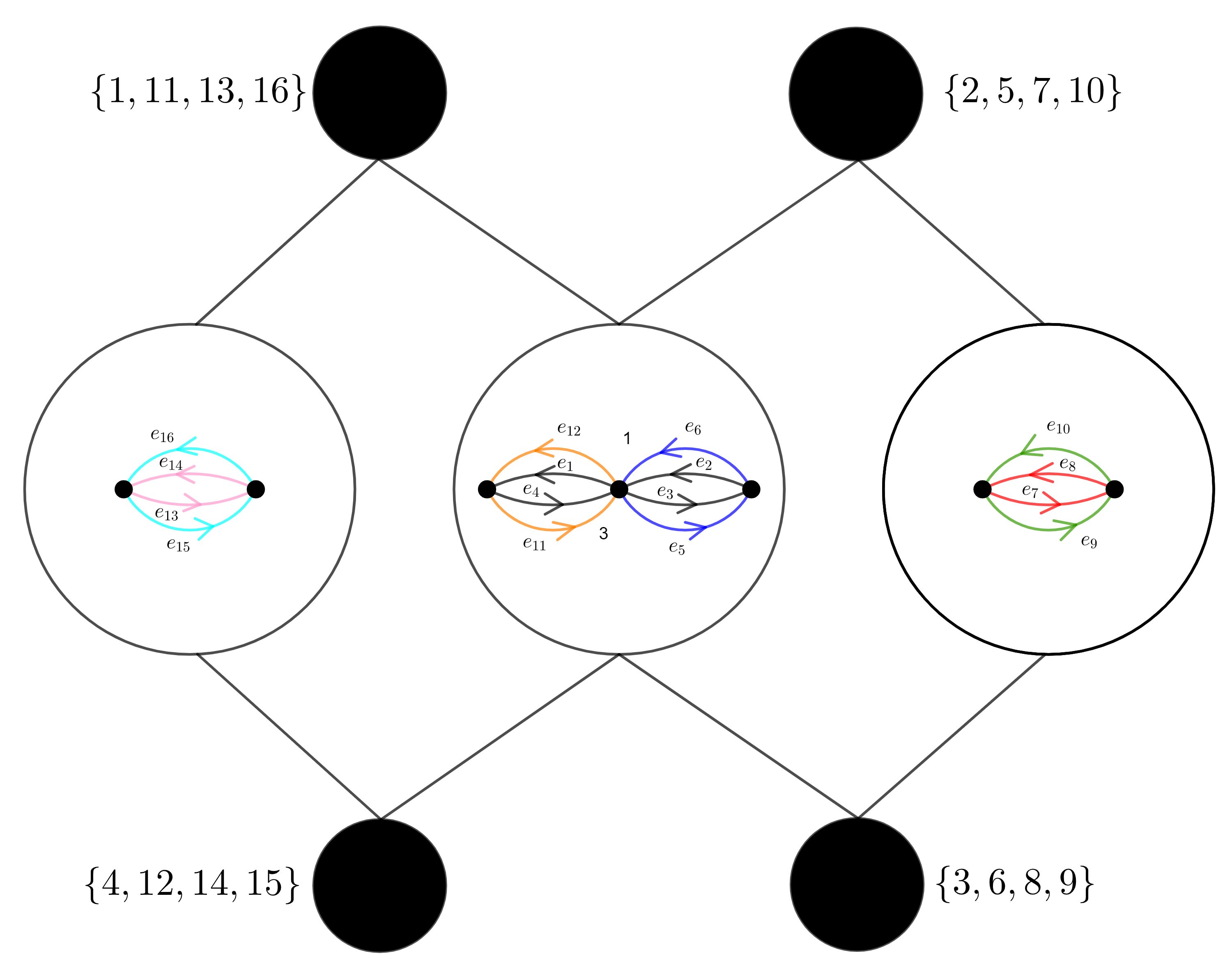}
    \text{C) The graph $T^{\gamma\sigma}(\sigma,\tau)$.}\\
    
    \caption{The graphs of Example \ref{Example: Big example}}
    \label{Figure: Big example}
\end{figure}

\addtocounter{figure}{-1}

\begin{figure}
    \centering
    \includegraphics[width=0.9\textwidth]{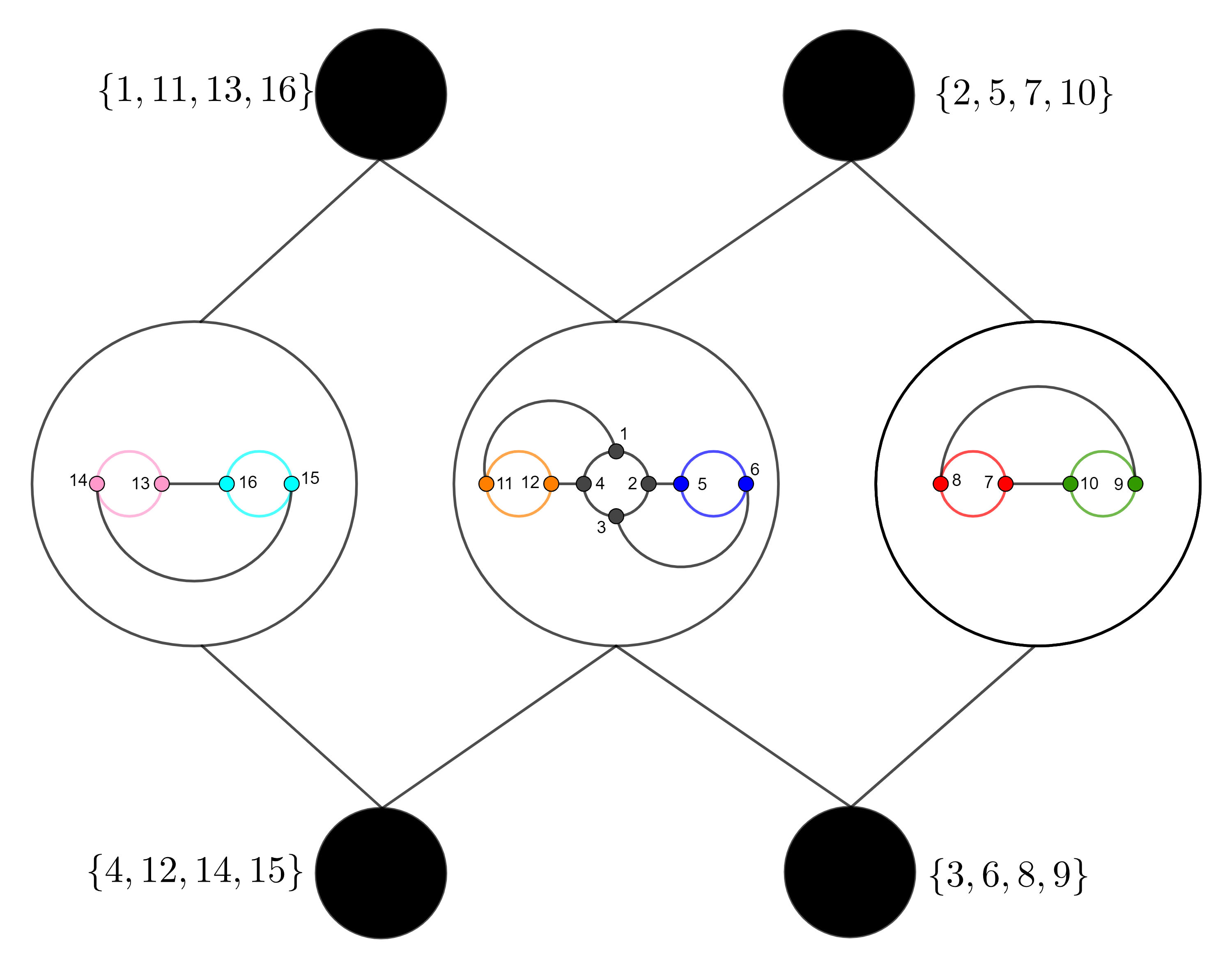}
    \text{D) The graph $\Gamma(\tau,\sigma,\gamma)$.}\\

    \includegraphics[width=0.9\textwidth]{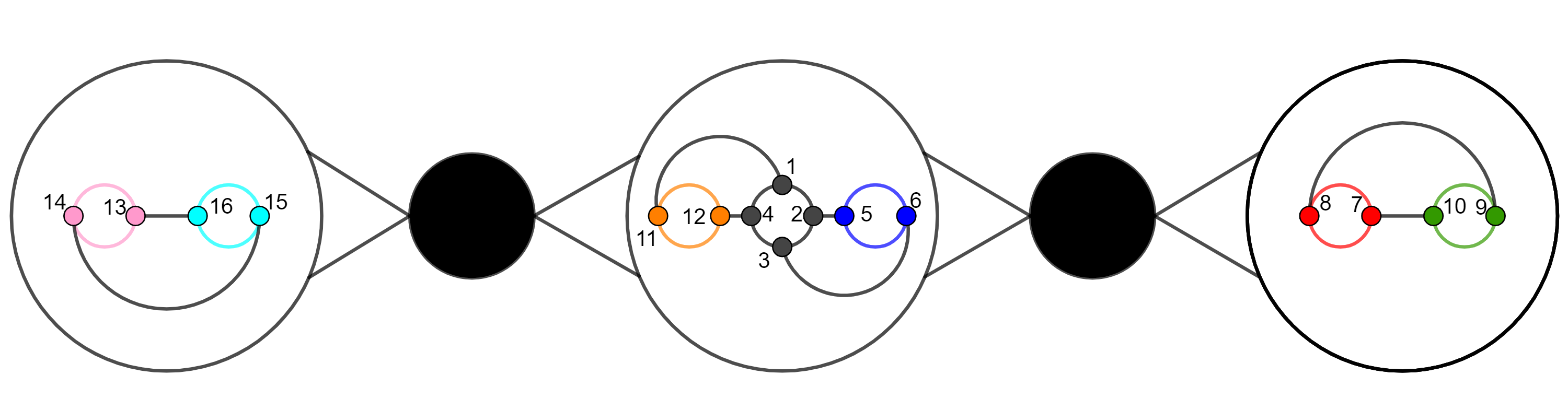}
    \text{E) The graph $T^{\gamma\sigma}(\sigma,\tau)^{\overline{\pi_\tau}}$.}\\
    
    \caption{The graphs of Example \ref{Example: Big example}}
\end{figure}

From Corollary \ref{Corollary: Existence of moments} and Lemma \ref{Lemma: First characterization of the limit} we get the following Proposition.

\begin{proposition}\label{Proposition: First expression of Alpha}
For any $m_1,\dots,m_r\in\mathbb{N}$,
$$\alpha_{m_1,\dots,m_r}=\sum_{(\pi,\tau)\in \mathcal{A}}\C_{\tau}(\pi)+\sum_{(\pi,\tau)\in \mathcal{B}}\C_{\tau}(\pi).$$
\end{proposition}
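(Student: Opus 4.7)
The plan is to read this proposition as a bookkeeping rearrangement of Corollary \ref{Corollary: Existence of moments} under the dichotomy established by Lemma \ref{Lemma: First characterization of the limit} and Notation \ref{Notation: Definition of types of limit triples}. The real work has already been done upstream, so the proof should be quite short.

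First, I would start from the closed form
\[
\alpha_{m_1,\dots,m_r} = \sum_{\substack{\pi\in \cP(V) \\ \#(\pi)-m/2+r-2=0}}\sum_{\substack{\tau\in \cP(V) \\ \tau\vee\gamma =1_m}} \C_{\tau}(\pi)
\]
provided by Corollary \ref{Corollary: Existence of moments}, and observe that terms with $\C_\tau(\pi)=0$ may be discarded without changing the sum. Thus $\alpha_{m_1,\dots,m_r}$ equals the sum of $\C_\tau(\pi)$ over all pairs $(\pi,\tau)$ satisfying simultaneously $\tau \vee \gamma = 1_m$, $\#(\pi)-m/2+r-2 = 0$ and $\C_\tau(\pi) \neq 0$.

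Second, I would apply the "moreover" part of Lemma \ref{Lemma: First characterization of the limit}: for a pair $(\pi,\tau)$ with $\C_\tau(\pi) \neq 0$, the equality $\#(\pi)-m/2+r-2 = 0$ holds precisely when there exists $\sigma \in \cP_2(m)$ with $\sigma \leq \tau$ satisfying $L_1$--$L_4$, i.e.\ when $(\sigma,\tau,\pi)$ is a limit triple for some $\sigma$. Therefore the range of summation above is exactly the set of pairs $(\pi,\tau)$ that extend to a limit triple.

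Third, by Notation \ref{Notation: Definition of types of limit triples}, every such pair $(\pi,\tau)$ falls into exactly one of the two classes: $\mathcal{A}$, if some extending $\sigma$ gives a non-crossing limit triple (condition $L_4'$ in addition to $L_1$--$L_4$), and $\mathcal{B}$ otherwise. The sets $\mathcal{A}$ and $\mathcal{B}$ are disjoint by construction and their union is the full index set identified in the previous step. Splitting the sum along this disjoint union yields
\[
\alpha_{m_1,\dots,m_r} \;=\; \sum_{(\pi,\tau)\in\mathcal{A}}\C_\tau(\pi) \;+\; \sum_{(\pi,\tau)\in\mathcal{B}}\C_\tau(\pi),
\]
which is the claim. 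There is no genuine obstacle here; the only thing to be careful about is that the quantifier on $\sigma$ is existential (the classification into $\mathcal{A}$ versus $\mathcal{B}$ depends on whether \emph{some} extending $\sigma$ satisfies $L_4'$), so each pair $(\pi,\tau)$ is counted exactly once regardless of how many $\sigma$'s extend it.
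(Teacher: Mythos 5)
Your proof is correct and takes essentially the same route as the paper, which simply remarks that the proposition follows from Corollary \ref{Corollary: Existence of moments} and Lemma \ref{Lemma: First characterization of the limit}; you have merely spelled out the bookkeeping (discarding vanishing terms, invoking the ``moreover'' equivalence, and splitting along the disjoint union $\mathcal{A}\sqcup\mathcal{B}$) that the paper leaves implicit.
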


The motivation on calling a limit triple of crossing and non-crossing type will be explained during the rest of the paper. However as a short motivation we may think of the non-crossing limit triples as the ones that are characterized by the non-crossing partitioned permutations.

\subsection{Non-crossing limit triples}\label{Subsection: Non-crossing limit triples}

\begin{definition}\label{Definition: Non-crossing partitioned permutations loop free}
Let $\sigma\in \cP_2(m)$. For a block $B\in \sigma$ we say that $B$ is a \textit{loop block} if $e_B^{\gamma\sigma}$ is a loop of $T^{\gamma\sigma}$. Here by $e_B^{\gamma\sigma}$ we mean the one defined as in notation \ref{Notation: Edge associated to block} which is well defined thanks to Lemma \ref{Lemma: Blocks of sigma determined edges of T^gamma sigma}. For a pair $(\tau,\sigma)\in \cP(m)\times \cP_2(m)$ we say that  the pair is \textit{loop free} if $\sigma\leq \tau$ and if $B$ is a loop block of $\sigma$ then $B$ is also a block of $\tau$. We denote by $\PS_{NC_2}^{loop-free}(m_1,\dots,m_r)$ to all the partitioned permutations $(\tau,\sigma)\in \PS_{NC_2}(m_1,\dots,m_r)$ where $(\tau,\sigma)$ is loop free.
\end{definition}

Our goal is to prove that each $(\tau,\sigma)\in \PS_{NC_2}^{loop-free}(m_1,\dots,m_r)$ corresponds to exactly $2^{\#(\sigma)-\#(\tau)}$ elements in $\mathcal{A}$ and conversely. Before doing this let us prove the following Proposition that characterizes a loop block.

\begin{definition}\label{Definition: Cutting through string}
Let $\sigma\in \NC_2(m_1,\dots,m_r)$ and let $B=\{u,v\}\in \sigma$ be a through string. Let $\sigma^\prime \in S_m$ be the permutation $\sigma^\prime =\sigma(u,v)$. We say that $B$ is \textit{cutting} if $\#(\gamma\vee\sigma^\prime)=2$. Equivalently removing the cycle $B$ disconnect $\gamma\vee\sigma$ into two blocks. See Figure \ref{Figure: Cutting string} for an example.
\end{definition}

\begin{figure}
    \centering
    \includegraphics[width=0.9\textwidth]{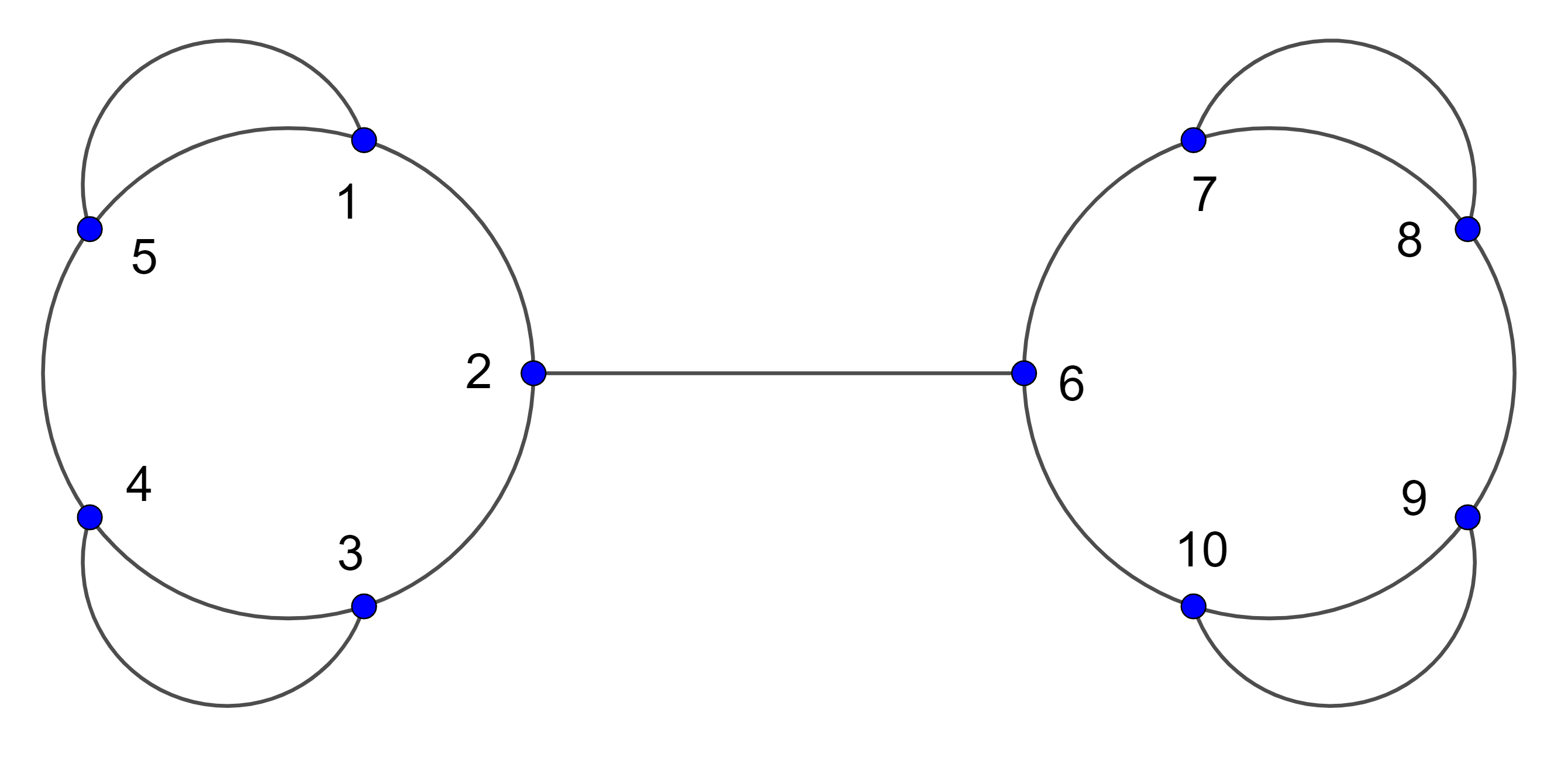}
    \caption{A non-crossing pairing $\sigma\in \NC_2(5,5)$ corresponding to $\sigma=(1,5)(2,6)(3,4)(7,8)(9,10)$ where the through string $(2,6)$ is cutting.}
    \label{Figure: Cutting string}
\end{figure}

\begin{remark}
Under the notation of Definition \ref{Definition: Cutting through string} it can be proved that $\sigma^\prime$ is still non-crossing. The proof consists of a simple calculation of the equality in Equation \ref{Inequality: Mingo and Nica inequality}.
\end{remark}

\begin{proposition}\label{Proposition: loop edge iff cutting through string}
Let $\sigma\in \NC_2(m_1,\dots,m_r)$ and let $B\in \sigma$ be a through string of $\sigma$. Then $B$ is cutting if and only if $B$ is a loop block.
\end{proposition}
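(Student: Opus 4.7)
The plan is to recast both conditions in terms of cycles of $\gamma\sigma$ and then to analyze what happens to $\gamma\vee\sigma'$ when we set $\sigma' := \sigma\cdot(u,v)$. First I would observe that $B=\{u,v\}$ is a loop block if and only if $u$ and $v$ lie in the same cycle of $\gamma\sigma$: since $\sigma(v)=u$, we have $\gamma\sigma(v)=\gamma(u)$, so $[\gamma(u)]_{\gamma\sigma}=[v]_{\gamma\sigma}$, and the edge $e_u^{\gamma\sigma}$ runs from $[\gamma(u)]_{\gamma\sigma}$ to $[u]_{\gamma\sigma}$, which is a loop precisely when $[u]_{\gamma\sigma}=[v]_{\gamma\sigma}$. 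I would also record the standard identity $\gamma\sigma\vee\gamma=\gamma\vee\sigma$, which implies that the connected components of $T^{\gamma\sigma}$ are in bijection with the blocks of $\gamma\vee\sigma$; in particular $T^{\gamma\sigma}$ is connected iff $\gamma\vee\sigma=1_m$.

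Next I would pass from $\sigma$ to $\sigma'$. The block $\{u,v\}$ of $\sigma$ splits into two singletons, so $\#(\sigma')=\#(\sigma)+1$. Right-multiplying $\gamma\sigma$ by the transposition $(u,v)$ splits the cycle of $\gamma\sigma$ containing $u$ and $v$ if they share one, and merges their two cycles otherwise; hence $\#(\gamma\sigma')=\#(\gamma\sigma)+1$ in the loop-block case and $\#(\gamma\sigma')=\#(\gamma\sigma)-1$ in the non-loop case. Viewed as set partitions, $\sigma'$ differs from $\sigma$ only by the splitting of $\{u,v\}$, so $\gamma\vee\sigma'$ differs from $\gamma\vee\sigma=1_m$ only by possibly splitting one block into two; therefore $\#(\gamma\vee\sigma')\in\{1,2\}$, and $B$ is cutting precisely when $\#(\gamma\vee\sigma')=2$.

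For the loop-block direction I would apply the Mingo--Nica inequality \ref{Inequality: Mingo and Nica inequality} to $\sigma'$. Since $\sigma'$ is an involution, $\#(\sigma'^{-1}\gamma)=\#(\sigma'\gamma)=\#(\gamma\sigma')$. Using the non-crossing equality $\#(\sigma)+\#(\gamma)+\#(\gamma\sigma)=m+2$, the left-hand side of the inequality evaluates to $m+4$, forcing $\#(\gamma\vee\sigma')\geq 2$; combined with the upper bound this gives $\#(\gamma\vee\sigma')=2$, so $B$ is cutting.

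For the non-loop direction the counting alone is too weak (it yields only $\#(\gamma\vee\sigma')\geq 1$), so I would argue directly on the quotient graph. Since $\gamma\vee\sigma=1_m$ the graph $T^{\gamma\sigma}$ is connected, and passing to $\gamma\sigma'$ merges the two distinct cycles $[u]_{\gamma\sigma}$ and $[v]_{\gamma\sigma}$ into one. Hence $T^{\gamma\sigma'}$ is obtained from $T^{\gamma\sigma}$ by identifying those two vertices, an operation which cannot disconnect a connected graph; therefore $\#(\gamma\vee\sigma')=1$ and $B$ is not cutting. I expect the main delicate point to be precisely this asymmetry: the Mingo--Nica count does not force equality in the non-loop case, and one genuinely needs to read off connectedness of $T^{\gamma\sigma'}$ from the vertex-contraction picture.
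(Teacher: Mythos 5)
Your proof is correct and follows essentially the same route as the paper: the forward implication (loop block $\Rightarrow$ cutting) via the Mingo--Nica count on $\sigma'=\sigma(u,v)$ is identical, and your reverse direction is just the contrapositive of the paper's (you show non-loop $\Rightarrow$ not cutting by contracting the two distinct vertices $[u]_{\gamma\sigma},[v]_{\gamma\sigma}$ of the connected graph $T^{\gamma\sigma}$, while the paper shows cutting $\Rightarrow$ loop block by inspecting the blocks of $\gamma\vee\sigma'$ using $\gamma\vee\gamma\sigma'=\gamma\vee\sigma'$). Both versions of the reverse direction rest on the same identity $\gamma\sigma\vee\gamma=\gamma\vee\sigma$, so the difference is cosmetic rather than structural.
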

\begin{proof}
Assume first that $B$ is cutting then $\#(\gamma\vee\sigma^\prime)=2$. Let $C$ and $D$ be the blocks of $\gamma\vee\sigma^\prime$, one of the blocks contains $u$ and the other contains $v$ otherwise $C$ and $D$ are also blocks of $\gamma\vee\sigma$ which is not possible. If $u$ and $v$ are in the same block of $\gamma\sigma^\prime$ then so are in the same block of $\gamma\vee\gamma\sigma^\prime$. However $\gamma\vee\gamma\sigma^\prime = \gamma\vee\sigma^\prime$, therefore $u$ and $v$ would be in the same block of $\gamma\vee\sigma^\prime$ which is a contradiction so it must be $u$ and $v$ are in disjoint blocks of $\gamma\sigma^\prime$ and then $u$ and $v$ are in the same block of $\gamma\sigma = \gamma\sigma^\prime(a,b)$. The latest means $e_B^{\gamma\sigma}$ is a loop of $T^{\gamma\sigma}$. Conversely suppose $e_B^{\gamma\sigma}$ is a loop of $T^{\gamma\sigma}$ then $u,v,\gamma(u)$ and $\gamma(v)$ are all in the same block of $\gamma\sigma$. Thus,
\begin{eqnarray*}
m+2\#(\gamma\vee\sigma^\prime) &\geq & \#(\sigma^\prime)+\#(\gamma)+ \#(\gamma\sigma^\prime) \\
&=& \#(\sigma)+1+\#(\gamma)+\#(\gamma\sigma(u,v)) \\
&=& \#(\sigma)+1+\#(\gamma)+\#(\gamma\sigma)+1 \\
&=& \#(\sigma)+\#(\gamma)+\#(\gamma\sigma)+2= m+4.
\end{eqnarray*}
Hence, $\#(\gamma\vee\sigma^\prime) \geq 2$ which forces $\#(\gamma\vee\sigma^\prime)=2$.
\end{proof}

\begin{lemma}\label{Lemma: Sigma is unique is non-crossing partitioned permutation free-loop type}
Let $(\tau,\sigma_1)$ and $(\tau,\sigma_2)$ be two non-crossing partitioned permutations such that,
$$(\tau,\sigma_1),(\tau,\sigma_2)\in \PS_{NC_2}^{loop-free}(m_1,\dots,m_r).$$
Then $\sigma_1=\sigma_2$.
\end{lemma}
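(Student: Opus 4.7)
The plan is to show that within each block $B$ of $\tau$, the restriction $\sigma_i|_B$ is uniquely determined by $\tau$, $\gamma$ and the loop-free/non-crossing hypotheses, from which $\sigma_1=\sigma_2$ follows. First I would dispose of the trivial case: every block $B$ of $\tau$ with $|B|=2$ admits a unique pairing, so $\sigma_1|_B=\sigma_2|_B$ automatically. All the work is therefore concentrated on the blocks $B$ of $\tau$ with $|B|\geq 4$, each of which contains $|B|/2\geq 2$ cycles of each $\sigma_i$.

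The central structural input will be the tree description of $\Gamma(\tau,\sigma_i,\gamma)$ provided by Lemma \ref{Lemma: Tree iff Non-crossing partitioned permutation} together with Remark \ref{Remark: Equality of four conditions}. Because the bipartite graph is a tree, for each block $B$ of $\tau$ the cycles of $\sigma_i$ lying in $B$ necessarily meet \emph{distinct} blocks of $\sigma_i\vee\gamma$; otherwise two edges of $\Gamma(\tau,\sigma_i,\gamma)$ incident to the same black vertex would share a white vertex, creating a cycle. Combined with the loop-free hypothesis, which forbids any cycle of $\sigma_i|_B$ from being a loop block when $|B|\geq 4$, Proposition \ref{Proposition: loop edge iff cutting through string} rules out cutting through strings inside $B$. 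Thus each cycle of $\sigma_i|_B$ is either a non-cutting through string (joining two different cycles of $\gamma$ at the ``outer boundary'' of the corresponding piece of $\sigma_i\vee\gamma$) or an internal non-cutting pair within a single cycle of $\gamma$, and these cycles must be arranged non-crossingly according to the cyclic order on $B$ inherited from $\gamma$.

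I would then run the uniqueness proof by induction on $\#(\tau)$, peeling off a leaf of the tree $\Gamma(\tau,\sigma_i,\gamma)$. Pick a white vertex $W$ which is a leaf of $\Gamma(\tau,\sigma_i,\gamma)$; then $W$ is connected to a unique block $B$ of $\tau$ by a unique cycle $\{u,v\}$ of $\sigma_i$. Using the non-crossing cyclic ordering on $B$ and the fact that $\{u,v\}$ must cut off the sub-annulus corresponding to $W$ without creating a loop block, one argues that the pair $\{u,v\}$ is forced by $\tau$, $\gamma$, and the shape of $W$; in particular it does not depend on whether we work with $\sigma_1$ or $\sigma_2$. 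Removing this cycle reduces both $(\tau,\sigma_1)$ and $(\tau,\sigma_2)$ to strictly smaller non-crossing partitioned permutations on the same underlying partition of a smaller set, still satisfying the loop-free property, and the induction hypothesis applies.

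The main obstacle will be the ``peeling'' step: showing rigorously that the pair of elements at the leaf edge is uniquely determined. The argument there is essentially a contradiction argument; if $\sigma_1$ and $\sigma_2$ used different pairs at this leaf, then in one of them either two edges of $\Gamma(\tau,\sigma_\bullet,\gamma)$ incident to the same black vertex $B$ would end up at the same white vertex (contradicting the tree property of Lemma \ref{Lemma: Tree iff Non-crossing partitioned permutation}), or a cycle of $\sigma_\bullet|_B$ would become cutting and hence, by Proposition \ref{Proposition: loop edge iff cutting through string}, a loop block lying in a $\tau$-block of size $\geq 4$, contradicting loop-freeness. Getting this case analysis clean—and in particular checking it uniformly for within-cycle pairs and for through strings on a multi-annulus—will be the technical core of the proof.
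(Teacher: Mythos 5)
Your overall strategy matches the paper's: peel a leaf off the tree $\Gamma(\tau,\sigma_i,\gamma)$, use loop-freeness together with Proposition~\ref{Proposition: loop edge iff cutting through string} to show the pair at the leaf is non-cutting, and derive a contradiction with the tree property if $\sigma_1$ and $\sigma_2$ disagree there. The contradiction argument at the leaf is exactly right. However, two steps in your reduction do not survive scrutiny as written.

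First, a white leaf of $\Gamma(\tau,\sigma_i,\gamma)$ need not exist. A white vertex $W$ is a leaf precisely when $\sigma_i|_W$ consists of a single pair, which forces $|W|=2$; in general this can fail entirely. For instance, with $r=1$, $m_1=4$, $\sigma=(1,2)(3,4)$, $\tau=\bigl\{\{1,2\},\{3,4\}\bigr\}$, the pair $(\tau,\sigma)$ lies in $\PS_{\NC_2}^{loop-free}(4)$ and $\Gamma(\tau,\sigma,\gamma)$ is a path with the single white vertex in the middle and two black leaves at the ends, so there is no white leaf to pick. The paper avoids this by first pruning off the black leaves corresponding to blocks of $\tau$ that coincide with blocks of $\sigma_i$ (these blocks necessarily agree for $\sigma_1$ and $\sigma_2$ since $\sigma_i\leq\tau$); in the pruned tree every surviving black vertex has degree $\geq 2$, so the pruned tree must have a white leaf $G$, a block of $\gamma\vee\sigma_i$ attached to the rest by a single non-trivial pair $\{u,v\}$ of $\sigma_i$. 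Second, your induction parameter $\#(\tau)$ need not decrease: if the removed white vertex $W=\{u,v\}$ satisfies $\{u,v\}\subsetneq D$ for a block $D\in\tau$ with $|D|\geq 4$, then $D$ merely shrinks and no block of $\tau$ disappears, so $\#(\tau)$ is unchanged and the induction stalls. The paper instead inducts on $\#(\gamma\vee\sigma_1)$ (the number of white vertices) and deletes the \emph{entire} block $G$ of $\gamma\vee\sigma_1$, together with the black leaves hanging off it, from $[m]$; this decreases the parameter by exactly one and keeps the restriction inside $\PS_{\NC_2}^{loop-free}$, so the induction closes.
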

\begin{proof}
We prove this by induction over $\#(\gamma\vee\sigma_1)$. Throughout the proof we will have in mind that both $\Gamma(\tau,\sigma_1)$ and $\Gamma(\tau,\sigma_2)$ are trees thanks to Lemma \ref{Lemma: Tree iff Non-crossing partitioned permutation}. If $\#(\gamma\vee\sigma_1)=1$ then any block of $\sigma$ is also a block of $\tau$ otherwise $\Gamma(\tau,\sigma)$ is not a tree. Since $\sigma_2\leq \tau$ then any block of $\sigma_2$ must also be a block of $\sigma_1$ hence $\sigma_1=\sigma_2$. Suppose it is true for $\#(\gamma\vee\sigma_1)=n$ then we prove it for $\#(\gamma\vee\sigma_1)=n+1$. Let $\Gamma^\prime$ be the graph $\Gamma(\tau,\sigma)$ after we remove all leaves (and their adjacent edges) that correspond to blocks of $\tau$ that are also blocks of $\sigma$. $\Gamma^\prime$ is also a tree so it must have a leaf which is a white vertex corresponding to a block of $\gamma\vee\sigma_1$. Let $G$ be this white vertex. Let $D$ be the unique black vertex that is adjacent to $G$ and let $B$ the edge that connects $G$ and $D$. The edge $B$ correspond to a block of $\sigma_1$, moreover $B$ is the only block of $\sigma_1$ contained in $G$ that is not a block of $\tau$. Any other block of $\sigma_1$ contained in $G$ is also a block of $\tau$ and hence a block of $\sigma_2$. We write $B=\{u,v\}$, we claim that $\{u,v\}$ must be also a block of $\sigma_2$. Indeed, suppose $\{u,v\}$ is not a block of $\sigma_2$ then there exist $a,b\in D$ such that $\{a,u\}$ and $\{b,v\}$ are blocks of $\sigma_2$. By Proposition \ref{Proposition: loop edge iff cutting through string} we know that removing $(u,v)$ from $\sigma_1$ doesn't disconnect the block $G$, this means that if $\sigma_1^\prime$ is the permutation whose cycles are the cycles of $\sigma_1$ contained in $G$ except $(u,v)$ which are both cycles of size $1$ and $\gamma^\prime$ is the permutation consisting on the cycles of $\gamma$ contained in $G$ then $\gamma^\prime\vee\sigma_1^\prime=G$. However we said that any other block of $\sigma_1$ distinct from $(u,v)$ contained in $G$ is also a block of $\sigma_2$ then $G$ must be contained in the connected component of $\gamma\vee\sigma_2$ that contains $u$ and $v$ which we call $G_2$. This means that $G_2$ is connected to the black vertex $D$ by the edges $\{a,u\}$ and $\{b,v\}$ which is not possible as $\Gamma(\tau,\sigma_2)$ is a tree. Then it must be $\{u,v\}$ is a block of $\sigma_2$. The latest also proves $G$ is a block of $\gamma\vee\sigma_2$. 

Let $C=[m]\setminus G$. Let $\hat{\gamma}=\gamma|_C$ which consists on the cycles of $\gamma$ that are not in $G$. Let $\hat{\sigma_i}=\sigma_i|_C$ for $i=1,2$ and let $\hat{\tau}=\tau|_C$. Let $\Gamma(\hat{\tau},\hat{\sigma_1},\gamma|_C)$ be defined as in Remark \ref{Remark and definition: The graph Gamma(V,pi)}. Note that the graph $\Gamma(\hat{\tau},\hat{\sigma_1})$ is precisely the graph $\Gamma(\tau,\sigma_1)$ but removing the white vertex $G$ and all its adjacent edges and vertices except $D$ which becomes $D\setminus \{u,v\}$. This means $\Gamma(\hat{\tau},\hat{\sigma_1})$ is a tree and hence by Lemma \ref{Lemma: Tree iff Non-crossing partitioned permutation} we have that $(\hat{\tau},\hat{\sigma_1})$ is a non-crossing partitioned permutation with respect to $(1_C,\gamma|_C)$. Moreover if $B$ is block of $\hat{\sigma_1}$ that is also a block of $\hat{\tau}$ then $B$ is a block of $\sigma_1$ that is also a block of $\tau$ and therefore it is a loop block of $T^{\gamma\sigma_1}$. Therefore, $B$ must also be a loop block of $T|_C^{\hat{\gamma}\hat{\sigma_1}}$, where $T|_C$ denotes the restriction of $T$ to the cycles of $\hat{\gamma}$. The latest follows from the fact that $T^{\gamma\sigma_1}$ and $T|_C^{\hat{\gamma}\hat{\sigma_1}}$ are the same except by the connected component $T|_G^{\gamma|_G{\sigma_1}|_G}$ which is in the former but not in the latter graph. That proves that,
$$(\hat{\tau},\hat{\sigma_1})\in \PS_{\NC_2}^{loop-free}(1_C,\gamma|_C).$$
Analogously we conclude,
$$(\hat{\tau},\hat{\sigma_2})\in \PS_{\NC_2}^{loop-free}(1_C,\gamma|_C).$$
But $\#(\gamma|_C\vee\hat{\sigma_1})=n$ as we are just removing the block $G$ of $\gamma\vee\sigma_1$. Then by induction hypothesis $\hat{\sigma_1}=\hat{\sigma_2}$. This proves $\sigma_1=\sigma_2$ as it was proved before that the restriction of $\sigma_1$ and $\sigma_2$ to $G$ are the same.
\end{proof}

\begin{notation}\label{Notation: Wideltilde partition}
Let $\sigma\in S_{\NC}(\gamma)$ be a non-crossing pairing of $\gamma$ and let $T^{\gamma\sigma}= \ab (V^{\gamma\sigma},E^{\gamma\sigma},\delta)$ be the quotient of $T$ under $\gamma\sigma$. Let $\rho\in \cP(m)$ be a partition such that $\overline{\gamma\sigma}\leq \rho$. We define the partition $\widetilde{\rho}\in \cP(\overline{E^{\gamma\sigma}})$ given by $\overline{e_u^{\gamma\sigma}}$ and $\overline{e_v^{\gamma\sigma}}$ are in the same block of $\widetilde{\rho}$ if and only if $e_u^{\gamma\sigma}$ and $e_v^{\gamma\sigma}$ are in the same block of $e_\rho^{\gamma\sigma}$. For an example see \ref{Example: example of rho sim}.
\end{notation}

\begin{example}\label{Example: example of rho sim}
Let $m_1,m_2,m_3,m_4,m_5,m_6,m_7$ and $\sigma$ be as in Example \ref{Example: Big example}. We have that,
$$\gamma\sigma=(1,12)(2,6,4,11)(3,5)(7,9)(8,10)(13,15)(14,16),$$
and,
$$\overline{\gamma\sigma}=\{1,4,11,12\},\{2,3,5,6\},\{7,8,9,10\},\{13,14,15,16\}.$$
Let $\rho\in \cP(16)$ with blocks,
$$\{1,2,3,4,5,6,11,12\},\{7,8,9,10\},\{13,14,15,16\}.$$
Then $\widetilde{\rho}$ is the partition with blocks,
$$\{\overline{e_1^{\gamma\sigma}},\overline{e_2^{\gamma\sigma}}\},\{\overline{e_7^{\gamma\sigma}}\},\{\overline{e_{13}^{\gamma\sigma}}\}.$$
\end{example}

\begin{remark}
In the context of Notation \ref{Notation: Wideltilde partition} observe that $\#(\rho)=\#(\widetilde{\rho})$.
\end{remark}

We are ready to prove the main goal of this subsection.

\begin{proposition}\label{Proposition: first contribution of alpha}
For any $m_1,\dots,m_r\in\mathbb{N}$,
$$\sum_{(\tau,\pi)\in \mathcal{A}}\C_{\tau}(\pi)=\sum_{(\tau,\sigma)\in \PS_{NC_2}^{loop-free}(m_1,\dots,m_r)}2^{\#(\sigma)-\#(\tau)}\prod_{B\in\tau}\beta_{|B|}.$$
\end{proposition}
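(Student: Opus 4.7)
By Proposition \ref{Proposition: First expression of Alpha}, we have
\[
\alpha_{m_1,\dots,m_r}=\sum_{(\pi,\tau)\in\mathcal{A}}\C_\tau(\pi)+\sum_{(\pi,\tau)\in\mathcal{B}}\C_\tau(\pi),
\]
so since the $\mathcal{B}$-sum already appears unchanged in the target formula, the plan is to prove
\[
\sum_{(\pi,\tau)\in\mathcal{A}}\C_\tau(\pi)=\sum_{(\tau,\sigma)\in\PS_{NC_2}^{loop-free}(m_1,\dots,m_r)}2^{\#(\sigma)-\#(\tau)}\prod_{B\in\tau}\beta_{|B|}.
\]
I will exhibit a surjection $\Psi:\mathcal{A}\to\PS_{NC_2}^{loop-free}(m_1,\dots,m_r)$ whose fibres have size $2^{\#(\sigma)-\#(\tau)}$ and along which $\C_\tau(\pi)$ is constant, equal to $\prod_{B\in\tau}\beta_{|B|}$.

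For $(\pi,\tau)\in\mathcal{A}$, choose any $\sigma$ making $(\sigma,\tau,\pi)$ a non-crossing limit triple, so conditions $L_1,L_2,L_3,L_4,L_4'$ all hold. By $L_4'$ the graph $T^{\gamma\sigma}(\sigma,\tau)$ is a tree; this graph agrees with $\Gamma(\tau,\sigma,\gamma)$ of Remark \ref{Remark and definition: The graph Gamma(V,pi)}, so Lemma \ref{Lemma: Tree iff Non-crossing partitioned permutation} gives $(\tau,\sigma)\in\PS_{NC_2}(m_1,\dots,m_r)$. Moreover $(\tau,\sigma)$ is automatically loop-free: if a loop block $B=\{u,v\}$ of $\sigma$ were strictly contained in a larger block $D\in\tau$, then $\C_D(\pi)$ would involve the diagonal entry $x_{i^\pi(u),i^\pi(u)}$ together with independent entries, forcing $\C_D=0$ by Gaussianity of the diagonal and the independence of diagonal and off-diagonal entries in Definition \ref{Definition: Complex Wigner Matrix}, contradicting $\C_\tau(\pi)\ne 0$. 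Lemma \ref{Lemma: Sigma is unique is non-crossing partitioned permutation free-loop type} then shows $\sigma$ is the unique loop-free mate of $\tau$, so $\Psi(\pi,\tau):=(\tau,\sigma)$ is well-defined. To compute the value, observe that by the defining property of $\sigma$, for every cycle $(u,v)$ of $\sigma$ the edges $e_u^\pi$ and $e_v^\pi$ share an unordered vertex pair in $T^\pi$ with opposite orientations, so within each block $B\in\tau$ all random variables $x_{i^\pi(b),i^\pi(\gamma(b))}$ coincide (up to conjugation) with a single off-diagonal $R$-diagonal entry, with balanced orientation counts. The $R$-diagonal structure from Definition \ref{Definition: Complex Wigner Matrix} then gives $\C_{|B|}=\beta_{|B|}$, and multiplicativity of $\C_\tau(\pi)$ over blocks produces $\prod_{B\in\tau}\beta_{|B|}$.

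It remains to count $\Psi^{-1}(\tau,\sigma)$. Fix $(\tau,\sigma)\in\PS_{NC_2}^{loop-free}$; condition $L_2$ forces $\bigcup_W\pi|_W=\gamma\sigma$, so every admissible $\pi$ is a coarsening of $\gamma\sigma$, and the freedom lies entirely in how $\pi$ merges vertices across the different connected components of $T^{\gamma\sigma}$. By Lemma \ref{Lemma: Blocks of sigma determined edges of T^gamma sigma}, each block $B\in\tau$ comprising $k_B$ cycles of $\sigma$ contributes $k_B$ pairs of parallel edges in $T^{\gamma\sigma}$ that must collapse in $T^\pi$ to a single vertex pair $(P_B,Q_B)$; after fixing one \emph{anchor} cycle per block (which fixes the labelling of $P_B,Q_B$), each of the remaining $k_B-1$ cycles contributes an independent binary choice of how to identify its endpoints with $P_B,Q_B$. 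Summing over blocks yields $\sum_B(k_B-1)=\#(\sigma)-\#(\tau)$ independent binary choices, and the tree structure of $\Gamma(\tau,\sigma,\gamma)$ (equivalent to the identity $\#(\sigma)-\#(\tau)=\#(\gamma\vee\sigma)-1$) guarantees they do not interfere with each other. The principal obstacle is the verification that every one of the $2^{\#(\sigma)-\#(\tau)}$ resulting $\pi$'s does lie in $\mathcal{A}$ with $\Psi$-image $(\tau,\sigma)$, i.e.\ that $L_1,\dots,L_4'$ are preserved under each choice; I plan to handle this by induction on $\#(\gamma\vee\sigma)$, mirroring the leaf-peeling argument of Lemma \ref{Lemma: Sigma is unique is non-crossing partitioned permutation free-loop type}: removing a leaf white vertex of $\Gamma(\tau,\sigma,\gamma)$ detaches one binary choice and reduces the non-crossing loop-free pair to a smaller one to which the inductive hypothesis applies.
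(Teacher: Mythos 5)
Your overall strategy matches the paper's: start from Proposition \ref{Proposition: First expression of Alpha}, isolate the $\mathcal{A}$-sum, and establish a $2^{\#(\sigma)-\#(\tau)}$-to-$1$ correspondence between $\mathcal{A}$ and $\PS_{\NC_2}^{loop-free}(m_1,\dots,m_r)$. However, there are two problems.

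First, your justification of the loop-free property is off. If a loop block $B=\{u,v\}$ of $\sigma$ sat strictly inside a block $D\in\tau$, then since $\tau\leq\overline\pi$ (Lemma \ref{Lemma: First characterization of the limit}(1)) \emph{every} edge $e_w^\pi$ with $w\in D$ connects the same vertex pair of $T^\pi$, and since $\gamma\sigma\leq\pi$ these are all the \emph{same} loop, hence the same diagonal entry. There are no ``independent entries'' in play; $\C_D=0$ because it is a cumulant of order $|D|\geq 4$ of a single Gaussian random variable, not by independence. You reach the right conclusion, but the reasoning as stated covers a configuration that cannot occur and omits the one that does.

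Second, and more seriously, the fiber-counting is not carried out. You correctly identify, via condition $L_2$, that any $\pi$ in the fiber must coarsen $\gamma\sigma$, and you count $\#(\sigma)-\#(\tau)$ orientation choices, but you explicitly flag ``the principal obstacle'' — verifying that each of the $2^{\#(\sigma)-\#(\tau)}$ resulting partitions $\pi$ actually lands in $\mathcal{A}$ with $\Psi(\pi,\tau)=(\tau,\sigma)$, and that these exhaust the fiber — and only \emph{propose} an induction without executing it. This is precisely where the bulk of the work lies. The paper handles it not by leaf-peeling induction on $\#(\gamma\vee\sigma)$ but through the oriented-partition-of-edges machinery developed in Section \ref{Section: Graph theory}: the forward direction (Step 1) builds $\pi$ as $\underset{\sim_p}{\widetilde{\tau}}$ for each admissible orientation $\sim_p$, verifies $L_1$--$L_4'$, and uses Lemma \ref{Lemma: Equality when tree}(i) to pin down $\#(\pi)$; the backward direction (Step 2) exploits $\#(\pi')=\#(\pi)=m/2-r+2$ together with $\pi'\leq\pi$ to force $\pi'=\pi$ and hence uniqueness. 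Without this verification your argument is a plausible outline rather than a proof; in particular surjectivity of $\Psi$ (nonemptiness of fibers) is never established. The leaf-peeling induction you sketch might be made to work, but as it stands there is a genuine gap.
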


\begin{proof}

The proof of this proposition is a bit long so we divide it into steps to make simpler for the reader.

\textbf{Step 1. We prove that each $(\tau,\sigma)\in \PS_{\NC_2}^{loop-free}(m_1,\dots,m_r)$ corresponds to $2^{\#(\sigma)-\#(\tau)}$ pairs $(\pi,\tau)\in \mathcal{A}$.}

Let $(\tau,\sigma)\in \PS_{NC_2}^{loop-free}(m_1,\dots,m_r)$
and let $G=T^{\gamma\sigma}= \ab (V^{\gamma\sigma},E^{\gamma\sigma}\kern-0.25em, \delta).$ We wan to construct an oriented partition of the edges $\overline{E^{\gamma\sigma}}$ determined by $(\tau,\sigma)$. If two blocks $B_1,B_2\in \sigma$ are such that $e_{B_1}^{\gamma\sigma}$ and $e_{B_2}^{\gamma\sigma}$ connect the same pair of vertices of $T^{\gamma\sigma}$ then $B_1$ and $B_2$ are both edges of $T^{\gamma\sigma}(\sigma,\tau)$ that are adjacent to the same white vertex. Let $D_1$ and $D_2$ the black vertices of $T^{\gamma\sigma}(\sigma,\tau)$ that are adjacent to $B_1$ and $B_2$ respectively. $D_1$ and $D_2$ correspond to blocks of $\tau$, which we merge. Let $\hat{\tau}$ be the resulting partition after merging all blocks of $\tau$ described as before. The resulting partition satisfies $\overline{\gamma\sigma} \leq \hat{\tau}$ so it makes sense to consider $\widetilde{\tau} \vcentcolon = \widetilde{\hat{\tau}}$ as in notation \ref{Notation: Wideltilde partition}. 

On the other hand, observe that $\Gamma(\tau,\sigma,\gamma)$ is a tree by Lemma \ref{Lemma: Tree iff Non-crossing partitioned permutation} and then so is $G(\sigma,\tau)$ as both graphs are isomorphic with the only difference that the white vertices in the former are blocks of $\gamma\vee\sigma$ and the white vertices of the latter are connected components of $T^{\gamma\sigma}$ which are exactly the same as each block of $\gamma\vee\sigma$ determines a connected components of $T^{\gamma\sigma}$ and conversely (see for example Figure \ref{Figure: Big example} part C) and D). Each time we merge two blocks of $\tau$ into the same block of $\hat{\tau}$ we join two black vertices of $T^{\gamma\sigma}(\sigma,\tau)$ and the edges $B_1$ and $B_2$ are identified, therefore, $\overline{T^{\gamma\sigma}(\sigma,\tau)^{\hat{\tau}}}$ is still a tree. In the graph $\overline{T^{\gamma\sigma}(\sigma,\tau)^{\hat{\tau}}}$ we have exactly one edge $\overline{e_B^{\gamma\sigma}}$ for each block $B$ of $\overline{\gamma\sigma}$. Thus $T^{\gamma\sigma}(\tilde{\tau})=\overline{T^{\gamma\sigma}(\sigma,\tau)^{\hat{\tau}}}$.


Since $\overline{T^{\gamma\sigma}(\sigma,\tau)^{\hat{\tau}}}$ is a tree then any block $\{\overline{e_{B_1}^{\gamma\sigma}},\dots,\overline{e_{B_n}^{\gamma\sigma}}\}$ of $\widetilde{\tau}$ with $n>1$ must be such that $\overline{e_{B_i}^{\gamma\sigma}}$ and $\overline{e_{B_j}^{\gamma\sigma}}$ belong to distinct connected components of $\overline{T^{\gamma\sigma}}$. And since any block of $\tau$ that contains more than one block of $\sigma$ is such that $e_B^{\gamma\sigma}$ is not a loop of $T^{\gamma\sigma}$ for $B\in \sigma$ then $\{\overline{e_{B_1}^{\gamma\sigma}},\dots,\overline{e_{B_n}^{\gamma\sigma}}\}$ are all non-loops of $\overline{T^{\gamma\sigma}}$. The latter means that for each block of $\widetilde{\tau}$ of the form $\{\overline{e_{B_1}^{\gamma\sigma}},\dots,\overline{e_{B_n}^{\gamma\sigma}}\}$ we can choose $2^{n-1}$ distinct possible orientation of the edges (by choosing the $2^{n-1}$ distinct pair of sets $A$ and $B$ as in Definition \ref{Definition: Identification of edges}). We conclude that for the partition of the edges $\widetilde{\tau}\in \cP(\overline{E^{\gamma\sigma}})$ we can choose as many distinct orientations as,
$$\prod_{B\in\widetilde{\tau}}2^{\#(\text{incident edges of }B)-1}.$$
Where in above product we consider even the blocks of $\widetilde{\tau}$ of size $1$ as $2^{1-1}=1$. The total number of edges is precisely $\#(\overline{\gamma\sigma})$ so the total number of distinct orientations is,
$$2^{\#(\overline{\gamma\sigma})-\#(\widetilde{\tau})}=2^{\#(\overline{\gamma\sigma})-\#(\hat{\tau})}.$$
Finally as observed before every time we merge two blocks of $\tau$ into the same block of $\hat{\tau}$ we are identifying exactly two edges, thus,
$$2^{\#(\overline{\gamma\sigma})-\#(\hat{\tau})}=2^{\#(\sigma)-\#(\tau)}.$$

Let $\pi\in \cP(V^{\gamma\sigma})$ be defined as $\pi=\underset{\sim_p}{\widetilde{\tau}}$ for some $\sim_p$ of the $2^{\#(\sigma)-\#(\tau)}$ possible orientation of the edges. Each partition $\pi\in \cP(V^{\gamma\sigma})$ determines a partition $\pi^\prime\in\cP(m)$ by letting $A_1\cup \cdots \cup A_n$ be a block of $\pi^\prime$ whenever $A_1,\dots,A_n$ are in the same block of $\pi$. Each distinct choice for the orientation of the edges produces a distinct partition $\pi^\prime$ as the identification of the vertices is distinct. As an abbuse of notation, let us call to $\pi^\prime$ simply $\pi$.

We claim that $(\pi,\tau)\in \mathcal{A}$. To prove this it is enough to check that $(\sigma,\tau,\pi)$ satisfies $L_1,L_2,L_3$ and $L_4$ as $L_4^\prime$ is is already satisfied. $L_1$ follows immediately as $\sigma$ is non-crossing by hypothesis. Let $W$ be a block of $\gamma\vee\sigma$, Let us recall that $\pi$ is obtained after we merge blocks of $T^{\gamma\sigma}$, therefore by construction $\pi|_W$ is the partition $\gamma|_W\sigma|_W$, thus $L_2$. Condition $L_3$ follows immediately as by construction we only identified edges that are non-loops. Now we verify condition $L_4$. First of all note that $T^{\gamma\sigma}(\tilde{\tau})=\overline{T^{\gamma\sigma}(\sigma,\tau)^{\hat{\tau}}}$ because when we take elementarization of the graph we take away multiplicity of the edges and both $\tilde{\tau}$ and $\hat{\tau}$ have the same number of blocks. As $\overline{T^{\gamma\sigma}(\sigma,\tau)^{\hat{\tau}}}$ is a tree then so is $T^{\gamma\sigma}(\tilde{\tau})$ so from Lemma \ref{Lemma: Equality when tree} part $(ii)$ it follows that for any $e_u^{\gamma\sigma}$ and $e_v^{\gamma\sigma}$ no loops of $T^{\gamma\sigma}$ it is satisfied that $e_u^{\pi}$ and $e_v^{\pi}$ connect the same pair of vertices of $T^{\pi}$ if and only if $\overline{e_u^{\gamma\sigma}}$ and $\overline{e_u^{\gamma\sigma}}$ are in the same block of $\tilde{\tau}$. The latest means the blocks of $\hat{\tau}$ (except the blocks that are union of blocks, $B$, of $\tau$ that are also blocks of $\sigma$ and that $e_B^{\gamma\sigma}$ correspond to loops) coincide with the blocks of $\overline{\pi_\tau}$. In other words, we have that $T^{\gamma\sigma}(\sigma,\tau)^{\overline{\pi_\tau}}$ is the graph $T^{\gamma\sigma}(\sigma,\tau)^{\hat{\tau}}$ with the only difference that in the former graph the blocks, $B$, of $\tau$ that are also blocks of $\sigma$ and that $e_B^{\gamma\sigma}$ correspond to loops are all leaves while in the later graph we merge some of these leaves adjacent to the same white vertex into a single leaf with multiple edges. But we just said $\overline{T^{\gamma\sigma}(\sigma,\tau)^{\hat{\tau}}}$ is a tree and then so is $\overline{T^{\gamma\sigma}(\sigma,\tau)^{\overline{\pi_\tau}}}$. This finishes the first step. Before moving on let us prove that $\#(\pi^\prime)-m/2+r-2=0$, this is not required in this step of the proof however it will be invoke later on. This fact follows directly from Lemma \ref{Lemma: Equality when tree} part $(i)$,

$$\#(\pi^\prime)=\#(\pi)=\#(\gamma\sigma)-2(\#(\gamma\vee\sigma)-1),$$

then $\#(\pi^\prime)-m/2+r-2=0$ since $\sigma$ is non-crossing permutation.

\textbf{Step 2. We prove that for each pair $(\pi,\tau)\in A$ there exist a unique $(\tau,\sigma)\in \PS_{\NC_2}^{loop-free}(m_1,\dots,m_r)$ so that one of the $2^{\#(\sigma)-\#(\tau)}$ pairs in $\mathcal{A}$ that corresponds to $(\tau,\sigma)$ is precisely $(\pi,\tau)$.}

Let $(\pi,\tau)\in\mathcal{A}$ then there exist a non-crossing pairing $\sigma$ such that $(\sigma,\tau,\pi)$ is a non-crossing limit triple. The graph $T^{\gamma\sigma}(\sigma,\tau)$ is a tree and then so is $\Gamma(\tau,\sigma)$, therefore by Lemma \ref{Lemma: Tree iff Non-crossing partitioned permutation} we have $(\tau,\sigma)\in \PS_{\NC_2}(m_1,\dots,m_r)$. Moreover if $B=\{u,v\}\in \sigma$ is such that $e_B^{\gamma\sigma}$ is a loop of $T^{\gamma\sigma}$ then it is also a loop of $T^{\pi}$. If $B$ is not a block of $\tau$ then $\C_{\tau}(\pi)=0$ as it has a factor of the form $\C_{n}(x_{1,1},\dots,x_{1,1})$ with $n\geq 4$ which is $0$. Thus $B$ must be a block of $\tau$, i.e. $(\tau,\sigma)\in \PS_{\NC_2}^{loop-free}(m_1,\dots,m_r)$, moreover, $\sigma$ is unique thanks to Lemma \ref{Lemma: Sigma is unique is non-crossing partitioned permutation free-loop type}.

We aim to prove that under the construction done before with $(\tau,\sigma)$ we obtain $\pi$ for some of the $2^{\#(\sigma)-\#(\tau)}$ distinct orientation of the edges. By hypothesis conditions $L_1,L_2,L_3,L_4,L_4^\prime$ are satisfied. Let $\hat{\tau}\in \cP(m)$ and $\widetilde{\tau}\in \cP(\overline{E^{\gamma\sigma}})$ be as before and let $\widetilde{\overline{\pi}}\in \cP(\overline{E^{\gamma\sigma}})$ defined as in Notation \ref{Notation: Wideltilde partition} which makes sense as $\gamma\sigma\leq \pi$ and then $\overline{\gamma\sigma}\leq \overline{\pi}$. If two blocks of $\sigma$, say $B_1$ and $B_2$ that are such that $e_{B_1}^{\gamma\sigma}$ and $e_{B_2}^{\gamma\sigma}$ connect the same pair of vertices of $T^{\gamma\sigma}$ then they connect the same pair of vertices of $T^{\pi}$ as $\gamma\sigma\leq \pi$. The latest means that if two blocks of $\tau$ are joined into the same block of $\hat{\tau}$ then they must be joined into the same block of $\overline{\pi}$. Therefore, $\hat{\tau}\leq \overline{\pi}$, thus $\widetilde{\tau}\leq \widetilde{\overline{\pi}}$. By $L_2$ we know $\pi|_W=\gamma|_W\sigma|_W$ for each block of $\gamma\vee\sigma$ then it follows that for a suitable orientation of the edges, $\sim_p$, determined by $\pi$ we have $\underset{\sim_p}{\widetilde{\overline{\pi}}}=\pi$ as $\widetilde{\overline{\pi}}$ precisely identifies the edges of $T^{\gamma\sigma}$ that are identified in $\overline{\pi}$. Let $\sim_1$ be such that $(\widetilde{\tau},\sim_1)\leq (\sim_p,\widetilde{\overline{\pi}})$ which can be chosen by just taking the restriction of $\sim_p$ to each block of $\widetilde{\tau}$. Therefore, $\underset{\sim_1}{\widetilde{\tau}}\leq \underset{\sim_p}{\widetilde{\overline{\pi}}}=\pi$. Let $\pi^\prime$ be the partition obtained from $(\widetilde{\tau},\sim_1)$ as in the first step, thus $\pi^\prime\leq \pi$. We proved at the end of the previous step that $\#(\pi^\prime)-m/2+r-2=0$. However $\#(\pi)-m/2+r-2=0$ so it must be $\pi^\prime=\pi$ as desired.

\textbf{Step 3. We conclude.}
From Proposition \ref{Proposition: First expression of Alpha} and the proved in the first two steps we can change the first sum indexed by $\mathcal{A}$ by a sum indexed by $\PS_{NC_2}^{loop-free}(m_1,\ab\dots,m_r)$ multiplying each element of the second set by $2^{\#(\sigma)-\#(\tau)}$. On the other hand, if $(\pi,\tau)\in \mathcal{A}$ and it correspond to $(\sigma,\tau,\pi)\in \PS_{NC_2}^{loop-free}\ab(m_1,\dots,m_r)$ then the contribution $\C_{\tau}(\pi)$ is given by,
$$\C_{\tau}(\pi)=\prod_{B\in\tau}\beta_{|B|},$$
because for each block $B$ of $\tau$ that consist of $\{u_1,v_1\},\dots,\{u_1,v_n\}$ blocks of $\sigma$ with $n>1$ we have that the edges $e_{u_1},\dots,e_{u_n},e_{v_1},\dots,e_{v_n}$ are all joining the same pair of vertices of $T^{\pi}$ with the same number of edges in each orientation, so the contribution of this block is,
$$\C_{|B|}(x_{1,2},x_{2,1},\dots,x_{1,2},x_{2,1})=\beta_{|B|}.$$
Note that we can assume the pair of vertices that the edges join are distinct as otherwise we have $\C_{|B|}(x_{1,1},\dots,x_{1,1})=0$ and the so is $\C_{\tau}(\pi)$. For the block of $\tau$ with size $2$ we simply have a contribution of the form $\C_2(x_{1,2},x_{2,1})$ or $\C_2(x_{1,1},x_{1,1})$, in either case equals $\beta_2$.

\end{proof}

\begin{corollary}\label{Corollary: First expression of cumulants for non-crossing objects only}
For any $m_1,\dots,m_r$,
$$\alpha_{m_1,\dots,m_r}=\sum_{(\tau,\sigma)\in \PS_{NC_2}^{loop-free}(m_1,\dots,m_r)} \widetilde{\K}_{(\tau,\sigma)} +\sum_{(\tau,\pi)\in \mathcal{B}}\C_{\tau}(\pi).$$
Where $\widetilde{\K}_{(\tau,\sigma)}$ is the multiplicative extension of $(\widetilde{\K}_{n_1,\dots,n_r})_{n_1,\dots,n_r}$ defined as,
\begin{equation}
\widetilde{\K}_{n_1,\dots,n_r}= \left\{ \begin{array}{lcc} 
2^{r-1}\beta_{2r} & if & r\geq 1 \text{ and } n_1=\cdots = n_r=2 \\ \\ 
0 & otherwise & 
\end{array} 
\right.
\end{equation}
\end{corollary}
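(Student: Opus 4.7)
The plan is to observe that this corollary is essentially a notational repackaging of Proposition \ref{Proposition: first contribution of alpha}; the real work has already been done there, and what remains is to check that the two expressions indexed by $\PS_{NC_2}^{loop-free}(m_1,\dots,m_r)$ literally agree term by term. Thus I will start from the formula
\[
\alpha_{m_1,\dots,m_r}=\sum_{(\tau,\sigma)\in \PS_{NC_2}^{loop-free}(m_1,\dots,m_r)}2^{\#(\sigma)-\#(\tau)}\prod_{B\in\tau}\beta_{|B|}+\sum_{(\tau,\pi)\in \mathcal{B}}\C_{\tau}(\pi)
\]
of Proposition \ref{Proposition: first contribution of alpha} and only manipulate the first sum.

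Next I would unpack the definition of the multiplicative extension. Fix $(\tau,\sigma)\in\PS_{NC_2}^{loop-free}(m_1,\dots,m_r)$ and a block $B\in\tau$. Since $\sigma$ is a pairing, the cycles $V_1,\dots,V_{k_B}$ of $\sigma$ contained in $B$ all have size $2$; thus by the definition of $\widetilde{\K}_{n_1,\dots,n_r}$ the only nonzero case applies and gives
\[
\widetilde{\K}_{|V_1|,\dots,|V_{k_B}|}=\widetilde{\K}_{2,\dots,2}=2^{k_B-1}\beta_{2k_B}=2^{k_B-1}\beta_{|B|},
\]
because $|B|=2k_B$. Multiplying over the blocks of $\tau$ and using the elementary identities $\sum_{B\in\tau}k_B=\#(\sigma)$ and $\sum_{B\in\tau}1=\#(\tau)$ yields
\[
\widetilde{\K}_{(\tau,\sigma)}=\prod_{B\in\tau}2^{k_B-1}\beta_{|B|}=2^{\#(\sigma)-\#(\tau)}\prod_{B\in\tau}\beta_{|B|}.
\]

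Substituting this identity back into Proposition \ref{Proposition: first contribution of alpha} gives precisely the formula claimed in the corollary, and the second sum over $\mathcal{B}$ is carried over unchanged. There is no real obstacle here: the entire content is the observation that the exponent of $2$ coming from the multiplicative extension of $\widetilde{\K}$ at each block of $\tau$ telescopes into the global exponent $\#(\sigma)-\#(\tau)$ that naturally appeared in Proposition \ref{Proposition: first contribution of alpha} from counting orientations of the edges in the identification tree. The only thing to be careful about is to verify that $\widetilde{\K}_{(\tau,\sigma)}$ is never forced to vanish on $\PS_{NC_2}^{loop-free}(m_1,\dots,m_r)$, which is immediate because every cycle of $\sigma$ has size $2$ and so every tuple $(|V_1|,\dots,|V_{k_B}|)$ entering the multiplicative extension is $(2,\dots,2)$.
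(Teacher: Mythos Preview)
Your proof is correct and follows essentially the same approach as the paper: both reduce the corollary to Proposition \ref{Proposition: first contribution of alpha} by verifying the identity $\widetilde{\K}_{(\tau,\sigma)}=2^{\#(\sigma)-\#(\tau)}\prod_{B\in\tau}\beta_{|B|}$ for $(\tau,\sigma)\in\PS_{NC_2}^{loop-free}(m_1,\dots,m_r)$. Your version simply makes the block-by-block computation of the multiplicative extension more explicit than the paper's one-line remark.
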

\begin{proof}
It is enough to note that for $(\tau,\sigma)\in \PS_{\NC_2}^{loop-free}(m_1,\dots,m_r)$ we have,
$$\widetilde{\K}_{(\tau,\sigma)}=2^{\#(\sigma)-\#(\tau)}\prod_{B\in\tau}\beta_{|B|}.$$
\end{proof}

\subsection{Crossing limit triples}\label{Subsection: Crossing limit triples}

Now we would like to study the crossing-limit triples. For a limit-triple $(\sigma,\tau,\pi)$ and a block $B\in \overline{\pi_\tau}$ we denote by $T^{\gamma\sigma}(\sigma,\tau)_B$ to the restriction of the graph $T^{\gamma\sigma}(\sigma,\tau)$ to the set of black vertices $B^\prime\in \tau$ with $B^\prime \subset B$ and their adjacent edges and white vertices. Observe that this makes sense as $\tau\leq\overline{\pi}$ and therefore each block of $\overline{\pi_\tau}$ is a union of blocks of $\tau$ (where the union might consist of a single block). Intuitively, the graph $T^{\gamma\sigma}(\sigma,\tau)$ is a tree if and only if each subgraph $T^{\gamma\sigma}(\sigma,\tau)_B$ is a tree for any block $B\in\overline{\pi_\tau}$. We aim to prove this and therefore to study the whole graph $T^{\gamma\sigma}(\sigma,\tau)$ it will be enough to study each graph $T^{\gamma\sigma}(\sigma,\tau)_B$ separately.

\begin{lemma}\label{Lemma: Splitting edges produces two new non-crossing pairings}
Let $\sigma\in \NC_2(m_1,\dots,m_r)$ and let $(a,b),(c,d)\in\sigma$ be through strings of $\sigma$ such that $e_a^{\gamma\sigma}$ and $e_c^{\gamma\sigma}$ connect the same pair of distinct vertices of $T^{\gamma\sigma}$ and with the same orientation. Let $\pi=\gamma\sigma$ and let,
$$\sigma^{switch}=\sigma(a,b)(c,d)(a,d)(b,c).$$ 
Then all the following satisfy.
\begin{enumerate}[label=(\roman*)]
    \item $\gamma\vee\sigma^{switch}$ has two blocks $A$ and $B$, so that $\{a,d\}\subset A$ and $\{b,c\}\subset B$.
    \item $\sigma^{switch}|_A\in \NC_2(\gamma|_A)$ and $\sigma^{switch}|_B\in \NC_2(\gamma|_B)$.
    \item $\pi|_A=\gamma|_A\sigma^{switch}|_A$ and $\pi|_B=\gamma|_B\sigma^{switch}|_B$.
    \item If $e_u^{\gamma\sigma}$ and $e_v^{\gamma\sigma}$ are no loops of $T^{\gamma\sigma}$ connecting the same pair of vertices of $T^{\pi}$ which are distinct to the vertices connected by $e_a^{\gamma\sigma}$, then, $e_u^{\gamma\sigma^{switch}}$ and $e_v^{\gamma\sigma^{switch}}$ connect the same pair of vertices of $T^{\gamma\sigma^{switch}}$.
\end{enumerate}
\end{lemma}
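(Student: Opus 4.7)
The overall strategy is to first translate $\sigma^{switch}$ into a clean multiplicative identity on $\gamma\sigma$, then apply the Mingo--Nica inequality to pin down $\#(\gamma \vee \sigma^{switch})$, and finally read off (ii)--(iv) from the explicit splitting of the two affected $\gamma\sigma$-cycles. A routine permutation computation shows $(a,b)(c,d)(a,d)(b,c) = (a,c)(b,d)$, so $\sigma^{switch} = \sigma(a,c)(b,d)$ (i.e.\ the pairs $(a,b),(c,d)$ of $\sigma$ are replaced by $(a,d),(b,c)$) and $\gamma\sigma^{switch} = \gamma\sigma \cdot (a,c)(b,d)$. By Lemma \ref{Lemma: Blocks of sigma determined edges of T^gamma sigma}, the same-orientation hypothesis on $e_a^{\gamma\sigma}$ and $e_c^{\gamma\sigma}$ translates to $a,c$ lying in a common cycle of $\gamma\sigma$ and $b,d$ in another, distinct cycle; so right-multiplication by $(a,c)$ splits the first cycle into two pieces $C_1 \ni a$, $C_2 \ni c$, and $(b,d)$ splits the second into $D_1 \ni b$, $D_2 \ni d$. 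In particular $\#(\gamma\sigma^{switch}) = \#(\gamma\sigma) + 2$.

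For (i), the Mingo--Nica inequality applied to $\sigma^{switch}$ together with $\#(\sigma^{switch})=\#(\sigma)$ and the equality $\#(\sigma)+\#(\gamma)+\#(\gamma\sigma)=m+2$ (from $\sigma \in \NC_2(m_1,\dots,m_r)$) gives $\#(\gamma \vee \sigma^{switch}) \geq 2$. For the matching upper bound I would let $A$ be the block of $\gamma \vee \sigma^{switch}$ containing $a$ (so $d \in A$ via the pair $(a,d) \in \sigma^{switch}$) and argue $b, c \notin A$ by a walk argument: any $\gamma$-$\sigma^{switch}$-walk from $a$ to $b$ can be converted into a $\gamma$-$\sigma$-walk making non-trivial use of $(a,b)$ or $(c,d)$, and the annular non-crossing structure of $\sigma$ combined with the geometric content of the ``same orientation'' hypothesis (which realizes $(a,b),(c,d)$ as parallel through strings cutting a single multi-annulus along a common pair of cycles) prevents such a walk from closing up. This topological upper bound is the main obstacle of the proof.

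Once (i) is in hand, the identity $\#(\sigma^{switch})+\#(\gamma)+\#(\gamma\sigma^{switch}) = m+4 = m+2\cdot 2$ is exactly equality in Mingo--Nica, so $\sigma^{switch} \in S_{NC}^{nc}(\gamma)$. Decomposing both sides of this equality across the blocks $A,B$ and using that $(\gamma\sigma^{switch})|_A = \gamma|_A\, \sigma^{switch}|_A$ (both $\gamma$ and $\sigma^{switch}$ preserve $A$, so the restriction and composition commute) forces equality in the Mingo--Nica inequality separately on $A$ and $B$, yielding (ii). For (iii), set $\pi = \gamma\sigma$. If $x \in A \setminus \{a,d\}$ then $x \notin \{a,b,c,d\}$, so $(a,c)(b,d)(x)=x$ and hence $\gamma\sigma^{switch}(x) = \pi(x)$; the $\gamma\sigma^{switch}$-invariance of $A$ then forces $\pi(x) \in A$, so $\pi|_A(x) = \pi(x) = \gamma|_A\sigma^{switch}|_A(x)$. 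At $x = a$, the $\pi$-cycle through $a,c$ decomposes as $C_1 = \{a, \pi(c), \pi^2(c), \dots, \pi^{-1}(a)\} \subset A$ and $C_2 = \{\pi(a), \pi^2(a), \dots, c\} \subset B$, so iterating $\pi$ starting from $a$ traverses all of $C_2$ inside $B$ and first returns to $A$ at $\pi(c)$, which matches $\gamma\sigma^{switch}(a) = \pi(c)$; the case $x=d$ is identical with $(b,d)$ in the role of $(a,c)$.

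For (iv), the hypothesis means that neither of $[u]_{\gamma\sigma}, [\gamma(u)]_{\gamma\sigma}$ equals $[a]_{\gamma\sigma}$ or $[b]_{\gamma\sigma}$, and likewise for $v$. Since the cycles of $\gamma\sigma^{switch}$ agree with those of $\gamma\sigma$ except for the splits $[a]_{\gamma\sigma} = C_1 \sqcup C_2$ and $[b]_{\gamma\sigma} = D_1 \sqcup D_2$, the cycles of $\gamma\sigma^{switch}$ containing $u, \gamma(u), v, \gamma(v)$ coincide with their $\gamma\sigma$-counterparts. Consequently $e_u^{\gamma\sigma^{switch}}$ and $e_v^{\gamma\sigma^{switch}}$ connect the same pair of vertices of $T^{\gamma\sigma^{switch}}$ as $e_u^{\gamma\sigma}$ and $e_v^{\gamma\sigma}$ do in $T^{\gamma\sigma}$, which gives (iv).
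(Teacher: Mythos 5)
Your framing via $\sigma^{switch}=\sigma(a,c)(b,d)$ and $\gamma\sigma^{switch}=\gamma\sigma\cdot(a,c)(b,d)$ is correct and matches the paper's cycle-splitting computation, and your treatments of (ii) and (iii) are essentially sound. However, the proposal has two genuine gaps.

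First, the upper bound $\#(\gamma\vee\sigma^{switch})\leq 2$ in part (i) is not actually proved: you describe a ``walk argument'' and then concede that ``this topological upper bound is the main obstacle of the proof.'' The paper closes this cleanly without any such argument. Set $\sigma'=\sigma(a,b)(c,d)$, i.e.\ the permutation obtained from $\sigma$ by breaking the two pairs into singletons. Since the partition $0_{\sigma'}$ refines $0_{\sigma^{switch}}$, one has $\#(\gamma\vee\sigma^{switch})\leq\#(\gamma\vee\sigma')$. Now $(a,b)$ is a non-loop block by hypothesis (the edge $e_a^{\gamma\sigma}$ connects two distinct vertices), so by Proposition~\ref{Proposition: loop edge iff cutting through string} it is non-cutting, hence $\#(\gamma\vee\sigma(a,b))=1$; removing the further block $(c,d)$ increases the block count by at most one, so $\#(\gamma\vee\sigma')\leq 2$. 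Combined with the Mingo--Nica lower bound this gives $\#(\gamma\vee\sigma^{switch})=2$. The separation $\{a,d\}\subset A$, $\{b,c\}\subset B$ is then a separate short argument: if both pairs sat in the same block $A$, every remaining block of $\sigma^{switch}$ (each also a block of $\sigma$) lies entirely in $A$ or in $B$, forcing $\gamma\vee\sigma\leq\{A,B\}$, contradicting $\gamma\vee\sigma=1_m$. Your proposal conflates these two steps and supplies neither; Proposition~\ref{Proposition: loop edge iff cutting through string} is the key ingredient you missed.

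Second, you misread the hypothesis of (iv). You take it to mean that neither endpoint of $e_u^{\gamma\sigma}$ equals $[a]_{\gamma\sigma}$ or $[b]_{\gamma\sigma}$, which would indeed make the cycles of $\gamma\sigma^{switch}$ through $u,\gamma(u),v,\gamma(v)$ identical to their $\gamma\sigma$-counterparts and render (iv) immediate. But the hypothesis only rules out the unordered \emph{pair} of endpoints of $e_u^{\gamma\sigma}$ being equal to that of $e_a^{\gamma\sigma}$; one endpoint may well be $[a]_{\gamma\sigma}$ or $[b]_{\gamma\sigma}$. The paper's proof is explicit about this (``It remains the case when one of the vertices is either $C_1$ or $C_2$ and the other is one of $C_3,\dots,C_p$''), and this is precisely the case that matters for the application in Lemma~\ref{Lemma: Crossing part contribution}. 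In that case you must still show $e_u^{\gamma\sigma^{switch}}$ and $e_v^{\gamma\sigma^{switch}}$ land on the same split piece; the paper does this by bringing in the $\sigma$-partner $u'$ of $u$ (with $\{u,u'\}\in\sigma$), noting $\{u,u'\}$ is also a block of $\sigma^{switch}$ so it is wholly contained in $A$ or $B$, and since $u'$ lies in a cycle $C_i$ ($i\geq 3$) whose side is already determined, $u$ is forced onto the corresponding piece $C_{1,2}$ (resp.\ $C_{1,1}$), and likewise for $v$. Your version skips this entirely.
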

\begin{proof}
Since $e_a^{\gamma\sigma}$ and $e_b^{\gamma\sigma}$ connect the same pair of vertices of $\pi=\gamma\sigma$ and with the same orientation then there exist cycles $C_1$ and $C_2$ of $\gamma\sigma$ of the form,
$$C_1=(x_1,\dots,x_s,c,x_{s+1},\dots,x_{s+t},a),$$
$$C_2=(y_1,\dots,y_u,d,y_{u+1},\dots,x_{u+v},b).$$
These cycles of $\gamma\sigma$ correspond to the blocks of $\pi$ that $e_a^{\gamma\sigma}$ and $e_b^{\gamma\sigma}$ connect. Let $C_3,\dots,C_p$ be the rest of cycles of $\gamma\sigma$. It is easy to note that $C_3,\dots,C_p$ are all cycles of $\gamma\sigma^{switch}$ and $\gamma\sigma^{switch}$ has cycles,
$$C_{1,1}=(x_1,\dots,x_s,c),$$
$$C_{1,2}=(x_{s+1},\dots,x_{s+t},a),$$
$$C_{2,1}=(y_1,\dots,y_u,d),$$
$$C_{2,2}=(y_{u+1},\dots,y_{u+v},b).$$
The latest means $\#(\gamma\sigma^{switch})=\#(\gamma\sigma)+2$, hence,
\begin{eqnarray*}
m+2\#(\gamma\vee\sigma^{switch}) & \geq & \#(\gamma)+\#(\sigma^{switch})+\#(\gamma\sigma^{switch}) \\
&=& \#(\gamma)+\#(\sigma)+\#(\gamma\sigma)+2 \\
&=& m+4.
\end{eqnarray*}
Therefore $\#(\gamma\vee\sigma^{switch})\geq 2$. On the other hand, let $\sigma^\prime=\sigma(a,b)(c,d)$, it is clear $\sigma^\prime\leq \sigma^{switch}$, thus,
\begin{eqnarray}\label{Aux: Inequality 3}
2\leq \#(\gamma\vee\sigma^{switch}) \leq \#(\gamma\vee\sigma^\prime).
\end{eqnarray}
However recall that $(a,b)$ is non-cutting as $e_a^{\gamma\sigma}$ is not a loop of $T^{\gamma\sigma}$, therefore $\#(\gamma\vee\sigma(a,b))=\#(\gamma\vee\sigma)=1$. Note that $\sigma^\prime=\sigma(a,b)(c,d)$, since removing the through string $(c,d)$ can increase the number of blocks of $\gamma\vee\sigma(a,b)$ at most by $1$ then $\#(\gamma\vee\sigma^\prime)\leq \#(\gamma\vee\sigma(a,b))+1=2$. Combining this with Inequality \ref{Aux: Inequality 3} we get,
$$\#(\gamma\vee\sigma^{switch})=2.$$
Let $A$ and $B$ be the blocks of $\gamma\vee\sigma^{switch}$, since $\{a,d\}$ is a block of $\sigma^{switch}$ then it must be contained in either $A$ or $B$, suppose it is contained in $A$. Similarly $\{b,c\}$ must be contained in either $A$ or $B$. Suppose it is contained in $A$, any other block $D$ of $\sigma^{switch}$ is also a block of $\sigma$ and therefore it is contained in either $A$ or $B$. Thus as $\{a,b,c,d\}\subset A$ it follows that any block of $\sigma$ is contained in either $A$ or $B$ which proves $\gamma\vee\sigma$ must be contained in the partition whose blocks are $A$ and $B$ which is a contradiction as $\gamma\vee\sigma=1_m$. So it must be $\{c,d\}\subset B$, this proves $(i)$. To prove $(ii)$ observe that by \cite[Equation 2.9]{MN},
$$\#(\sigma^{switch}|_A)+\#(\gamma|_A)+\#(\gamma|_A\sigma^{switch}|_A)\leq |A|+2,$$
$$\#(\sigma^{switch}|_B)+\#(\gamma|_B)+\#(\gamma|_B\sigma^{switch}|_B)\leq |B|+2,$$
with equality if and only if $\sigma^{switch}|_A\in \NC_2(\gamma|_A)$ and $\sigma^{switch}|_B\in \NC_2(\gamma|_B)$. Summing over two Inequalities yields,
$$\#(\sigma^{switch})+\#(\gamma)+\#(\gamma\sigma^{switch})\leq m+4.$$
But we proved $\#(\sigma^{switch})+\#(\gamma)+\#(\gamma\sigma^{switch})=m+4$ so it must be equality in both inequalities which proves $(ii)$. To prove $(iii)$ we use that $\gamma\vee\sigma^{switch}=\gamma\vee\gamma\sigma^{switch}$, so $A$ and $B$ are both union of blocks of $\gamma\sigma^{switch}$. Since $\{a,d\}\subset A$ then $C_{1,2}\subset A$ and $C_{2,1}\subset A$. Similarly $C_{1,1}\subset B$ and $C_{2,2}\subset B$. Suppose without loss of generality that $C_3,\dots,C_q \subset A$ with $3\leq q\leq p$ and $C_{q+1},\dots,C_p\subset B$. Thus,
$$A=C_{1,2}\cup C_{2,1}\cup C_3 \cup \cdots \cup C_q.$$
$$B=C_{1,1}\cup C_{2,2}\cup C_{q+1} \cup \cdots \cup C_p.$$
Therefore, we have that $\gamma|_A\sigma^{switch}|_A$ has cycle decomposition $C_{1,2}C_{2,1}\ab C_3\cdots C_q$. However $\pi=\gamma\sigma=C_1C_2\cdots C_p$, thus $\pi|_A=C_{1,2}C_{2,1}C_3\cdots C_q$ which proves $\pi|_A=\gamma|_A\sigma^{switch}|_A$. Similarly we prove $\pi|_B=\gamma|_B\sigma^{switch}|_B$. It remains to prove $(iv)$. Suppose $e_u^{\gamma\sigma}$ and $e_v^{\gamma\sigma}$ connect the same pair of vertices which are different from the vertices connected by $e_a^{\gamma\sigma}$. The vertices of $T^{\gamma\sigma}$ are the cycles of $\gamma\sigma$, so let us suppose without loss of generality that $e_u^{\gamma\sigma}$ connect the cycles $C_3$ and $C_4$. These cycles are also cycles of $\gamma\sigma^{switch}$ so $e_u^{\gamma\sigma^{switch}}$ and $e_v^{\gamma\sigma^{switch}}$ still connect the same pair of vertices of $T^{\gamma\sigma^{switch}}$. It remains the case when one the vertices is either $C_1$ or $C_2$ and the other is one of $C_3,\dots,C_p$ (by hypothesis $e_u^{\gamma\sigma}$ cannot connect $C_1$ and $C_2$). Suppose without loss of generality that $e_u^{\gamma\sigma}$ connect $C_1$ and $C_3$. Let $u^\prime$ be such that $\{u,u^\prime\}$ is a block of $\sigma$. Then we either have $u\in C_1$ and $u^\prime\in C_3$ or $u^\prime \in C_1$ and $u\in C_3$, let us assume without loss of generality we are in the former case. Since $u\neq a,b,c,d$ then $\{u,u^\prime\}$ is also a block of $\sigma^\prime$ and therefore $\{u,u^\prime\}$ is contained in either $A$ or $B$ since, $u^\prime\in C_3$ then it must be $\{u,u^\prime\}\subset A$ and therefore $u\in C_{1,2}$. This proves $e_u^{\gamma\sigma^{switch}}$ connect the vertices $C_{1,2}$ and $C_3$ of $T^{\gamma\sigma^{switch}}$. Similarly let $v^\prime$ be such that $\{v,v^\prime\}$ is a block of $\sigma$. Since $e_v^{\gamma\sigma}$ connect the same pair of vertices of $e_{u}^{\gamma\sigma}$ then it must be that $e_v^{\gamma\sigma}$ connects $C_1$ and $C_3$. Proceeding as before we prove that $e_v^{\gamma\sigma^{switch}}$ connects the vertices $C_{1,2}$ and $C_3$ of $T^{\gamma\sigma^{switch}}$ which proves $(iv)$.
\end{proof}

\begin{lemma}\label{Lemma: Subgraph of T^gamma sigma}
Let $(\sigma,\tau,\pi)$ be a limit triple and let $B\in \overline{\pi_\tau}$ be a block of $\overline{\pi_\tau}$. The graph $T^{\gamma\sigma}(\sigma,\tau)_B$ is connected.
\end{lemma}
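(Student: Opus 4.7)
I would prove this by contradiction with condition $L_4$: the tree structure of $\overline{T^{\gamma\sigma}(\sigma,\tau)^{\overline{\pi_\tau}}}$. Write $G := T^{\gamma\sigma}(\sigma,\tau)$ and $G_B := T^{\gamma\sigma}(\sigma,\tau)_B$ and suppose, for contradiction, that $G_B$ is disconnected.

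First I would observe that $G$ itself is connected: two whites of $G$ lie in the same component if and only if they belong to the same block of $(\sigma\vee\gamma)\vee\tau$, and $\tau\vee\gamma = 1_m$ together with $\sigma\vee\gamma \geq \gamma$ forces $(\sigma\vee\gamma)\vee\tau = 1_m$. Among pairs of $\tau$-blocks $D, D' \subset B$ lying in distinct components of $G_B$, I would choose one at minimum distance in $G$, and let
\[
P:\quad D = D^{(0)} - W_0 - D^{(1)} - W_1 - \cdots - W_{m-1} - D^{(m)} = D'
\]
be a $G$-geodesic between them. Minimality of $(D,D')$ forces $D^{(1)},\ldots,D^{(m-1)} \not\subset B$; the geodesic property forces the vertices of $P$ to be pairwise distinct; and $m \geq 2$, since $m = 1$ would make $W_0$ a common neighbour of $D$ and $D'$ in $G_B$.

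Consider the image of $P$ in the quotient $G^{\overline{\pi_\tau}}$. Both $D^{(0)}$ and $D^{(m)}$ map to the merged vertex $\tilde B$, the intermediate images $\tilde D^{(j)}$ ($1 \leq j \leq m-1$) are distinct from $\tilde B$ (since $D^{(j)} \not\subset B$), and the whites $W_0,\ldots,W_{m-1}$ remain pairwise distinct under the quotient. Thus $P$ becomes a closed walk $w$ of length $2m$ based at $\tilde B$, with first and last edges $\{\tilde B, W_0\}$ and $\{\tilde B, W_{m-1}\}$; these are distinct (as $W_0 \neq W_{m-1}$) and each is traversed exactly once in $w$. Now pass $w$ to the simple graph $\overline{G^{\overline{\pi_\tau}}}$ and reduce it by iteratively removing back-and-forth subwalks of the form $U - V - U$. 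Each such reduction decreases the traversal count of a single edge by $2$ and leaves all others unchanged, so it preserves the parity of every edge's traversal count. Hence the odd-parity edges $\{\tilde B, W_0\}$ and $\{\tilde B, W_{m-1}\}$ survive every reduction, and the reduced walk is non-trivial. But in a tree every closed walk reduces to the trivial walk; this contradicts $L_4$.

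The main subtlety I expect is verifying that the reductions cannot eliminate the first or last edge of $w$. This follows from the parity argument, and it can also be seen concretely: a back-and-forth at position~$1$ of $w$ would require $\tilde D^{(1)} = \tilde B$, which is impossible since $D^{(1)} \not\subset B$ (and the symmetric observation handles the last edge). The parity invariant is what ultimately makes the argument go through regardless of how many intermediate $\tilde D^{(j)}$'s happen to coincide in the quotient.
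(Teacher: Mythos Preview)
Your proof is correct and takes a genuinely different route from the paper's.

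The paper's proof does not appeal to $L_4$ directly. Instead it builds a spanning tree of $G$, uses it to construct an oriented partition of the edges $(\widetilde{\tau},\sim_p)$ of $\overline{T^{\gamma\sigma}}$ with $\underset{\sim_p}{\widetilde{\tau}}\leq\pi$, invokes Lemma~\ref{Lemma: Equality when tree} to compute $\#(\underset{\sim_p}{\widetilde{\tau}})=\#(\gamma\sigma)-2(\#(\gamma\vee\sigma)-1)$, and then uses the non-crossing condition $L_1$ together with the numerical equality $\#(\pi)=m/2-r+2$ to conclude $\underset{\sim_p}{\widetilde{\tau}}=\pi$. The contradiction then comes from part~$(ii)$ of Lemma~\ref{Lemma: Equality when tree}: two blocks of $\widetilde{\tau}$ corresponding to distinct components of $G_B$ would force the associated edges to connect distinct vertex-pairs in $T^{\pi}$, contradicting $\tau\leq\overline{\pi}$.

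Your argument bypasses all of the Section~\ref{Section: Graph theory} machinery and uses only the tree condition $L_4$. The geodesic/minimality step isolating a path whose intermediate blacks lie outside $B$ is clean, and the parity observation (that the edge $\{\tilde B,W_0\}$ is traversed exactly once in the image walk) is decisive: in a tree, every edge separates the graph, so any closed walk must cross each edge an even number of times. In fact this parity fact alone already gives the contradiction, without needing the reduction-by-backtracks step. What the paper's approach buys is that it simultaneously establishes $\underset{\sim_p}{\widetilde{\tau}}=\pi$, which is reused elsewhere; your approach is self-contained and shorter for this lemma in isolation.
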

\begin{proof}
Since $\gamma\vee\tau=1_m$ then $T^{\gamma\sigma}(\sigma,\tau)$ is connected and thus it has a spanning tree, which we call $T$.

Let $B_1,B_2\in\sigma$ be such that $e_{B_1}^{\gamma\sigma}$ and $e_{B_2}^{\gamma\sigma}$ connect the same pair of vertices of $T^{\gamma\sigma}$ and $B_1$ and $B_2$ are both edges of $T$. Then $B_1$ and $B_2$ are both edges of $T^{\gamma\sigma}(\sigma,\tau)$ adjacent to the same withe vertex. Let $D_1$ and $D_2$ be the black vertices of $T^{\gamma\sigma}(\sigma,\tau)$ (and then of $T$) that are adjacent to $B_1$ and $B_2$ respectively. $D_1$ and $D_2$ correspond to blocks of $\tau$ which we merge. Let $\hat{\tau}$ be the resulting partition after merging all blocks of $\tau$ described as before. Note that $\overline{T^{\hat{\tau}}}$ is still a tree as each time we merge two black vertices of $T$ into the same block of $\hat{\tau}$ we identify two edges. The process to get $\hat{\tau}$ is essentially the same process we did in Subsection \ref{Subsection: Non-crossing limit triples} however in this case we might be missing some edges of $T^{\gamma\sigma}(\sigma,\tau)$ and therefore the resulting partition $\hat{\tau}$ not necessarily satisfies $\overline{\gamma\sigma}\leq \hat{\tau}$ in the sense that any block of $\hat{\tau}$ is a union of blocks of $\overline{\gamma\sigma}$ but there might be blocks of $\overline{\gamma\sigma}$ that are in no block of $\hat{\tau}$. For those blocks of $\overline{\gamma\sigma}$ we let them be blocks of $\hat{\tau}$, in this way now it is satisfied $\overline{\gamma\sigma}\leq \hat{\tau}$. For these blocks of $\overline{\gamma\sigma}$ we also let them be leaves of $\overline{T^{\hat{\tau}}}$ in the sense that if $B$ is some of these blocks we let $B$ be a black vertex of $\overline{T^{\hat{\tau}}}$ joined to the white vertex that contains $e_u$ with $u\in B$ (this is well defined as for $e_u$ and $e_v$ with $u,v\in B$ we have that both $e_u$ and $e_v$ belong to the same connected component of $T^{\gamma\sigma}$). Let $\widetilde{\tau} = \widetilde{\hat{\tau}}$. As proved in Subsection \ref{Subsection: Non-crossing limit triples} the partition $\widetilde{\tau}\in \cP(\overline{E^{\gamma\sigma}})$ is given by $\overline{e_A^{\gamma\sigma}}$ and $\overline{e_B^{\gamma\sim}}$ are in the same block if and only if $\overline{e_A^{\gamma\sigma}}$ and $\overline{e_B^{\gamma\sigma}}$ are adjacent to the same black vertex of $\overline{T^{\hat{\tau}}}$. For $u,v\in [m]$, if $\overline{e_u^{\gamma\sigma}}$ and $\overline{e_v^{\gamma\sigma}}$ are in the same block of $\widetilde{\tau}$ then $e_u^{\gamma\sigma}$ and $e_v^{\gamma\sigma}$ join the same pair of vertices of $T^{\pi}$ so we can choose an appropriate orientation of the edges $\sim_p$ so that $\underset{\sim_p}{\widetilde{\tau}}\leq \pi$. On the other hand note that $\overline{T^{\hat{\tau}}}$ is a graph whose white vertices are the connected components of $T^{\gamma\sigma}$, the black vertices are the blocks of $\hat{\tau}$ that can be identified with blocks of $\widetilde{\tau}$ and edges indexed by $\overline{E^{\gamma\sigma}}$ where $\overline{e}\in \overline{E^{\gamma\sigma}}$ connects a white vertex $W$ with a black vertex $b$ if $\overline{e}\in b$ and $\overline{e}\in \overline{W}$, i.e. $\overline{T^{\hat{\tau}}}=T^{\gamma\sigma}(\widetilde{\tau})$ which is a tree. Moreover, if $\{u,v\}$ is a block of $\sigma$ such that $e_u^{\gamma\sigma}$ is a loop of $T^{\gamma\sigma}$, then $\{u,v\}$ must also be a block of $\tau$ otherwise $\C_{\tau}(\pi)=0$. This means that if $\overline{e_u^{\gamma\sigma}}$ is a loop of $\overline{T^{\gamma\sigma}}$ then $\{\overline{e_u^{\gamma\sigma}}\}$ must be a block of $\widetilde{\tau}$. It follows by Lemma \ref{Lemma: Equality when tree},
$$\#(\underset{\sim_p}{\widetilde{\tau}})=\#(\gamma\sigma)-2(\#(\gamma\vee\sigma)-1)=m-\#(\sigma)-\#(\gamma)+2=m/2-r+2.$$
But we also know $\#(\pi)=m/2-r+2$ so it must be $\underset{\sim_p}{\widetilde{\tau}}=\pi$. Suppose that for some of the blocks $B\in \overline{\pi_\tau}$ the graph $T^{\gamma\sigma}(\sigma,\tau)_B$ is disconnected. The block $B$ must correspond to a block for which $\overline{e_B}$ is not a loop of $T^{\pi}$, otherwise by definition of $\overline{\pi_\tau}$, $B$ has size $2$ and then $T^{\gamma\sigma}(\sigma,\tau)_B$ is connected. Note that $\hat{\tau}$ only joins blocks of $\tau$ that are contained in the same block of $\overline{\pi}$. Also note that two blocks of $\tau$ contained in a block $B\in \overline{\pi}$ that are in the same block of $\hat{\tau}$ must be in the same connected component of $T^{\gamma\sigma}(\sigma,\tau)_B$ because $\hat{\tau}$ merges blocks adjacent to the same white vertex. For this observations we conclude that there must exist two blocks $D_1$ and $D_2$ of $\hat{\tau}$ that are in distinct connected components of $T^{\gamma\sigma}(\sigma,\tau)_B$. Moreover if $u\in D_1$ and $v\in D_2$ then $e_u^{\pi}$ and $e_v^{\pi}$ are no loops of $T^{\pi}$ joining the same pair of vertices. The blocks $D_1$ and $D_2$ correspond to distinct blocks of $\widetilde{\tau}$ and therefore by Lemma \ref{Lemma: Equality when tree} part $(ii)$ we know that $\overline{e_u^{\underset{\sim_p}{\widetilde{\tau}}}}=\overline{e_u^{\pi}}$ and $\overline{e_v^{\pi}}=\overline{e_v^{\underset{\sim_p}{\widetilde{\tau}}}}$ join distinct pair of vertices. However we took $B\in \overline{\pi_\tau}$ which means any two edges join the same pair of vertices which is a contradiction.
\end{proof}

\begin{lemma}\label{Lemma: Subgraphs of T^gamma sigma shared at most one white vertex}
Let $(\sigma,\tau,\pi)$ be a limit triple and let $B,D\in \overline{\pi_\tau}$ be distinct blocks of $\overline{\pi_\tau}$. The graphs $T^{\gamma\sigma}(\sigma,\tau)_B$ and $T^{\gamma\sigma}(\sigma,\tau)_D$ shared at most one vertex which must be white.
\end{lemma}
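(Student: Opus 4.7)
The plan is to argue by contradiction, leveraging condition $L_4$ which, for any limit triple, guarantees that the elementarized quotient graph $\overline{T^{\gamma\sigma}(\sigma,\tau)^{\overline{\pi_\tau}}}$ is a tree.

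First I would dispose of the claim that any shared vertex must be white. The black vertices of $T^{\gamma\sigma}(\sigma,\tau)_B$ and $T^{\gamma\sigma}(\sigma,\tau)_D$ are respectively the blocks $B'\in\tau$ with $B'\subset B$ and the blocks $B'\in\tau$ with $B'\subset D$. Since $B\ne D$ are distinct blocks of the partition $\overline{\pi_\tau}$, and every block of $\tau$ is contained in a unique block of $\overline{\pi_\tau}$, no single $B'\in\tau$ can belong to both families. Hence any shared vertex of the two subgraphs is automatically white.

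Next I would suppose for contradiction that the two subgraphs share two distinct white vertices $W_1$ and $W_2$. By the definition of $T^{\gamma\sigma}(\sigma,\tau)_B$, membership of $W_i$ as a white vertex of this subgraph produces a black vertex $B_i\in\tau$ with $B_i\subset B$ and an edge of $T^{\gamma\sigma}(\sigma,\tau)$ joining $B_i$ to $W_i$; analogously one obtains $D_i\in\tau$ with $D_i\subset D$ and an edge joining $D_i$ to $W_i$, for each $i=1,2$. Passing to the quotient $T^{\gamma\sigma}(\sigma,\tau)^{\overline{\pi_\tau}}$ merges $B_1,B_2$ into a single black vertex $[B]$ and $D_1,D_2$ into a single black vertex $[D]$, and the four edges above descend to edges joining $W_1$ to $[B]$, $W_2$ to $[B]$, $W_1$ to $[D]$, and $W_2$ to $[D]$ among the four pairwise distinct vertices $W_1,W_2,[B],[D]$.

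Finally I would invoke elementarization: the four unordered endpoint pairs $\{W_1,[B]\}$, $\{W_2,[B]\}$, $\{W_1,[D]\}$, $\{W_2,[D]\}$ are pairwise distinct, so each of the four edges survives in $\overline{T^{\gamma\sigma}(\sigma,\tau)^{\overline{\pi_\tau}}}$, producing a $4$-cycle on the vertices $W_1,[B],W_2,[D]$. This contradicts $L_4$. The only step needing any care is verifying that no two of the four edges collapse under elementarization, but this is immediate from the pairwise distinctness of the endpoint pairs, so the contradiction is clean and the proof is complete.
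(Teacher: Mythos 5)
Your proof is correct, and it takes a somewhat different (and arguably cleaner) route than the paper's. The paper invokes the preceding lemma that each subgraph $T^{\gamma\sigma}(\sigma,\tau)_B$ is connected, extracts a path from $W_1$ to $W_2$ inside each of $T^{\gamma\sigma}(\sigma,\tau)_B$ and $T^{\gamma\sigma}(\sigma,\tau)_D$, and argues that these two paths remain distinct after passing to the quotient and elementarizing, producing a cycle. You bypass connectedness entirely: the mere fact that $W_1, W_2$ each lie in both subgraphs already yields, for $i=1,2$, adjacent black vertices $B_i \subset B$ and $D_i \subset D$, and after the quotient (where all the $B_i$ collapse to $[B]$ and the $D_i$ to $[D]$) and elementarization these give a clean $4$-cycle $W_1 - [B] - W_2 - [D] - W_1$ on four pairwise distinct vertices, contradicting the tree condition $L_4$. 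Your treatment of the ``must be white'' part matches the paper's. What your argument buys is independence from Lemma~\ref{Lemma: Subgraph of T^gamma sigma} and avoidance of reasoning about how whole paths behave under quotient and elementarization; what the paper's approach buys is nothing extra here, though the path-based picture is the one used in the surrounding text and so may feel more uniform with the rest of the section.
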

\begin{proof}
First observe that the black vertices of these graphs correspond to blocks of $\tau$ contained in distinct blocks of $\overline{\pi}$ so they cannot have any black vertex in common. Suppose the graphs shared two white vertices, $W_1$ and $W_2$ which correspond to connected components of $T^{\gamma\sigma}$. By Lemma \ref{Lemma: Subgraph of T^gamma sigma} we know each graph is connected and therefore in each graph there exist a path going from $W_1$ to $W_2$. Each path is completely contained in the graphs. The path contained in the graph $T^{\gamma\sigma}(\sigma,\tau)_B$ contains only edges $e_V$ where $V\in \sigma$ is a block contained in $B$, similarly the edges of the other path correspond to blocks only contained in $D$. In the quotient $\overline{T^{\gamma\sigma}(\sigma,\tau)^{\overline{\pi_\tau}}}$ we get then two paths going from $A$ to $B$ which are distinct as the original paths only contain edges corresponding to blocks completely contained in each $B$ and $D$ and therefore they cannot be the same in the quotient otherwise $B=D$. This is a contradiction as that implies $\overline{T^{\gamma\sigma}(\sigma,\tau)^{\overline{\pi_\tau}}}$ has a cycle from $W_1$ to $W_2$.
\end{proof}

Thanks to Lemmas \ref{Lemma: Subgraph of T^gamma sigma} and \ref{Lemma: Subgraphs of T^gamma sigma shared at most one white vertex} we know that the graph $T^{\gamma\sigma}(\tau,\sigma)$ is a tree if and only if each subgraph $T^{\gamma\sigma}(\tau,\sigma)_B$ is a tree for any $B\in\overline{\pi_\tau}$. For a non-crossing limit triple $(\sigma,\tau,\pi)$ we have that $T^{\gamma\sigma}(\tau,\sigma)_B$ is a tree for any $B\in\overline{\pi_\tau}$ however for crossing limit triples at least one of the graphs $T^{\gamma\sigma}(\tau,\sigma)_B$ is not a tree. This will be the main tool to characterize the non-crossing limit triples.

\begin{remark}\label{Remark: Loops blocks are already trees}
For a block $B\in\overline{\pi_\tau}$ for which $e_B^{\pi}$ is a loop of $T^{\pi}$, we know that $T^{\gamma\sigma}(\sigma,\tau)_B$ is already a tree as it consists of a single white and black vertex joined by a unique edge.
\end{remark}

\begin{proposition}\label{Proposition: Characterization of subgraphs corresponding to crossing limit triples}
Let $(\pi,\tau)\in \mathcal{B}$ then there exists 
$$\sigma\in \NC_2(m_1,\dots,m_r)\cup \NC_2^{nc}(m_1,\dots,m_r),$$ such that $(\sigma,\tau,\pi)$ is a limit triple and there is $B\in\overline{\pi_\tau}$ such that \allowbreak
$T^{\gamma\sigma}(\sigma,\tau)_B$ is not a tree.
\end{proposition}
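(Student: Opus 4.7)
The plan is to combine the definition of $\mathcal{B}$ with the equivalence noted just before the statement: $T^{\gamma\sigma}(\sigma,\tau)$ is a tree if and only if each $T^{\gamma\sigma}(\sigma,\tau)_B$, for $B\in\overline{\pi_\tau}$, is a tree. Once this equivalence is in hand, the proposition follows by contraposition.

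By the definition of $\mathcal{B}$, one may choose $\sigma\in \NC_2(m_1,\dots,m_r)\cup\NC_2^{nc}(m_1,\dots,m_r)$ making $(\sigma,\tau,\pi)$ a limit triple, and any such $\sigma$ will do. The same definition prohibits $(\sigma,\tau,\pi)$ from being a non-crossing limit triple, so condition $L_4^\prime$ must fail for this $\sigma$; equivalently, $T^{\gamma\sigma}(\sigma,\tau)$ is not a tree. The equivalence above then forces some $B\in\overline{\pi_\tau}$ to satisfy that $T^{\gamma\sigma}(\sigma,\tau)_B$ is not a tree, which is precisely the proposition.

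What remains is to justify the non-trivial direction ``each piece is a tree implies the whole is a tree'' of the equivalence; the reverse direction uses Lemma~\ref{Lemma: Subgraph of T^gamma sigma} directly, since each $T^{\gamma\sigma}(\sigma,\tau)_B$ is a connected subgraph of a tree, hence itself a tree. The graph $T^{\gamma\sigma}(\sigma,\tau)$ has $\#(\gamma\vee\sigma)+\#(\tau)$ vertices and $\#(\sigma)$ edges, and is connected because $\tau\vee\gamma\vee\sigma=1_m$, so being a tree reduces to the identity $\#(\gamma\vee\sigma)+\#(\tau)-\#(\sigma)=1$. Summing $|V(T^{\gamma\sigma}(\sigma,\tau)_B)|-|E(T^{\gamma\sigma}(\sigma,\tau)_B)|=1$ over $B\in\overline{\pi_\tau}$ and invoking Lemma~\ref{Lemma: Subgraphs of T^gamma sigma shared at most one white vertex} to put the repeated (white-vertex) incidences across distinct pieces in bijection with the edges of $\overline{T^{\gamma\sigma}(\sigma,\tau)^{\overline{\pi_\tau}}}$, which is a tree on $\#(\overline{\pi_\tau})+\#(\gamma\vee\sigma)$ vertices by $L_4$, yields exactly the desired identity. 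The only mildly delicate step is this vertex-edge bookkeeping, and I foresee no further obstacle.
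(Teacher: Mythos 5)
Your proof is correct and follows essentially the same logic as the paper's: both take the $\sigma$ guaranteed by the definition of $\mathcal{B}$, note that $L_4'$ must fail, and then pass from "$T^{\gamma\sigma}(\sigma,\tau)$ is not a tree" to "some piece $T^{\gamma\sigma}(\sigma,\tau)_B$ is not a tree." The paper treats the equivalence between the whole graph being a tree and each piece being a tree as an immediate consequence of Lemmas~\ref{Lemma: Subgraph of T^gamma sigma} and~\ref{Lemma: Subgraphs of T^gamma sigma shared at most one white vertex}, whereas you flesh out the nontrivial direction with an Euler-type count: summing $|V_B|-|E_B|=1$ over $B\in\overline{\pi_\tau}$, using $L_4$ to identify $\sum_B w_B$ with the edge count $\#(\overline{\pi_\tau})+\#(\gamma\vee\sigma)-1$ of the tree $\overline{T^{\gamma\sigma}(\sigma,\tau)^{\overline{\pi_\tau}}}$, and recovering $\#(\gamma\vee\sigma)+\#(\tau)-\#(\sigma)=1$; combined with connectivity (which, as you note, follows from $\tau\vee\gamma=1_m$ since $\sigma\leq\tau$), this shows the whole graph is a tree. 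One small remark: Lemma~\ref{Lemma: Subgraphs of T^gamma sigma shared at most one white vertex} is not actually needed for this direction of the bookkeeping — the identification of white-vertex incidences with edges of the elementarized quotient graph is immediate from the definition of elementarization together with $L_4$ — though its invocation does no harm.
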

\begin{proof}
We know there exist $\sigma$ so that $(\sigma,\tau,\pi)$ is a limit triple. However if $T^{\gamma\sigma}(\sigma,\tau)_B$ is a tree for any $B\in \overline{\pi_\tau}$ then $T^{\gamma\sigma}(\sigma,\tau)$ would be a tree which is a contradiction as that would imply $(\pi,\tau)\in \mathcal{A}$.
\end{proof}

In the last Subsection we got a unique $(\tau,\sigma)$ for each $(\pi,\tau)\in\mathcal{A}$. Now we would like to associate to each pair $(\pi,\tau)\in \mathcal{B}$ a canonical choice of $\sigma$ so that $(\sigma,\tau,\pi)$ is a limit triple and there is $B\in\overline{\pi_\tau}$ so that $T^{\gamma\sigma}(\sigma,\tau)_B$ is not a tree. We know such a $\sigma$ must exist thanks to Proposition \ref{Proposition: Characterization of subgraphs corresponding to crossing limit triples}.

\begin{definition}\label{Definition: definition of canonical sigma}
Let $(\pi,\tau)\in \mathcal{B}$. We let $\sigma^c \in \NC_2(m_1,\dots,m_r)\cup \NC_2^{nc}(m_1,\dots,m_r),$ be defined as follows. Let $B\in\overline{\pi_\tau}$. If there exist $\sigma$ such that $T^{\gamma\sigma}(\sigma,\tau)_B$ is a tree then we let $\sigma^c_B = \sigma_B$. Otherwise we let $\sigma^c_B$ to be $\sigma_B$ for any choice of $\sigma$ so that $(\sigma,\tau,\pi)$ is a limit triple. The latest is well defined because $\sigma\leq \tau\leq \overline{\pi_\tau}$ so it makes sense to define $\sigma$ restricted to each block of $\overline{\pi_\tau}$. Thanks to Lemma \ref{Lemma: Sigma is unique is non-crossing partitioned permutation free-loop type} the permutation $\sigma^c$ is uniquely defined at the blocks $B$ of $\overline{\pi_\tau}$ so that $T^{\gamma\sigma}(\sigma,\tau)_B$ is a tree. For any other block we might have more than once choice. See example \ref{Example: canonical sigma}.  
\end{definition}

\begin{example}\label{Example: canonical sigma}
Let us consider $m_1=4$ and $m_2=m_3=m_4=m_5=m_6=m_7=2$. Let
$$\pi=\{1,12,14,16\},\{2,4,6,8,10,11,13,15\},\{3,5,7,9\},$$ 
so that 
$$\overline{\pi}=\{1,4,11,12,13,14,15,16\},\{2,3,5,6,7,8,9,10\}.$$
Let us consider two examples for $\tau$. Let,
$$\tau_1=\{1,11,13,16\},\{4,12,14,15\},\{2,5,7,10\},\{3,6,8,9\},$$
and,
$$\tau_2=\{1,4,13,16\},\{11,12,14,15\},\{2,5,7,10\},\{3,6,8,9\}.$$
Since there are no loops we have $\overline{\pi_{\tau_1}}=\overline{\pi_{\tau_2}}=\overline{\pi}$. In example \ref{Example: Big example} we said that there is no $\sigma\leq \tau_1$ so that $T^{\gamma\sigma}(\sigma,\tau)$ is a tree, moreover, there is no $\sigma$ so that $T^{\gamma\sigma}(\sigma,\tau)_B$ is a tree for $B\in\overline{\pi}$. In this case for the two blocks of $\overline{\pi}$ we have more than one choice of $\sigma^c$ restricted to each block. For example we can choose,
$$\sigma^c=(1,11)(13,16)(4,12)(14,15)(2,5)(7,10)(3,6)(8,9),$$
or,
$$\sigma^c=(1,13)(11,16)(4,12)(14,15)(2,5)(7,10)(3,6)(8,9).$$
An example of $T^{\gamma\sigma^c}(\sigma^c,\tau_1)$ can be seen in Figure \ref{Figure: Big example} part C) for the first choice of $\sigma^c$. On the other hand, observe that the blocks $\{2,5,7,10\}$ and $\{3,6,8,9\}$ of $\tau_2$ are also blocks of $\tau_1$, so for the block $B_1=\{2,3,5,6,7, \ab 8, \ab 9,10\}$ of $\overline{\pi}$ there is no $\sigma$ such that $T^{\gamma\sigma}(\sigma,\tau)_{B_1}$ is a tree so for this block we can choose $\sigma^c|_{B_1}$ as before, for example,
$$\sigma^c|_{B_1}=(2,5)(7,10)(3,6)(8,9).$$
However for the blocks $\{1,4,13,16\}$ and $\{11,12,14,15\}$ of $\tau_2$ that are contained in the block $B_2=\{1,4,11,12,13,14,15,16\}$ of $\overline{\pi}$ we can choose,
$\sigma^c_{B_2}=(1,4)(13,16)(11,12)(14,15).$
It can be easily checked that $T^{\gamma\sigma^c}(\sigma^c,\tau_2)_{B_2}$ is a tree.
\end{example}

\begin{definition}
Let $(\pi,\tau)\in\mathcal{B}$ and let $(\sigma^c,\tau,\pi)$ be a crossing limit triple with $\sigma^c$ be as in definition \ref{Definition: definition of canonical sigma}. We let $\rho \vcentcolon = \rho_{\tau,\pi}$ be the partition of $[m]$ given as follows:
\begin{enumerate}
    \item For $B\in\overline{\pi_\tau}$ we let $B$ be a block of $\rho$ if $T^{\gamma\sigma^c}(\sigma^c,\tau)_B$ is not a tree.
    \item For $B\in\overline{\pi_\tau}$ we let the blocks of $\tau$ contained in $B$ to be blocks of $\rho$ if $T^{\gamma\sigma^c}(\sigma^c,\tau)_B$ is a tree.
\end{enumerate}
Proposition \ref{Proposition: Characterization of subgraphs corresponding to crossing limit triples} ensures $\rho$ exist and it satisfies,
$$\tau \leq \rho \leq \overline{\pi}.$$
For an example see \ref{Example: Example of rho}.
\end{definition}

\begin{example}\label{Example: Example of rho}
Let $(\pi,\tau_1)$ and $(\pi,\tau_2)$ be as in Example \ref{Example: canonical sigma}. As mentioned in Example \ref{Example: canonical sigma} there is no $\sigma\leq \tau^1$ so that $T^{\gamma\sigma}(\sigma,\tau)_B$ is a tree for some $B\in\overline{\pi}$. Thus,
$$\rho_{\tau_1,\pi}=\overline{\pi}=\{1,4,11,12,13,14,15,16\},\{2,3,5,6,7,8,9,10\}.$$
On the other hand, in Example \ref{Example: canonical sigma} we saw that there is $\sigma\leq \tau_2$ so that $T^{\gamma\sigma}(\sigma,\tau)_{B_2}$ is a tree with $B_2=\{1,4,11,12,13,14,15,16\}\in \overline{\pi}$. For the other block $B_1$ of $\overline{\pi}$ we said there is no $\sigma$ so that $T^{\gamma\sigma}(\sigma,\tau)_{B_1}$ is a tree. Hence,
$$\rho_{\tau_2,\pi}=\{1,4,13,16\},\{11,12,14,15\},\{2,3,5,6,7,8,9,10\}.$$
\end{example}


\begin{definition}\label{Definition: Graphs that count the extra terms modified}
Let $n\in \mathbb{N}$ be even. We say that $\pi\in \cP(n)$ respects the parity if any block of $\pi$ has the same number of even and odds numbers. Let $\tau\in \cP(n)$ be a partition that respects the parity. Let $\gamma_n^{par}\in S_n$ to be the permutation with cycle decomposition,
$$(1,2)\cdots (n-1,n).$$
We say that $\tau$ doesn't admit a non-crossing partitioned permutation if $\tau\vee\sigma_n^{par}=1_{n}$ and there is no $\sigma\in \cP_2(n)$ that respects the parity with $\sigma \leq \tau$ and such that $\Gamma(\tau,\sigma,\gamma_n^{par})$ is a tree.
\end{definition}

\begin{definition}\label{Definition: Equivalence class of blocks of partitioned permutation}
Let $(\tau,\sigma)\in \PS_{\NC_2}(m_1,\dots,m_r)$ and let $(a,b)$ and $(c,d)$ be two cycles of $\sigma$.
\begin{enumerate}
    \item We say that $(a,b)\sim (c,d)$ if either $[a]_{\gamma\sigma}=[c]_{\gamma\sigma}$ and $[b]_{\gamma\sigma}=[d]_{\gamma\sigma}$ or $[a]_{\gamma\sigma}=[d]_{\gamma\sigma}$ and $[b]_{\gamma\sigma}=[c]_{\gamma\sigma}$.
    \item For two blocks $B_1,B_2\in \tau$ we say that they are related if there exist $(a,b),(c,d)\in\sigma$ with $(a,b)\subset B_1$ and $(c,d)\subset B_2$ and such that $(a,b)\sim (c,d)$. For a block $B\in \tau$ we denote by $D_{B}$ to the set of blocks of $\tau$ distinct from $B$ that are related to $B$.
    \item We say that $(\tau,\sigma)$ is \textit{maximal} if $D_B=\emptyset$ for some $B\in\tau$, in this case we call $B$ a \textit{maximal block}.
\end{enumerate}
An example of a maximal and a no maximal non-crossing partitioned permutation can be seen in Example \ref{Example: Example of maximal partitioned permutation}.
\end{definition}

\begin{example}\label{Example: Example of maximal partitioned permutation}
Let $m_1=m_2=m_3=m_4=2$. And let $(\tau_1,\sigma_1)$ and $(\tau_2,\sigma_2)$ be given by,
$$\sigma_1=(1,4)(2,3)(5,6)(7,8),$$
$$\tau_1=\{1,4\},\{2,3,5,6,7,8\},$$
$$\sigma_2=(1,2)(3,4)(5,6)(7,8),$$
$$\tau_2=\{1,2,3,4,5,6,7,8\}.$$
Both partitioned permutations are in $\PS_{\NC_2}(2,2,2,2)$, however $(\tau_2,\sigma_2)$ is maximal as $D_B=\emptyset$ for the block $B=\{1,2,3,4,5,6,7,8\}$ while $(\tau_1,\sigma_1)$ is not maximal because $(1,4)\sim (2,3)$ and therefore $D_{\{1,4\}}=\{2,3,5,6,7\} \neq \emptyset$. 
\end{example}

\begin{lemma}\label{Lemma: Crossing part contribution}
For any $m_1,\dots,m_r\in \mathbb{N},$
\begin{equation*}
\sum_{(\tau,\pi)\in \mathcal{B}}\C_{\tau}(\pi)=\sum_{\substack{(\rho,\sigma^{new})\in \PS_{\NC_2}^{loop-free}(m_1,\dots,m_r)}}\widehat{\K}_{(\rho,\sigma^{new})}.
\end{equation*}
Where,
\begin{multline*}
\widehat{\K}_{(\rho,\sigma^{new})}=2^{\#(\sigma^{new})-\#(\rho)} \\
\mbox{} \times
\left[\prod_{\substack{B\in\rho \\ |B|=2n}}\left(\beta_{|B|}+\mathbbm{1}_{D_B=\emptyset}\sum_{\substack{\tau\in A_n}}\prod_{D\in \tau}\beta_{|D|}\right)-\prod_{B\in\rho}\beta_{|B|}\right],
\end{multline*}
with,
\begin{multline*}
A_n=\{\tau\in \cP(2n): \tau\text{ doesn't admit a non-crossing} \\ \text{partitioned permutation}\}.
\end{multline*}
\end{lemma}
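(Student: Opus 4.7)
The plan is to reorganise the sum $\sum_{(\pi,\tau)\in\mathcal{B}}\C_\tau(\pi)$ by grouping each pair $(\pi,\tau)$ according to its canonical data $(\sigma^c,\rho)=(\sigma^c,\rho_{\tau,\pi})$ introduced above. The decisive input is the decomposition established in the preceding lemmas: $T^{\gamma\sigma^c}(\sigma^c,\tau)$ splits along the blocks of $\rho$ into connected subgraphs $T^{\gamma\sigma^c}(\sigma^c,\tau)_B$ that pairwise share at most a single white vertex. Because matrix entries indexed by distinct blocks of $\overline{\pi}$ are independent, the classical cumulant factors as $\C_\tau(\pi)=\prod_{B\in\rho}C_B$, and by the very definition of $\rho$ we have $C_B=\beta_{|B|}$ when $T^{\gamma\sigma^c}(\sigma^c,\tau)_B$ is a tree (in which case $B$ is already a block of $\tau$) and $C_B=\prod_{D\in\tau|_B}\beta_{|D|}$ otherwise.

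Next, I would handle the two block-types locally. On a tree-block $B$, the pair $(\tau|_B,\sigma^c|_B)$ behaves exactly as a loop-free non-crossing partitioned permutation on the induced annulus and contributes $\beta_{|B|}$, just as in the non-crossing contribution already computed. On a non-tree block $B$ of size $2n$, the natural cyclic ordering of the cycles of $\sigma^c|_B$ identifies $\gamma|_B$ with $\gamma_{2n}^{par}$, and I would argue that $\tau|_B$ must lie in $A_n$: precisely because the local subgraph fails to be a tree, no pairing $\sigma\in\cP_2(2n)$ compatible with $\tau|_B$ yields a tree $\Gamma(\tau|_B,\sigma,\gamma_{2n}^{par})$, which is the defining property of $A_n$. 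Conversely, every $\tau'\in A_n$ is realised by reading the construction backwards, invoking the switch operation on splitting edges to move between compatible choices of $\sigma^c$. This produces the local contribution $\sum_{\tau'\in A_n}\prod_{D\in\tau'}\beta_{|D|}$ on each non-tree block.

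Assembling the reorganised sum, define $\sigma^{new}$ by collapsing each non-tree $\rho$-block to a single representative cycle of $\sigma^c$ while retaining $\sigma^c$ on tree blocks, so that $(\rho,\sigma^{new})\in\PS_{\NC_2}^{loop-free}(m_1,\dots,m_r)$. The prefactor $2^{\#(\sigma^{new})-\#(\rho)}$ is then produced by the same orientation-counting argument used in the proof of the non-crossing contribution. The indicator $\mathbbm{1}_{D_B=\emptyset}$ encodes a rigidity imposed by the $\sim$-relation: if some other $\rho$-block $B'$ is related to $B$ through $\sigma^{new}$, the structural compatibility between $\tau|_B$ and $\tau|_{B'}$ forces $\tau|_B$ to collapse to the single block $B$, so only $\beta_{|B|}$ survives on that block. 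The subtraction of $\prod_{B\in\rho}\beta_{|B|}$ removes the all-tree configuration, which sits in $\mathcal{A}$ (already counted in the non-crossing Proposition) rather than in $\mathcal{B}$. The main obstacle I expect is rigorously establishing the $\sigma^c$-independent equivalence ``the local subgraph is not a tree $\Longleftrightarrow$ $\tau|_B\in A_{|B|/2}$'' together with the maximality bookkeeping; both should ultimately reduce to the switch lemma combined with the tree decomposition of the preceding structural lemmas.
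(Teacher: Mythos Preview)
Your global strategy aligns with the paper's: group $(\pi,\tau)\in\mathcal{B}$ by the canonical data $\rho=\rho_{\tau,\pi}$, factor $\C_\tau(\pi)$ over the blocks of $\rho$, identify the non-tree blocks as those where $\tau|_B\in A_{|B|/2}$, reuse the orientation count from the non-crossing case for the factor $2^{\#(\sigma^{new})-\#(\rho)}$, and subtract the all-tree configuration which lies in $\mathcal{A}$.

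The genuine gap is your construction of $\sigma^{new}$. The phrase ``collapsing each non-tree $\rho$-block to a single representative cycle of $\sigma^c$'' does not produce a pairing of $[m]$: a block $B$ of size $2n$ carries $n$ transpositions of $\sigma^c$, and $\sigma^{new}|_B$ must again consist of $n$ transpositions, not one. This construction is the technical heart of the paper's proof, and it is precisely where the switch lemma is used (not, as you suggest, merely to realise every $\tau'\in A_n$ in the converse direction). Starting from $\sigma=\sigma^c$, whenever two cycles $(a,b),(c,d)\in\sigma$ lying in the same block of $\rho$ have $e_a^{\gamma\sigma}$ and $e_c^{\gamma\sigma}$ connecting the same pair of vertices of $T^{\gamma\sigma}$ with the same orientation, one replaces them by $(a,d),(b,c)$. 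By Lemma~\ref{Lemma: Splitting edges produces two new non-crossing pairings} this splits one white vertex into two, preserves the non-crossing condition, keeps $\pi|_{\,\cdot\,}=\gamma|_{\,\cdot\,}\sigma|_{\,\cdot\,}$ on each new block, and (part~(iv)) leaves all other edge identifications intact, so that $\overline{T^{\gamma\sigma'}(\sigma',\rho)}$ remains a tree. Iterating until no multi-edges remain yields $\sigma^{new}$ with $T^{\gamma\sigma^{new}}(\sigma^{new},\rho)$ itself a tree, hence $(\rho,\sigma^{new})\in\PS_{\NC_2}^{loop-free}(m_1,\dots,m_r)$; uniqueness of $\sigma^{new}$ (independent of the switching order and of the original choice of $\sigma^c$) then follows from Lemma~\ref{Lemma: Sigma is unique is non-crossing partitioned permutation free-loop type}. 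Without this step you have neither a well-defined map $(\pi,\tau)\mapsto(\rho,\sigma^{new})$ nor the fibre description that justifies the power of $2$.

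Your reading of the indicator $\mathbbm{1}_{D_B=\emptyset}$ is in the right direction but the mechanism is not ``structural compatibility forcing collapse''. The point is that two blocks of $\rho$ which are related through $\sigma^{new}$ give edges in $T^{\gamma\sigma^{new}}$ joining the same pair of vertices and therefore lie in the same block of $\overline{\pi}$; since a non-tree block $B$ of $\rho$ is by construction an entire block of $\overline{\pi_\tau}$, no other block of $\rho$ can be related to it, i.e.\ $D_B=\emptyset$. Thus the sum over $A_n$ can occur only at maximal blocks, and non-maximal blocks are automatically tree blocks contributing just $\beta_{|B|}$.
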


\begin{proof}
Let $(\pi,\tau)\in\mathcal{B}$ and let $(\sigma^c,\tau,\pi)$ be a crossing limit triple with $\sigma^c$ as in Definition \ref{Definition: definition of canonical sigma}. Let us set $\sigma\vcentcolon = \sigma^c$. Let $B\in\overline{\pi_\tau}$. If $B$ is also a block of $\rho$ then $\overline{T^{\gamma\sigma}(\sigma,\tau)^{\rho}_B}=\overline{T^{\gamma\sigma}(\sigma,\tau)^{\overline{\pi_\tau}}_B}$ which is a tree as $\overline{T^{\gamma\sigma}(\sigma,\tau)^{\overline{\pi_\tau}}}$ is a tree by condition $L_4$. If $B$ is not a block of $\rho$ then the blocks of $\tau$ contained in $B$ are also blocks of $\rho$ and therefore $T^{\gamma\sigma}(\sigma,\tau)^{\rho}_B=T^{\gamma\sigma}(\sigma,\tau)_B$ which is a tree by definition of $\rho$. Thus for any block $B\in\overline{\pi_\tau}$ we have that $\overline{T^{\gamma\sigma}(\sigma,\tau)^{\rho}_B}$ is a tree. The graph $T^{\gamma\sigma}(\sigma,\tau)^{\rho}$ is the same as the graph $T^{\gamma\sigma}(\sigma,\rho)$, so we have proved $\overline{T^{\gamma\sigma}(\sigma,\rho)_B}$ is a tree for any $B\in\overline{\pi_\tau}$. Moreover the partition $\overline{\pi_\tau}$ consist of blocks of $\overline{\pi}$ except by the blocks of $\tau$ that are loops of $T^{\pi}$ which are themselves blocks of $\overline{\pi_\tau}$. Since these blocks are also blocks of $\rho$ then $\overline{\pi_\tau}=\overline{\pi_\rho}$. Now we will construct a non-crossing pairing $\sigma^{new}\leq \rho$ so that $T^{\gamma\sigma^{new}}(\sigma^{new},\rho)$ is a tree and $(\rho,\sigma^{new})\in \PS_{\NC_2}^{loop-free}(m_1,\dots,m_r)$. Moreover by Lemma \ref{Lemma: Sigma is unique is non-crossing partitioned permutation free-loop type} the pairing $\sigma^{new}$ will be unique. One example of this construction can be seen in Figure \ref{Figure: Big example complement}.

On one hand, if $B\in\overline{\pi_\tau}$ is a block such that $T^{\gamma\sigma}(\sigma,\tau)_B$ is a tree then $\rho|_B=\tau|_B$ by definition and therefore if we let $\sigma^{new}_B=\sigma_B$ we get $T^{\gamma\sigma^{new}}(\sigma^{new},\rho)_B$ is a tree. On the other hand, if $B\in\overline{\pi_\tau}$ is a block such that $T^{\gamma\sigma}(\sigma,\tau)_B$ is not a tree then $B$ is also a block of $\rho$. let $D_1,\dots,D_n$ be the set of blocks of $\tau$ contained in $B$. Since $T^{\gamma\sigma}(\sigma,\tau)_B$ is not a tree but $\overline{T^{\gamma\sigma}(\sigma,\tau)^{\rho}_B}$ is, there exist multiple edges joining the same pair of vertices. Assume that $B$ and some connected component of $T^{\gamma\sigma}$ which we denote by $W_1$ are the pair of vertices that have multiple edges. Let $\{V_1,\dots,V_n\}$ be the set of edges connecting $B$ and $W_1$. Each $V_i$ correspond to a block of $\sigma$, let us write $V_i=\{u_i,v_i\}$. By Remark \ref{Remark: Loops blocks are already trees} the edge $e_{u_1}^{\pi}$ is a non-loop of $T^{\pi}$ and hence $e_{u_1}^{\gamma\sigma}$ must be a non-loop of $T^{\gamma\sigma}$ since $\gamma\sigma\leq \pi$. Thus, the edges $e_{u_1}^{\gamma\sigma},e_{v_1}^{\gamma\sigma},\dots,e_{v_n}^{\gamma\sigma},e_{u_n}^{\gamma\sigma}$ are all joining the same pair of vertices of $T^{\gamma\sigma}$ and are all non-loops of $T^{\gamma\sigma}$. Let us assume without loss of generality that all the edges $e_{u_1}^{\gamma\sigma},\dots,e_{u_n}^{\gamma\sigma}$ have the same orientation while all the edges $e_{v_1}^{\gamma\sigma},\dots,e_{v_n}^{\gamma\sigma}$ have the same orientation and opposite to the edges $e_{u_1}^{\gamma\sigma},\dots,e_{u_n}^{\gamma\sigma}$. Let $\sigma|_{W_1}^\prime = \sigma|_{W_1}(u_1,v_1)(u_2,v_2)(u_1,v_2)(u_2,v_1)$. By Lemma \ref{Lemma: Splitting edges produces two new non-crossing pairings} the partition $\gamma|_{W_1}\vee \sigma_{W_1}^\prime$ has two connected components $W_{1,1}$ and $W_{1,2}$ whose union is $W_1$. We let $\sigma^\prime$ to be $\sigma(u_1,v_1)(u_2,v_2)(u_1,v_2)(u_2,v_1)$. Thanks to Lemma \ref{Lemma: Splitting edges produces two new non-crossing pairings} in the graph $T^{\gamma\sigma^\prime}(\sigma^\prime,\tau)_B$ the edges $V_3,\dots,V_n$ go from $B$ to either $W_{1,1}$ or $W_{1,2}$ while the edge $\{u_1,v_2\}$ goes from $B$ to $W_{1,1}$ and the edge $\{u_2,v_1\}$ goes from $B$ to $W_{1,2}$. Moreover by Lemma \ref{Lemma: Splitting edges produces two new non-crossing pairings} part $(iv)$ if $V_1^\prime,\dots,V_{n^\prime}^\prime$ are another pair of edges going from some other block of $\rho$ say $B_2$ to $W_1$ then in the graph $T^{\gamma\sigma^\prime}(\sigma^\prime,\tau)_B$ either all these edges go from $B_2$ to $W_{1,1}$ or all these edges go from $B_2$ to $W_{1,2}$. We conclude that $\overline{T^{\gamma\sigma^\prime}(\sigma^\prime,\rho)_B}$ is still a tree. In the new pairing $\sigma^\prime$ the edges $\{V_1,\dots,V_n\}$ are not joining the same pair of vertices anymore, but rather some of them join $B$ to $W_{1,1}$ while the rest join $B$ to $W_{1,2}$ so we can continue doing the same process for each collection of edges joining the same pair of vertices. At some point we get a pairing, $\sigma^{new}$ which has at most one edge joining two pair of vertices and therefore $T^{\gamma\sigma^{new}}(\sigma^{new},\rho)_B$ will be a tree for any $B\in\overline{\pi_\tau}=\overline{\pi_\rho}$ which proves $T^{\gamma\sigma^{new}}(\sigma^{new},\rho)$ is a tree as desired. Moreover any block of $\sigma^{new}$ that corresponds to a loop of $T^{\gamma\sigma}$ is also a block of $\sigma$ that correspond to a loop of $T^{\gamma\sigma}$ which is a block of $\tau$ and $\rho$. Then $(\rho,\sigma^{new})\in \PS_{\NC_2}^{loop-free}(m_1,\dots,m_r)$.
So given $(\pi,\tau)\in\mathcal{B}$ we have found a unique $$(\rho,\sigma^{new})\in \PS_{\NC_2}^{loop-free}(m_1,\dots,m_r).$$ 
Now we wonder how many pairs $(\pi,\tau)\in\mathcal{B}$ correspond to the same non-crossing partitioned permutation $(\rho,\sigma^{new})$. To answer this first observe that given $(\sigma^{new},\rho)$ we can construct $2^{\#(\sigma^{new})-\#(\rho)}$ distinct partitions $\pi^\prime$ as in Proposition \ref{Proposition: first contribution of alpha}. Since $\rho \leq \overline{\pi}$ then for some of these orientations we have $\pi^\prime \leq \pi$, however as both $\pi$ and $\pi^\prime$ have $m/2+r-2$ blocks then it must be $\pi=\pi^\prime$, i.e. $\pi$ can be any of the $2^{\#(\sigma^{new})-\#(\rho)}$ distinct partitions $\pi$ constructed as in Proposition \ref{Proposition: first contribution of alpha}. 

This determines how many distinct partitions $\pi$ correspond to the same pair $(\rho,\sigma^{new})$, however, it might also be possible that more than one $\tau$ correspond to the same pair $(\rho,\sigma^{new})$. To determine this observe that if $B\in\overline{\pi_\tau}$ is a block of $\overline{\pi_\tau}$ for which the blocks of $\tau$ contained in $B$ are also blocks of $\rho$ then $\tau$ is uniquely determined on these blocks. On the other hand, for a block $B\in \overline{\pi_\tau}$ that is also a block of $\rho$ we have that $T^{\gamma\sigma}(\sigma,\tau)_B$ is not a tree but $\overline{T^{\gamma\sigma}(\sigma,\tau)_B^{\rho}}$ is a tree. 

Observe that regardless of $\sigma|_B$ and $\tau|_B$ we always end up with the same pairing $\sigma^{new}$. The latest means that we are allowed to choose any $\tau|_B$ as long as $T^{\gamma\sigma}(\sigma,\tau)_B$ is not a tree for any $\sigma$. Suppose $B=\{u_1,u_2,\dots,u_n,\ab v_1,\dots,v_n\}$ so that $\sigma^{new}$ has blocks $\{u_i,v_i\}$ and $e_{u_1}^{\pi},\dots,e_{u_n}^{\pi}$ have all the same orientation while $e_{v_1}^{\pi},\dots,e_{v_n}^{\pi}$ have all the same orientation and opposite to $e_{u_1}^{\pi},\dots,e_{u_n}^{\pi}$. Let us identify this block with the block $B=\{1,\dots,2n\}$ so that $u_i$ are the even numbers and $v_i$ are the odd numbers. Then $\tau|_B$ can be any partition of $\cP(2n)$ that respects the parity (so that $\tau$ contains the same number of edges in each orientation) and such that there is no $\sigma\in \cP_2(2n)$ that respects the parity and $T^{\gamma\sigma}(\sigma,\tau)_B$ is a tree. The graph $T^{\gamma\sigma}(\sigma,\tau)_B$ is isomorphic to the graph $\Gamma(\tau|_B,\sigma|_B,\gamma_{2n}^{par})$. Hence, $\tau|_B$ can be any choice as long as it doesn't admit a non-crossing partitioned permutation as Defined in \ref{Definition: Graphs that count the extra terms modified}. In Example \ref{Example: The pairs corresponding to maximal block} we show an explicit computations of all these pairs for the case $r=4$ and $m_1=m_2=m_3=m_4=2$.

Note that at least one block of $\rho$ must be union of blocks of $\tau$ otherwise we would have $T^{\gamma\sigma^{new}}(\tau,\sigma^{new})$ is a tree which contradicts $(\pi,\tau)\in \mathcal{B}$. Moreover the block of $\rho$ that is union of blocks of $\tau$ must be a block of $\overline{\pi_\tau}$ which must be also a block of $\overline{\pi}$. The contribution of a block $B$ of $\rho$ that is not union of blocks of $\tau$ in $\C_{\tau}(\pi)$ is
$$\beta_{|B|},$$
while the contribution of a block $B$ of $\rho$ that is union of blocks of $\tau$ is
$$\prod_{\substack{D\in \tau \\ D\subset B}}\beta_{|D|}.$$

We said before that at any block $B$ of $\rho$ that is union of blocks of $\tau$ we can choose any $\tau|_B$ as long as it doesn't admit a non-crossing partitioned permutation. Let $\tau|_B$ one of these choices, then
\begin{equation}\label{Auxiliar: aux of last proof for crossing part}
\C_{\tau}(\pi)=\prod_{\substack{B\in \rho \\ B \text{ is a block of }\tau}}\beta_{|B|}\prod_{\substack{B\in \rho \\ B \text{ is union of blocks of }\tau}}\prod_{\substack{D\in\tau \\ D\subset B}}\beta_{|D|}.
\end{equation}

Note that if two blocks of $\tau$ are related in the sense of definition \ref{Definition: Equivalence class of blocks of partitioned permutation} then these blocks are contained in the same block of $\overline{\pi}$. Further, at least one of the blocks of $\rho$, $B$, is the union of all the blocks of $\tau$ contained in $B$ and $B$ is a block of $\overline{\pi}$. So this block must be maximal. This means $(\rho,\sigma^{new})$ contains at least one maximal block. If we let $\mathcal{M}$ to be the subset of $\PS_{\NC_2}^{loop-free}(m_1,\dots,m_r)$ that contains at least one maximal block we are proving that $(\rho,\sigma^{new})\in \mathcal{M}$. 

Conversely let $(\rho,\sigma^{new})\in \mathcal{M}$. We let $\pi$ to be one of the $2^{\#(\sigma^{new})-\#(\rho)}$ distinct partitions $\pi$ constructed as in Proposition \ref{Proposition: first contribution of alpha} with $(\rho,\sigma^{new})$. We choose $\tau \leq \rho$ be such that for at least one maximal block $B\in \rho$ the partition $\tau|_B$ doesn't admit a non-crossing partitioned permutation. Then $\#(\pi)-m/2+r-2=0$ and $\tau \leq \rho \leq \overline{\pi}$. Further, there is no $\sigma\leq \tau$ such that $T^{\gamma\sigma}(\sigma,\tau)$ is a tree as $\tau|_{B}$ doesn't admit a non-crossing partitioned permutation. Hence $(\tau,\pi)\in \mathcal{B}$.

So we have proved that the summation over $\mathcal{B}$ can be indexed as a summation over $\mathcal{M}$ just taking care of how may pairs $(\tau,\pi)\in \mathcal{B}$ correspond to the same $(\rho,\sigma^{new})\in \mathcal{M}$. Additionally, in (\ref{Auxiliar: aux of last proof for crossing part}) we express the contribution $\C_\tau(\pi)$ in terms of $(\rho,\sigma^{new})$. Hence

\begin{multline}\label{Aux: auxliar of last proof of non-crossing ones 2}
\sum_{(\tau,\pi)\in \mathcal{B}}\C_{\tau}(\pi)= \\
\sum_{(\rho,\sigma^{new})\in\mathcal{M}}2^{\#(\sigma^{new})-\#(\rho)}\left[ \prod_{\substack{B\in \rho}}\left(\beta_{|B|}+\mathbbm{1}_{D_B=\emptyset}\sum_{\tau\in A_{|B|/2}}\prod_{D\in\tau}\beta_{|D|}\right)-\prod_{\substack{B\in \rho}}\beta_{|B|} \right],
\end{multline}
where the summation over $\tau \in A_{|B|/2}$ is to consider all possible $\tau$ that correspond to the same pair $(\rho,\sigma^{new})$ while the product by $2^{\#(\sigma^{new})-\#(\rho)}$ is to consider all possible $\pi$ that correspond to the same pair $(\rho,\sigma^{new})$. Furthermore, we consider the term $\left(\beta_{|B|}+\mathbbm{1}_{D_B=\emptyset}\sum_{\tau\in A_{|B|/2}}\prod_{D\in\tau}\beta_{|D|}\right)$ because it might be possible that a maximal block $B\in \rho$ is not union of blocks of $\tau$ but rather it is a block of $\tau$ and finally we subtract the second product $\prod_{\substack{B\in \rho}}\beta_{|B|}$ as that would correspond to the case when there is no block of $\rho$ that is union of blocks of $\tau$. To illustrate the validity of (\ref{Aux: auxliar of last proof of non-crossing ones 2}) in Example \ref{Example: The pairs corresponding to maximal block} we compute both sides of (\ref{Aux: auxliar of last proof of non-crossing ones 2}) for the case $r=2$ and $m_1=m_2=m_3=m_4=2$.

We would like to get a sum indexed by the whole set $\PS_{\NC_2}^{loop-free}(m_1,\dots\ab ,m_r)$ instead of just $\mathcal{M}$. This can be be easily done be letting,
\begin{multline*}
\widehat{\K}_{(\rho,\sigma^{new})}=2^{\#(\sigma^{new})-\#(\rho)} \\
\mbox{}\times
\left[\prod_{\substack{B\in\rho \\ |B|=2n}}\left(\beta_{|B|}+\mathbbm{1}_{D_B=\emptyset}\sum_{\substack{\tau\in A_n}}\prod_{D\in \tau}\beta_{|D|}\right)-\prod_{B\in\rho}\beta_{|B|}\right].
\end{multline*}
Then,
\begin{equation*}
\sum_{(\tau,\pi)\in \mathcal{B}}\C_{\tau}(\pi)=\sum_{\substack{(\rho,\sigma^{new})\in \PS_{\NC_2}^{loop-free}(m_1,\dots,m_r)}}\widehat{\K}_{(\rho,\sigma^{new})}.
\end{equation*}
Because whenever $(\sigma^{new},\rho)$ has no maximal blocks then the first and second product cancels out so that we get $0$, otherwise we get the expression that we had before.
\end{proof}

\begin{example}\label{Example: The pairs corresponding to maximal block}
Let $r=4$ and $m_1=m_2=m_3=m_4=2$. The unique non-crossing pairing with a maximal block is
$$(\rho,\sigma^{new})=(1_8,\{1,2\},\{3,4\},\{5,6\},\{7,8\}).$$
Let us determine all pairings $(\pi,\tau)$ that correspond to the same $(\rho,\sigma^{new})$. First, there are $8=2^{\#(\sigma)-\#(\rho)}$ partitions $\pi$. For each $\pi$ we can choose $3$ partitions $\tau$ which is precisely the number of partitions of $\cP(8)$ that doesn't admit a non-crossing partitioned permutation. Thus there must be $24=8\times 3$ pairs $(\pi,\tau)$ that correspond to the same $(\rho,\sigma^{new})$. In the table we enumerate all of these.
\begin{table}[h]
\begin{tabular}{|cccc|}
\hline
$\pi$ & $\tau_1$  & $\tau_2$  & $\tau_3$      \\
\hline
$\{1,3,5,7\}\{2,4,6,8\}$ & $\{1,4,5,8\}\{2,3,6,7\}$ & $\{1,4,6,7\}\{2,3,5,8\}$ & $\{1,3,6,8\}\{2,4,5,7\}$ \\
\hline
$\{1,3,5,8\}\{2,4,6,7\}$ & $\{1,4,5,7\}\{2,3,6,8\}$ & $\{1,4,6,8\}\{2,3,5,7\}$ & $\{1,3,6,7\}\{2,4,5,8\}$ \\
\hline
$\{1,3,6,7\}\{2,4,5,8\}$ & $\{1,4,6,8\}\{2,3,5,7\}$ & $\{1,4,5,7\}\{2,3,6,8\}$ & $\{1,3,5,8\}\{2,4,6,7\}$ \\
\hline
$\{1,3,6,8\}\{2,4,5,7\}$ & $\{1,4,6,7\}\{2,3,5,8\}$ & $\{1,4,5,8\}\{2,3,6,7\}$ & $\{1,3,5,7\}\{2,4,6,8\}$ \\
\hline
$\{1,4,5,7\}\{2,3,6,8\}$ & $\{1,3,5,8\}\{2,4,6,7\}$ & $\{1,3,6,7\}\{2,4,5,8\}$ & $\{1,4,6,8\}\{2,3,5,7\}$ \\
\hline
$\{1,4,5,8\}\{2,3,6,7\}$ & $\{1,3,5,7\}\{2,4,6,8\}$ & $\{1,3,6,8\}\{2,4,5,7\}$ & $\{1,4,6,7\}\{2,3,5,8\}$ \\
\hline
$\{1,4,6,7\}\{2,3,5,8\}$ & $\{1,3,6,8\}\{2,4,5,7\}$ & $\{1,3,5,7\}\{2,4,6,8\}$ & $\{1,4,5,8\}\{2,3,6,7\}$ \\
\hline
$\{1,4,6,8\}\{2,3,5,7\}$ & $\{1,3,6,7\}\{2,4,5,8\}$ & $\{1,3,5,8\}\{2,4,6,7\}$ & $\{1,4,5,7\}\{2,3,6,8\}$ \\    
\hline
\end{tabular}
\end{table}
Let us now compute both sides of (\ref{Aux: auxliar of last proof of non-crossing ones 2}). The left hand side sums over all pairs $(\pi,\tau)$ which in this case there are $24$ of them, each one with contribution $\C_\tau(\pi)=\beta_4^2$ so that the left hand side becomes $24\beta_4^2$. The right hand side just have a single term $(\rho,\sigma^{new})=(1_8,\{1,2\},\{3,4\},\{5,6\},\{7,8\})$. For this term and the unique block of $\rho$ we get
\begin{eqnarray*}
&& 2^{\#(\sigma^{new})-\#(\rho)}\left[ \prod_{\substack{B\in \rho}}\left(\beta_{|B|}+\mathbbm{1}_{D_B=\emptyset}\sum_{\tau\in A_{|B|/2}}\prod_{D\in\tau}\beta_{|D|}\right)-\prod_{\substack{B\in \rho}}\beta_{|B|} \right] \\
&=& 8\left[\beta_8+3\beta_4^2-\beta_8\right]=24\beta_4^2.
\end{eqnarray*}
\end{example}

\begin{figure}
    \centering
    \includegraphics[width=0.75\textwidth]{Figures/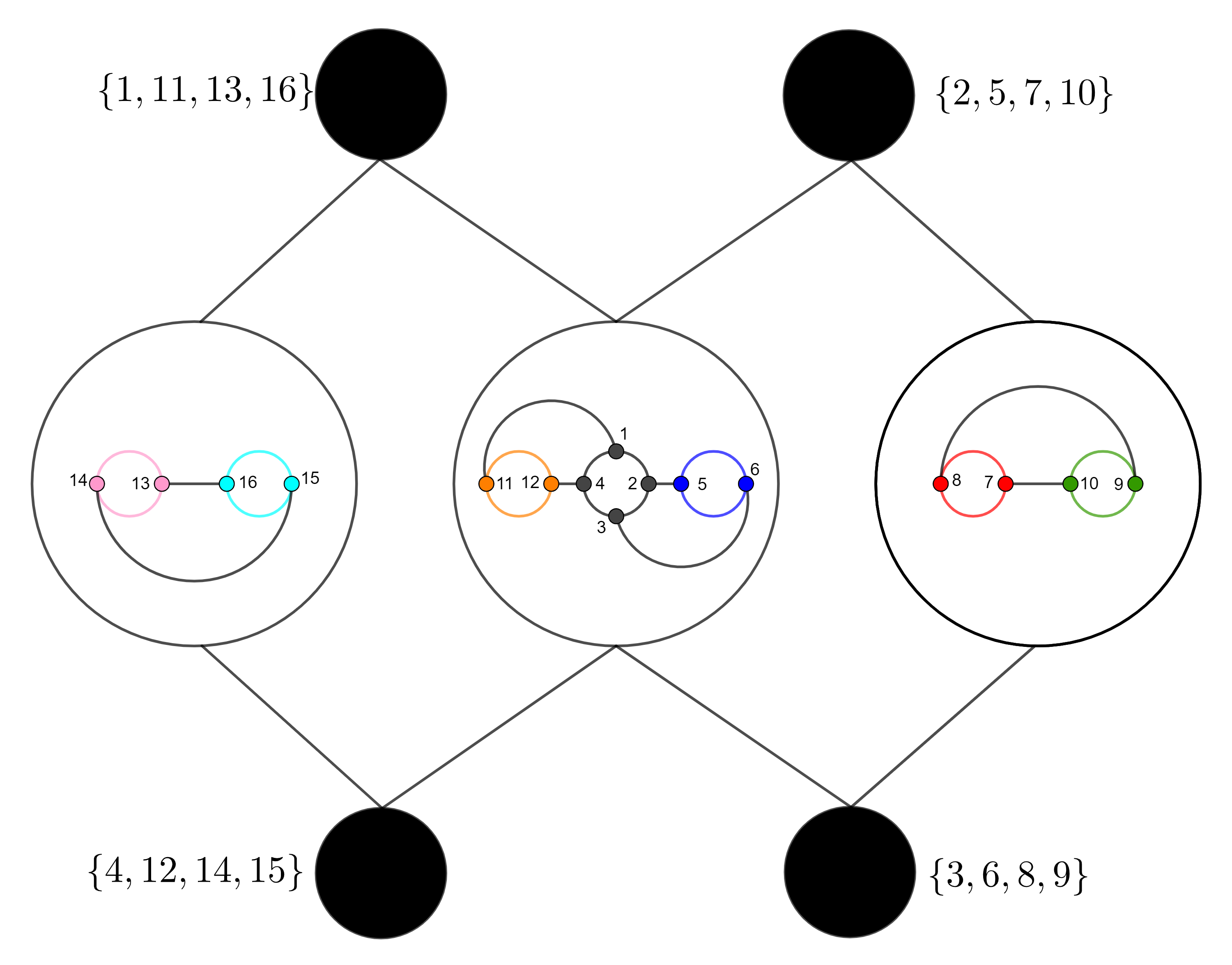}
    \text{A) The graph $T^{\gamma\sigma}(\sigma,\tau)$ of Example \ref{Example: Big example}. Note that for any $B\in\overline{\pi}$} 
    \text{the graph $T^{\gamma\sigma}(\sigma,\tau)_B$ is not a tree.}\\

    \includegraphics[width=0.8\textwidth]{Figures/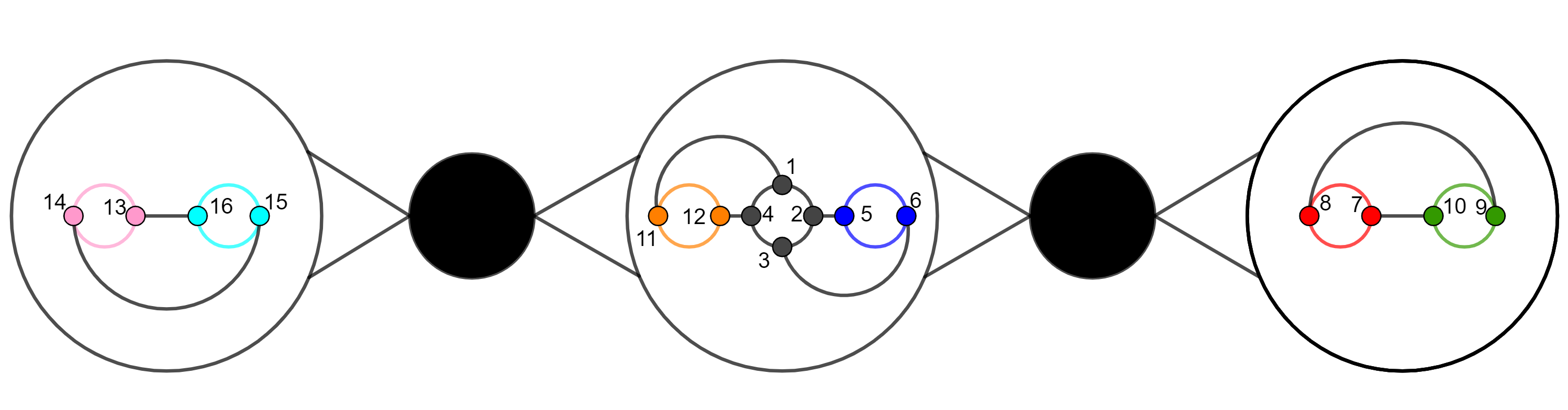}
    \text{B) In this case $\rho=\overline{\pi}$ as checked in Example \ref{Example: Example of rho}.} 
    \text{Here the graph $T^{\gamma\sigma}(\sigma,\tau)^{\overline{\pi}}$ which is the same as $T^{\gamma\sigma}(\sigma,\rho)$.}\\
    
    \includegraphics[width=0.8\textwidth]{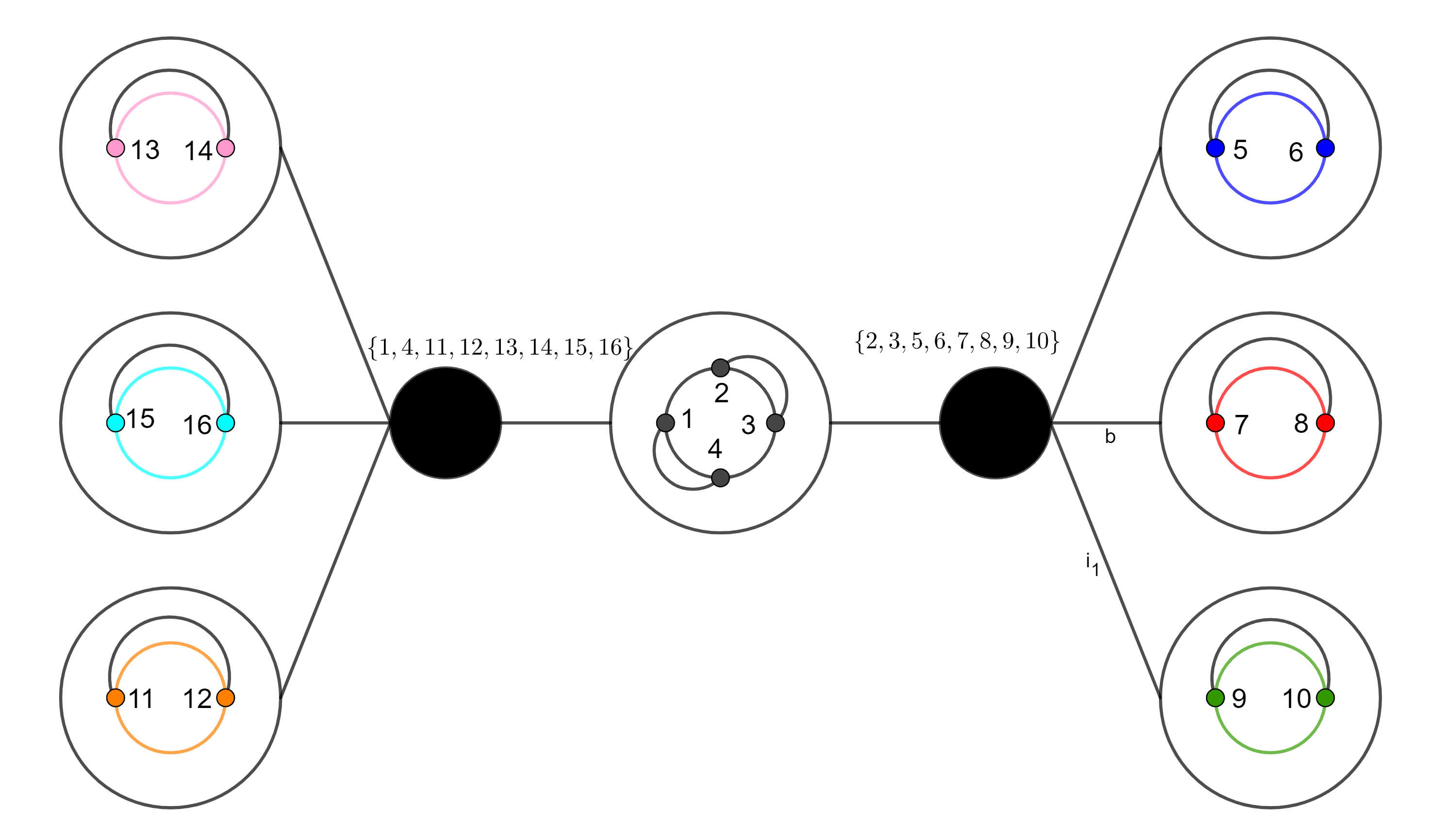}
    \text{C) The resulting pairing $\sigma^{new}=(1,4)(2,3)(5,6)(7,8)(9,10)(11,12)$}
    \text{$(13,14)(15,16)$. Here the graph $T^{\gamma\sigma^{new}}(\sigma^{new},\rho).$}\\
    
    \caption{The graphs of Examples \ref{Example: Big example} and \ref{Example: Example of rho}}
    \label{Figure: Big example complement}
\end{figure}

\subsection{Higher order cumulants}

Let us recall the two Equations,
\begin{equation}\label{Equation: Identity for non-crossing triples}
\sum_{(\tau,\pi)\in \mathcal{A}} =
\sum_{(\tau,\sigma)\in \PS_{NC_2}^{loop-free}(m_1,\dots,m_r)} \widetilde{\K}_{(\tau,\sigma)}
\end{equation}
\begin{equation}\label{Equation: Identity for crossing triples}
\sum_{(\tau,\pi)\in \mathcal{B}}\C_{\tau}(\pi)=\sum_{\substack{(\rho,\sigma^{new})\in \PS_{\NC_2}^{loop-free}(m_1,\dots,m_r)}}\widehat{\K}_{(\rho,\sigma^{new})}.
\end{equation}
With,
$$\widetilde{\K}_{(\tau,\sigma)}=2^{\#(\sigma)-\#(\tau)}\prod_{B\in\tau}\beta_{|B|},$$
and,
\begin{multline*}
\widehat{\K}_{(\rho,\sigma^{new})}=2^{\#(\sigma^{new})-\#(\rho)} \\
\mbox{}\times
\left[\prod_{\substack{B\in\rho \\ |B|=2n}}\left(\beta_{|B|}+\mathbbm{1}_{D_B=\emptyset}\sum_{\substack{\tau\in A_n}}\prod_{D\in \tau}\beta_{|D|}\right)-\prod_{B\in\rho}\beta_{|B|}\right].
\end{multline*}
This motivates the following definition.
\begin{definition}\label{Definition: The pseudo cumulants}
For a non-crossing partitioned permutation $(\tau,\sigma)\in \PS_{\NC_2}^{loop-free}(m_1,\dots,m_r)$ we let $\overline{\K}_{(\tau,\sigma)}$ be defined as
$$\overline{\K}_{(\tau,\sigma)}=\prod_{B\in\tau}\overline{\K}_{(B,\sigma|_B)},$$
with
$$\overline{\K}_{(B,\sigma|_B)}=2^{r-1}\left(\beta_{2r}+\mathbbm{1}_{D_B=\emptyset}\sum_{\substack{\tau\in \cP(2r) \\ \tau\text{ doesn't admit }\\ \text{a non-crossing} \\ \text{partitioned permutation}}}\prod_{D\in\tau}\beta_{|D|}\right),$$
whenever $B$ consists of $r$ cycles of $\sigma$ of size $2$.
We call $\overline{\K}_{(\tau,\sigma)}$ the \textit{pseudo-cumulants} of the Complex Wigner Matrix.
\end{definition}

Thus based in Equations \ref{Equation: Identity for non-crossing triples} and \ref{Equation: Identity for crossing triples} we get the following Lemma.

\begin{lemma}\label{Lemma: Main expression for moments first approach}
For any $m_1,\dots,m_r\in\mathbb{N}$,
\begin{equation}\label{Equation: Main expression for moments first approach}
\alpha_{m_1,\dots,m_r}=\sum_{(\tau,\sigma)\in \PS_{NC_2}^{loop-free}(m_1,\dots,m_r)} \overline{\K}_{(\tau,\sigma)},
\end{equation}
where $\overline{\K}_{(\tau,\sigma)}$ are the pseudo-cumulants of the Complex Wigner Matrix.
\end{lemma}

Up to this point we are pretty close to finish our main goal, finding the free cumulants of the Complex Wigner ensemble. Now we face with two last problems. The first one is writing the moments as a summation indexed by the whole set of non-crossing partitioned permutations rather than $\PS_{NC_2}^{loop-free}(m_1,\dots,m_r)$. This will be simple by extending our summation first to the set of non-crossing partitioned permutations $(\tau,\sigma)$ where $\sigma$ has only cycles of size $1$ and $2$ and then it extends trivially to all the set of non-crossing partitioned permutations. We do this at the beginning of this section. The second problem is that we want the sequence of cumulants to be multiplicative. In the sense that for each block $B\in \tau$ the term $\overline{\K}_{(B,\sigma|_B)}$ depends completely on the number of cycles of $\sigma|_B$ contained in $B$. Although we would like to define the sequence $\overline{\K}_{(\tau,\sigma)}$ as the multiplicative extension of a sequence $\overline{\K}_{n_1,\dots,n_r}$, this is impossible as for each block of $\tau$ the contribution is different depending whether this block contains another block related to it. Note that this problem relies completely on the fact that in the expression for $\overline{\K}_{(B,\sigma|_B)}$ the term containing $\mathbbm{1}_{D_B=\emptyset}$ depends on the block $B$ rather than just in the number of cycles of $\sigma$ contained in $B$. This suggest that if we want to find the higher order free cumulants of any order we need to find another sequence $\K^\prime_{(\tau,\sigma)}$ where for each block $B$ of $\tau$ the term $\K^\prime_{(B,\sigma|_B)}$ depends only on the number of cycles of $\sigma$ contained in $B$. This will be done at the very end of the section. Let us then move with our first goal. Let us recall that by $\PS_{\NC_{2,1}}(m_1,\dots,m_r)$ we mean all the non-crossing partitioned permutations for which any cycle of $\sigma$ has either size $1$ or $2$.

\begin{lemma}\label{Lemma: Cumulants when admit a multipliciative extension}
For a non-crossing partitioned permutation $(\tau,\sigma)\in \PS_{\NC_2}^{loop-free}(m_1,\dots,m_r)$ let $\K^\prime_{(\tau,\sigma)}$ be defined as
$$\K^\prime_{(\tau,\sigma)}=\prod_{B\in\tau}\K^\prime_{(B,\sigma|_B)},$$
with
$$\K^\prime_{(B,\sigma|_B)}=\delta_{r},$$
whenever $B$ consists of $r$ cycles of $\sigma$ of size $2$. Here $\delta_r$ is just a number that depends only on $r$. We extend $\K^\prime_{(\tau,\sigma)}$ to any $(\tau,\sigma)\in \PS_{\NC_{2,1}}(m_1,\dots,m_r)$ by letting,
$$\K^\prime_{(B,\sigma|_B)}=(-1)^{u/2}\frac{u!}{2^{u/2}(u/2)!}\delta_r,$$
whenever the block $B\in \tau$ consist of $v$ cycles of $\sigma$ of size $2$, $u$ cycles of $\sigma$ of size $1$ and $r=u/2+v$ with $(u,v)\neq (2,0)$ and $0$ otherwise. Then,
\begin{eqnarray*}
\sum_{(\tau,\sigma)\in \PS_{\NC_{2,1}}(m_1,\dots,m_r)} \K^\prime_{(\tau,\sigma)}=\sum_{(\tau,\sigma)\in \PS_{\NC_2}^{loop-free}(m_1,\dots,m_r)} \K^\prime_{(\tau,\sigma)}.
\end{eqnarray*}
\end{lemma}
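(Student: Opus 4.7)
The plan is to reduce the stated identity to a local statement at each block of $\tau$ and then to establish that local statement by a sign-reversing involution. By definition $\K^\prime_{(\tau,\sigma)} = \prod_{B \in \tau} \K^\prime_{(B,\sigma|_B)}$ is multiplicative over the blocks of $\tau$. Moreover, the non-crossing conditions defining $\PS_{\NC_{2,1}}(m_1,\dots,m_r)$ and $\PS_{\NC_2}^{loop-free}(m_1,\dots,m_r)$ factor through the blocks of $\tau$ once $\tau$ is fixed: each restriction $\sigma|_B$ can be chosen independently subject to local admissibility (non-crossingness of $\sigma|_B$ inside $B$, plus on the right-hand side the loop-free condition). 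Hence, after summing over $\sigma$ with $\tau$ fixed, both sides become products over $B \in \tau$, and it suffices to prove the identity block by block (then summing over $\tau$ yields the claim).

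Fix a block $B$ of $\tau$ and let $r$ be its complexity, so that $|B|=2r$. The factor $2^{r-1}(\beta_{2r}+\delta_r)$ is common to every admissible $\sigma|_B$ of complexity $r$ on both sides, and can be pulled out. The local identity thereby reduces to the purely combinatorial statement that the signed weighted count
\[
\sum_{\sigma|_B \in \PS_{\NC_{2,1}}(B)} (-1)^{u(\sigma|_B)/2}\, \frac{u(\sigma|_B)!}{2^{u(\sigma|_B)/2}(u(\sigma|_B)/2)!}
\]
equals the unweighted count $\#\PS_{\NC_2}^{loop-free}(B)$, where $u(\sigma|_B)$ is the number of singleton cycles in $\sigma|_B$. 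The quantity $\frac{u!}{2^{u/2}(u/2)!}$ is precisely the number of perfect matchings on $u$ elements, which suggests an inclusion-exclusion interpretation.

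I would establish this by constructing a sign-reversing involution $\iota$ on the set of configurations in $\PS_{\NC_{2,1}}(B) \setminus \PS_{\NC_2}^{loop-free}(B)$. Such configurations fall into two kinds: those containing at least one singleton cycle of $\sigma|_B$, and those that are full pairings but whose loop pairs fail the loop-free condition (a loop pair sitting inside a block of $\tau$ larger than itself). In either case the idea is the same: pick a canonical pair of singletons (or of loop-forming elements) and toggle between joining them into a size-2 cycle of $\sigma$ and leaving them as singletons. This toggle changes $u$ by $\pm 2$, flipping $(-1)^{u/2}$ while the counting factor $\frac{u!}{2^{u/2}(u/2)!}$ absorbs the multiplicity. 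The exclusion of the case $(u,v)=(2,0)$ is what makes the involution have the correct fixed-point set, namely the loop-free pairings.

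The main obstacle is the choice of canonical pair: one must ensure that the pair/unpair toggle preserves the non-crossing condition on $(\tau,\sigma)$, equivalently the tree structure of $\Gamma(\tau,\sigma,\gamma)$. A workable choice is to take, within each block of $\tau$, the innermost (nesting-minimal) pair of singletons whose joining produces a valid non-crossing cycle of $\sigma$ and whose unjoining remains non-crossing; the nested structure of $\PS_{\NC_{2,1}}(B)$ should guarantee the existence of such a pair. A secondary technicality is matching the non-loop-free pairings with singleton-containing configurations under $\iota$ so that both types of excess contribution cancel simultaneously.
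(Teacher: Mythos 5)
Your high-level decomposition of the excess set into two pieces (partitioned permutations with singletons, and full pairings violating loop-freeness) and the idea of toggling between singletons and newly formed cutting pairs is on the right track, and is also the mechanism the paper uses. But there are two gaps, the first of which is fatal to the proposed structure of the argument.

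\textbf{The factorization over blocks of $\tau$ is false.} You claim that, once $\tau$ is fixed, each restriction $\sigma|_B$ can be chosen independently subject to a local admissibility condition, so that both sides become products over $B\in\tau$. This is not so: by Lemma \ref{Lemma: Tree iff Non-crossing partitioned permutation}, the condition $(\tau,\sigma)\in\PS_{\NC}(m_1,\dots,m_r)$ is equivalent to the bipartite graph $\Gamma(\tau,\sigma,\gamma)$ being a tree, and this tree condition couples the cycles of $\sigma$ that live in different blocks of $\tau$ (they share white vertices, the blocks of $\sigma\vee\gamma$). For instance, with $\gamma=(1,2)(3,4)(5,6)$ and $\tau=\{1,2,3,4\},\{5,6\}$, the choices $\sigma|_{\{1,2,3,4\}}=(1,3)(2,4)$ and $\sigma|_{\{5,6\}}=(5,6)$ are each locally fine, but $\Gamma(\tau,\sigma,\gamma)$ is then disconnected with a double edge, so $(\tau,\sigma)\notin\PS_{\NC_2}(2,2,2)$. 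Likewise the loop-free condition depends on whether a 2-cycle is a cutting through string, which is a property of the global $\sigma$ (via $\gamma\vee\sigma'$), not of $\sigma|_B$. The paper does obtain a product over blocks, but only after fixing a canonical $(\tau,\sigma)$ in which every 2-cycle of $\sigma$ not equal to a block of $\tau$ is a non-cutting through string, and then varying which singletons within each block get paired into cutting through strings; Proposition \ref{Proposition: loop edge iff cutting through string} guarantees this operation stays inside the admissible set, which is what makes the block-local sums independent within that family. You cannot get that independence for free just by fixing $\tau$.

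\textbf{The cancellation is not an involution.} You propose a sign-reversing involution that toggles a canonical pair of singletons in and out of $\sigma$. A genuine involution pairs each term with exactly one term of equal magnitude and opposite sign, but when $u\mapsto u-2$ the factor $\frac{u!}{2^{u/2}(u/2)!}$ drops by a factor of $u-1$, so a single ``paired'' configuration does not cancel a single ``unpaired'' one. The cancellation only works in aggregate: one configuration with $u$ singletons must cancel against $u-1$ configurations with $u-2$ singletons (one for each possible partner of the canonical singleton), and iterating this recursively is exactly the content of the alternating binomial identity the paper computes,
\[
\sum_{k=0}^n \binom{2n}{2k}\frac{(2k)!}{2^k k!}(-1)^{n-k}\frac{(2n-2k)!}{2^{n-k}(n-k)!}=\frac{(2n)!}{n!\,2^n}\bigl(1+(-1)\bigr)^n=0.
\]
Your remark that ``the counting factor absorbs the multiplicity'' is pointing at this, but it needs to be made precise: what you actually need is this closed-form vanishing sum (or an equivalent weighted matching argument over all pairings), not an involution. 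Your proposed canonical ``innermost pair'' choice cannot repair the weight mismatch, and the secondary technicality of simultaneously matching the non-loop-free pairings with the singleton-containing configurations is exactly what the paper's family decomposition handles and what your write-up leaves unresolved.
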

\begin{proof}
Note that the set $\PS_{\NC_{2,1}}(m_1\dots,m_r)$ is the disjoint union of the sets,
$$\PS_{\NC_2}^{loop-free}(m_1,\dots,m_r),$$
$$\PS_{\NC_2}(m_1,\dots,m_r)\setminus \PS_{\NC_2}^{loop-free}(m_1,\dots,m_r),$$
and,
$$\PS_{\NC_{2,1}}^{!}(m_1,\dots,m_r).$$
Where $\PS_{\NC_{2,1}}^{!}(m_1,\dots,m_r)$ is the subset of $\PS_{\NC_{2,1}}(m_1\dots,m_r)$ that contains at least one block of size $1$. Therefore,

\begin{eqnarray*}
\sum_{(\tau,\sigma)\in \PS_{NC_{2,1}}(m_1,\dots,m_r)} \K^\prime_{(\tau,\sigma)}&=&\sum_{(\tau,\sigma)\in \PS_{NC_{2,1}}^{!}(m_1,\dots,m_r)} \K^\prime_{(\tau,\sigma)} \\
&+& \sum_{(\tau,\sigma)\in \PS_{NC_{2}}(m_1,\dots,m_r)\setminus \PS_{\NC_2}^{loop-free}(m_1,\dots,m_r)} \K^\prime_{(\tau,\sigma)} \\
&+&\sum_{(\tau,\sigma)\in \PS_{NC_{2}^{loop-free}}(m_1,\dots,m_r)} \K^\prime_{(\tau,\sigma)}.
\end{eqnarray*}

Let $(\tau,\sigma)\in \PS_{\NC_{2,1}}^{!}(m_1,\dots,m_r)$. We can assume that any block of $\tau$ that is also a block of $\sigma$ must be of size $2$ otherwise $\K^\prime_{(\tau,\sigma)}=0$. Assume $(\tau,\sigma)$ is such that if $\{r,s\}$ is a block of $\sigma$ that is not a block of $\tau$ then $\{r,s\}$ it is either a non-through string or it is a non cutting through string. Let $B$ be a block of $\tau$ that consists of $v$ blocks of $\sigma$ that are of size $2$ and $u$ blocks of $\sigma$ that are of size $1$. If $u$ is odd then $\K^\prime_{(\tau,\sigma)}=0$ as $\K^\prime_{(1_B,\sigma|_B)}=0$ so we assume $u$ is even and positive. We let $r=u/2+v$. 

Let $u=2n$ and let $\{u_1\},\dots,\{u_{2n}\}$ be the blocks of size $1$. Let $0\leq k\leq n$. We choose $2k$ singletons $u_{i_1},\dots,u_{i_{2k}}$ and pair so that we get $k$ blocks all of size $2$. Assume the blocks are of the form $\{u_{i_1},u_{i_2}\},\dots,\{u_{i_{2k-1}},u_{i_{2k}}\}$. Let $\sigma^\prime$ be the permutation $\sigma$ except by the singletons $\{u_{i_1}\},\dots,\{u_{i_{2k}}\}$ which we turn into the pairs $\{u_{i_1},u_{i_2}\},\dots,\ab\{u_{i_{2k-1}},u{i_{2k}}\}$. By Proposition \ref{Proposition: loop edge iff cutting through string} the through strings $\{u_{i_j},u_{i_{i+1}}\}$ are all cutting as removing this block increases the number of connected components of $\gamma\vee\sigma^\prime$ otherwise $(\tau,\sigma)$ wouldn't be a non-crossing partitioned permutation. The partitioned permutation $(\tau,\sigma^\prime)$ is still non-crossing ans it is contained in $$\PS_{\NC_{2,1}}^{!}(m_1,\dots,m_r)\cup (\PS_{\NC_2}(m_1,\dots,m_r) \setminus \PS_{\NC_2}^{loop-free}(m_1,\dots,m_r)).$$
Thus, summing over all $k$ and all choices of singletons $u_{i_1},\dots,u_{i_{2k}}$ gives a sum over all partitioned permutations in
$$\PS_{\NC_{2,1}}^{!}(m_1,\dots,m_r)\cup (\PS_{\NC_2}(m_1,\dots,m_r) \setminus \PS_{\NC_2}^{loop-free}(m_1,\dots,m_r)).$$
The contribution of each choice restricted to the block $B$ is,
$$(-1)^{n-k}\frac{(2n-2k)!}{2^{n-k}(n-k)!}\delta_r.$$

The number of ways of choosing the singletons is $\binom{2n}{2k}$ while the number of ways of pairing them is $|\cP_2(2k)|=\frac{(2k)!}{2^kk!}$. Finally note that,
\begin{eqnarray*}
&&\sum_{k=0}^n \binom{2n}{2k}|\cP_2(2k)|(-1)^{n-k}\frac{(2n-2k)!}{2^{n-k} (n-k)!}\\
&=& 
\sum_{k=0}^n \binom{2n}{2k}\frac{(2k)!}{2^k k!}(-1)^{n-k}\frac{(2n-2k)!}{2^{n-k} (n-k)!} \\
&=& 
\sum_{k=0}^n \frac{(2n)!}{2^n}\frac{1}{(n-k)!k!}(-1)^{n-k} \\
&=& 
\frac{(2n)!}{n!2^n}\sum_{k=0}^n \binom{n}{k}(-1)^{n-k} \\
&=& 
\frac{(2n)!}{n!2^n}(1+(-1))^n=0.
\end{eqnarray*}
Since $\K^\prime_{(\sigma,\tau)}$ is the product of the contribution at each block and at each block we get $0$, then,
\begin{eqnarray*}
&&\sum_{(\tau,\sigma)\in \PS_{NC_{2}}(m_1,\dots,m_r)\setminus \PS_{\NC_2}^{loop-free}(m_1,\dots,m_r)} \K^\prime_{(\tau,\sigma)} \\
&+&\sum_{(\tau,\sigma)\in \PS_{\NC_{2,1}}^{!}(m_1,\dots,m_r)} \K^\prime_{(\tau,\sigma)} \\
&=& 0.
\end{eqnarray*}
\end{proof}

\begin{theorem}\label{Theorem: Cumulants when admit a multipliciative extension 2}
For a non-crossing partitioned permutation $(\tau,\sigma)\in \PS_{\NC_{2,1}}(m_1,\dots,m_r)$ we let $\K^\prime_{(\tau,\sigma)}$ to be defined as in Lemma \ref{Lemma: Cumulants when admit a multipliciative extension}.
If $\alpha^\prime_{m_1,\dots,m_r}$ is a moment sequence that satisfies

\[
\alpha^\prime_{m_1,\dots,m_r}=\sum_{(\tau,\sigma)\in\PS_{\NC_{2,1}}(m_1,\dots,m_r)}\K^\prime_{(\tau,\sigma)},
\]
then the cumulant sequence associated to the moment sequence $\alpha^\prime_{m_1,\dots,m_r}$ is given by
$$\K_{1,\dots,1,2,\dots,2}=(-1)^{u/2}\frac{u!}{2^{u/2}(u/2)!}\delta_r,$$
where in above expression there are $v$ indices that are $2$, $u$ indices that are $1$, $u$ is even, and $r=u/2+v$ with $(u,v)\neq (2,0)$ and $0$ otherwise.
\end{theorem}
\begin{proof}
The proof follows immediately by noting that if we let the cumulants to be defined as before then
$$\sum_{(\tau,\sigma)\in\PS_{\NC}(m_1,\dots,m_r)}\K_{(\tau,\sigma)}=\sum_{(\tau,\sigma)\in\PS_{\NC_{2,1}}(m_1,\dots,m_r)}\K_{(\tau,\sigma)}.$$
The right hand side of last equality is the same as $\sum_{(\tau,\sigma)\in\PS_{\NC_{2,1}}(m_1,\dots,m_r)}\ab\K^\prime_{(\tau,\sigma)}$ which equals $\alpha^\prime_{m_1,\dots,m_r}$. Thus the cumulants satisfy the moment-cumulant relation as required.
\end{proof}

Lemma \ref{Lemma: Cumulants when admit a multipliciative extension} and Theorem \ref{Theorem: Cumulants when admit a multipliciative extension 2} shows that to conclude our goal we are reduced to find and expression similar to Equation \ref{Equation: Main expression for moments first approach}, namely,
\begin{equation}
\alpha_{m_1,\dots,m_r}=\sum_{(\tau,\sigma)\in \PS_{NC_2}^{loop-free}(m_1,\dots,m_r)} \K^\prime_{(\tau,\sigma)}.
\end{equation}
where the term $\K^\prime_{(\tau,\sigma)}$ can be expressed as the multiplicative extension of a sequence $\K^\prime_{n_1,\dots,n_r}$. As pointed out before, the problem with the pseudo cumulants sequence $\overline{\K}_{(\tau,\sigma)}$ is that for each block $B\in \tau$ we define,
$$\overline{\K}_{(B,\sigma|_B)}=2^{r-1}\left(\beta_{2r}+\mathbbm{1}_{D_B=\emptyset}\sum_{\substack{\tau\in \cP(2r) \\ \tau\text{ doesn't admit }\\ \text{a non-crossing} \\ \text{partitioned permutation}}}\prod_{D\in\tau}\beta_{|D|}\right),$$
whenever $B$ consists of $r$ cycles of $\sigma$ of size $2$. The latest means that we are only adding the second term when a block $B$ is maximal. If we get rid of the term $\mathbbm{1}_{D_B=\emptyset}$ then it might be possible that we count more than once the same partition $\tau\in \cP(2r)$ (for instance, see Example \ref{Example: overcounting of bad tau}). To avoid the overcounting we can simply subtract the partitions that have been counted before. This is formalized in the next Lemma.

\begin{example}\label{Example: overcounting of bad tau}
Consider the two non-crossing partitioned permutations $(\tau_1,\sigma_1),(\tau_2,\sigma_2)$ in $\PS_{\NC_2}^{loop-free}(2,2,2,2,2)$ give by,
\begin{eqnarray*}
\sigma_1 &=& (1,4)(2,3)(5,6)(7,8)(9,10) \\
\tau_1 &=& \{1,4\},\{2,3,5,6,7,8,9,10\} \\
\sigma_2 &=& (1,2)(3,4)(5,6)(7,8)(9,10) \\
\tau_2 &=& \{1,2,3,4,5,6,7,8,9,10\}.
\end{eqnarray*}
If we get rid of the term $\mathbbm{1}_{D_B=\emptyset}$ in our formula for the pseudo-cumulants then for $(\tau_1,\sigma_1)$ we are counting the partition,
$$\tau=\{2,6,7,9\},\{3,5,8,10\}$$
which comes from the block $B=\{2,3,5,6,7,8,9,10\}$ of $\tau_1$. The other block $B=\{1,4\}$ of $\tau_1$ determines the other block of $\tau$ so that $\tau=\{1,4\},\{2,6,7,9\},\{3,5,8,10\}$. This partition is one that belongs to $\mathcal{B}$ and contributes $\beta_2\beta_4^2$ in the term $\overline{\K}_{(\tau_1,\sigma_1)}$. On the other hand, for $(\tau_2,\sigma_2)$ if we get rid of the term $\mathbbm{1}_{D_B=\emptyset}$ then we are allowed to choose again $\tau=\{1,4\},\{2,6,7,9\},\{3,5,8,10\}$. This implies we counted twice the same partition but we just wanted it to be counted once which correspond to the case when the unique block of $\tau_2$ is maximal. To get rid of the overcounting we will let $\tau$ to be counted in $(\tau_1,\sigma_1)$ and then subtract from $(\tau_2,\sigma_2)$ the ones that were counted in $(\tau_1,\sigma_1)$. To make this possible observe that $\tau \leq \tau_1 \leq \tau_2$ so for given $(\tau_2,\sigma_2)$ we need to subtract the ones counted for any other partitioned permutation $(\tau_1,\sigma_1)$ where $\tau_1 \leq \tau_2$. We then continue doing this process recursively. We explain this in detail below in Lemma \ref{Lemma: From pseudo to real cumulants}.

\end{example}

\begin{lemma}\label{Lemma: From pseudo to real cumulants}
Let $r\in\mathbb{N}$ and
$$A_{\underbrace{2,\dots,2}_{r\text{ indices}}}=\{(\cU,\delta)\in \PS_{\NC_2}(2,\dots,2): \cU\neq 1_{2r}\}.$$
Let
$$C_{2r}=\sum_{\substack{\tau\in \cP(2r) \\ \tau\text{ doesn't admit }\\ \text{a non-crossing} \\ \text{partitioned permutation}}}\prod_{D\in\tau}\beta_{|D|}.$$
For $(\tau,\sigma)\in \PS_{\NC_2}^{loop-free}(m_1,\dots\ab,m_r)$ let
$$\K^\prime_{(\tau,\sigma)}=\prod_{B\in\tau}\K^\prime_{(B,\sigma|_B)},$$
with $\K^\prime_{(B,\sigma|_B)}$ the recursive sequence given by,
$$\K^\prime_{(B,\sigma|_B)}=2^{r-1}\left(\beta_{2r}+C_{2r}\right)-\sum_{(\cU,\sigma)\in A_{\underbrace{2,\dots ,2}_{r \text{ indices}}}}\prod_{V\in U}\K^\prime_{\underbrace{2,\dots ,2}_{\#(\sigma|_V)\text{ indices}}}-\prod_{V\in U}2^{\#(\sigma)-1}\beta_{|V|},$$
whenever $B$ consists of $r$ cycles of $\sigma$ of size $2$ for $r\geq 2$ and $\K^\prime_{2}=\beta_2$. Then
\begin{equation}\label{Auxiliar: relation between cumulants and pseudo cumulants}
\sum_{(\tau,\sigma)\in \PS_{NC_2}^{loop-free}(m_1,\dots ,m_r)} \overline{\K}_{(\tau,\sigma)}=\sum_{(\tau,\sigma)\in \PS_{NC_2}^{loop-free}(m_1,\dots,m_r)} \K^\prime_{(\tau,\sigma)}.
\end{equation}
Here $\overline{\K}_{(\tau,\sigma)}$ are the pseudo-cumulants of the Complex Wigner Matrix.
\end{lemma}
\begin{proof}
We first prove it for any $r$ and $m_1=\cdots =m_r=2$ then we will extend it to any $m_i$. So let us assume $m_1=\cdots =m_r=2$. We proceed inductively over $r$ defining the cumulants $\K^\prime_{2,\dots,2}$ accordingly so that (\ref{Auxiliar: relation between cumulants and pseudo cumulants}) holds, here $\K^\prime_{2,\dots,2}$ has $r$ indices $2's$. 

Note that for the cases $r=1,2,3$ the term $C_{2r}=0$ and therefore in these cases we simply have
\begin{eqnarray*}
\K^\prime_{2}=\beta_2 \qquad \K^\prime_{2,2}=2\beta_4 \qquad \K^\prime_{2,2,2}=4\beta_6.
\end{eqnarray*}
It is actually well know that above expressions are indeed the cumulants of the complex Wigner ensemble. The case $r=1$ is the Wigner's semicircle law. For $r=2,3$ see \cite{MMPS,MM} respectively. Assume we have defined $\K^\prime_{2,\dots,2}$ for at most $r-1$ indices $2's$. Let us define $\K^\prime_{2,\dots,2}$ for the case when there are $r$ indices $2's$ so that (\ref{Auxiliar: relation between cumulants and pseudo cumulants}) holds. Since we are assuming $m_i=2$ for any $i$ then there is only one non-crossing partitioned permutation that is maximal, that correspond to
$$(\tau,\sigma)=(1_{2r},(1,2)\cdots (r-1,r)).$$
Hence the left hand side of (\ref{Auxiliar: relation between cumulants and pseudo cumulants}) becomes
$$\sum_{(\cU,\delta)\in A_{\underbrace{2,2,2,2}_{r\text{ indices}}}}\prod_{V\in U}2^{\#(\sigma|_V-1)}\beta_{|V|}+2^{r-1}(\beta_{2r}+C_{2r}).$$
On the other hand the right hand side of (\ref{Auxiliar: relation between cumulants and pseudo cumulants}) becomes
$$\sum_{(\cU,\delta)\in A_{\underbrace{2,\dots ,2}_{r\text{ indices}}}}\prod_{V\in U}\K^\prime_{\underbrace{2,\dots ,2}_{\#(\sigma|_V)\text{ indices}}} + \K^\prime_{\underbrace{2,\dots ,2}_{r\text{ indices}}}.$$
To make both expression equal each other we must define
$$\K^\prime_{(B,\sigma|_B)}=2^{r-1}\left(\beta_{2r}+C_{2r}\right)-\sum_{(\cU,\sigma)\in A_{\underbrace{2,\dots ,2}_{r \text{ indices}}}}\prod_{V\in U}\K^\prime_{\underbrace{2,\dots ,2}_{\#(\sigma|_V)\text{ indices}}}-\prod_{V\in U}2^{\#(\sigma)-1}\beta_{|V|},$$
for the block $B=1_{2r}$ that consist of $r$ cycles of size $2$ of $\sigma$ as desired.

Finally let us extend this to any $m_1,\dots,m_r$. First of all, we claim that for any $(\tau,\sigma)\in \PS_{\NC_2}^{loop-free}(m_1,\dots,m_r)$ there exist $(\rho,\sigma^\prime)\in \PS_{\NC_2}^{loop-free}(m_1,\dots,m_r)$ such that any block of $\rho$ which is union of blocks of $\sigma^\prime$ is maximal and $\tau \leq \rho$. Indeed, we first let the blocks of $\tau$ that contain a cutting through string of $\sigma$ be also blocks of $\rho$ (let us recall these blocks have all size $2$). If any other block of $\tau$ is maximal then we are done by letting $(\rho,\sigma^\prime)=(\tau,\sigma)$. Otherwise, there exist blocks $B_1,B_2\in \tau$ and $(a,b),(c,d)\in \sigma$ with $(a,b)\sim (c,d)$ (in the sense of Definition \ref{Definition: Equivalence class of blocks of partitioned permutation}) such that $(a,b)\subset B_1$ and $(c,d)\subset B_2$. Hence $(a,b)$ and $(c,d)$ must be in the same connected component of $\gamma\vee\sigma$. Assume without loss of generality $[a]_{\gamma\sigma}=[c]_{\gamma\sigma}$ and $[b]_{\gamma\sigma}=[d]_{\gamma\sigma}$. We let $\sigma^\prime=\sigma(a,b)(c,d)(a,d)(b,c)$ and $\rho$ to be the one with blocks the same as $\tau$ except $B_1$ and $B_2$ which is merged into the single block $B_1\cup B_2$ of $\rho$. By Lemma \ref{Lemma: Splitting edges produces two new non-crossing pairings} we get $(\rho,\sigma^\prime)\in \PS_{\NC_2}^{loop-free}(m_1,\dots,m_r)$ and the cycles $(a,d)$ and $(b,c)$ of $\sigma^\prime$ are not related anymore. We can continue doing the same process until we have no related cycles so that any block of $\rho$ which is union of blocks of $\sigma^\prime$ is maximal and $\tau \leq \rho$. Let $(\rho,\sigma^\prime)\in \PS_{\NC_2}^{loop-free}(m_1,\dots,m_r)$ as before and let $M$ to be the subset of $\PS_{\NC_2}^{loop-free}(m_1,\dots,m_r)$ that contains only $(\tau,\sigma)$ with $\tau\leq \rho$. Any $(\tau,\sigma)\in M$ can be written as the product of $(\tau_B,\sigma_B)$ where $B\in \rho$ is a block of $\rho$. Moreover at any block $B\in\rho$ by the proved at the beginning we know $\sum_{(\tau_B,\sigma_B)}\overline{\K}_{(\tau_B,\sigma_B)}=\sum_{(\tau_B,\sigma_B)}\K^\prime_{(\tau_B,\sigma_B)}$. Hence
$$\sum_{(\tau,\sigma)\in M}\overline{\K}_{(\tau,\sigma)}=\sum_{(\tau,\sigma)\in M}\K^\prime_{(\tau,\sigma)}.$$
If $M=\PS_{\NC_2}^{loop-free}(m_1,\dots,m_r)$ we are done, otherwise we take $(\tau,\sigma)\in \PS_{\NC_2}^{loop-free}(m_1,\dots,m_r) \setminus M$ and consider $(\rho,\sigma^\prime)$ as before and repeat the process over again until we cover the whole set $\PS_{\NC_2}^{loop-free}(m_1,\dots,m_r)$.
\end{proof}

\begin{lemma}\label{Lemma: From real cumulants in loop-free to all cumulants}
Let $\delta_r$ be the recursive sequence given by $\delta_1=\beta_2$ and
$$\delta_r=2^{r-1}\left(\beta_{2r}+C_{2r}\right)-\sum_{(\cU,\sigma)\in A_{\underbrace{2,\dots ,2}_{r \text{ indices}}}}\prod_{V\in U}\delta_{\#(\sigma|_V)}-\prod_{V\in U}2^{\#(\sigma)-1}\beta_{|V|},$$
for $r\geq 2$. Here $C_{2r}$ and $A_{2,\dots,2}$ are defined as in Lemma \ref{Lemma: From pseudo to real cumulants}. Then, the free cumulants of a Complex Wigner Matrix are given by
$$
\K_{1,\dots,1,2,\dots,2} = (-1)^{u/2}\frac{u!}{2^{u/2}(u/2)!}\delta_r,
$$
where there are $v$ indices that are $2$, $u$ indices that are $1$, $u$ is even, and $r=u/2+v$ with $(u,v)\neq(2,0)$ and $0$ otherwise.
\end{lemma}
\begin{proof}
By definition of the cumulants and Lemma \ref{Lemma: From pseudo to real cumulants} we know,
$$\sum_{(\tau,\sigma)\in \PS_{NC_2}^{loop-free}(m_1,\dots ,m_r)} \overline{\K}_{(\tau,\sigma)}=\sum_{(\tau,\sigma)\in \PS_{NC_2}^{loop-free}(m_1,\dots,m_r)} \K_{(\tau,\sigma)},$$
where $\overline{\K}_{(\tau,\sigma)}$ are the pseudo-cumulants and $\K_{(\tau,\sigma)}$ is the multiplicative extension of $\K_{2,\dots,2}$. The left hand side of last equation is $\alpha_{m_1,\dots,m_r}$ according to Lemma \ref{Lemma: Main expression for moments first approach} while the right-hand side equals to the same sum over the set $\PS_{\NC_{2,1}}(m_1,\dots,m_r)$ according to Lemma \ref{Lemma: Cumulants when admit a multipliciative extension}. From there we conclude by Theorem \ref{Theorem: Cumulants when admit a multipliciative extension 2}.
\end{proof}

Lemma \ref{Lemma: From real cumulants in loop-free to all cumulants} proves our main result \ref{Theorem: Main theorem 1}. On the other hand if $X$ is a GUE then $\beta_2=1$ and any other $\beta_n=0$. Thus only when any block of $\sigma$ is also a block of $\tau$ we have that $\overline{\K}_{(\tau,\sigma)}\neq 0$. Moreover in such a case we have $\overline{\K}_{(\tau,\sigma)}=\beta_2=1$. That correspond to the case when $\sigma\in \NC_2(m_1,\dots,m_r)$, so by Equation \ref{Equation: Main expression for moments first approach} it follows,
$$\alpha_{m_1,\dots,m_r}=|\NC_2(m_1,\dots,m_r)|,$$
which proves our Corollary \ref{Theorem: Main theorem 3}. Finally to prove our result Theorem \ref{Theorem: Main theorem 2} it is enough to appeal to Lemmas \ref{Lemma: Main expression for moments first approach} and \ref{Lemma: From pseudo to real cumulants}.

\subsection{Computation of lower order free cumulants}

Let us finish this work by computing the higher order free cumulants up to order $5$. This will illustrate how our formula works. Let us set $u$ be number of indices that are $1$ and $v$ the number of indices that are $2$. We also let $r=u/2+v$. Let us then compute $C_{2r}$ for $r=1,2,3,4$ first. To complete this calculation we are reduced to find the partitions $\tau\in \cP(2r)$ so that $\tau$ doesn't admit a non-crossing partitioned permutation. Let us note that when $r=1$ the unique partition $\tau\in \cP(2)$ that we are allowed to choose is $\tau=\{1,2\}$ and then only $\sigma=\{1,2\}$ satisfies $\sigma\leq \tau$ and it respects the parity. It is clear that $\Gamma(\tau,\sigma,\gamma_2^{par})$ is a tree so $C_2=0$ in our formula. For $r=2$ the only choices for $\tau$ are $\{1,4\}\{2,3\}$ or $\{1,2,3,4\}$. In the first one we have the only choice $\sigma=(1,4)(2,3)$ resulting in $\Gamma(\tau,\sigma,\gamma_4^{par})$ being a tree. In the second case we can choose $\sigma=(1,2)(3,4)$ resulting in $\Gamma(\tau,\sigma,\gamma_4^{par})$ being a tree. Thus $C_4$ is again $0$. For $r=3$ we can choose,
\begin{eqnarray*}
\tau &=& \{1,4\},\{2,5\},\{3,6\} \\
\tau &=& \{1,6\},\{2,3\},\{4,5\} \\
\tau &=& \{1,6\},\{2,3,4,5\} \\
\tau &=& \{1,4\},\{2,3,5,6\} \\
\tau &=& \{2,3\},\{1,4,5,6\} \\
\tau &=& \{2,5\},\{1,3,4,6\} \\
\tau &=& \{3,6\},\{1,2,4,5\} \\
\tau &=& \{4,5\},\{1,2,3,6\} \\
\tau &=& \{1,2,3,4,5,6\} \\
\end{eqnarray*}
In the first two choices the sigma is unique and $\Gamma(\tau,\sigma,\gamma_6^{par})$ is a tree. In the third choice we take $\sigma=(1,6)(2,5)(3,4)$ so that $\Gamma(\tau,\sigma,\gamma_6^{par})$ is a tree. Similarly for the $4^{th},5^{th},6^{th},7^{th}$ and $8^{th}$ we choose $\sigma=(1,4)(2,3)(5,6),\sigma=(1,4)(2,3)(5,6),\sigma=(1,4)(2,3)(5,6),\sigma=(1,6)\ab (2,5)(3,4)$ and $\sigma=(1,2)(3,6)(4,5)$ respectively so that $\Gamma(\tau,\sigma,\gamma_6^{par})$ is a tree. Finally for the last one we choose $\sigma=(1,2)(3,4)(5,6)$ so that $\Gamma(\tau,\sigma,\gamma_6^{par})$ is a tree. So $C_6=0$. For the case $r=4$ it can be verified there are $3$ partitions $\tau$ for which there is no $\sigma$ so that $\Gamma(\tau,\sigma,\gamma_6^{par})$ is a tree. These are $\tau=\{1,4,5,8\},\{2,3,6,7\},\tau=\{1,4,6,7\},\{2,3,5,8\}$ and $\tau=\{1,3,6,8\},\{2,4,5,7\}$, each one in the sum contributes $\beta_4^2$. Moreover for each of these three partitions we have $C_\tau=\emptyset$, thus $C_8=3\beta4^2$. Now we just need to find the $\delta_r$ sequence which is defined recursively. By hypothesis $\delta_1=\beta_2$ and therefore for the first order cumulants we can only choose one index which has to be $2$, so $r=1$, since $C_2=0$ we have,
$$\K_2=\beta_2.$$

Now we compute $\delta_2$, by definition
$$\delta_2=2\left(\beta_{4}+C_{4}\right)-\sum_{(\cU,\sigma)\in A_{2,2}}\prod_{V\in U}\delta_{\#(\sigma|_V)}-\prod_{V\in U}2^{\#(\sigma)-1}\beta_{|V|}.$$
There are element two elements in $A_{2,2}$ corresponding to $(\{1,3\},\{2,4\},\ab (1,3)(2,4))$ and $(\{1,4\},\{2,3\},(1,3)(2,4))$. Since $\delta_1=\beta_2$ the second summation is $0$ while $C_4=0$, hence $\delta_2=2\beta_4$. For the second order we can only choose two indices, both of them $2$, so $r=2$. Therefore
$$\K_{2,2}=2\beta_4.$$
Now we compute $\delta_3$, by definition
$$\delta_3=4\left(\beta_{6}+C_{6}\right)-\sum_{(\cU,\sigma)\in A_{2,2,2}}\prod_{V\in U}\delta_{\#(\sigma|_V)}-\prod_{V\in U}2^{\#(\sigma)-1}\beta_{|V|}.$$
In this case as in the case $r=2$ the summation becomes $0$ while $C_6=0$, hence $\delta_3=4\beta_6$. For the third order case we can either choose three indices all of them $2$ or two indices that are $1$ and one which is $2$. In the first case we have $r=3$, therefore
$$\K_{2,2,2}=4\beta_6.$$
For the second case we have $r=u/2+v=2$, thus
$$\K_{2,1,1}=-2\beta_4.$$
Finally let us compute $\delta_4$. By definition
$$\delta_4=8\left(\beta_{8}+C_{8}\right)-\sum_{(\cU,\sigma)\in A_{2,2,2,2}}\prod_{V\in U}\delta_{\#(\sigma|_V)}-\prod_{V\in U}2^{\#(\sigma)-1}\beta_{|V|}.$$
In this case the summation again becomes $0$ while $C_8=3\beta_4^2$, hence $\delta_4=8\beta_8+24\beta_4^2.$ For the fourth order case we can choose either $4$ indices all of them $2$, or $2$ indices that are $2$ and two indices that are $1$ or $4$ indices all of them $1$. In the first case we have $r=4$m, therefore
$$\K_{2,2,2,2}=8(\beta_8+3\beta_4^2)=8\beta_8+24\beta_4^2.$$
For the case of two indices that are $2$ and two indices that are $1$ we have $r=u/2+v=1+2=3$, hence
$$\K_{1,1,2,2}=(-1)(4)\frac{2!}{2}\beta_6=-4\beta_6.$$
Finally if all four indices are $2$ then $r=u/2+v=2$, hence 
$$\K_{1,1,1,1}=(-1)^2(2)\frac{4!}{4(2!)}\beta_4=6\beta_4.$$
For the fifth order case we either have $\K_{2,2,2,2,2},\K_{1,1,2,2,2}$ or $\K_{1,1,1,1,2}$. For the second and third case it corresponds to $r=4$ and $r=3$ respectively, thus,
$$\K_{1,1,2,2,2}=(-1)8\frac{2!}{2}(\beta_8+3\beta_4^2)=-8\beta_8-24\beta_4^2,$$
and
$$\K_{1,1,1,1,2}=(-1)^24\frac{4!}{4(2!)}\beta_6=12\beta_6.$$
Finally we do not compute $\K_{2,2,2,2,2}$ as that implies to compute $C_{10}$ which means finding all possible $\tau$ in the case $r=5$, however that should be a very simple problem to implement in a coding language.


\thebottomline

\begin{thebibliography}{XX}
 
  
\bibitem{AGZ} G.~Anderson, A.~Guionnet, and O.~Zeitouni,
  \textit{An Introduction to Random Matrices}, Cambridge 
  Studies in Advanced Mathematics. \textbf{118}, Cambridge Univ. Press,
  2010.


\bibitem{AM} O.~Arizmendi, and J.~A.~Mingo, Second Order Cumulants: second order even elements and $R$-diagonal elements, \textit{Ann. Inst. Henri Poincaré Comb. Phys. Interact.}, (2023).

 
\bibitem{MSAW} C.~Armstrong, J.~A.~Mingo, R.~Speicher, and
  J.~C.~H.~Wilson, The non-commutative cycle lemma,
  \textit{J. Combin. Theory Ser. A}, 2009.
  

\bibitem{B} P.~Biane, Some properties of crossings and partitions, \textit{Discrete Math.}, \textbf{175}(1-3) (1997), 41-53.

  
\bibitem{bcgf} G.~Borot, S.~Charbonnier, E.~Garcia-Failde,
  F.~Leid, S.~Shadrin, Analytic theory of higher order free
  cumulants, arXix:2112.12184.
  
\bibitem{bdks} B.~Bychkov, P.~Dunin-Barkowski, M.~Kazarian, S.~Shadrin, Generalised ordinary vs fully simple duality for n-point functions and a proof of the Borot–Garcia-Failde conjecture, arXiv:2106.08368, (2021). 
  
  
\bibitem{CMSS} B.~Collins, J.~A.~Mingo, P.~\'Sniady,
  R.~Speicher, Second order freeness and fluctuations of
  Random Matrices:~III. Higher Order Freeness and Free
  Cumulants, \textit{Doc. Math.}, \textbf{12} (2007), 1-70.


\bibitem{FTW} R.~Feng, G.~Tian, and D.~We, Spectrum of SYK model, \textit{Peking Math J.} \textbf{2} (2019), 41-70.


\bibitem{gth} G.~T.~Hooft, A planar diagram theory theory for strong interactions, \textit{Nuclear Phy.
 B} \textbf{72}, (1974), 461–473.
  
\bibitem{KKP} A.~Khorunzhy, B.~Khoruzhenko, and L.~Pastur,
  On the $\textit{1}/N$ corrections to the Green functions
  of random matrices with independent entries,
  \textit{J. Phys. A} \textbf{28} (1995), L31–L35.


\bibitem{LZ} S.~K.~Lando and A.~K.~Zvonkin, \textit{Graphs
  on surfaces and their applications}, Encyclopaedia of
  Mathematical Sciences \textbf{141}, Springer, 2003.


\bibitem{KS} B.~Krawczyk, and R.~Speicher, Combinatorics of Free Cumulants, \textit{J. Combin. Theory, Ser. A.}, \textbf{90} (2000), 267-292. MR1757277 (2001f:46101)
  

\bibitem{C} C.~Male, Traffic Distributions and Independence:
  Permutation Invariant Random Matrices and the Three
  Notions of Independence, \textit{Mem. Amer. Math. Soc.}
  \textbf{267} (2020), no.~1300.


\bibitem{MMPS} C.~Male, J.~A.~Mingo, S.~P\'ech\'e,
  R.~Speicher, Joint Global fluctuations of complex Wigner
  and Deterministic Matrices, \textit{Random Matrices Theory
    Appl.} \textbf{11} (2022), paper 2250015, 46 pages.


\bibitem{massey} W.~Massey, \textit{A Basic Course in Algebraic Topology}, Graduate Texts in Mathematics \textbf{127}, Springer-Verlag, 1991.


\bibitem{MM} J.~A.~Mingo, and D.~Munoz-George, Third order moments of Complex Wigner matrices, \textit{submitted}, preprint arxiv.2205.13081


\bibitem{MN} J.~A.~Mingo, A.~Nica, Annular non-crossing
  permutations and partitions, and second-order asymptotics
  for random matrices, \textit{Int. Math. Res. Not. IMRN},
  \textbf{2004}, 1413-1460.


\bibitem{MSS} J.~A.~Mingo, P.~\'Sniady, and R.~Speicher,
  Second order freeness and fluctuations of Random Matrices:
  II. Unitary Random Matrices , \textit{Adv. in Math.},
  \textbf{209} (2007), 212-240.


\bibitem{MSBook} J.~A.~Mingo and R.~Speicher, \textit{Free
  probability and Random Matrices}, Fields Institute
  Monographs, no. \textbf{35}, Springer-Nature, 2017.


\bibitem{MS} J.~A.~Mingo, and R.~Speicher, Second order freeness
and fluctuations of random matrices I. Gaussian and Wishart matrices and cyclic Fock spaces, \textit{J. Funct. Anal.} 
  \textbf{235} (2006), 226-270.


\bibitem{D} D.~Munoz-George, Higher order moments and free cumulants of Complex Wigner Matrices,  PhD thesis, Queen's University, 

https://qspace.library.queensu.ca/server/api/core/bitstreams/

73c9a1f0-f6d9-4478-8a3f-1f3d9bdd229b/content


\bibitem{NS} A. Nica and R. Speicher, \textit{Lectures on
  the combinatorics of free probability},
  London Mathematical Society Lecture Note Series \textbf{335},
  Cambridge University Press, 2006.


\bibitem{NC} T.~Nishizeki and N.~Chiba, \textit{Planar
  graphs: Theory and algorithms}, Annals of Discrete
  Mathematics 32, North-Holland, 1998.


\bibitem{PS} L.~Pastur and M.~ Shcherbina, \newblock {\em
  Eigenvalue Distributions of Large Random Matrices}, Mathematical Surveys
   and Monographs \textbf{171}, \newblock Amer.
  Math. Soc., 2011.
  

\bibitem{W1} E.~P.~Wigner, Characteristic vectors of
  bordered matrices with infinite dimensions,
  \textit{Ann. of Math. (2)}  \textbf{62} (1955), 548–564, and
  part II, \textbf{65} (1957), 203-207.
  

\bibitem{W2} E.~P.~Wigner, On the distribution of the roots
  of certain symmetric matrices, \textit{Ann. of Math. (2)} 
  \textbf{67} (1958), 325-327.


\bibitem{PSp} M.~Pluma, and R.~Speicher, A dynamical version of the SYK model and the q-Brownian motion, \textit{Random Matrices: Theory Appl.} \textbf{11} (2006), paper 2250031, 20 pages.

  
\end{thebibliography}
\end{document}